\begin{document}

\allowdisplaybreaks


\title[Portrait Moduli Spaces]
      {Moduli Spaces for Dynamical Systems with Portraits}
\date{\today}
\author[J.R. Doyle]{John R. Doyle}
\email{jdoyle@latech.edu}
\address{Mathematics \& Statistics Department, Louisiana Tech
  University, Ruston, LA 71272 USA}
\author[J.H. Silverman]{Joseph H. Silverman}
\email{jhs@math.brown.edu}
\address{Mathematics Department, Box 1917
  Brown University, Providence, RI 02912 USA.
  ORCID: https://orcid.org/0000-0003-3887-3248}

\subjclass[2010]{Primary: 37P45; Secondary: 37P15}
\keywords{portrait, dynamical system, arithmetic dynamics}
\thanks{Silverman's research supported by Simons Collaboration Grant \#241309}




\hyphenation{ca-non-i-cal semi-abel-ian}


\newtheorem{theorem}{Theorem}[section]
\newtheorem{lemma}[theorem]{Lemma}
\newtheorem{sublemma}[theorem]{Sublemma}
\newtheorem{conjecture}[theorem]{Conjecture}
\newtheorem{proposition}[theorem]{Proposition}
\newtheorem{corollary}[theorem]{Corollary}
\newtheorem*{claim}{Claim}

\theoremstyle{definition}
\newtheorem{definition}[theorem]{Definition}
\newtheorem*{intuition}{Intuition}
\newtheorem{example}[theorem]{Example}
\newtheorem{remark}[theorem]{Remark}
\newtheorem{question}[theorem]{Question}

\theoremstyle{remark}
\newtheorem*{acknowledgement}{Acknowledgements}





\newenvironment{notation}[0]{%
  \begin{list}%
    {}%
    {\setlength{\itemindent}{0pt}
     \setlength{\labelwidth}{4\parindent}
     \setlength{\labelsep}{\parindent}
     \setlength{\leftmargin}{5\parindent}
     \setlength{\itemsep}{0pt}
     }%
   }%
  {\end{list}}

\newenvironment{parts}[0]{%
  \begin{list}{}%
    {\setlength{\itemindent}{0pt}
     \setlength{\labelwidth}{1.5\parindent}
     \setlength{\labelsep}{.5\parindent}
     \setlength{\leftmargin}{2\parindent}
     \setlength{\itemsep}{0pt}
     }%
   }%
  {\end{list}}
\newcommand{\Part}[1]{\item[\upshape#1]}

\def\Case#1#2{%
\paragraph{\textbf{\boldmath Case #1: #2.}}\hfil\break\ignorespaces}

\renewcommand{\a}{\alpha}
\renewcommand{\b}{\beta}
\newcommand{\g}{\gamma}
\renewcommand{\d}{\delta}
\newcommand{\e}{\epsilon}
\newcommand{\f}{\varphi}
\newcommand{\fhat}{\hat\varphi}
\newcommand{\bfphi}{{\boldsymbol{\f}}}
\renewcommand{\l}{\lambda}
\renewcommand{\k}{\kappa}
\newcommand{\lhat}{\hat\lambda}
\newcommand{\m}{\mu}
\newcommand{\bfmu}{{\boldsymbol{\mu}}}
\renewcommand{\o}{\omega}
\newcommand{\bfpi}{{\boldsymbol{\pi}}}
\renewcommand{\r}{\rho}
\newcommand{\bfrho}{{\boldsymbol{\rho}}}
\newcommand{\rbar}{{\bar\rho}}
\newcommand{\s}{\sigma}
\newcommand{\sbar}{{\bar\sigma}}
\renewcommand{\t}{\tau}
\newcommand{\z}{\zeta}

\newcommand{\D}{\Delta}
\newcommand{\G}{\Gamma}
\newcommand{\F}{\Phi}
\renewcommand{\L}{\Lambda}

\newcommand{\ga}{{\mathfrak{a}}}
\newcommand{\gb}{{\mathfrak{b}}}
\newcommand{\gM}{{\mathfrak{M}}}
\newcommand{\gn}{{\mathfrak{n}}}
\newcommand{\gp}{{\mathfrak{p}}}
\newcommand{\gP}{{\mathfrak{P}}}
\newcommand{\gq}{{\mathfrak{q}}}

\newcommand{\Abar}{{\bar A}}
\newcommand{\Ebar}{{\bar E}}
\newcommand{\kbar}{{\bar k}}
\newcommand{\Kbar}{{\bar K}}
\newcommand{\Pbar}{{\bar P}}
\newcommand{\Sbar}{{\bar S}}
\newcommand{\Tbar}{{\bar T}}
\newcommand{\gbar}{{\bar\gamma}}
\newcommand{\lbar}{{\bar\lambda}}
\newcommand{\ybar}{{\bar y}}
\newcommand{\phibar}{{\bar\f}}
\newcommand{\nubar}{{\overline\nu}}

\newcommand{\Acal}{{\mathcal A}}
\newcommand{\Bcal}{{\mathcal B}}
\newcommand{\Ccal}{{\mathcal C}}
\newcommand{\Dcal}{{\mathcal D}}
\newcommand{\Ecal}{{\mathcal E}}
\newcommand{\Fcal}{{\mathcal F}}
\newcommand{\Gcal}{{\mathcal G}}
\newcommand{\Hcal}{{\mathcal H}}
\newcommand{\Ical}{{\mathcal I}}
\newcommand{\Jcal}{{\mathcal J}}
\newcommand{\Kcal}{{\mathcal K}}
\newcommand{\Lcal}{{\mathcal L}}
\newcommand{\Mcal}{{\mathcal M}}
\newcommand{\Ncal}{{\mathcal N}}
\newcommand{\Ocal}{{\mathcal O}}
\newcommand{\Pcal}{{\mathcal P}}
\newcommand{\Qcal}{{\mathcal Q}}
\newcommand{\Rcal}{{\mathcal R}}
\newcommand{\Scal}{{\mathcal S}}
\newcommand{\Tcal}{{\mathcal T}}
\newcommand{\Ucal}{{\mathcal U}}
\newcommand{\Vcal}{{\mathcal V}}
\newcommand{\Wcal}{{\mathcal W}}
\newcommand{\Xcal}{{\mathcal X}}
\newcommand{\Ycal}{{\mathcal Y}}
\newcommand{\Zcal}{{\mathcal Z}}

\renewcommand{\AA}{\mathbb{A}}
\newcommand{\BB}{\mathbb{B}}
\newcommand{\CC}{\mathbb{C}}
\newcommand{\FF}{\mathbb{F}}
\newcommand{\GG}{\mathbb{G}}
\newcommand{\NN}{\mathbb{N}}
\newcommand{\PP}{\mathbb{P}}
\newcommand{\QQ}{\mathbb{Q}}
\newcommand{\RR}{\mathbb{R}}
\newcommand{\ZZ}{\mathbb{Z}}

\newcommand{\bfa}{{\boldsymbol a}}
\newcommand{\bfb}{{\boldsymbol b}}
\newcommand{\bfc}{{\boldsymbol c}}
\newcommand{\bfd}{{\boldsymbol d}}
\newcommand{\bfe}{{\boldsymbol e}}
\newcommand{\bff}{{\boldsymbol f}}
\newcommand{\bfg}{{\boldsymbol g}}
\newcommand{\bfi}{{\boldsymbol i}}
\newcommand{\bfj}{{\boldsymbol j}}
\newcommand{\bfk}{{\boldsymbol k}}
\newcommand{\bfm}{{\boldsymbol m}}
\newcommand{\bfn}{{\boldsymbol n}}
\newcommand{\bfp}{{\boldsymbol p}}
\newcommand{\bfr}{{\boldsymbol r}}
\newcommand{\bfs}{{\boldsymbol s}}
\newcommand{\bft}{{\boldsymbol t}}
\newcommand{\bfu}{{\boldsymbol u}}
\newcommand{\bfv}{{\boldsymbol v}}
\newcommand{\bfw}{{\boldsymbol w}}
\newcommand{\bfx}{{\boldsymbol x}}
\newcommand{\bfy}{{\boldsymbol y}}
\newcommand{\bfz}{{\boldsymbol z}}
\newcommand{\bfA}{{\boldsymbol A}}
\newcommand{\bfF}{{\boldsymbol F}}
\newcommand{\bfB}{{\boldsymbol B}}
\newcommand{\bfD}{{\boldsymbol D}}
\newcommand{\bfG}{{\boldsymbol G}}
\newcommand{\bfI}{{\boldsymbol I}}
\newcommand{\bfM}{{\boldsymbol M}}
\newcommand{\bfP}{{\boldsymbol P}}
\newcommand{\bfQ}{{\boldsymbol Q}}
\newcommand{\bfT}{{\boldsymbol T}}
\newcommand{\bfU}{{\boldsymbol U}}
\newcommand{\bfX}{{\boldsymbol X}}
\newcommand{\bfY}{{\boldsymbol Y}}
\newcommand{\bfzero}{{\boldsymbol{0}}}
\newcommand{\bfone}{{\boldsymbol{1}}}

\newcommand{\Aut}{\operatorname{Aut}}
\newcommand{\Berk}{{\textup{Berk}}}
\newcommand{\Birat}{\operatorname{Birat}}
\newcommand{\characteristic}{\operatorname{char}}
\newcommand{\codim}{\operatorname{codim}}
\newcommand{\Crit}{\operatorname{Crit}}
\newcommand{\crit}{{\textup{crit}}}
\newcommand{\critwt}{\operatorname{critwt}} 
\newcommand{\Cycle}{\operatorname{Cycles}}
\newcommand{\diag}{\operatorname{diag}}
\newcommand{\dimEnd}{{M}}  
\newcommand{\Disc}{\operatorname{Disc}}
\newcommand{\Div}{\operatorname{Div}}
\newcommand{\Df}{{Df}}  
\newcommand{\Dom}{\operatorname{Dom}}
\newcommand{\dyn}{{\textup{dyn}}}
\newcommand{\End}{\operatorname{End}}
\newcommand{\PortEndPt}{{\textup{endpt}}} 
\newcommand{\END}{\smash[t]{\overline{\operatorname{End}}}\vphantom{E}}
\newcommand{\EndPoint}{E}  
\newcommand{\ExtOrbit}{\mathcal{EO}} 
\newcommand{\Fbar}{{\bar{F}}}
\newcommand{\Fix}{\operatorname{Fix}}
\newcommand{\Fiber}{\operatorname{Fiber}}
\newcommand{\FOD}{\operatorname{FOD}}
\newcommand{\FOM}{\operatorname{FOM}}
\newcommand{\Frame}{\operatorname{Fr}}
\newcommand{\Gal}{\operatorname{Gal}}
\newcommand{\GITQuot}{/\!/}
\newcommand{\GL}{\operatorname{GL}}
\newcommand{\GR}{\operatorname{\mathcal{G\!R}}}
\newcommand{\Hom}{\operatorname{Hom}}
\newcommand{\Index}{\operatorname{Index}}
\newcommand{\Image}{\operatorname{Image}}
\newcommand{\Isom}{\operatorname{Isom}}
\newcommand{\hhat}{{\hat h}}
\newcommand{\Ker}{{\operatorname{ker}}}
\newcommand{\Ksep}{K^{\text{sep}}}  
\newcommand{\Length}{\operatorname{Length}}
\newcommand{\Lift}{\operatorname{Lift}}
\newcommand{\limstar}{\lim\nolimits^*}
\newcommand{\limstarn}{\lim_{\hidewidth n\to\infty\hidewidth}{\!}^*{\,}}
\newcommand{\Mat}{\operatorname{Mat}}
\newcommand{\maxplus}{\operatornamewithlimits{\textup{max}^{\scriptscriptstyle+}}}
\newcommand{\MOD}[1]{~(\textup{mod}~#1)}
\newcommand{\Model}{\operatorname{Model}}
\newcommand{\Mor}{\operatorname{Mor}}
\newcommand{\Moduli}{\mathcal{M}}
\newcommand{\MODULI}{\overline{\mathcal{M}}}
\newcommand{\Norm}{{\operatorname{\mathsf{N}}}}
\newcommand{\notdivide}{\nmid}
\newcommand{\normalsubgroup}{\triangleleft}
\newcommand{\NS}{\operatorname{NS}}
\newcommand{\onto}{\twoheadrightarrow}
\newcommand{\ord}{\operatorname{ord}}
\newcommand{\Orbit}{\mathcal{O}}
\newcommand{\Pcase}[3]{\par\noindent\framebox{$\boldsymbol{\Pcal_{#1,#2}}$}\enspace\ignorespaces}
\newcommand{\pd}{p}       
\newcommand{\bfpd}{\bfp}  
\newcommand{\Per}{\operatorname{Per}}
\newcommand{\Perp}{\operatorname{Perp}}
\newcommand{\PrePer}{\operatorname{PrePer}}
\newcommand{\PGL}{\operatorname{PGL}}
\newcommand{\Pic}{\operatorname{Pic}}
\newcommand{\Portrait}{\mathfrak{Port}}  
\newcommand{\prim}{\textup{prim}}
\newcommand{\Prob}{\operatorname{Prob}}
\newcommand{\Proj}{\operatorname{Proj}}
\newcommand{\Qbar}{{\bar{\QQ}}}
\newcommand{\rank}{\operatorname{rank}}
\newcommand{\Rat}{\operatorname{Rat}}
\newcommand{\Resultant}{\operatorname{Res}}
\newcommand{\Residue}{\operatorname{Residue}} 
\renewcommand{\setminus}{\smallsetminus}
\newcommand{\sgn}{\operatorname{sgn}}
\newcommand{\shafdim}{\operatorname{ShafDim}}
\newcommand{\SL}{\operatorname{SL}}
\newcommand{\Span}{\operatorname{Span}}
\newcommand{\Spec}{\operatorname{Spec}}
\renewcommand{\ss}{{\textup{ss}}}
\newcommand{\stab}{{\textup{stab}}}
\newcommand{\Stab}{\operatorname{Stab}}
\newcommand{\SemiStable}[1]{\textup{(SS$_{#1}$)}}  
\newcommand{\Stable}[1]{\textup{(St$_{#1}$)}}      
\newcommand{\Support}{\operatorname{Supp}}
\newcommand{\Sym}{\operatorname{Sym}}  
\newcommand{\TableLoopSpacing}{{\vrule height 15pt depth 10pt width 0pt}} 
\newcommand{\tors}{{\textup{tors}}}
\newcommand{\Trace}{\operatorname{Trace}}
\newcommand{\trianglebin}{\mathbin{\triangle}} 
\newcommand{\tr}{{\textup{tr}}} 
\newcommand{\UHP}{{\mathfrak{h}}}    
\newcommand{\val}{\operatorname{val}} 
\newcommand{\wt}{\operatorname{wt}} 
\newcommand{\<}{\langle}
\renewcommand{\>}{\rangle}

\newcommand{\pmodintext}[1]{~\textup{(mod}~#1\textup{)}}
\newcommand{\ds}{\displaystyle}
\newcommand{\longhookrightarrow}{\lhook\joinrel\longrightarrow}
\newcommand{\longonto}{\relbar\joinrel\twoheadrightarrow}
\newcommand{\SmallMatrix}[1]{%
  \left(\begin{smallmatrix} #1 \end{smallmatrix}\right)}


\begin{abstract}
A $\textit{portrait}$ $\mathcal{P}$ on $\mathbb{P}^N$ is a pair of finite point sets $Y\subseteq{X}\subset\mathbb{P}^N$, a map $Y\to X$, and an assignment of weights to the points in $Y$. We construct a parameter space $\operatorname{End}_d^N[\mathcal{P}]$ whose points correspond to degree $d$ endomorphisms $f:\mathbb{P}^N\to\mathbb{P}^N$ such that $f:Y\to{X}$ is as specified by a portrait $\mathcal{P}$, and prove the existence of the GIT quotient moduli space $\mathcal{M}_d^N[\mathcal{P}]:=\operatorname{End}_d^N//\operatorname{SL}_{N+1}$ under the $\operatorname{SL}_{N+1}$-action $(f,Y,X)^\phi=\bigl(\phi^{-1}\circ{f}\circ\phi,\phi^{-1}(Y),\phi^{-1}(X)\bigr)$ relative to an appropriately chosen line bundle.  We also investigate the geometry of $\mathcal{M}_d^N[\mathcal{P}]$ and give two arithmetic applications.
\end{abstract}

\maketitle

\setcounter{tocdepth}{1}  

\tableofcontents

\section{Introduction}
\label{section:introduction}

The primary goal of this paper is the construction of moduli spaces
for dynamical systems that come equipped with a collection of points
having specified (partial) orbits and weights. We start with the
space $\End_d^N$ of degree~$d$ endomorphisms $f:\PP^N\to\PP^N$ and the
natural conjugation action of $\f\in\Aut(\PP^N)\cong\PGL_{N+1}$
on~$\End_d^N$ given by $f^\f:=\f^{-1}\circ f\circ\f$. The maps~$f$
and~$f^\f$ have the same dynamics, so the quotient space
$\Moduli_d^N:=\End_d^N\GITQuot\PGL_{N+1}$ is the moduli space of
degree~$d$ dynamical systems on~$\PP^N$. It is well-known
that~$\Moduli_d^N$ exists as a GIT quotient space;
see~\cite{MR2741188,milnor:quadraticmaps,MR2567424,silverman:modulirationalmaps}.

We now add a \emph{portrait structure}, which is a natural dynamical notion of level
structure, to a rational map~$f$. Roughly speaking, we
aim to classify triples~$(f,Y,X)$, where $Y\subset X\subset\PP^N$ are
finite sets of points such that $f(Y)\subset X$ and such that the map
$f:Y\to{X}$ has a specified dynamical structure.  Our goal is to construct a
moduli space that classifies the set of such triples, modulo the
action of~$\f\in\PGL_{N+1}$ via
$(f,Y,X)^\f:=\bigl(f^\f,\f^{-1}(X),\f^{-1}(Y)\bigr)$.

For example, we might take $Y=X$ and require $f:Y=X\to X$ to be the
identity map, in which case we are classifying maps together with a
collection of fixed points. As a second example with $Y=X$, we might
require that $f:Y=X\to{X}$ be a cyclic permutation, so we are
classifying maps together with a cycle of a given period. More
generally, we may further assign weights to the points in~$Y$ and
require that the multiplicity of~$f$ exceed the specified
multiplicity. In this way, for example, we obtain a moduli space for
maps on~$\PP^1$ whose critical points have a specified orbit
structure, a topic that has been much studied due to its importance in
complex dynamics.  Leaving formal definitions for later, we briefly
state some of our main results.

\begin{theorem}
\label{theorem:summaryofmainresults}
Let $N\ge1$ and $d\ge2$, and let~$\Pcal$ be a portrait. Then with
appropriate choices of structure sheaves, the following are
true\textup:
\begin{parts}
  \Part{(a)}
  There is a $\ZZ$-scheme $\Moduli_d^N[\Pcal]$ that classifies
  equivalence classes of dynamical systems on~$\PP^N$ with marked
  points that model~$\Pcal$; see
  Theorem~$\ref{theorem:MNdwithportrait}$.
  \Part{(b)}
  If either~$\Pcal$ is unweighted or $N=1$, and if the variety
  $\Moduli_d^N[\Pcal]\otimes_\ZZ\QQ$ is not empty, then it has the
  expected dimension; see Theorems~$\ref{theorem:UnweightedNonempty}$
  and~$\ref{thm:dimMd1}$.
\end{parts}
\end{theorem}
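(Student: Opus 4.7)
The plan is to address the two parts separately: part~(a) by explicit construction of a parameter space followed by a GIT quotient, and part~(b) by combining an expected-dimension count with a transversality verification.

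For part~(a), I would first build $\End_d^N[\Pcal]$ as a closed subscheme of $\End_d^N \times (\PP^N)^{|X|}$, where the additional factors record the marked points of $X$. Each $y \in Y$ contributes the closed incidence condition $f(P_y) = P_{\sigma(y)}$, where $\sigma\colon Y \to X$ is the map specified by $\Pcal$, and each weight $w$ at a point $y \in Y$ contributes the closed condition that $f$ has local multiplicity at least $w$ at $P_y$, expressible as the simultaneous vanishing of appropriate derivatives in affine charts. The diagonal $\SL_{N+1}$-action $(f,X,Y)^\phi = (f^\phi,\phi^{-1}(X),\phi^{-1}(Y))$ extends the usual conjugation action on $\End_d^N$. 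To form the GIT quotient, I would linearize using the standard ample bundle on $\End_d^N$ (as in the construction of $\Moduli_d^N$) twisted by a sufficiently positive combination of the $\Ocal(1)$'s on the $\PP^N$ factors; for an appropriate choice of weights, the marked points rigidify $f$ enough that a nonempty stable locus exists, producing $\Moduli_d^N[\Pcal]$ as a separated scheme over $\ZZ$.

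For part~(b), the expected dimension equals $\dim \Moduli_d^N + N|X|$ minus the codimensions imposed by the portrait's incidence and weight conditions. To show that this is attained whenever $\Moduli_d^N[\Pcal]\otimes\QQ$ is nonempty, I would verify that the defining equations of $\End_d^N[\Pcal]$ form a regular sequence at a generic point. In the unweighted case, each condition $f(P_y) = P_{\sigma(y)}$ has codimension $N$, and transversality reduces to a Jacobian computation — varying coefficients of $f$ moves $f(P_y)$ surjectively onto a neighborhood of the target. In the $N=1$ case with weights, the multiplicity conditions translate to the vanishing of successive derivatives at specified points, and one can exploit the explicit polynomial-pair description of $\Rat_d$ (for example through resultants or discriminants) to verify transversality one weighted point at a time.

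The main obstacle is establishing (semi)stability in part~(a) uniformly across portraits: the classical stability criteria for $\End_d^N$ must be combined with the rigidifying effect of the marked points, and the linearization has to be chosen so that the semistable locus is nonempty for every $\Pcal$ of interest. A secondary difficulty in part~(b) is that for particularly degenerate portraits, the incidence and multiplicity conditions could in principle fail to intersect transversally; ruling this out, even under the nonemptiness hypothesis, requires careful local analysis, and in the $N=1$ weighted case relies on the special geometry of critical points of maps on $\PP^1$.
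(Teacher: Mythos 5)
Your overall architecture for part~(a) --- cut out $\End_d^N[\Pcal]$ inside $\End_d^N\times(\PP^N)^{\#\Vcal}$ by incidence and multiplicity conditions, then take a GIT quotient --- matches the paper's, but the stability step as you describe it would not go through. Two issues. First, the direction of the weighting is the opposite of what you suggest: the paper's numerical criterion (Corollary~\ref{corollary:dgenm1z}) requires the weight $m_0$ on the $\End_d^N$ factor to dominate the total weight on the point factors, $m_0>m_\Sigma/(d-1)$; making the $\Ocal(1)$'s on the $\PP^N$ factors ``sufficiently positive'' pushes you toward instability, since configurations of marked points degenerating onto linear subspaces are exactly what destabilizes (and indeed Proposition~\ref{proposition:stableEnd1P1nz}(c) exhibits $(f,P)$ with $f(P)=P$ that is \emph{not} stable for $\Ocal(1,1)$ on $\End_2^1\times\PP^1$). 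Second, and more importantly, the paper does not linearize on all $\#\Vcal$ factors at once: it puts weight only on the factors indexed by $\Vcal\setminus\Vcal^\circ$ (the vertices with no out-arrow), proves that the projection $\Pi_\Pcal:\END_d^N[\Pcal]\to\End_d^N\times(\PP^N)^{\Vcal\setminus\Vcal^\circ}$ is a \emph{finite} morphism (Theorem~\ref{theorem:EndENDpt}(a)), and then pulls back stability along this affine equivariant map via \cite[Proposition~1.18]{mumford:geometricinvarianttheory}. Without some such device you would need a direct stability analysis of the constrained locus, which your proposal does not supply; ``the marked points rigidify $f$'' is not the issue, since $\Aut(f)$ is already finite for every $f\in\End_d^N$.

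For part~(b) the unweighted Jacobian argument is essentially right in spirit but glosses over the one place where transversality genuinely can fail: a periodic cycle $v_1\to\cdots\to v_n\to v_1$ has no free endpoint to absorb the constraints, and the differential criterion degenerates exactly when a multiplier of the cycle is an $n$th root of unity. The paper handles this by showing the locus of maps with too few points of each preperiodic type has codimension one (Lemmas~\ref{lemma:NumPrepPts} and~\ref{lemma:codim}) and then computing fiber dimensions of $\End_d^N[\Pcal]\to\End_d^N$; you would need an equivalent input. The weighted $N=1$ case is where your proposal has a genuine gap: the required transversality of the critical orbit relations is precisely Thurston rigidity (via \cite{MR1251582} and \cite{arxiv1702.02582}), a theorem proved by Teichm\"uller-theoretic methods, and it \emph{fails} on the flexible Latt\`es locus --- which is why Theorem~\ref{thm:dimMd1} computes the dimension of $\Moduli_d^1[\Pcal]^{\text{nL}}$ rather than of $\Moduli_d^1[\Pcal]$ itself. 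An elementary resultant or discriminant computation ``one weighted point at a time'' cannot establish this, and the dimension formula you would be proving is false as stated without excluding the Latt\`es models.
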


We refer the reader to Section~\ref{section:abstractportraits} for the
formal definition of abstract portraits and to
Section~\ref{section:ModelsOfPortraits} for the description of the
moduli problem for an endomorphism of~$\PP^N$ to model an weighted
portrait.

It often happens that a portrait has non-trivial automorphisms. For
example, a collection of fixed points may be arbitrarily permuted,
while an $n$-cycle may be cyclically permuted.  In the setting of
Theorem~\ref{theorem:summaryofmainresults}, for
$\Acal\subseteq\Aut(\Pcal)$, we construct more general moduli
spaces~$\Moduli_d^N[\Pcal|\Acal]$ that classify dynamical systems with
marked points that model~$\Pcal$, modulo the action of~$\Acal$. For
example, if~$\Pcal$ is a single $n$-cycle and~$\Acal\cong\ZZ/n\ZZ$ is
the group of cyclic permutations, then~$\Moduli_d^N[\Pcal]$ classifies
maps~$f$ with~$n$ marked points forming an~$n$-cycle,
while~$\Moduli_d^N[\Pcal|\Acal]$ classifies maps~$f$ together with an
$n$-cycle of points, where the points in the $n$-cycle are
indistinguishable.\footnote{For those who are familiar with elliptic
  modular curves, we can make an analogy with the curves~$X_1(n)$
  and~$X_0(n)$. The former classifies pairs~$(E,P)$ consisting of an
  elliptic curve and a point of exact order~$n$, while the latter
  classifies pairs~$(E,C)$ consisting of an elliptic curve and a
  cyclic subgroup of order~$n$.  Then~$X_0(n)$ is the quotient
  of~$X_1(n)$ via the natural action of~$(\ZZ/n\ZZ)^\times$ on~$X_1(n)$.}

It is a truth universally acknowledged that algebro-geometric objects
in possession of an underlying structure must be in want of a moduli
space.\footnote{\dots with apologies to J.A.} This provides our
primary justification for constructing and
studying~$\Moduli_d^N[\Pcal]$, but we also briefly describe two
arithmetic applications. The first concerns the dynamical uniform
boundedness conjecture, whose exact statement is given in
Section~\ref{section:uniformbdedness}.  We prove that this conjecture
is equivalent to a statement about algebraic points
on~$\Moduli_d^N[\Pcal]$; see Theorem~\ref{theorem:UBCifffMBC}.  To do
this, we must first extend a field-of-moduli versus
field-of-definition result from an earlier paper~\cite{fomfodPN2018};
see Section~\ref{section:fomfoddynportdegbd}.  The second application
concerns good reduction of dynamical systems with portrait structure.
In Section~\ref{section:goodreduction} we briefly discuss dynamical
good reduction and its relationship to integral points on the moduli
space~$\Moduli_d^N[\Pcal]$; we refer the reader
to~\cite{arxiv1703.00823} for a more extensive discussion of this last
topic.

We close this introduction with a summary of the contents of our
paper.  Section~\ref{section:earlier} surveys the literature on
dynamical moduli spaces. Section~\ref{section:abstractportraits}
describes the abstract theory of portraits, which are ``pictures''
that are used to classify ``maps with orbits.''
Sections~\ref{section:GITsummary} and~\ref{section:notationEnddN}
briefly recall notation and properties, the former from geometric
invariant theory, the latter for the space~$\End_d^N$.  In
Section~\ref{section:ModelsOfPortraits} we define models of portraits
and the associated parameter and moduli problems.
Section~\ref{section:EnddNP} is devoted to constructing parameter
spaces~$\End_d^N[\Pcal]$ for maps with portraits, first for unweighted
portraits, and then for weighted portraits. We next turn in
Section~\ref{section:SLN1stableEndNPNnz} to GIT and show
that~$\End_d^N\times(\PP^N)^n$ is $\SL_{N+1}$-stable relative to a
suitably chosen line bundle. We use this result and a covering lemma
in Section~\ref{section:stabilityend} to construct the moduli space
space~$\Moduli_d^N[\Pcal]$ classifying $\SL_{N+1}$-isomorphism classes
of maps with portraits, and more generally we construct moduli
spaces~$\Moduli_d^N[\Pcal|\Acal]$ obtained by taking the quotient by a
group~$\Acal$ of automorphisms of~$\Pcal$.
Section~\ref{section:P1critmarkedc} contains a more precise
description of~$\Moduli_d^1[\Pcal]$ in the case that~$\Pcal$ is a
complete preperiodic marking of the critical points.  We then use
Section~\ref{section:Md1Pexamples} to describe a number of explicit
examples of~$\Moduli_d^1[\Pcal]$, and more
generally~$\Moduli_d^1[\Pcal|\Acal]$, in both the unweighted and
weighted cases.  Returning to the general theory, we note that there
may be geometric or topological reasons that
force~$\Moduli_d^N[\Pcal]=\emptyset$. Assuming
that~$\Moduli_d^N[\Pcal]$ is non-empty, we compute its dimension in
Section~\ref{section:dimMdNP} for all~$N$ and unweighted portraits,
and in Section~\ref{section:dimMd1P} for~$N=1$ and weighted portraits.
Sections~\ref{section:fomfoddynportdegbd},~\ref{section:uniformbdedness},
and~\ref{section:goodreduction} give arithmetic applications, as
discussed earlier.  We conclude in Section~\ref{section:dynportraits}
with a discussion of multiplier systems. These are used to construct
regular maps from~$\Moduli_d^N[\Pcal]$ to affine space whose images
are constrained by Ueda's fixed point multiplier relations.

\section{Earlier Results}
\label{section:earlier}
In this section we briefly summarize some of the work that has been
done on moduli spaces for dynamical systems and relate it to the
present work. Our discussion provides a path into the existing
literature, but is far from exhaustive.

Various general constructions of parameter and moduli spaces for
dynamical systems on projective space have been given by a variety of
people, including:
\begin{itemize}
  \setlength{\itemsep}{0pt}
\item
  Degree~$2$ maps on~$\PP^1$:
  DeMarco~\cite{arxiv0412438},
  Milnor~\cite{milnor:quadraticmaps},
  Rees~\cite{MR1149864,MR1317518}.
\item
  Degree~$3$ maps on~$\PP^1$:
    Hutz--Tepper~\cite{MR3136590},
    West~\cite{arxiv1408.3247,MR3427313}.
\item
  Degree~$d$ maps on~$\PP^1$:
  Silverman~\cite{silverman:modulirationalmaps}.
\item
  Degree~$d$ maps on~$\PP^N$:
  Levy~\cite{MR2741188},
  Petsche--Szpiro--Tepper~\cite{MR2567424}.
\end{itemize}

Turning now to dynamical systems with marked points, there is a large
literature studying so-called \emph{dynatomic curves}, which are
1-dimensional moduli spaces whose points classify pairs consisting of
a map in a 1-parameter family and a point of given period or
preperiod. The classical example is the dynatomic
curve~$X_1^{\textup{dyn}}(n)$ whose points classify pairs $(c,\a)$
such that~$\a$ is a point of exact (or formal) period~$n$ for the
map~$x^2+c$, or more generally~$x^d+c$.  (See
Example~\ref{example:dynatomiccurve} for further information about
dynatomic curves and their description as portrait moduli spaces.)
The following papers are among those that investigate~$X_1^{\textup{dyn}}(n)$:
  Bousch~\cite{bousch:thesis},
  Buff--Epstein--Koch~\cite{buffepsteinkoch2018},
  Buff--Lei~\cite{MR3289906},
  Douady--Hubbard~\cite{MR762431,MR812271},
  Doyle~\cite{doyle17dynmod},
  Doyle et al.\ \cite{arxiv1703.04172},
  Doyle--Poonen~\cite{arxiv1711.04233},
  Gao~\cite{MR3460633},
  Gao--Ou~\cite{MR3202010},
  Krumm~\cite {arxiv1805.11152,arxiv1707.02501},
  Lau--Schleicher~\cite{lauschleicher:irreducibility},
  Morton~\cite {morton:dynamicalmoduli}.
These papers study topics such as smoothness, irreducibility,
genus, and gonality of~$X_1^{\textup{dyn}}(n)$, as well as 
reduction mod~$p$ and specialization properties.

There are also many papers that study spaces of maps on~$\AA^1$
or~$\PP^1$ with the property that some or all of their critical points
match a specified preperiodic portrait. A fundamental result in this
area is Thurston's rigidity theorem,
see~\cite{MR1251582,arxiv1702.02582}, which we will make crucial use
of in our investigations. Among other research in this area, we
mention work of Bielefeld--Fisher--Hubbard~\cite{MR1149891} and
Poirier~\cite {MR2496235} that describes all critical point portraits
that can arise from polynomial maps on~$\AA^1$, a paper of Cordwell et
al.\ \cite{MR3323420} classifying rational functions on~$\PP^1$ whose
critical points are all fixed points, and papers of
Faber--Granville~\cite{MR2863906},
Ghioca--Nguyen--Tucker~\cite{MR3349337}, and
Ingram--Silverman~\cite{MR2475968} that describe preperiodic portraits
that occur under reduction modulo primes.

The literature on higher dimensional moduli spaces classifying
dynamical systems with marked points is smaller.  Probably closest in
spirit to the present paper is recent work of Arfeux~\cite{MR3645509},
in which he constructs a moduli space for triples $(f,Y,Z)$, where
$f:\PP^1\to\PP^1$ is a rational map of degree~$d$, and where
$Y,Z\subset\PP^1$ specify marked sets of points such that
$Y=f^{-1}(Z)$ and such that~$Z$ contains the critical values of~$f$.
Arfeux defines an abstract portrait to be the 4-tuple
\text{$(Y,Z,Y\to{Z},\deg:Y\to\NN)$}, so more-or-less the same as our
portrait data $(\Vcal^\circ,\Vcal,\F,\e)$. Our construction, in
addition to being somewhat more general for~$\PP^1$, also deals with
portrait moduli spaces for dynamical systems on~$\PP^N$. We note that
our work is primarily algebraic in nature, while Arfeux's work has a
more geometric flavor.  The two papers thus provide complementary
approaches, both of which are useful.

\section{The Category of Abstract Portraits}
\label{section:abstractportraits}

In this section we describe an abstract theory of portraits, which are
the ``pictures'' that we use to classify ``maps with orbits.''

\begin{definition}
A \emph{portrait} is a $4$-tuple
\[
  \Pcal = (\Vcal^o,\Vcal,\Phi,\e)
\]
such that:
\begin{parts}
  \Part{\textbullet}
  $\Vcal$ is a finite set.
  \Part{\textbullet}
  $\Vcal^\circ\subseteq\Vcal$ is a subset.
  \Part{\textbullet}
  $\Phi:\Vcal^\circ\to\Vcal$ is a function.
  \Part{\textbullet}
  $\e:\Vcal^\circ\to\ZZ_{\ge1}$ is a (multiplicity or weight)
  function.
\end{parts}
A portrait is \emph{unweighted} if $\e(w)=1$ for every
$w\in\Vcal^\circ$, in which case we
write~$\Pcal=(\Vcal^\circ,\Vcal,\Phi)$.  We say that~$\Pcal$ is
\emph{preperiodic} if $\Vcal=\Vcal^\circ$, or equivalently,
since~$\Vcal$ is finite, if every element of~$\Vcal^\circ$ has a
well-defined closed forward orbit under the map~$\Phi$. In this case
we may omit~$\Vcal^\circ$ from the notation and write
simply~$(\Vcal,\Phi,\e)$, or~$(\Vcal,\Phi)$.
\end{definition}

\begin{definition}
\label{definition:critorbitportrait}
Let $\Pcal = (\Vcal^o,\Vcal,\Phi,\e)$ be a portrait. The \emph{set of
critical points of~$\Pcal$} is
\[
  \Crit(\Pcal):=\bigl\{v\in\Vcal^\circ:\e(v)\ge2\bigr\}.
\]
The (\emph{forward}) \emph{orbit} of a point~$v\in\Vcal^\circ$ is the set
\[
  \Orbit_\F(v) := \bigl\{\F^n(v) : \text{$n \ge 0$ and $\F^{n-1}(v)\in\Vcal^\circ$} \bigr\},
\]
where we abuse notation and let $\Phi^0(v) = v$ for any $v \in \Vcal$.
In other words, $\Orbit_\F(v)$ is the set of points obtained by 
starting at~$v$ and repeatedly applying~$\F$ until reaching a point where~$\F$
is not defined.
\end{definition}

\begin{definition}
Let $\Pcal=(\Vcal^\circ,\Vcal,\Phi_\Vcal,\e_\Vcal)$. A
\emph{subportrait} of~$\Pcal$ is a portrait
$(\Wcal^\circ,\Wcal,\Phi_\Wcal,\e_\Wcal)$ satisfying
\[
  \Wcal\subseteq\Vcal,\quad
  \Wcal^\circ\subseteq\Vcal^\circ,\quad
  \Phi_\Wcal = \Phi_\Vcal|_{\Wcal^\circ},\quad
  \e_\Wcal \le \e_\Vcal|_{\Wcal^\circ}.
\]
\end{definition}
  
\begin{definition}
\label{definition:portraitmorphism}
Let~$\Pcal_1=(\Vcal_1^\circ,\Vcal_1,\Phi_1,\e_1)$
and~$\Pcal_2=(\Vcal_2^\circ,\Vcal_2,\Phi_2,\e_2)$ be portraits.  A
\emph{portrait morphism} from~$\Pcal_1$ to~$\Pcal_2$ is an injective
set map
\[
  \a : \Vcal_1 \longhookrightarrow \Vcal_2
\]
satisfying:
\begin{parts}
  \Part{\textbullet}
  $\a(\Vcal_1^\circ) \subseteq \Vcal_2^\circ$.
  \Part{\textbullet}
  $\a\circ\Phi_1(v) = \Phi_2\circ\a(v)$ for all $v\in\Vcal_1^\circ$.
  \Part{\textbullet}
  $\e_2\circ\a(v) \ge \e_1(v)$ for all $v\in\Vcal_1^\circ$.
\end{parts}
We say $\a$ is a \emph{portrait isomorphism} if it has a two-sided inverse, or equivalently,
if it satisfies
\[
  \a(\Vcal_1) = \Vcal_2,\quad
  \a(\Vcal_1^\circ) = \Vcal_2^\circ,\quad
  \e_2\circ\a(v) = \e_1(v)\quad\text{for all $v\in\Vcal_1^\circ$.}
\]
We write~$\Hom(\Pcal_1,\Pcal_2)$ for the set of portrait morphisms
from~$\Pcal_1$ to~$\Pcal_2$, and~$\Aut(\Pcal)$ for the group of portrait
isomorphisms from a portrait~$\Pcal$ to itself. We note that~$\Aut(\Pcal)$ is
a subgroup of the symmetric group on the set~$\Vcal$ of vertices of~$\Pcal$. 
\par
Note that we have defined a category $\Portrait$ whose objects are
portraits and whose morphisms are portrait morphisms.  Subcategories
include the category of unweighted portraits and the category of
preperiodic portraits.
\end{definition}

Table~\ref{table:someautgps} lists the automorphism groups of some
unweighted preperiodic portraits whose vertex set contains~$4$ points.

\begin{table}[ht]
\[
\begin{array}{|c|c|c|c|c|c|} \hline
  \Pcal & \Aut(\Pcal) \\ \hline\hline
  \xymatrix{ {\bullet}  \ar@(dr,ur)[]_{}} \quad
  \xymatrix{ {\bullet}  \ar@(dr,ur)[]_{}} \quad
  \xymatrix{
     {\bullet} \ar@(dl,dr)[r]_{}   & {\bullet}   \ar@(ur,ul)[l]_{} \\ } & \ZZ/2\ZZ\times\ZZ/2\ZZ \\ \hline
  \xymatrix{ {\bullet} \ar[r] & {\bullet}  \ar@(dr,ur)[]_{} } \quad
  \xymatrix{
     {\bullet} \ar@(dl,dr)[r]_{}   & {\bullet}   \ar@(ur,ul)[l]_{} \\
  }  & \ZZ/2\ZZ \\ \hline
  \xymatrix{ {\bullet}  \ar@(dr,ur)[]_{} } \quad
  \xymatrix{
     {\bullet} \ar[r] &  {\bullet} \ar@(dl,dr)[r]_{}   & {\bullet}   \ar@(ur,ul)[l]_{} \\
  }  & 1 \\ \hline
  \xymatrix{
    {\bullet} \ar[r]_{} & {\bullet} \ar[r]_{} &
     {\bullet} \ar@(dl,dr)[r]_{}   & {\bullet}   \ar@(ur,ul)[l]_{} \\
  }  & 1 \\ \hline
  \xymatrix{
    {\bullet} \ar[r]_{} & 
     {\bullet} \ar@(dl,dr)[r]_{}   & {\bullet}   \ar@(ur,ul)[l]_{} & {\bullet} \ar[l]_{} \\
  }  & \ZZ/2\ZZ \\ \hline
    \xymatrix{ {\bullet}  \ar[r]_{} & {\bullet} \ar[d]_{} \\
    {\bullet} \ar[u]_{} & {\bullet} \ar[l]_{} \\
  } &  \raisebox{-15pt}{$\ZZ/4\ZZ$} \\ \hline
\end{array}
\]
\caption{Examples of Portrait Automorphism Groups}
\label{table:someautgps}
\end{table}

\subsection{Portraits, Graphs, and Pseudoforests}
\label{section:portraitisgraph}  
We have called our objects \emph{portraits} because that is what they
are typically called in dynamical systems, especially in the study of
critical point portraits. But our portraits can also be described
using terminology from graph theory. Specifically, a portrait is a
\emph{finite multi-directed pseudoforest}. To unpack this phrase, we
start with a multi-digraph, which is a directed graph~$(V,E)$ that
comes with a function \text{$\mu:E\to\ZZ_{\ge1}$} that assigns a multiplicity
to each edge. We next require that it be a finite graph, which simply
means that the set of vertices~$V$ is finite. Finally, a directed
graph is a pseudoforest if every vertex has at most one out-arrow,
which is equivalent to the condition that every connected component of
the graph is either a directed tree or a directed graph containing at
most one cycle.

With these definitions, a portrait $\Pcal=(\Vcal^\circ,\Vcal,\Phi,\e)$
can be used to create a finite multi-directed pseudoforest~$(V,E,\mu)$
by taking
\[
  V=\Vcal\quad\text{and}\quad E = \bigl\{ \bigl(v,\Phi(v)\bigr) : v\in\Vcal^\circ \bigr\},
\]
and using the multiplicity function
\[
  \mu\bigl(v,\Phi(v)\bigr) = \e(v)\quad\text{for $v\in\Vcal^\circ$.}
\]
Ignoring the multiplicities, the directed graph is
essentially the function
graph of~$\Phi$ in~$\Vcal^\circ\times\Vcal$.
Conversely, a finite multi-directed pseudoforest~$(V,E,\mu)$ can be
used to define a portrait by taking
\begin{align*}
  \Vcal &= V,\quad
  \Vcal^\circ = \{v\in V : \text{$v$ has an out-arrow} \},\\*
  \F(v) &= \text{vertex at other end of $v$'s out-arrow},\\*
  \e(v) &= \mu(\text{arrow originating at $v$}).
\end{align*}
This gives a bijection between the set of portraits and the set of
finite multi-directed pseudoforests, but the portrait characterization
seems more conducive to dynamical discussions.

\section{A Brief Summary of Geometric Invariant Theory}
\label{section:GITsummary}

We set notation and briefly recall some basic facts from geometric
invariant theory.  For details,
see~\cite{MR2004511,mumford:geometricinvarianttheory,MR2884382}.

Let~$X$ be a scheme, let~$G$ be a reductive group that acts on~$X$,
let~$\Lcal$ be an invertible sheaf on~$X$ with a $G$-linearization,
and let $x \in X$ be a geometric point. We consider the following
properties:

There is an integer $n\ge0$ and a $G$-invariant global section
$s\in\G(X,\Lcal^{\otimes n})$ such that:
\begin{parts}
  \Part{(i)}
  $s(x)\ne0$.
  \Part{(ii)}
  $X_{s}:=\{s\ne0\}$ is affine.
  \Part{(iii)}
  The action of $G$ on $X_s$ is closed, i.e., for every $x'\in X_s$, the orbit $Gx'$
  is a closed subset of~$X_s$.
  \Part{(iv)}
  The stabilizer $\Stab_G(x):=\{\s\in G:\s x=x\}$ is $0$-dimensional.
\end{parts}

\begin{definition}
The set of \emph{semi-stable}, respectively \emph{stable}, points
(relative to the sheaf~$\Lcal$) is the set
\begin{align*}
  X^\ss(\Lcal) &:= \bigl\{ x\in X : \text{(i) and (ii) are true} \bigr\} &&(\textbf{semi-stable}) \\*
  X^\stab(\Lcal) &:= \bigl\{ x\in X : \text{(i), (ii), (iii) and (iv) are true} \bigr\} &&(\textbf{stable}) 
\end{align*}
A point that is not semi-stable is called \emph{unstable}. 
\end{definition}

\begin{remark}
There is some variation in the literature regarding the definition of
stability. Thus
Mumford~\cite[Section~1.4]{mumford:geometricinvarianttheory} defines
$x\in X$ to be \emph{stable} if conditions~(i),~(ii), and~(iii) hold,
while~$x$ is \emph{properly stable} if condition~(iv) is also true.
However, it seems to have become more common to include condition~(iv)
in the definition of stability; see for example the standard
reference~\cite[page~115]{MR2004511}. We further note that the
\emph{Hilbert--Mumford numerical criterion} for
stability~\cite[Theorem~2.1]{mumford:geometricinvarianttheory} gives a
criterion for stable points that includes condition~(iv).  This is not an
entirely empty distinction, since as shown
in~\cite{mumford:geometricinvarianttheory}, a geometric quotient
exists even if~(iv) is violated.  For example, consider the diagonal
action of~$\SL_2$ on~$(\PP^1)^2$.  Then~(i)--(iii) hold on
$(\PP^1)^2\setminus\D$, while no points satisfy~(iv).  The geometric quotient
$((\PP^1)^2\setminus\D)\GITQuot\SL_2$ exists and consists of a single
point.
\end{remark}

We now describe the Hilbert--Mumford numerical criterion;
see~\cite[Theorem~2.1]{mumford:geometricinvarianttheory} for details.
Let $\ell:\GG_m\to\SL_{m+1}$ be a $1$-parameter subgroup, and
diagonalize the action
\[
\ell(\a) = \begin{pmatrix}
  \a^{r_0} \\
  & \a^{r_1} \\
  && \ddots \\
  &&& \a^{r_m} \\
\end{pmatrix},\quad
\text{i.e.,}\quad
\ell(\a)\star\hat x_i=\a^{r_i}\hat x_i,
\]
where $(\hat x_0,\ldots,\hat x_m)$ are coordinates on~$\AA^{m+1}$. The
\emph{numerical invariant} of~$\ell$ at~$x$ is
\begin{equation}
  \label{eqn:muxlge0}
  \mu^\Lcal(x,\ell) := \max\{-r_i : \text{$i$ such that $\hat x_i\ne0$} \}.
\end{equation}

\begin{theorem}[Mumford~\cite{mumford:geometricinvarianttheory}]
\label{theorem:numcrit}
Let $G\subset\SL_{m+1}$ act on~$\PP^m$, 
and let $P\in\PP^m$ be a geometric point. Then
\begin{align*}
P \in (\PP^m)^\ss\bigl(\Ocal_{\PP^m}(1)\bigr)
&\quad\Longleftrightarrow\quad
\mu^{\Ocal_{\PP^m}(1)}(P,\ell)\ge0~\text{for all $\ell:\GG_m\hookrightarrow G$.} \\
P \in (\PP^m)^\stab\bigl(\Ocal_{\PP^m}(1)\bigr)
&\quad\Longleftrightarrow\quad
\mu^{\Ocal_{\PP^m}(1)}(P,\ell)>0~\text{for all $\ell:\GG_m\hookrightarrow G$.} 
\end{align*}
\end{theorem}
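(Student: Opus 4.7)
The plan is to lift the question from $\PP^m$ to the affine cone $\AA^{m+1}$ and reinterpret (semi)stability as a statement about whether the origin lies in the $G$-orbit closure of a lift. Choose $\hat P\in\AA^{m+1}\setminus\{0\}$ lying above $P$. Since $G$-invariant global sections of $\Ocal_{\PP^m}(n)$ correspond to $G$-invariant homogeneous polynomials of degree $n$ on $\AA^{m+1}$, condition~(i) in the definition of semistability amounts to the existence of such a polynomial not vanishing at $\hat P$; by the Nullstellensatz applied to the finitely generated ring of invariants, this is equivalent to $0\notin\overline{G\cdot\hat P}$. Condition~(ii) is automatic here, since the nonvanishing locus of a section of $\Ocal_{\PP^m}(n)$ is the complement of an effective divisor in $\PP^m$, hence affine. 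So the semistable case reduces to
\[
0\notin\overline{G\cdot\hat P}\quad\Longleftrightarrow\quad \mu^{\Ocal(1)}(P,\ell)\ge 0\text{ for every 1-PS }\ell:\GG_m\hookrightarrow G.
\]

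The easy direction is a direct calculation. For $\ell$ diagonalized with weights $r_0,\dots,r_m$ as in the theorem,
\[
\ell(\a)\star\hat P=(\a^{r_0}\hat x_0,\ldots,\a^{r_m}\hat x_m),
\]
so $\lim_{\a\to 0}\ell(\a)\star\hat P=0$ exactly when $r_i>0$ for every $i$ with $\hat x_i\ne0$, i.e.\ when $\mu^{\Ocal(1)}(P,\ell)=\max\{-r_i:\hat x_i\ne 0\}<0$. Thus any 1-PS with negative numerical invariant produces $0\in\overline{G\cdot\hat P}$ and destabilizes $P$, and conversely a 1-PS with $\mu(P,\ell)=0$ produces a nonzero limit point whose orbit closure strictly contains $\hat P$ unless $\ell$ stabilizes $\hat P$ projectively.

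The heart of the argument, and the main obstacle, is the converse to the semistable equivalence: if $0\in\overline{G\cdot\hat P}$, then \emph{some} 1-PS of $G$ realizes this limit. This is Mumford's fundamental lemma, a nontrivial statement whose proof for $G=\SL_{m+1}$ proceeds via the Iwasawa/Iwahori decomposition of a destabilizing sequence $g_n\star\hat P\to 0$: writing $g_n=k_n t_n u_n$ with $k_n$ in a maximal compact subgroup, $t_n$ in a maximal torus, and $u_n$ unipotent, one passes to a subsequence and extracts from the torus factors a 1-PS $\ell$ whose weights match the asymptotic directions of the $t_n$ on the coordinates of $\hat P$. A convex/polytopal argument (Kempf--Rousseau) ensures the resulting $\ell$ lies in $G$ and has $\mu^{\Ocal(1)}(P,\ell)<0$. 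Combined with the easy direction, this gives the semistable equivalence.

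For the stable case, the same machinery handles conditions~(iii) and~(iv) with a little more bookkeeping. By the calculation above, a 1-PS with $\mu(P,\ell)=0$ either produces a limit point in $\overline{G\cdot\hat P}\setminus G\cdot\hat P$ (so~(iii) fails along $X_s$) or fixes $\hat P$ up to scalar, in which case $\ell$ contributes a positive-dimensional piece of $\Stab_G(P)$, violating~(iv). Conversely, if $\mu(P,\ell)>0$ for every nontrivial 1-PS, then no 1-PS degenerates $\hat P$ toward a distinct orbit or fixes it projectively; invoking Mumford's lemma once more (in the form that a non-closed reductive orbit must be degenerated by a 1-PS, and that a positive-dimensional reductive stabilizer contains a 1-PS) gives closed orbit and $0$-dimensional stabilizer simultaneously, completing the equivalence for stable points.
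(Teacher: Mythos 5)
This theorem is not proved in the paper at all: it is quoted from Mumford with a citation and used as a black box throughout (e.g.\ in the proofs of Theorems \ref{theorem:stableEnddNtimesPNnz} and \ref{theorem:EnddNstableAutfinite}), so there is no in-paper argument to compare yours against. Your sketch follows the standard route to the Hilbert--Mumford criterion --- pass to the affine cone, identify semistability of $P$ with $0\notin\overline{G\cdot\hat P}$, check the easy direction by diagonalizing a $1$-parameter subgroup, and invoke the fundamental lemma for the converse --- and the structure is sound, including the bookkeeping for the stable case (a $1$-PS with $\mu=0$ either exhibits a non-closed orbit or lands inside a positive-dimensional stabilizer).

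Two caveats. First, essentially all of the content of the theorem is concentrated in the step you label ``Mumford's fundamental lemma'': that if some sequence $g_n\in G$ drives $g_n\star\hat P$ to $0$ (or to a point outside the orbit), then a single $1$-parameter subgroup of $G$ already does so. You describe the Iwahori/Cartan decomposition and the Kempf--Rousseau convexity argument but do not carry them out, so as written this is a proof outline rather than a proof; the remaining steps are routine by comparison. Second, the equivalence between ``every positive-degree homogeneous $G$-invariant vanishes at $\hat P$'' and ``$0\in\overline{G\cdot\hat P}$'' is not just the Nullstellensatz: you need reductivity of $G$ to separate the disjoint closed invariant sets $\{0\}$ and $\overline{G\cdot\hat P}$ by an invariant (the easy implication, that an invariant is constant on orbit closures and hence vanishes at $\hat P$ if $0$ is a limit, is free; the converse is where the Reynolds operator enters). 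Since the paper's standing hypothesis in Section \ref{section:GITsummary} is that $G$ is reductive, this is available, but it should be invoked explicitly.
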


\section{Notation and Basic Properties of $\End_d^N$}
\label{section:notationEnddN}
In this section we set some standard notation that will be used
throughout this paper, and we recall some of the basic properties of
the space~$\End_d^N$ that parameterizes degree~$d$ endomorphisms
of~$\PP^N$.  For~$N\ge1$ and~$d\ge1$, we recall that there are
$\binom{N+d}{d}$ monomials of degree~$d$ in the
variables~$x_0,\ldots,x_N$. We let
\[
  \dimEnd = \dimEnd(N,d) := (N+1)\binom{N+d}{d}-1,
\]
so~$\dimEnd+1$ is the total number of coefficients that appear
in~$N+1$ generic homogeneous polynomials of degree~$d$.

We let
\[
  \End_d^N := \text{the space of degree $d$ endomorphisms $\PP^N\to\PP^N$}.
\]
We denote elements
\[
  f=[f_0,\ldots,f_N] \in \End_d^N
\]
using homogeneous polynomials $f_i$ of degree~$d$ in the
variables~$x_0,\ldots,x_N$. For each $0\le\rho\le N$, we denote the
coefficients of each~$f_\rho$ by
\begin{equation}
  \label{eqn:frhoxdef}
  f_\rho(\bfx) = \sum_{|\bfe|=d} a_\rho(\bfe)\bfx^\bfe
  := \sum_{\substack{e_0,\ldots,e_N\ge0\\e_0+e_1+\cdots+e_N=d\\}}
  a_\rho(e_0,\ldots,e_n)x_0^{e_0}\cdots x_N^{e_N},
\end{equation}
where the notation is self-explanatory. The coefficients of
the~$f_\rho$ are only defined up to multiplication by a non-zero
scalar, so fixing some ordering for the coefficients, each~$f$
corresponds to a point in~$\PP^\dimEnd$, i.e., there is a natural embedding
\[
  \End_d^N\longhookrightarrow\PP^\dimEnd,
\]
and the~$a_\rho(\bfe)$ are global sections
of~$\Ocal_{\PP^\dimEnd}(1)$. The image of this embedding is the complement of the
vanishing locus of the \emph{Macaulay resultant}~$\Rcal$, which is a homogeneous
polynomial in the $a_\rho(\bfe)$ of degree $(N+1)d^N$.

We define an action of~$\SL_{N+1}$ on~$\End_d^N$ and on~$\PP^N$ as follows:
For  $\f\in\SL_{N+1}$, $f\in\End_d^N$, and~$P\in\PP^N$, we set
\begin{equation}
  \label{eqn:SLactonEndPN}
  \f\star f := \f^{-1}\circ f\circ\f
  \quad\text{and}\quad
  \f\star P := \f^{-1}(P).
\end{equation}
We note that the actions are designed to be compatible with
composition and evaluation,
\[
  \f\star(f^{k}) = (\f\star f)^{k}
  \quad\text{and}\quad
  (\f\star f)(\f\star P) = \f\star\bigl(f(P)\bigr).
\]

For~$f\in\End_d^N$, we write
\[
  \Aut(f) := \{\f\in\SL_{N+1} : \f\star f=f\}
\]
for the stabilizer of~$f$.

The action of~$\SL_{N+1}$ on~$\End_d^N$ extends to an action
on~$\PP^\dimEnd$, and the following classical result says
that~$(\PP^\dimEnd)^\stab\bigl(\Ocal_{\PP^\dimEnd}(1)\bigr)$
contains~$\End_d^N$, i.e., the action of~$\SL_{N+1}$ on~$\End_d^N$ is
stable in the sense of geometric invariant theory.

\begin{theorem}
\label{theorem:EnddNstableAutfinite}
\textup{[Petsche--Szpiro--Tepper, Levy]}
\par\text{Let $N\ge1$ and $d\ge2$.}%
\begin{parts}
\Part{(a)}
For all geometric points $f\in\End_d^N$, the group~$\Aut(f)$ is finite.
\Part{(b)}
The scheme $\End_d^N$ is contained in the $\SL_{N+1}$-stable locus
of~$\PP^\dimEnd$ relative to the invertible
sheaf~$\Ocal_{\PP^\dimEnd}(1)$. The GIT quotient is typically denoted
by $\Moduli_d^N:=\End_d^N\GITQuot\SL_{N+1}$.
\end{parts}
\end{theorem}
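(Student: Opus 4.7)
The plan is to apply the Hilbert--Mumford numerical criterion (Theorem~\ref{theorem:numcrit}) directly: for each geometric point $f\in\End_d^N$ and each nontrivial $1$-parameter subgroup $\ell\hookrightarrow\SL_{N+1}$, I will show $\mu^{\Ocal_{\PP^\dimEnd}(1)}(f,\ell)>0$. This single computation yields part~(b); part~(a) then follows automatically, since any positive-dimensional $\Aut(f)$ would contain a nontrivial $1$-PS $\ell$ with $\ell(\a)\star f=f$, forcing $\mu(f,\ell)=0$ in contradiction with~(b), and $\Aut(f)\subset\SL_{N+1}$ is closed, hence finite once zero-dimensional.

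After conjugating by an element of $\SL_{N+1}$, I would assume $\ell(\a)=\diag(\a^{r_0},\ldots,\a^{r_N})$ with $r_0\le\cdots\le r_N$ and $\sum r_i=0$; nontriviality of $\ell$ forces $r_0<0<r_N$. A short calculation using the definition $\f\star f=\f^{-1}\circ f\circ\f$ shows that the coefficient $a_\rho(\bfe)$ of $\bfx^\bfe$ in $f_\rho$ transforms under $\ell$ by the character $\a^{\langle r,\bfe\rangle-r_\rho}$, whence by~\eqref{eqn:muxlge0},
\[
  \mu^{\Ocal_{\PP^\dimEnd}(1)}(f,\ell) \;=\; \max\bigl\{\,r_\rho-\langle r,\bfe\rangle : a_\rho(\bfe)\neq 0\,\bigr\}.
\]

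The heart of the argument is a filtration. Suppose for contradiction that $\mu(f,\ell)\le 0$, so that every nonzero coefficient of $f$ satisfies $\langle r,\bfe\rangle\ge r_\rho$. For $s\in\RR$, define the coordinate subspace
\[
  L_s \;:=\; \bigl\{[x_0:\cdots:x_N]\in\PP^N : x_i=0\ \text{for all $i$ with $r_i>s$}\bigr\}.
\]
Then $f(L_s)\subseteq L_{ds}$: any monomial $\bfx^\bfe$ supported on $\{i:r_i\le s\}$ has weight $\langle r,\bfe\rangle\le ds$, and combined with $\langle r,\bfe\rangle\ge r_\rho$ this forces $a_\rho(\bfe)=0$ whenever $r_\rho>ds$, so $f_\rho|_{L_s}\equiv 0$. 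Iterating this inclusion gives $f^n(L_s)\subseteq L_{d^n s}$ for every $n\ge 1$.

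The contradiction arises on taking $s=r_0$: the subspace $L_{r_0}$ is nonempty (it contains the minimum-weight standard basis vertex), but since $r_0<0$ and $d\ge 2$ we have $d^n r_0\to-\infty$, so $L_{d^n r_0}=\emptyset$ in $\PP^N$ once $d^n r_0<r_0$. Yet $f^n$ is an honest morphism of $\PP^N$, so $f^n(L_{r_0})$ is nonempty---the desired contradiction. The main obstacle I expect is finding the correct filtration $\{L_s\}$ whose images interact multiplicatively with $d$ in the weight parameter; once this is in hand the rest of the proof is formal, and both parts~(a) and~(b) drop out uniformly from Mumford's numerical criterion.
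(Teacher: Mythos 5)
Your argument is correct, and it is essentially the route the paper takes: the statement is the $n=0$ case of Theorem~\ref{theorem:stableEnddNtimesPNnz} (this is Levy's proof via the numerical criterion), and your weight computation for $\ell\star a_\rho(\bfe)$ matches the one in Section~\ref{section:SLN1stableEndNPNnz}. Two remarks. First, the filtration $L_s$ and the iteration $f^n(L_s)\subseteq L_{d^ns}$ are more machinery than the problem needs: since $r_0<0$ and $d\ge2$ we already have $dr_0<r_0\le r_i$ for every $i$, so $L_{dr_0}=\emptyset$ and a single application of $f$ gives the contradiction. Unwinding that single step, all it says is that the hypothesis $\mu(f,\ell)\le0$ forces $a_\rho(\bfd)=0$ for every $\rho$ (where $\bfd=(d,0,\ldots,0)$), i.e.\ $[1,0,\ldots,0]$ is a common zero of $f_0,\ldots,f_N$; this is exactly the observation~\eqref{eqn:arhoprimedne0z} that drives the proof of Theorem~\ref{theorem:stableEnddNtimesPNnz}. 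The dynamical dressing is pleasant but not doing extra work.

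Second, a small logical wobble in your deduction of (a): a positive-dimensional algebraic group need not contain a nontrivial $1$-parameter subgroup $\GG_m\hookrightarrow G$ — a copy of $\GG_a$ has none — so "positive-dimensional $\Aut(f)$ contains a nontrivial $1$-PS" is not automatic. This is harmless here, because the version of the numerical criterion you are citing (Theorem~\ref{theorem:numcrit}, i.e.\ Mumford's Theorem~2.1) already concludes \emph{proper} stability, whose definition includes condition~(iv) that $\Stab_{\SL_{N+1}}(f)$ is $0$-dimensional; finiteness of $\Aut(f)$ then follows since a $0$-dimensional closed subgroup scheme of $\SL_{N+1}$ over a field is finite. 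So simply cite the criterion for (a) rather than the $1$-PS-in-the-stabilizer argument.
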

\begin{proof}
(a)\enspace Finiteness is proven in
both~\cite{MR2741188} and~\cite{MR2567424}, but the former has an even
stronger result, namely that there is a bound for~$\#\Aut(f)$ that
depends only on~$N$ and~$d$. Levy further proves that for any finite
group~$G\ne1$, the set
\[
  \bigl\{f\in\End_d^N:\text{$\Aut(f)$ has a subgroup that is
    isomorphic to $G$}\bigr\}
\]
is an $\SL_{N+1}$-invariant Zariski closed subscheme of~$\End_d^N$,
hence descends to a Zariski closed subscheme of~$\Moduli_d^N$.
\par\noindent(b)\enspace
Semi-stability of~$\End_d^N$ is immediate from the definition, since
the Macaulay resultant~$\Rcal\in\G\bigl(\PP^\dimEnd,\Ocal_{\PP^\dimEnd}\bigl((N+1)d^N\bigr)\bigr)$
is a global section whose non-vanishing defines~$\End_d^N$, so
$\End_d^N=\{\Rcal\ne0\}$ is an affine variety.
(See~\cite[Theorem~1.8]{MR2884382} for information about the Macaulay resultant.)

Stability of~$\End_d^N$ is more delicate, the first step always being
to prove that~$\Aut(f)$ is finite.  The original proof of stability
for~$\End_d^1$ is in~\cite{silverman:modulirationalmaps}, and there
are two proofs in the literature for~$\End_d^N$.  The first, by
Petsche--Szpiro--Tepper~\cite{MR2567424}, is very direct. They apply
\cite[argument on page~10]{mumford:geometricinvarianttheory} to deduce
that the action of~$\SL_{N+1}$ in~$\End_d^N$ is closed, after which
they cite
\cite[Amplification~1.3 on page~30]{mumford:geometricinvarianttheory}
to deduce that the geometric quotient exists. This last deduction
depends crucially on the fact that $\End_d^N$ is affine, which as noted
earlier follows from the fact that~$\End_d^N$ is the complement of a
hypersurface in $\PP^M$.

The proof by Levy~\cite{MR2741188} uses the numerical criterion
(Theorem~\ref{theorem:numcrit}), so is somewhat more cumbersome, but
it has a number of advantages.  First, as Levy points out, it allows
one to describe more accurately the full semi-stable and stable
loci. Second, as we show in Section~\ref{section:SLN1stableEndNPNnz},
with some additional work Levy's method allows us to deal with
non-affine schemes such as $\End_d^N\times(\PP^N)^n$.
\end{proof}

\section{Models of Portraits and Portrait Moduli Problems}
\label{section:ModelsOfPortraits}
In this section we describe what it means for a map $f\in\End_d^N$ and
a collection of points in~$\PP^N$ to be a model for the
portrait~$\Pcal$. We then define the $\Pcal$-moduli problem to be
that of classifying the set of isomorphism classes of models. The
first order of business is to recall the definition of the
multiplicity of a map~$f$ at a point~$P$, which is required in order
to deal with weighted portraits.  Readers who are interested only in
unweighted portraits may skip this definition.

\begin{definition}
\label{definition:multiplicityPN}
Let $(f,P)\in\End_d^N(k)\times\PP^N(k)$ be a geometric point. The
\emph{multiplicity of~$f$ at~$P$}, denoted~$e_f(P)$, is defined as
follows: Let
\begin{align*}
  \Ocal_P&:=\text{the local ring of~$\PP^N$ at~$P$,} \\
  \gM_P&:=\text{the maximal ideal of~$\Ocal_P$.}
\end{align*}
Write~$f=[f_0,\ldots,f_N]$ using homogeneous polynomials of
degree~$d$, choose an index~$j$ with~$f_j(P)\ne0$,
and let~$I_{f,P}$ be the ideal of~$\Ocal_P$ defined by
\[
  I_{f,P} :=   \left(  \frac{f_0}{f_j}-\frac{f_0}{f_j}(P),\frac{f_1}{f_j}-\frac{f_1}{f_j}(P),\ldots,
  \frac{f_N}{f_j}-\frac{f_N}{f_j}(P)\right) \subseteq \gM_P.
\]
To see that the ideal~$I_{f,P}$ is independent of the choice of~$j$,
we note that if~$k$ is some other index with~$f_k(P)\ne0$, then we
have the identity
\begin{multline*}
  \frac{f_\ell}{f_k}-\frac{f_\ell}{f_k}(P) \\
  = \left[ \left(
    \frac{f_\ell}{f_j}-\frac{f_\ell}{f_j}(P) \right)\cdot
    \frac{f_k}{f_j}(P) - \left( \frac{f_k}{f_j}-\frac{f_k}{f_j}(P)
    \right)\cdot \frac{f_\ell}{f_j}(P) \right] \cdot \frac{f_j}{f_k}
  \cdot \frac{f_j}{f_k}(P).
\end{multline*}
We thus obtain a well-defined quantity
\[
  e_f(P) := \dim_k \Ocal_P/I_{f,P}.
\]
For basic properties of multiplicities, and alternative equivalent
definitions in various settings, see for
example~\cite[page~669]{MR1288523}
or~\cite{arxiv:0801.3643,arxiv:1011.5155}.
\end{definition}
\begin{remark}
In dimension~$1$, i.e., for~$N=1$, the quantity~$e_f(P)$ is also
called the \emph{ramification index of~$f$ at~$P$}.  If~$e_f(P)\ge2$,
then dynamicists say that~$P$ is a \emph{critical point} and
that~$f(P)$ is a \emph{critical value}, while algebraic geometers say
that~$P$ is a \emph{ramification point} and~$f(P)$ is a \emph{branch
  point}.  If $P\ne\infty$ and $f(P)\ne\infty$, then the definition
of~$e_f(P)$ simplifies to
\[
  e_f(P) = \ord_P\bigl( f - f(P) \bigr).
\]
Further, working over an algebraically closed field~$k$ of
characteristic~$0$, we have the standard formulas
\[
  \deg(f) = \sum_{Q\in f^{-1}(P)} e_f(Q)
  \quad\text{and}\quad
  2\deg(f)-2=\sum_{P\in\PP^1(k)} \bigl(e_f(P)-1\bigr).
\]
\end{remark}

\begin{definition}
\label{definition:unwtedmoduliprob}
Let $\Pcal=(\Vcal^\circ,\Vcal,\F,\e)$ be a portrait, and
let~$k$ be an algebraically closed field. A \emph{model for~$\Pcal$
  over~$k$} (\emph{of dimension~$N$ and degree~$d$}) is a
pair~$(f,\gamma)$, where~$f$ and~$\g$ are maps
\begin{equation}
  \label{eqn:modelforP}
  f\in\End_d^N(k)\quad\text{and}\quad
  \g:\Vcal\longhookrightarrow\PP^N(k)
\end{equation}
such that the diagram
\begin{equation}
  \label{eqn:VcircPNk}
  \begin{CD}
    \Vcal^\circ @>\g>> \PP^N(k) \\
    @VV\F V @VV f V \\
    \Vcal @>\g>> \PP^N(k) \\
  \end{CD}
\end{equation}
commutes, and such that the multiplicities of~$f$ satisfy
\begin{equation}
  \label{eqn:efgvgeev}
  e_f\bigl(\g(v)\bigr) \ge \e(v)\quad\text{for all $v\in\Vcal^\circ$.}
\end{equation}
We note that~$\g$ is required to be injective, and also that
if~$\Pcal$ is unweighted, then the multiplicity
condition~\eqref{eqn:efgvgeev} is vacuous. We write
$\Model_d^N[\Pcal]$ for the set of pairs~$(f,\g)$
satisfying~\eqref{eqn:VcircPNk} and~\eqref{eqn:efgvgeev}.

Two models~$(f,\g)$ and~$(f',\g')$ are \emph{equivalent} if there is a
change of variables $\f\in\PGL_{N+1}(k)$ such that
\[
  f' = \f^{-1}\circ f\circ\f
  \quad\text{and}\quad
  \g' = \f^{-1}\circ\g.
\]
The \emph{$(N,d,\Pcal)$-moduli problem} is to classify equivalence
classes of $(N,d)$-models for~$\Pcal$. 
\end{definition}

\begin{lemma}
\label{lemma:portmodmaps}
Let~$\Pcal_1$ and~$\Pcal_2$ be portraits such that
$\Model_d^N(\Pcal_2)\ne\emptyset$. There is a natural injective map
\begin{align*}
  \Hom(\Pcal_1,\Pcal_2) &\longrightarrow
  \Hom\bigl(\Model_d^N(\Pcal_2),\Model_d^N(\Pcal_1)\bigr) \\
  \a &\longmapsto \Bigl( (f,\g) \longmapsto (f,\g\circ\a) \Bigr).
\end{align*}
In other words, if $\a:\Pcal_1\to\Pcal_2$ is a portrait morphism as
described in Definition~$\ref{definition:unwtedmoduliprob}$ and
if~$(f,\g)$ is a model for~$\Pcal_2$, then~$(f,\g\circ\a)$ is a model
for~$\Pcal_1$.
\end{lemma}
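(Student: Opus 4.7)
The plan is to verify two things: first, that the assignment $(f,\g)\mapsto(f,\g\circ\a)$ actually lands in $\Model_d^N(\Pcal_1)$, so the map to $\Hom\bigl(\Model_d^N(\Pcal_2),\Model_d^N(\Pcal_1)\bigr)$ is well-defined; and second, that the assignment $\a\mapsto\bigl((f,\g)\mapsto(f,\g\circ\a)\bigr)$ is injective on $\Hom(\Pcal_1,\Pcal_2)$. Everything is an essentially formal unwinding of Definitions~\ref{definition:portraitmorphism} and~\ref{definition:unwtedmoduliprob}; the hypothesis $\Model_d^N(\Pcal_2)\ne\emptyset$ is used exactly once, to extract a model whose set-map $\g$ provides the cancellation needed for injectivity.

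For well-definedness, fix $(f,\g)\in\Model_d^N(\Pcal_2)$ and a portrait morphism $\a:\Pcal_1\to\Pcal_2$, and set $\g':=\g\circ\a:\Vcal_1\to\PP^N(k)$. Since $\a$ and $\g$ are both injective, so is $\g'$. I would next check that the diagram analogous to~\eqref{eqn:VcircPNk} commutes for $\Pcal_1$: for $v\in\Vcal_1^\circ$, the portrait-morphism condition $\a(\Vcal_1^\circ)\subseteq\Vcal_2^\circ$ puts $\a(v)\in\Vcal_2^\circ$, so
\[
  f\bigl(\g'(v)\bigr)
  = f\bigl(\g(\a(v))\bigr)
  = \g\bigl(\F_2(\a(v))\bigr)
  = \g\bigl(\a(\F_1(v))\bigr)
  = \g'\bigl(\F_1(v)\bigr),
\]
where the second equality uses that $(f,\g)$ is a model for $\Pcal_2$ and the third uses $\a\circ\F_1=\F_2\circ\a$. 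Finally the multiplicity inequality follows from chaining the two inequalities in the definitions:
\[
  e_f\bigl(\g'(v)\bigr)
  = e_f\bigl(\g(\a(v))\bigr)
  \ge \e_2\bigl(\a(v)\bigr)
  \ge \e_1(v),
\]
with the first inequality from $(f,\g)\in\Model_d^N(\Pcal_2)$ applied to $\a(v)\in\Vcal_2^\circ$ and the second from the portrait-morphism condition $\e_2\circ\a\ge\e_1$ on $\Vcal_1^\circ$. Thus $(f,\g\circ\a)\in\Model_d^N(\Pcal_1)$.

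For injectivity, suppose $\a,\a'\in\Hom(\Pcal_1,\Pcal_2)$ induce the same map of model sets. Using the hypothesis that $\Model_d^N(\Pcal_2)$ is non-empty, pick any $(f,\g)\in\Model_d^N(\Pcal_2)$. Then $\g\circ\a=\g\circ\a'$ as maps $\Vcal_1\to\PP^N(k)$, and since $\g$ is injective (being part of a model), cancelling $\g$ gives $\a=\a'$ as set maps $\Vcal_1\to\Vcal_2$, hence as portrait morphisms. There is no genuine obstacle in this proof; the only subtlety worth flagging is the role of the non-emptiness hypothesis, without which injectivity of the assignment could fail vacuously for $\Pcal_2$ that admits no model of dimension $N$ and degree $d$.
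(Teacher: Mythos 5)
Your proof is correct and follows essentially the same route as the paper's: the well-definedness check unwinds the same three conditions (injectivity, commutativity, and the chained weight inequality $e_f\circ\g\circ\a\ge\e_2\circ\a\ge\e_1$), and the injectivity argument likewise uses the non-emptiness hypothesis to produce a model and then cancels the injective map $\g$. No issues.
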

\begin{proof}
Let $\a:\Pcal_1\to\Pcal_2$ be a portrait morphism and let~$(f,\g)$
be a model for~$\Pcal_2$. The fact that~$(f,\g\circ\a)$ is a model
for~$\Pcal_1$ is immediate from the following facts, all of which
come directly from the definitions:
\begin{parts}
\Part{(1)}
$\F(\Vcal_1^\circ)\subseteq\Vcal_2^\circ$.
\Part{(2)}
$\a:\Vcal_1\hookrightarrow\Vcal_2$ and $\g:\Vcal\hookrightarrow\PP^N$ are injective.
\Part{(3)}
The diagram
\[
\begin{CD}
  \Vcal_1^\circ @>\a>> \Vcal_2^\circ @>\g>> \PP^N \\
  @VV\F_1 V @VV\F_2 V @VV f V \\
  \Vcal_1 @>\a>> \Vcal_2 @>\g>> \PP^N \\  
\end{CD}
\]
has commutative squares.
\Part{(4)}
The weights behave properly, as we see from the following calculation:
\begin{align*}
  e_f\circ\g&\ge\e_2 &&\text{since $(f,\g)$ is a model for $\Pcal_2$,} \\*
  e_f\circ\g\circ\a&\ge\e_2\circ\a &&\text{composing with $\a$,} \\
  e_f\circ\g\circ\a&\ge\e_1 &&\text{since $\e_2\circ\a\ge\e_1$
    by definition of portrait}\\*[-1\jot]
    &&&\text{morphism (Definition~\ref{definition:portraitmorphism}).}
\end{align*}
\end{parts}
Hence~$(f,\g\circ\a)$ is a model for~$\Pcal_1$.
\par
Finally, let $\a':\Pcal_1\to\Pcal_2$ be another portrait morphism, and
suppose that
$\bigl(f,\gamma\circ\a\bigr)=\bigl(f,\gamma\circ\a'\bigr)$ for every
model~$(f,\gamma)$ of~$\Pcal_2$.
This means that $\g\circ\a=\g\circ\a'$, and then the assumed
injectivity of~$\g$,~$\a$, and~$\a'$ forces~$\a=\a'$. 
\end{proof}

\section{The Parameter Space $\End_d^N[\Pcal]$ for Models of a Portrait}
\label{section:EnddNP}

Our goal in this section is to construct a universal parameter space
for the set of models of~$\Pcal$.  
We start with a bit of useful notation and a definition.

\begin{definition}
For any finite index set~$\Ncal$ and any scheme~$S$, we write
\[
  S^\Ncal := \prod_{n\in\Ncal} S_n\quad\text{with each $S_n\cong S$}
\]
for the product of~$\#\Ncal$ copies of~$S$, indexed by~$\Ncal$.
For~$n\in\Ncal$, we let
\[
\pi_{S,n} : S^\Ncal\longrightarrow S_n\cong S
\]
denote the projection map.  If the scheme~$S$ is clear from context,
we may omit~$S$ from the notation and write~$\pi_n$.
\end{definition}

\begin{definition}
\label{definition:upsNdP}
Let $\Pcal$ be a portrait.  A \emph{universal parameter space for
  $(N,d)$-models of~$\Pcal$} is a scheme~$E$ with the following
property: Let~$T$ be a scheme, let $f:T\to\End_d^N$ be a morphism, and
let $\g:T\to(\PP^N)^\Vcal$ be a morphism such that for every geometric
point~$t\in T(k)$, the pair~$(f_t,\g_t)$ is a model for~$\Pcal$
over~$k$. Then there is a unique morphism $F:T\to{E}$ such that
$F_t=(f_t,\g_t)$ for all geometric points~$t\in T(k)$.
\end{definition}

The construction of the universal parameter space, which we denote
by~$\End_d^N[\Pcal]$, proceeds in two steps. We start with the case
that~$\Pcal$ is an unweighted portrait. We then
construct~$\End_d^N[\Pcal]$ for general~$\Pcal$ by taking a closed
subscheme of the space for the associated unweighted portrait.

\subsection{The Parameter Space $\End_d^N[\Pcal]$ for Unweighted Portraits}
\label{subsection:unwtEnddNP}

In this section we assume that~$\Pcal$ is an unweighted portrait. We
construct a scheme $\End_d^N[\Pcal]$, which we prove is a universal
parameter space for $(N,d)$-models of~$\Pcal$.  We start with
$\End_d^N$, the parameter space of degree~$d$ endomorphisms
$f:\PP^N\to\PP^N$, as described in
Section~\ref{section:notationEnddN}.  In particular,~$\End_d^N$ is
identified with an affine subscheme of~$\PP^\dimEnd$ by listing the
coefficients of the homogeneous polynomials that define an
endomorphism.

We recall that~$(\PP^N)^\Vcal$ is a product of copies of~$\PP^N$
indexed by~$\Vcal$, and that for each~$v\in\Vcal$, we have a
projection map~$\pi_v:(\PP^N)^\Vcal\to\PP^N$.  For each pair of
vertices~$v,v'\in\Vcal$, we similarly have a bi-projection map
\[
  \pi_v\times\pi_{v'} : (\PP^N)^{\Vcal} \longrightarrow \PP^N\times \PP^N.
\]
We let
\[
 \D\subset\PP^N\times \PP^N
\]
denote the diagonal, and then we define the \emph{complement of the
  big diagonal of $(\PP^N)^\Vcal$} to be the scheme
\[
  (\PP^N)^\Vcal_\D := (\PP^N)^\Vcal \setminus \bigcup_{v,v'\in\Vcal,\,v\ne v'} \pi_{v,v'}^{-1}(\D).
\]
For example, if~$k$ is an algebraically closed field, then
$(\PP^N)^\Vcal_\D(k)$ is isomorphic to the usual complement of
the big diagonal of~$(\PP^N(k))^{\#\Vcal}$, i.e.,
$(\PP^N)^\Vcal_\D(k)$ is the set of $\#\Vcal$-tuples of distinct points
in~$\PP^N(k)$.

We observe that~$\End_d^N$ and~$(\PP^N)^\Vcal_\D$ are naturally
quasi-projective schemes over~$\Spec(\ZZ)$, since they are open
subschemes, respectively, of the schemes~$\PP^\dimEnd$
and~$(\PP^N)^{\#\Vcal}$.  We want to define $\End_d^N[\Pcal]$ to be
the subscheme of $\End_d^N\times(\PP^N)^{\Vcal}_\D$ that consists of
maps and points that are ``models of~$\Pcal$.''

We start with a map~$G$ that we define at the level of geometric
points by
\[
  G:\End_d^N\times(\PP^N)^\Vcal \longrightarrow (\PP^N)^\Vcal \times (\PP^N)^\Vcal,
  \quad
  G(f,\bfP) = \bigl(f(\bfP),\bfP\bigr).
\]
More formally, let
\[
  \bff:\End_d^N\times\PP^N\to\PP^N
\]
be the map giving the natural action of~$\End_d^N$ on~$\PP^N$, i.e.,
at the level of geometric points~$\bff$ sends~$(f,P)\to f(P)$. Then
the map~$G$ is the composition
\begin{align*}
  G:\End_d^N\times(\PP^N)^\Vcal
  &\xrightarrow{\hbox to5em{\hfil$\scriptstyle\operatorname{diag}\times\operatorname{diag}$\hfil}}
  (\End_d^N)^\Vcal\times\bigl((\PP^N)^\Vcal\times(\PP^N)^\Vcal\bigr) \\*
  &\xrightarrow{\hbox to5em{\hfil$\scriptstyle\sim$\hfil}}
  (\End_d^N\times\PP^N)^\Vcal \times(\PP^N)^\Vcal\\*
  &\xrightarrow{\hbox to5em{\hfil$\scriptstyle\bff^\Vcal\times\operatorname{id}$\hfil}}
  (\PP^N)^\Vcal\times(\PP^N)^\Vcal.
\end{align*}

Next, for each pair of vertices~$v,v'\in\Vcal$, we define a map
\begin{equation}
  \label{eqn:Psivvprime}
  \Psi_{v,v'} : \End_d^N\times(\PP^N)^\Vcal_\D \to \PP^N\times\PP^N,
  \quad
  \Psi_{v,v'} = (\pi_v\times\pi_{v'})\circ G
\end{equation}
that first applies~$G$ and then projects onto the indicated factors.
Recalling that~$\D$ denotes the diagonal of~$\PP^N\times\PP^N$, we
define~$\End_d^N[\Pcal]$ to be the scheme
\begin{equation}
  \label{eqn:EnddNPcaldef}
  \End_d^N[\Pcal] := \bigcap_{v\in\Vcal^\circ}  \Psi_{v,\Phi(v)}^{-1}(\D).
\end{equation}

To trace out what this means, let
\[
  \bigl(f,(P_w)_{w\in\Vcal}\bigr) \in \End_d^N(k) \times (\PP^N)^\Vcal_\D(k)
\]
be a geometric point, where we interpret the second coordinate as
being given by an injective map~$P:\Vcal\hookrightarrow\PP^N(k)$.
(The injectivity comes from the fact that we are required to be
in~$(\PP^N)^\Vcal_\D(k)$, which is the complement of the big
diagonal.)  Then
\begin{align*}
  \bigl(f,(P_w)_{w\in\Vcal}\bigr) &\in \End_d^N[\Pcal](k) \\
  &\quad\Longleftrightarrow\quad
  \Psi_{v,\Phi(v)}\bigl(f,(P_w)_{w\in\Vcal}\bigr)\in\D
  \quad\text{for all $v\in\Vcal^\circ$.}
\end{align*}
We compute
\begin{align*}
  \Psi_{v,\Phi(v)}\bigl(f,&(P_w)_{w\in\Vcal}\bigr) \\
  &= (\pi_v\times\pi_{\Phi(v)})\circ G \bigl(f,(P_w)_{w\in\Vcal}\bigr)
  &&\text{definition of $\Psi_{v,v'}$,} \\
  &= (\pi_v\times\pi_{\Phi(v)}) \bigl((f(P_w))_{w\in\Vcal},(P_w)_{w\in\Vcal}\bigr)
  &&\text{definition of $G$,} \\
  &= \bigl( f(P_v), P_{\Phi(v)} \bigr)
  &&\text{definition of $\pi_v$.}
\end{align*}
Hence
\begin{equation}
  \label{eqn:EnddNPdesc}
  \bigl(f,(P_w)_{w\in\Vcal}\bigr) \in \End_d^N[\Pcal](k) 
  \;\Longleftrightarrow\;
  f(P_v) = P_{\Phi(v)}
  \;\text{for all $v\in\Vcal^\circ$,}
\end{equation}
which says exactly that~$(f,P)$ is a model for~$\Pcal$.

\begin{proposition}
\label{proposition:EnddNPunweighted}
Let~$\Pcal$ be an unweighted portrait, and let
\[
  \End_d^N[\Pcal] \subset \End_d^N \times (\PP^N)^\Vcal \subset \PP^\dimEnd \times (\PP^N)^\Vcal
\]
be the scheme described by~\eqref{eqn:EnddNPcaldef}.
\begin{parts}
\Part{(a)}
$\End_d^N[\Pcal]$ is a universal parameter space for models of~$\Pcal$.
\vspace{1\jot}
\Part{(b)}
$\End_d^N[\Pcal]$ is an $\SL_{N+1}$-invariant subvariety of $ \PP^\dimEnd \times (\PP^N)^\Vcal$.
\end{parts}
\end{proposition}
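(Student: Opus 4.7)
My plan is to verify both assertions directly from the scheme-theoretic construction of $\End_d^N[\Pcal]$ in~\eqref{eqn:EnddNPcaldef}, essentially promoting the pointwise calculation already carried out between~\eqref{eqn:EnddNPcaldef} and~\eqref{eqn:EnddNPdesc} to the level of $T$-valued points. For part~(a), I start with a test scheme~$T$ and morphisms $f\colon T\to\End_d^N$ and $\g\colon T\to(\PP^N)^\Vcal$ whose geometric fibers $(f_t,\g_t)$ are models of~$\Pcal$. Assembling these into a product morphism $(f,\g)\colon T\to\End_d^N\times(\PP^N)^\Vcal$, I show it factors uniquely through $\End_d^N[\Pcal]$ in three steps. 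First, the injectivity of each $\g_t$ (part of being a model) places the set-theoretic image into the open subscheme $\End_d^N\times(\PP^N)^\Vcal_\D$, which suffices to factor through this open immersion. Second, for each $v\in\Vcal^\circ$, the pointwise identity $f_t(\g_t(v))=\g_t(\F(v))$ combined with the explicit formula $\Psi_{v,\F(v)}(f,(P_w)_w)=(f(P_v),P_{\F(v)})$ obtained in the text preceding~\eqref{eqn:EnddNPdesc} shows that $\Psi_{v,\F(v)}\circ(f,\g)$ has image in $\D\subset\PP^N\times\PP^N$, so $(f,\g)$ factors through $\Psi_{v,\F(v)}^{-1}(\D)$. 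Third, intersecting these factorizations over all $v\in\Vcal^\circ$ produces the desired morphism $F\colon T\to\End_d^N[\Pcal]$; uniqueness is automatic since $F$ is determined by its composition with the locally closed immersion $\End_d^N[\Pcal]\hookrightarrow\End_d^N\times(\PP^N)^\Vcal$.

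Part~(b) is a brief direct check using the $\SL_{N+1}$-action recalled in~\eqref{eqn:SLactonEndPN}. Using the characterization~\eqref{eqn:EnddNPdesc} of $\End_d^N[\Pcal]$ on geometric points as the locus where $f(P_v)=P_{\F(v)}$ for every $v\in\Vcal^\circ$, I compute for $\f\in\SL_{N+1}$ that
\[
  (\f\star f)\bigl(\f\star P_v\bigr) = \f^{-1}\!\bigl(f(\f(\f^{-1}(P_v)))\bigr) = \f^{-1}(P_{\F(v)}) = \f\star P_{\F(v)},
\]
so the translated configuration $\bigl(\f\star f,\,(\f\star P_w)_{w\in\Vcal}\bigr)$ again satisfies the model condition. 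Since $\f^{-1}$ is an automorphism of $\PP^N$ it preserves distinctness, so the translated tuple of points also lies in $(\PP^N)^\Vcal_\D$. Hence the action carries $\End_d^N[\Pcal]$ into itself, and in fact preserves it setwise because $\f$ is invertible.

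No step of this plan is genuinely difficult. The mildest point of care is in~(a), where the universal property requires an honest morphism of schemes while Definition~\ref{definition:upsNdP} phrases its hypothesis fiberwise; this is handled by observing that the open-immersion step only needs set-theoretic image (automatic for open subschemes), while the closed-subscheme steps reduce to the scheme-theoretic identity of the two morphisms $T\to\PP^N$ corresponding to the two sides of the commuting square~\eqref{eqn:VcircPNk}, which is precisely what a $T$-family of models encodes. Once this identification is in place, the factorization through each closed preimage $\Psi_{v,\F(v)}^{-1}(\D)$ follows from the functoriality of pullbacks, and the three verifications package together cleanly via the scheme-theoretic intersection~\eqref{eqn:EnddNPcaldef}.
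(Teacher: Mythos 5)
Your proposal is correct and follows essentially the same route as the paper: part (a) assembles $(f,\g)$ into a morphism to $\End_d^N\times(\PP^N)^\Vcal$ and uses the pointwise characterization~\eqref{eqn:EnddNPdesc} to factor it through $\End_d^N[\Pcal]$, and part (b) is the same one-line conjugation computation. Your version merely spells out the factorization through the open immersion and the closed preimages $\Psi_{v,\F(v)}^{-1}(\D)$ in more detail than the paper does.
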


\begin{proof}
\par\noindent(a) 
Let~$T$ be a scheme, and let
\[
  f:T\to\End_d^N
  \quad\text{and}\quad
  \g:T\to(\PP^N)^\Vcal
\]
be morphisms such that for every geometric point~$t\in T(k)$, the
pair $\bigl(f(t),\g(t)\bigr))$ is a model for~$\Pcal$ over~$k$. We
need to show that there is a unique morphism $F:T\to\End_d^N[\Pcal]$
such that $F(t)=\bigl(f(t),\g(t)\bigr))$ for all geometric
points~$t\in T(k)$. The given~$f$ and~$\g$ define a morphism
$(f,\g):T\to\End_d^N\times(\PP^N)^\Vcal$, and~\eqref{eqn:EnddNPdesc}
combined with the assumptions on~$f$ and~$\g$ imply that the image of
this morphism lies in the subscheme~$\End_d^N[\Pcal]$.
\par\noindent(b)
We need to verify that~$\End_d^N[\Pcal]$ is an~$\SL_{N+1}$-invariant
subscheme of~$\PP^\dimEnd\times(\PP^N)^\Vcal$, so first we need to describe
the action.  We let~$\f\in\SL_{N+1}$ act on~$f\in\End_d^N$ by
conjugation $f^\f:=\f^{-1}\circ{f}\circ\f$ and we extend this action
to~$\PP^\dimEnd$ as usual. Similarly, we let~$\f\in\SL_{N+1}$ act
on~$(P_v)\in(\PP^N)^\Vcal$ via the diagonal action
$(P_v)^\f:=\bigl(\f^{-1}(P_v)\bigr)$.  We observe
that~$(\PP^N)^\Vcal_\D$ is an~$\SL_{N+1}$-invariant subscheme
of~$(\PP^N)^\Vcal$, since the big diagonal is~$\SL_{N+1}$-invariant.

It suffices to check the~$\SL_{N+1}$-invariance on geometric points,
so we need to show that
\begin{multline*}
  \bigl(f,(P_w)_{w\in\Vcal}\bigr) \in \End_d^N[\Pcal](k) 
  \quad\text{and}\quad \f\in\SL_{N+1}(k)\\
  \quad\Longrightarrow\quad
  \bigl(f^\f,(P_w^\f)_{w\in\Vcal}\bigr) \in \End_d^N[\Pcal](k).
\end{multline*}
Applying the characterization~\eqref{eqn:EnddNPdesc} of geometric
points in~$\End_d^N[\Pcal]$, we need to show that
\[
  f(P_v) = P_{\Phi(v)}
  ~\text{for all $v\in\Vcal^\circ$}
  \quad\Longrightarrow\quad
  f^\f(P^\f_v) = P^\f_{\Phi(v)}
  ~\text{for all $v\in\Vcal^\circ$.}
\]
But the definitions have been set up so that for all $v \in \Vcal^\circ$,
\[
  f^\f(P_v^\f) = (\f^{-1}\circ f\circ \f)\bigl(\f^{-1}(P_v)\bigr)
  = \f^{-1}\bigl(f(P_v)\bigr)
  = \f^{-1}(P_{\Phi(v)})
  = P_{\Phi(v)}^\f.
\]
This completes the verification that~$\End_d^N[\Pcal]$
is~$\SL_{N+1}$-invariant.
\end{proof}

\begin{remark}
\label{remark:EnddNPexplicit}
We describe~$\End_d^N[\Pcal]$ more explicitly.  For convenience, we
let $\Vcal=\{1,2,\ldots,n\}$ with
$\Vcal^\circ=\{1,2,\ldots,m\}$. (Note that if $\Pcal = \emptyset$ is
the empty portrait, then $\End_d^N[\Pcal] = \End_d^N$.)  We start with
the scheme
\[
  (\PP^N)^n := \prod_{i=1}^n \Proj \ZZ[X_0^{(i)},X_1^{(i)},\ldots,X_N^{(i)}].
\]
For each $i\ne j$, the diagonal~$\D_{i,j}$ corresponding to the
condition $P_i=P_j$ is the subscheme described by the equations
\[
  \D_{i,j} := \bigl( X_k^{(j)} X_\ell^{(i)} - X_\ell^{(j)} X_k^{(i)} \bigl)_{0\le k,\ell\le N}.
\]
We let
\[
  (\PP^N)^n_\D = (\PP^N)^n \setminus \bigcup_{1\le i<j\le n} \D_{i,j}
\]
be the complement of the big diagonal.

Similarly, for each $i$ and $j$, the condition $f(P_i)=P_j$ defines a
closed subscheme~$\G_{i,j}$ of $\End_d^N\times(\PP^N)^n_\D$.
Explicitly, if we write $f=[f_0,\ldots,f_N]$, then~$\G_{i,j}$ is the
subscheme described by the equations
\[
\G_{i,j} := \bigl(
  X_k^{(j)}f_\ell(X_0^{(i)},\ldots,X_N^{(i)})
  - X_\ell^{(j)}f_k(X_0^{(i)},\ldots,X_N^{(i)}) \bigr)_{0\le k,\ell\le N}.
\]
Finally, we can describe~$\End_d^N[\Pcal]$ as the intersection
\[
  \End_d^N[\Pcal] := \bigcap_{i=1}^m \G_{i,\Phi(i)}.
\]
\end{remark}

\begin{remark}
\label{remark:whenemptyset}
There are certain natural geometric constraints that~$\Pcal$ must
satisfy in order for~$\End_d^N[\Pcal]$ to be non-empty.  See
Theorem~\ref{theorem:UnweightedNonempty} for unweighted
portraits, and also Proposition~\ref{proposition:whenisMPempty} for
weighted portraits when~$N=1$.
\end{remark}

\begin{remark}
We have constructed $\End_d^N[\Pcal]$ as a subscheme of
$\PP^\dimEnd\times(\PP^N)^\Vcal$. In practice, it is often possible to
specify the portrait~$\Pcal$ using fewer than~$\#\Vcal$ points.  This,
in turn, can be used to give a simpler description
of~$\Moduli_d^N[\Pcal]$, i.e., a description using fewer equations in
a lower-dimensional ambient space. This is the approach taken by the
first author in~\cite{doyle17dynmod}, where dynamical modular curves
are constructed for polynomials~$x^2+c$ with portraits~$\Pcal$
specified by generating sets. However, there are subtleties in this
approach, since disconnectedness of~$\Pcal$ may lead to extraneous
components in the naively constructed moduli space.  We refer the
reader to~\cite[Example~2.16]{doyle17dynmod} for details.
\end{remark}

\subsection{The Parameter Space $\End_d^N[\Pcal]$ for Weighted Portraits}
\label{subsection:wtEnddNP}

In this section we construct a universal parameter
space~$\End_d^N[\Pcal]$, where now~$\Pcal=(\Vcal^\circ,\Vcal,\Phi,\e)$
is allowed to include non-trivial weights.  We write
$\Pcal'=(\Vcal^\circ,\Vcal,\Phi,\e')$ for the associated unweighted
portrait, i.e.,~$\Pcal$ and~$\Pcal'$ have the same vertices and the
same directed edges, but $\e'(v)=1$ for all $v\in\Vcal^\circ$.  In
Section~\ref{subsection:unwtEnddNP} we constructed the
space~$\End_d^N[\Pcal']$.  In this section we show that we can
take~$\End_d^N[\Pcal]$ to be a Zariski closed subscheme
of~$\End_d^N[\Pcal']$.

\begin{theorem}
\label{theorem:EnddNPweighted}
Let~$\Pcal$ be a portrait, let~$\Pcal'$ be the associated unweighted
portrait, and let~$\End_d^N[\Pcal']$ be the scheme described in
Proposition~$\ref{proposition:EnddNPunweighted}$.  There is a Zariski
closed subscheme of~$\End_d^N[\Pcal']$, which we denote
by~$\End_d^N[\Pcal]$, whose geometric points are specified by
\begin{equation}
  \label{eqn:EnddNPefPgeev}
  \End_d^N[\Pcal] := \bigl\{ (f,\bfP) \in \End_d^N[\Pcal'] :
  e_f(P_v) \ge \e(v)~\text{for all $v\in\Vcal^\circ$} \bigr\}.
\end{equation}
The space $\End_d^N[\Pcal]$ has the following properties\textup:
\begin{parts}
\Part{(a)}
$\End_d^N[\Pcal]$ is a universal parameter space for models of~$\Pcal$.
\vspace{1\jot}
\Part{(b)}
$\End_d^N[\Pcal]$ is an $\SL_{N+1}$-invariant subvariety of $\End_d^N[\Pcal']$.
\end{parts}
\end{theorem}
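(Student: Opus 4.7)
My plan is to realize $\End_d^N[\Pcal]$ as the intersection inside $\End_d^N[\Pcal']$ of closed subschemes $Z_v$, one per $v \in \Vcal^\circ$, where each $Z_v$ cuts out the multiplicity condition $e_f(P_v) \ge \e(v)$. The key input I will use is upper semi-continuity of multiplicity in families. Concretely, I form the universal incidence scheme $W = \{(f,P,Q) \in \End_d^N \times \PP^N \times \PP^N : f(P)=Q\}$ and observe that the projection $W \to \End_d^N \times \PP^N$ sending $(f,P,Q)$ to $(f,Q)$ is a finite morphism whose fiber length at $(f_0,Q_0)$ supported at $(f_0,P_0,Q_0)$ is precisely $e_{f_0}(P_0)$. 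Upper semi-continuity of fiber length for finite morphisms---or equivalently, the stratification of $\End_d^N \times \PP^N$ by Fitting ideals of the pushforward of $\Ocal_W$---furnishes a canonical closed subscheme structure on $\{(f,P) : e_f(P) \ge n\}$ for each $n \ge 1$. Pulling back along the projection $\End_d^N \times (\PP^N)^\Vcal \to \End_d^N \times \PP^N$ induced by $\pi_v$ and intersecting with $\End_d^N[\Pcal']$ produces $Z_v$, and I then set $\End_d^N[\Pcal] := \bigcap_{v \in \Vcal^\circ} Z_v$.

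For part (a), suppose we are given $T$ together with morphisms $f : T \to \End_d^N$ and $\gamma : T \to (\PP^N)^\Vcal$ whose geometric fibers model $\Pcal$. In particular they model the unweighted $\Pcal'$, so Proposition~\ref{proposition:EnddNPunweighted}(a) produces a unique morphism $F : T \to \End_d^N[\Pcal']$ realizing the data. I then need to show $F$ factors through $\End_d^N[\Pcal]$. Because the scheme structure on each $Z_v$ comes from a Fitting ideal, it is stable under base change; combined with the hypothesis that $e_{f_t}(\gamma_t(v)) \ge \e(v)$ at every geometric point $t$, this forces the pullback to $T$ of the defining ideal of $Z_v$ to vanish, so $F$ factors through each $Z_v$ and therefore through $\End_d^N[\Pcal]$. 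Uniqueness descends from the unweighted case.

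For part (b), Proposition~\ref{proposition:EnddNPunweighted}(b) already gives $\SL_{N+1}$-invariance of $\End_d^N[\Pcal']$, so it suffices to verify that each $Z_v$ is $\SL_{N+1}$-stable. This follows from conjugation-invariance of multiplicity: for any $\f \in \SL_{N+1}$, the automorphism $\f^{-1} : \PP^N \to \PP^N$ identifies the local ring $\Ocal_P$ with $\Ocal_{\f^{-1}(P)}$ and carries the ideal $I_{f,P}$ to $I_{f^\f, \f^{-1}(P)}$, whence $e_{f^\f}(\f^{-1}(P)) = e_f(P)$. The main obstacle in this plan is nailing down the correct scheme structure on each $Z_v$ so that the universal property of~(a) goes through with no hypothesis on $T$: set-theoretic closedness is immediate, but to handle nonreduced $T$ one must choose the defining ideal carefully---either via Fitting ideals of the pushforward of $\Ocal_W$ localized appropriately around $P$, or via explicit local equations in affine charts expressing that the Taylor expansion of $f$ at $P$, truncated to order less than $\e(v)$, agrees with that of the constant map to $f(P)$.
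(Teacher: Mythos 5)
Your overall architecture coincides with the paper's: for each $v\in\Vcal^\circ$ one puts a closed subscheme structure on the locus $\{(f,P)\in\End_d^N\times\PP^N:e_f(P)\ge\e(v)\}$, pulls it back by $\operatorname{id}\times\pi_v$, intersects with $\End_d^N[\Pcal']$, and then deduces (a) from the unweighted case together with the multiplicity hypothesis and (b) from conjugation-invariance of $e_f(P)$. Your treatments of (a) and (b) match the paper's (and your worry about non-reduced $T$ in (a) goes beyond what the paper itself addresses: the paper's notion of universal parameter space, and its proof, only test geometric points; note also that geometric-point membership only forces the pullback of the defining ideal into the nilradical of $\Ocal_T$, not to zero). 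The real issue is the one step carrying all the content, namely closedness of the multiplicity locus, and there your proposed mechanism does not work as stated.

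The Fitting ideals of the pushforward of $\Ocal_W$ under $(f,P,Q)\mapsto(f,Q)$ live on the \emph{target} and stratify it by the \emph{total} length of the fiber $f^{-1}(Q)$. That morphism is finite flat (miracle flatness: finite, regular source and target, fibers of constant dimension $0$), so the pushforward is locally free of rank $d^N$ and the stratification is trivial; it cannot detect the \emph{local} length $e_f(P)$ at a chosen source point, which is the invariant you need and which lives on the source $(f,P)$-space. Your fallback of writing explicit equations forcing the Taylor expansion of $f$ at $P$ to agree with the constant map to order less than $\e(v)$ is also incorrect once $N\ge2$: that condition says $I_{f,P}\subseteq\gM_P^{\e}$, which is strictly stronger than $\dim_k\Ocal_P/I_{f,P}\ge\e$ (for instance $(x,y^2)\subset k[[x,y]]$ has colength $2$ but is not contained in $\gM^2$), so it cuts out a locus with the wrong geometric points. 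The paper's solution is Lemma~\ref{lemma:IfpcontainsMe}: $e_f(P)\ge\e$ if and only if $\dim_k\Ocal_P/(I_{f,P}+\gM_P^{\e})\ge\e$. Since $R_\bfU/\gM_\bfU^{\e}$ is a free module of finite rank over $\ZZ[\bfa,\bfU]$, this becomes a rank condition on an explicit matrix, hence the vanishing of determinantal minors, hence Zariski closed (Proposition~\ref{proposition:efPgeepsilon}). In Fitting-ideal language, the correct object is the truncation modulo $\gM_P^{\e}$ (a sheaf of relative principal parts), not $\pi_*\Ocal_W$. Without this reduction to a finite-rank quotient, or some substitute for it, your construction of the $Z_v$ is incomplete.
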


\begin{proposition}
\label{proposition:EnddNP1toP2}
Let~$\Pcal_1$ and~$\Pcal_2$ be weighted portraits. There is a natural map
\[
  \Hom(\Pcal_1,\Pcal_2) \longrightarrow
  \Hom\bigl(\End_d^N[\Pcal_2],\End_d^N[\Pcal_1]\bigr) 
\]
induced by the map on models described in Lemma~$\ref{lemma:portmodmaps}$,
i.e., a portrait morphism $\Pcal_1\to\Pcal_2$ induces a morphism
$\End_d^N[\Pcal_2]\to\End_d^N[\Pcal_1]$ of schemes.
\end{proposition}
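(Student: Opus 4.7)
The plan is to build the morphism $F_\alpha\colon\End_d^N[\Pcal_2]\to\End_d^N[\Pcal_1]$ by applying the universal property of~$\End_d^N[\Pcal_1]$ (Definition~\ref{definition:upsNdP}) to canonical data on $T:=\End_d^N[\Pcal_2]$. Since $T\subseteq\End_d^N\times(\PP^N)^{\Vcal_2}$, the two projections furnish morphisms $f\colon T\to\End_d^N$ and $\gamma\colon T\to(\PP^N)^{\Vcal_2}$, and by the universal property of~$\End_d^N[\Pcal_2]$ (Theorem~\ref{theorem:EnddNPweighted}), for every geometric point $t\in T(k)$ the pair $(f_t,\gamma_t)$ is a model of~$\Pcal_2$.

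Next I would use the injection $\alpha\colon\Vcal_1\hookrightarrow\Vcal_2$ to produce the coordinate-projection morphism
\[
  \alpha^*\colon(\PP^N)^{\Vcal_2}\longrightarrow(\PP^N)^{\Vcal_1},\qquad
  (P_w)_{w\in\Vcal_2}\longmapsto(P_{\alpha(v)})_{v\in\Vcal_1},
\]
and set $\gamma_1:=\alpha^*\circ\gamma$. The key claim is that the data $(f,\gamma_1)$ on~$T$ satisfies the hypotheses of Definition~\ref{definition:upsNdP} for~$\End_d^N[\Pcal_1]$. On geometric points this amounts to verifying that $(f_t,\gamma_t\circ\alpha)$ is a model for~$\Pcal_1$, which is exactly Lemma~\ref{lemma:portmodmaps}: commutativity of the model square comes from $\alpha\circ\Phi_1=\Phi_2\circ\alpha$, and the multiplicity inequality $e_{f_t}\bigl(\gamma_t(\alpha(v))\bigr)\ge\e_2(\alpha(v))\ge\e_1(v)$ is built into Definition~\ref{definition:portraitmorphism}. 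Granting this, the universal property yields the desired unique morphism $F_\alpha\colon\End_d^N[\Pcal_2]\to\End_d^N[\Pcal_1]$.

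The one technical point worth flagging, and the only real content beyond bookkeeping, is that $\gamma_1$ must factor through the big-diagonal complement $(\PP^N)^{\Vcal_1}_{\D}$, since the universal property of~$\End_d^N[\Pcal_1]$ is phrased in terms of maps into that complement. This uses both the injectivity of $\gamma_t$ (which holds because $(f_t,\gamma_t)$ is a model of~$\Pcal_2$, so its image sits in $(\PP^N)^{\Vcal_2}_{\D}$) and the injectivity of~$\alpha$ required by Definition~\ref{definition:portraitmorphism}; together they force $\gamma_t\circ\alpha$ to be injective at every geometric point, and so $\gamma_1$ avoids the big diagonal. With this in hand, functoriality of the assignment, namely $F_{\beta\circ\alpha}=F_\alpha\circ F_\beta$ and $F_{\mathrm{id}}=\mathrm{id}$, follows immediately from the uniqueness clause of Definition~\ref{definition:upsNdP}, upgrading $\alpha\mapsto F_\alpha$ to the natural map $\Hom(\Pcal_1,\Pcal_2)\to\Hom\bigl(\End_d^N[\Pcal_2],\End_d^N[\Pcal_1]\bigr)$ claimed in the proposition.
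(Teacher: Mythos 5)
Your proof is correct and follows essentially the same route as the paper: the morphism is the coordinate projection $(f,P)\mapsto(f,P\circ\a)$ induced by~$\a$, the big-diagonal issue is handled by the injectivity of~$\a$ and of~$\g_t$, and the substance is the geometric-point verification of the commutativity and multiplicity conditions from Lemma~\ref{lemma:portmodmaps}. Packaging the scheme-theoretic existence via the universal property of~$\End_d^N[\Pcal_1]$ rather than restricting the ambient projection directly is only a cosmetic difference.
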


We start with the case of a single point.

\begin{proposition}
\label{proposition:efPgeepsilon}
Let $\e\ge0$ be an integer. Then the locus
\begin{equation}
  \label{eqn:fPefPgeepsilon}
  \bigl\{ (f,P) \in \End_d^N \times \PP^N : e_f(P) \ge \e  \bigr\}
\end{equation}
is a Zariski closed subscheme of $\End_d^N\times\PP^N$.  
\end{proposition}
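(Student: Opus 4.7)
The plan is to argue locally on an affine cover of $\End_d^N \times \PP^N$, realizing the locus in question as the vanishing of minors of an explicit matrix of regular functions.

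First, cover $\End_d^N \times \PP^N$ by the affine opens $U_{i,j}$ where $x_i(P) \neq 0$ and $f_j(P) \neq 0$. On $U_{0,0}$ (all other charts are symmetric), introduce affine coordinates $y_\ell = x_\ell/x_0$ for $\ell = 1, \ldots, N$, write $p = (p_1, \ldots, p_N)$ for the coordinates of $P$, and define
\[
  F_\ell(y) := f_\ell(1,y)\cdot f_0(1,p) - f_0(1,y)\cdot f_\ell(1,p),\quad \ell = 1, \ldots, N.
\]
Since $f_0(1,y)$ and $f_0(1,p)$ are units in $\Ocal_P$ on $U_{0,0}$, a direct calculation using the identity displayed in Definition~\ref{definition:multiplicityPN} shows $I_{f,P} = (F_1, \ldots, F_N)$, and the coefficients of the $F_\ell$ are regular functions of $(f, P)$ on $U_{0,0}$.

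Second, invoke the uniform bound $e_f(P) \leq d^N$ valid throughout $\End_d^N \times \PP^N$, which holds because $f$ is a finite morphism of degree $d^N$; this forces $\gM_P^{d^N} \subseteq I_{f,P}$, so for any fixed $n \geq d^N$ we have $e_f(P) = \dim_k \Ocal_P/(I_{f,P} + \gM_P^n)$. Shifting coordinates via $u_\ell := y_\ell - p_\ell$, the space $\Ocal_P/\gM_P^n$ has the canonical basis of monomials $u^\alpha$ with $|\alpha| < n$, of some fixed dimension $D_n$ depending only on $N$ and $n$; the subspace $(I_{f,P} + \gM_P^n)/\gM_P^n$ is spanned by the images of $u^\alpha \cdot F_\ell(u+p)$ for $|\alpha| < n$ and $1 \leq \ell \leq N$, so its dimension equals $\rank M_n$ for an explicit matrix $M_n = M_n(f, P)$ whose entries are regular functions of $(f, P)$. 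Consequently
\[
  \{(f, P) \in U_{0,0} : e_f(P) \geq \e\} = \{(f, P) \in U_{0,0} : \rank M_n \leq D_n - \e\},
\]
which is cut out by the $(D_n - \e + 1)$-minors of $M_n$, hence Zariski closed in $U_{0,0}$.

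The remaining task is to glue these local closed subschemes across the affine cover. Set-theoretic agreement on overlaps is immediate from the coordinate-independence of $e_f(P)$ established in Definition~\ref{definition:multiplicityPN}. The main obstacle I anticipate is promoting this to scheme-theoretic agreement, i.e., showing that the minor ideals on different charts cut out the same scheme structure on $U_{i,j} \cap U_{i',j'}$. This can be handled by a direct change-of-coordinates computation, or, more conceptually, by interpreting the locus as a Fitting-ideal locus of a coherent sheaf whose local presentations are given by the matrices $M_n$; since Fitting ideals are invariant under change of presentation, the scheme structure is then automatically well-defined globally.
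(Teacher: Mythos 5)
Your proposal is correct and follows essentially the same strategy as the paper: pass to affine charts, identify $I_{f,P}$ with the ideal generated by the cross-terms $f_\ell(\bfx)f_j(\bfu)-f_j(\bfx)f_\ell(\bfu)$, truncate by a power of $\gM_P$ so that the quotient becomes a finite free module with a monomial basis, and express the condition $e_f(P)\ge\e$ as a bound on the rank of an explicit matrix of regular functions, hence as the vanishing of its minors. The one genuine divergence is in how the truncation is justified. The paper proves Lemma~\ref{lemma:IfpcontainsMe}, which gives $e_f(P)\ge\dim_k\bigl(\Ocal_P/(I_{f,P}+\gM_P^\e)\bigr)\ge\min\{\e,e_f(P)\}$, and therefore truncates at $\gM_P^\e$ itself; this yields the smallest possible matrix and requires no a priori bound on $e_f(P)$. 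You instead invoke the uniform bound $e_f(P)\le d^N$ (valid since $f$ has topological degree $d^N$) to get $\gM_P^{d^N}\subseteq I_{f,P}$, and truncate at any $n\ge d^N$, so that the truncated dimension computes $e_f(P)$ exactly rather than $\min\{n,e_f(P)\}$. Note that your containment $\gM_P^{d^N}\subseteq I_{f,P}$ still needs the finiteness of $\Ocal_P/I_{f,P}$ and a Nakayama-type argument (the descending chain in the paper's Lemma~\ref{lemma:IfpcontainsMe}(b)), so you do not entirely escape proving that lemma's content; what you buy is a cleaner equality at the cost of a larger matrix. Your attention to scheme-theoretic gluing via Fitting ideals is a point the paper passes over in silence (it works on one chart ``by symmetry''), and your Fitting-ideal remark is the right way to make the global scheme structure canonical.
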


The key to proving Proposition~\ref{proposition:efPgeepsilon} is the
following lemma, which is used to show that the
set~\eqref{eqn:fPefPgeepsilon} is defined by finitely many polynomial
relations.

\begin{lemma}
\label{lemma:IfpcontainsMe}  
With notation as in Definition~\textup{\ref{definition:multiplicityPN}},
we have\textup:
\begin{parts}
\Part{(a)}
$e_f(P)$ is finite.
\Part{(b)}
For all integers $\e\ge0$, we have
\[
  e_f(P) \ge \dim_k\left( \frac{\Ocal_P}{I_{f,P}+\gM_P^\e}\right)
  \ge \min\bigl\{\e,e_f(P)\bigr\}.
\]
\end{parts}
\end{lemma}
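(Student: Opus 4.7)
My plan is to exploit the fact that $I_{f,P}$ is the pullback along $f$ of the maximal ideal at $f(P)$, so that the quotient $\Ocal_P/I_{f,P}$ is the local factor at $P$ of the scheme-theoretic fiber of $f$ over $f(P)$. Since $f$ is a degree $d\ge2$ endomorphism of $\PP^N$, it is a finite morphism, so this fiber is a zero-dimensional closed subscheme of $\PP^N$; consequently $\Ocal_P/I_{f,P}$ is an Artinian local $k$-algebra, which proves part~(a). Equivalently, one can observe that the vanishing locus of $I_{f,P}$ in $\Spec\Ocal_P$ is the single point $P$, so $I_{f,P}$ is $\gM_P$-primary by the Nullstellensatz and Krull's theorem, giving the same conclusion.

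For part~(b), the left-hand inequality is immediate from the inclusion $I_{f,P}\subseteq I_{f,P}+\gM_P^\e$, which induces a surjection $\Ocal_P/I_{f,P}\twoheadrightarrow\Ocal_P/(I_{f,P}+\gM_P^\e)$. For the right-hand inequality I would pass to the Artinian local quotient $A:=\Ocal_P/I_{f,P}$ with maximal ideal $\gn:=\gM_P/I_{f,P}$, in which $\dim_k A=e_f(P)$ and $\Ocal_P/(I_{f,P}+\gM_P^\e)=A/\gn^\e$, and consider the descending chain
\[
A\;\supseteq\;\gn\;\supseteq\;\gn^2\;\supseteq\;\cdots\;\supseteq\;\gn^\e.
\]
The key observation is Nakayama's lemma: if $\gn^i=\gn^{i+1}=\gn\cdot\gn^i$, then $\gn^i=0$. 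So as long as $\gn^i\ne0$, each inclusion $\gn^{i+1}\subsetneq\gn^i$ is strict, and hence each successive quotient contributes at least one to the $k$-dimension.

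I then split into two cases. If $\gn^\e=0$, meaning $\gM_P^\e\subseteq I_{f,P}$, then $A/\gn^\e=A$ has dimension $e_f(P)$, which is at least $\min\{\e,e_f(P)\}$. Otherwise $\gn^\e\ne0$, and then by the Nakayama observation the chain above consists of $\e$ strict inclusions, so the induced chain in $A/\gn^\e$ gives $\dim_k A/\gn^\e\ge\e$; moreover the strict chain $A\supsetneq\gn\supsetneq\cdots\supsetneq\gn^\e\supsetneq0$ in $A$ already forces $e_f(P)\ge\e+1>\e$, so $\min\{\e,e_f(P)\}=\e$ and the bound holds. The only step that requires any care is the verification that $f$ really is finite, which I view as the main (albeit standard) point: this follows because $f\in\End_d^N$ is by definition a morphism $\PP^N\to\PP^N$ with no base points, hence proper and quasi-finite, hence finite, which is exactly what makes $I_{f,P}$ primary and $e_f(P)$ a well-defined finite integer.
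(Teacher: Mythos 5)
Your proposal is correct and takes essentially the same route as the paper: for (a), the finiteness of the morphism $f$ that you invoke is exactly the content of the paper's direct argument that a positive-dimensional component of the fiber through $P$ would have to meet the hypersurface $\{f_j=0\}$ and thus produce indeterminacy points; for (b), both proofs run the descending chain of powers of the maximal ideal in $\Ocal_P/I_{f,P}$ together with Nakayama's lemma, yours organized by a case split on whether $\gM_P^\e$ dies in the quotient, the paper's by introducing the stabilization index $r$ of that chain.
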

\begin{proof}
(a)\enspace Let~$j$ be an index for which $f_j(P) \ne 0$, and let~$V_{f,P,j}$
be the Zariski closure in~$\PP^N$ of the set
\[
  \bigl\{Q\in\PP^N : \text{$f_j(Q)\ne0$ and
    $(f_i/f_j)(Q)=(f_i/f_j)(P)$ for all $0\le i\le N$}\bigr\}.
\]
Then $e_f(P)=\infty$ if and only if~$V_{f,P,j}$ has dimension greater
than~$0$. But if~$\dim V_{f,P,j}\ge1$, then
$V_{f,P,j}\cap\{f_j=0\}\ne\emptyset$, and points in this intersection
are indeterminacy points of~$f$.
\par\noindent(b)\enspace
The first inequality is immediate from the definition of~$e_f(P)$.
To ease notation, let~$M:=\Ocal_P/I_{f,P}$, where we view~$M$ as a
finitely generated~$\Ocal_P$-module.
Let~$r\ge0$ be the largest integer (or $\infty$) such that
\[
  \gM_P^iM\ne\gM_P^{i+1}M\quad\text{for all $0\le i< r$.}
\]
Then we have a descending chain of~$\Ocal_P$-modules,
\begin{equation}
  \label{eqn:OPchain}
  M \supsetneq \gM_P M \supsetneq \gM_P^2 M \supsetneq \gM_P^3 M
  \supsetneq \cdots  \supsetneq \gM_P^r M = \gM_P^{r+1} M,
\end{equation}
and considering the successive quotients, we see that
\[
  e_f(P) := \dim_k M = \sum_{i=0}^\infty \dim_k \left(\frac{\gM_P^i M}{\gM_P^{i+1} M}\right) \ge \sum_{i=0}^{r-1} 1 = r.
\]
This proves that~$r$ is finite, since we know from~(a) that~$e_f(P)$ is finite.

Further, our choice of~$r$ implies that
\text{$\gM_P\cdot\gM_P^rM=\gM_P^rM$}.  Applying Nakayama's lemma to
the finitely generated $\Ocal_P$-module~$\gM_P^rM$, we conclude that
$\gM_P^rM=0$. Hence the chain~\eqref{eqn:OPchain} terminates with~$0$,
and we obtain an equality
\begin{equation}
  \label{eqn:dfPeqsumr1}
  e_f(P) := \dim_k M = \sum_{i=0}^{r-1} \dim_k \left(\frac{\gM_P^i M}{\gM_P^{i+1} M}\right) .
\end{equation}

We use the isomorphisms
\[
  \frac{\Ocal_P}{I_{f,P}+\gM_P^\e}
  \cong \frac{\Ocal_P/I_{f,P}}{(I_{f,P}+\gM_P^\e)/I_{f,P}}
  \cong \frac{\Ocal_P/I_{f,P}}{\gM_P^\e/(I_{f,P}\cap\gM_P^\e)}
  \cong \frac{M}{\gM_P^\e M}
\]
and the chain~\eqref{eqn:OPchain} to compute
\begin{equation}
  \label{eqn:dimkOPIfPMPe}
  \dim_k\left( \frac{\Ocal_P}{I_{f,P}+\gM_P^\e}\right)
  = \dim_k\left( \frac{M}{\gM_P^\e M} \right)
  = \sum_{i=0}^{\e-1} \dim_k \left(\frac{ \gM_P^i M}{\gM_P^{i+1} M} \right).
\end{equation}
If $\e\le r-1$, then every term in the sum is at least~$1$, so we get
a lower bound of~$\e$ as desired.  On the other hand, if $\e\ge r$,
then~\eqref{eqn:dfPeqsumr1} tells us that the sum
in~\eqref{eqn:dimkOPIfPMPe} is equal to~$e_f(P)$.
\end{proof}

\begin{proof}[Proof of Proposition $\ref{proposition:efPgeepsilon}$]
We consider a polynomial ring
\[
  \ZZ[\bfa,\bfx,\bfu]
\]
where $\bfa := \bigl(a_\rho(\bfe)\bigr)$ are the coefficients
of~$f_0,\ldots,f_N$ as described by~\eqref{eqn:frhoxdef}, and
where~$\bfx=[x_0,\ldots,x_N]$ and~$\bfu=[u_0,\ldots,u_N]$ are
independent sets of coordinates on~$\PP^N$. The idea is that~$\bfx$ is
used for the variables in~$f$, while~$\bfu$ is a point at which we
want to compute the multiplicity of~$f$.  We view~$\ZZ[\bfa,\bfx,\bfu]$ as a
trihomogeneous polynomial ring, so
\[
  \Proj \ZZ[\bfa,\bfx,\bfu] \cong \PP_\bfa^\dimEnd\times\PP_\bfx^N\times\PP_\bfu^N,
\]
where the subscripts indicate the homogeneous coordinates on the
three factors.

We fix an integer~$\e\ge0$ and want to describe the set
of~$(\bfa,\bfx,\bfu)$ such that~$f_\bfa(\bfx)$ has multiplicity at
least~$\e$ at~$\bfu$. It suffices to do this on the open
sets~$x_iu_j\ne0$ for all $0\le i,j\le N$, and by symmetry it is
enough to work on the open set~$x_0u_0\ne0$. So we dehomogenize by
setting
\[
  X_i := x_i/x_0 \quad\text{and}\quad U_i := u_i/u_0
\]
and we let
\[
  R := \ZZ[\bfa,\bfX,\bfU] \quad\text{with}\quad
  \Proj \ZZ[\bfa] \times \Spec \ZZ[\bfX,\bfU] \cong \PP_\bfa^\dimEnd \times \AA^N_\bfX \times \AA^N_\bfU.
\]

We consider the prime ideal~$\gM$ of~$R$ defined by 
\[
  \gM := (X_1-U_1,X_2-U_2,\ldots,X_N-U_N)\subset R.
\]

We let~$R_\bfU$ be the localization of $R$ at $\gM$,
\[
  R_\bfU := \left\{ {A}/{B} : A\in R,\; B\in R\setminus\gM \right\},
\]
we let~$\gM_\bfU$ be its maximal ideal,
\[
  \gM_\bfU := \left\{ {A}/{B} : A\in\gM,\; B\in R\setminus\gM \right\},
\]
and we define an ideal~$I_{\bfU,f}$ in~$R_\bfU$ by\footnote{The
  polynomial~$f_i$ is bihomogeneous of bidegree~$(1,d)$ in the
  variables~$\bfa$ and~$\bfx$, and the notation $f_i(\bfX)$ is
  shorthand for $f_i(\bfX):=x_0^{-d}f_i(\bfx)=f_i(1,X_1,\ldots,X_N)$.}
\[
  I_{\bfU,f} = \bigl( f_i(\bfX)f_j(\bfU) - f_j(\bfX)f_i(\bfU) \bigr)_{1\le i,j\le N}.
\]
We note that $I_{\bfU,f}\subseteq\gM_\bfU$, since the functions in~$I_{\bfU,f}$
vanish at $\bfX=\bfU$.

If we take particular values for~$\bfa$ and~$\bfU$ defined over an
algebraically closed field~$k$, i.e., if we take a
particular~$f\in\End_d^N(k)$ and a particular point~$P\in\PP^N(k)$,
then unsorting the definitions, we see that
\[
  e_f(P) = \dim_k R_{\bfU(P)}/I_{\bfU(P),f}.
\]
Further, Lemma~\ref{lemma:IfpcontainsMe} tells us that
for any integer~$\e\ge0$, we have
\begin{equation}
  \label{eqn:efPgeeiff}
   e_f(P) \ge \e
  \quad\Longleftrightarrow\quad
  \dim_k R_{\bfU(P)}/\bigl(I_{\bfU(P),f}+\gM_{\bfU(P)}^\e\bigr) \ge \e.
\end{equation}

We want to turn this around and describe a Zariski closed condition
on~$(\bfa,\bfU)$ that corresponds to requiring~$e_f(P)\ge\e$.  We
note that~\eqref{eqn:efPgeeiff} tells us that it suffices to work
modulo the ideal~$\gM_\bfU^\e$, and that the quotient ring $R_\bfU/\gM_\bfU^\e$
is a free~$\ZZ[\bfa,\bfU]$-module with basis given by the following
finite set of polynomials:
\[
  \Scal := \bigl\{ (X_1-U_1)^{\d_1} (X_2-U_2)^{\d_2} \cdots (X_N-U_N)^{\d_N}
  : \d_1+\cdots+\d_N<\e \bigr\}.
\]
From~\eqref{eqn:efPgeeiff}, we see that we need to describe the locus where
\[
  \rank_{\ZZ[\bfa,\bfU]}  R_\bfU/\bigl(I_{\bfU,f}+\gM_\bfU^\e\bigr) \ge \e.
\]

To ease notation, we let
\[
  n := \#\Scal,\quad
  A := \ZZ[\bfa,\bfU],\quad
  \overline{R}:=R_\bfU/\gM_\bfU^\e,
  \quad
  \overline{R}':=R_\bfU/\bigl(I_{\bfU,f}+\gM_\bfU^\e\bigr).
\]
As noted earlier, the ring~$\overline{R}$ is a free~$A$-module with
basis~$\Scal$, so $n=\rank_A\overline{R}$. To form~$\overline{R}'$, we take the quotient
of~$\overline{R}$ by the elements of the ideal~$I_{\bfU,f}$,
where~$I_{\bfU,f}$ is generated as an~$R_\bfU$-module by the finitely many
polynomials $f_i(\bfX)f_j(\bfU)-f_j(\bfX)f_i(\bfU)$. And although the rank
of~$I_{\bfU,f}$ as an~$A$-module is infinite, since we are also taking
the quotient by~$\gM_\bfU^\e$, it suffices to reduce the elements in~$I_{f,\bfU}$
modulo~$\gM_\bfU^\e$ and thereby express them as~$A$-linear combinations
of elements of~$\Scal$.

Explicitly, for each~$i,j\in\{1,\ldots,N\}$ and
each~$h(\bfX,\bfU)\in\Scal$, we multiply the~$(i,j)$ generator
of~$I_{f,P}$ by~$h$, reduce modulo~$\gM_\bfU^\e$, and write the result as
an $A$-linear combination of the basis~$\Scal$. Thus
\begin{multline}
  \label{eqn:fiUjfjUih}
  \bigl(f_i(\bfX)f_j(\bfU)-f_j(\bfX)f_i(\bfU)\bigr)h(\bfX,\bfU) \\*
  \equiv
  \smash[b]{  \sum_{g(\bfX,\bfU)\in\Scal} \a_g(i,j,h)g(\bfX,\bfU) \pmod{\gM_\bfU^\e} }\\*
  \text{with $\a_g(i,j,h)\in A$.}
\end{multline}
Then as an~$A$-module, the ring~$\overline{R}'$ is generated by the
elements of~$\Scal$, and the relations among those elements are
exactly those described by~\eqref{eqn:fiUjfjUih}.

We define~$M$ to be a matrix with columns indexed by~$g\in\Scal$,
with rows indexed by triples~$(i,j,h)$ with~$1\le i,j\le n$ and $h\in\Scal$.
We set the $g\times(i,j,h)$ entry of~$M$ to be~$\a_g(i,j,h)$. Then~$M$ is
an~$N^2n$-by-$n$ matrix with coefficients in~$A$.

We are identifying~$A^n$ with~$A^\Scal$, where
an element $\bfa\in A^\Scal$ is a map $\bfa:\Scal\to A$,
and the map $A^\Scal\to\overline{R}'$ is given by
\[
  A^\Scal\to\overline{R}',\quad
  \bfa\longmapsto \sum_{g(\bfX,\bfU)\in\Scal} \bfa_g \cdot g(\bfX,\bfU)
  \pmod{I_{\bfU,f}+\gM_\bfU^\e}.
\]
The kernel of this map is generated by the
relations~\eqref{eqn:fiUjfjUih}, where there is one relation for each
choice of~$(i,j,h)$. Hence the kernel is generated by the row-span
of~$M$, i.e., by $A^{N^2n}M$. This gives the isomorphism
\[
  \overline{R}' \cong A^n/A^{N^2n}M,
\]
so in particular we have
\[
   \rank_{\ZZ[\bfa,\bfU]} {R_\bfU}/({I_{\bfU,f}+\gM_\bfU^\e})
  = \rank_A \overline{R}'
  = \rank_A {A^n}/{A^{N^2n}M}.
\]

This yields
\[
  \rank_{\ZZ[\bfa,\bfU]} {R_\bfU}/({I_{\bfU,f}+\gM_\bfU^\e}) \ge \e
  \quad\Longleftrightarrow\quad
  \rank_A M \le n - \e,
\]
where~$\rank_A M$ denotes the row-rank, or equivalently the column
rank, of the matrix~$M$ as an~$A$-module.

Finally, we observe that $\rank_A M \le n - \e$ if and only if all of
the $n-\e+1$ square minors of~$M$ have determinant equal to~$0$. Each
of these determinantal equations defines a Zariski closed set, and
their intersection is the required locus in~$\End_d^N\times\PP^N$.
This completes the proof of
Proposition~\ref{proposition:efPgeepsilon}.
\end{proof}

\begin{remark}
For a given $f\in\End_d^N$, the condition $e_f(P)\ge2$ is a
codimension~$1$ condition, since
\[
  \bigl\{P\in\PP^N : e_f(P)\ge2\bigr\}=\Crit(f)
\]
is a subvariety of codimension~$1$ and degree $(N+1)(d-1)$.  More
generally for~$\e\ge3$, it seems interesting to study the subscheme
$\bigl\{P\in\PP^N:e_f(P)\ge\e\bigr\}$ and to understand how it varies
with~$f$.  
\end{remark}

\begin{proof}[Proof of Theorem~$\ref{theorem:EnddNPweighted}$]
The scheme $\End_d^N[\Pcal']$ as constructed in
Proposition~\ref{proposition:EnddNPunweighted} is a closed subscheme
of~$\End_d^N\times(\PP^N)^\Vcal$, which is in turn an open subscheme
of~$\PP^\dimEnd\times(\PP^N)^\Vcal$.  For each~$v\in\Vcal^\circ$, we
consider the projection map
\[
  (\operatorname{id}\times\pi_v) : \End_d^N\times(\PP^N)^\Vcal \longrightarrow \End_d^N\times \PP^N,
\]
and we define~$\End_d^N[\Pcal]$ to be the intersection of the inverse images
\begin{multline}
  \label{eqn:capfPefPgev}
  \End_d^N[\Pcal] := \End_d^N[\Pcal'] \\
  \cap
  \bigcap_{v\in\Vcal^\circ} (1\times\pi_v)^{-1}
  \bigl\{ (f,\bfP)\in\End_d^N\times\PP^N : e_f(P_v)\ge\e(v) \bigr\}.
\end{multline}
Proposition~\ref{proposition:efPgeepsilon} tells us that the
individual sets in the intersection~\eqref{eqn:capfPefPgev} are closed
subschemes of~$\End_d^N\times\PP^N$, so their pull-backs by the
projection maps are closed, as is their intersection.
Since~$\End_d^N[\Pcal']$ is also a closed subscheme
of~$\End_d^N\times\PP^N$, the same is true of~$\End_d^N[\Pcal]$.
\par\noindent(a)\enspace
Let~$(T,f,\g)$ be as in the definition~\ref{definition:upsNdP}
of universal parameter space. Ignoring the weights for the moment,
Proposition~\ref{proposition:EnddNPunweighted}(a) tells us that there
is a unique morphism $F:T\to\End_d^N[\Pcal']$ such that
$F_t=(f_t,\g_t)$ for all geometric points~$t$ of~$T$. To complete the
proof, it suffices to show that the assumption that every~$F_t$ is a
model for~$\Pcal$ implies that the image~$F(T)$ is contained in the
subscheme~$\End_d^N[\Pcal]$ of~$\End_d^N[\Pcal']$. But this is clear,
since~$F_t$ being a model for~$\Pcal$ means that
\[
  e_{f_t}\bigl(\g_t(v)\bigr) \ge \e(v)\quad\text{for all $v\in\Vcal^\circ$,}
\]
which is exactly the condition~\eqref{eqn:EnddNPefPgeev} needed to
ensure that~$(f_t,\g_t)$ is in the subscheme~$\End_d^N[\Pcal]$
of~$\End_d^N[\Pcal']$.
\par\noindent(b)\enspace
We observe that for any $(f,P)\in\End_d^N\times\PP^N$
and any~$\f\in\SL_{N+1}$, we have
\[
  e_{\f^{-1}\circ f\circ\f}\bigl(\f^{-1}(P)\bigr)
  = e_{\f^{-1}\circ f}(P) = e_f(P),
\]
where we use the fact that automorphisms of~$\PP^N$ leave the
multiplicity invariant.  Since we already know from
Proposition~\ref{proposition:EnddNPunweighted}(b)
that $\End_d^N[\Pcal']$ is~$\SL_{N+1}$-invariant, it follows that the
subscheme~$\End_d^N[\Pcal]$ is also~$\SL_{N+1}$-invariant.
\end{proof}

\begin{proof}[Proof of Proposition $\ref{proposition:EnddNP1toP2}$]
Let~$\a\in\Hom(\Pcal_1,\Pcal_2)$, where we view~$\a$ as a
map~$\a:\Vcal_1\to\Vcal_2$ on vertices.
We claim that this gives a morphism
\begin{align*}
  \a^* : \End_d^N \times(\PP^N)^{\Vcal_2}_\D
  &\longrightarrow   \End_d^N \times(\PP^N)^{\Vcal_1}_\D, \\
  (f,P) &\longmapsto (f,P\circ\a).
\end{align*}
The injectivity of~$\a$ ensures that $P\circ\a:\Vcal_1\to\PP^N$ avoids
the big diagonal.

It remains to check that if $(f,P)\in\End_d^N[\Pcal_2]$, then
$\a^*(f,P)\in\End_d^N[\Pcal_1]$. It suffices to check this at the
level of geometric points:
\begin{align*}
  (f,P)\in&\End_d^N[\Pcal_2]\\
  &\quad\Longleftrightarrow\quad
  f\circ P = P\circ \Phi_2~\text{and}~e_f\circ P \ge \e_2\\*[-1\jot]
  &\omit\hfill\hspace{4.5em} as maps $\Vcal_2^\circ\to\PP^N$ and  $\Vcal_2^\circ\to\ZZ_{\ge1}$,  respectively, \\
  &\quad\Longrightarrow\quad
  f\circ P\circ\a = P\circ \Phi_2\circ\a~\text{and}~e_f\circ P\circ\a\ge\e_2\circ\a \\*[-1\jot]
  &\omit\hfill since $\a:\Vcal_1^\circ\hookrightarrow\Vcal_2^\circ$,\\  
  &\quad\Longrightarrow\quad
  f\circ P\circ\a = P\circ\a\circ \Phi_1~\text{and}~e_f\circ P\circ\a\ge\e_1 \\*[-1\jot]
  &\omit\hfill since $\a\in\Hom(\Pcal_1,\Pcal_2)$, \\
  &\quad\Longrightarrow\quad
  (f,P\circ\a)\in\End_d^N[\Pcal_1].
\end{align*}
Hence $\a^*\bigl(\End_d^N[\Pcal_2]\bigr)\subseteq\End_d^N[\Pcal_1]$.
\end{proof}

\section{The $\SL_{N+1}$-Stable Locus of $\End_d^N\times(\PP^N)^n$}
\label{section:SLN1stableEndNPNnz} 
In Section~\ref{section:EnddNP} we constructed a parameter
space~$\End_d^N[\Pcal]$ whose points classify models for~$\Pcal$. We
would like to take the quotient of this space by the action
of~$\SL_{N+1}$ so as to construct a moduli space for the isomorphism
classes of models for~$\Pcal$. In order to obtain a nice quotient, we
need to prove that~$\End_d^N[\Pcal]$ is in the stable locus for the
action of~$\SL_{N+1}$ on~$\PP^\dimEnd\times(\PP^N)^\Vcal$ relative to
a suitable ample sheaf.

Our primary goal in this section is to prove a simple numerical
criterion on the integers $(N,d,n,m_0,\ldots,n_n)$ which ensures that
$\End_d^N\times(\PP^N)^n$ is $\SL_{N+1}$-stable relative to the
invertible sheaf~$\Ocal(m_0,\ldots,m_n)$. This criterion is given in
Corollary~\ref{corollary:dgenm1z}; see also Section~\ref{section:refinedP1}
for a more refined result for~$\PP^1$.
Although the corollary is an immediate consequence of the first implication in
Theorem~\ref{theorem:stableEnddNtimesPNnz}(b), it requires little
extra work to prove Theorem~\ref{theorem:stableEnddNtimesPNnz}, and
since the more precise statement in the theorem may be of independent
interest, we give the full proof. 

\begin{theorem}
\label{theorem:stableEnddNtimesPNnz}
Fix integers
\[
  N\ge1,\quad d\ge2,\quad n\ge0,\quad \bfm:=(m_0,\ldots,m_n)\in\ZZ_{\ge0}^n,
\]
and let
\[
  \Ocal(\bfm) := \Ocal_{\PP^\dimEnd\times(\PP^N)^n}(m_0,m_1,\ldots,m_n)
  \quad\text{and}\quad
  m_\Sigma := m_1 + m_2 + \cdots + m_n.
\]
For each geometric point $\bfP=(P_1,\ldots,P_n)\in(\PP^N)^n$, each
linear subspace $L\subseteq\PP^N$, and each integer $\e\ge0$,
let\footnote{We view~$C_\bfP(L)$ as a counting function for the points
  of~$\bfP$ that are in~$L$, where the points are weighted
  by~$m_1,\ldots,m_n$.}
\begin{align}
  \label{eqn:CPLDLdef1z}
  C_\bfP(L) &:= \sum_{{1\le\nu\le n,\, P_\nu\in L}} m_\nu, \\    
  \label{eqn:CPLDLdef2z}
  D_\e(L) &:=  \frac{m_\Sigma(\dim L + 1)+m_0(d-1)\codim L}{N+1} + \e.  
\end{align}
We let~$\SL_{N+1}$ act on~$\End_d^N\times(\PP^N)^n$ as
in~\eqref{eqn:SLactonEndPN}, i.e., the action on~$\End_d^N$ is via
conjugation and on each~$\PP^N$ via the inverse.  For a geometric
point
\[
  (f,\bfP)\in \End_d^N\times(\PP^N)^n,
\]
we consider the following statements\textup:
\begin{align*}
\SemiStable{}:&&& \text{$(f,\bfP)$ is~$\SL_{N+1}$-semistable relative to~$\Ocal(\bfm)$.}  \hspace{3em}\\
\SemiStable{\e}:&&&   C_\bfP(L)\le D_\e(L)\quad\text{for all linear $L\subsetneq\PP^N$.} \\
\Stable{}:&&& \text{$(f,\bfP)$ is~$\SL_{N+1}$-stable relative to~$\Ocal(\bfm)$.}  \\
\Stable{\e}:&&&   C_\bfP(L)< D_\e(L) \quad\text{for all linear $L\subsetneq\PP^N$.}  
\end{align*}
These statements satisfy the following implications\textup:
\par\noindent\begin{tabular}{lrlrlr}
\textup{(a)}&\SemiStable{0} &$\Longrightarrow$& \SemiStable{} &$\Longrightarrow$& \SemiStable{m_0} . \\[1\jot]
\textup{(b)}&\Stable{0} &$\Longrightarrow$& \Stable{} &$\Longrightarrow$& \Stable{m_0} . \\
\end{tabular}
\end{theorem}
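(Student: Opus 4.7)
The strategy is to apply the Hilbert--Mumford numerical criterion of Theorem~\ref{theorem:numcrit}. By conjugation in $\SL_{N+1}$, we may assume that a given nontrivial $1$-parameter subgroup $\ell:\GG_m\hookrightarrow\SL_{N+1}$ is diagonal with integer weights $r_0\ge r_1\ge\cdots\ge r_N$ and $\sum r_i=0$, in a basis $e_0,\ldots,e_N$ of $\AA^{N+1}$; set $L_k:=\PP(\Span(e_0,\ldots,e_k))$, so $\dim L_k=k$. Additivity of the numerical invariant over the tensor structure of $\Ocal(\bfm)$ gives
\[
  \mu^{\Ocal(\bfm)}\bigl((f,\bfP),\ell\bigr) \;=\; m_0\,\mu^{\Ocal(1)}(f,\ell) + \sum_{\nu=1}^n m_\nu\,\mu^{\Ocal(1)}(P_\nu,\ell).
\]

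Computing the $\star$-action on coefficients (by conjugation) and on points (by inversion) yields
\[
  \mu^{\Ocal(1)}(P_\nu,\ell) = -r_{j^*(\nu)} \quad\text{and}\quad \mu^{\Ocal(1)}(f,\ell) = \max_{(\rho,\bfe):\,a_\rho(\bfe)\ne0}\Bigl(\,\sum_i e_ir_i - r_\rho\,\Bigr),
\]
where $j^*(\nu):=\max\{i:x_i(P_\nu)\ne0\}$. Since $P_\nu\in L_k$ iff $j^*(\nu)\le k$, an Abel summation yields
\[
  \sum_{\nu} m_\nu\mu^{\Ocal(1)}(P_\nu,\ell) \;=\; -m_\Sigma r_N - \sum_{k=0}^{N-1}(r_k-r_{k+1})\,C_\bfP(L_k).
\]
Using that $f$ is a morphism (so $f(e_0)$ is defined, forcing some $a_\rho(d,0,\ldots,0)\ne0$) gives the crucial lower bound $\mu^{\Ocal(1)}(f,\ell)\ge(d-1)r_0$, and the naive estimate $\sum_i e_ir_i\ge dr_N$ together with $r_\rho\le r_0$ (wait, $-r_\rho\le-r_N$) supplies the upper bound $\mu^{\Ocal(1)}(f,\ell)\le dr_0-r_N$.

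For $\SemiStable{0}\Rightarrow\SemiStable{}$, the plan is to use the direct Abel identity
\[
  \sum_{k=0}^{N-1}(r_k-r_{k+1})\,D_0(L_k) \;=\; -m_\Sigma r_N + m_0(d-1)r_0,
\]
which follows from $\sum_k(r_k-r_{k+1})(k+1)=-(N+1)r_N$ and $\sum_k(r_k-r_{k+1})(N-k)=(N+1)r_0$. Weighting $\SemiStable{0}$ by the nonnegative numbers $r_k-r_{k+1}$ and combining with the lower bound on $\mu^{\Ocal(1)}(f,\ell)$ gives $\mu^{\Ocal(\bfm)}((f,\bfP),\ell)\ge0$. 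For $\Stable{0}\Rightarrow\Stable{}$, nontriviality of $\ell$ forces some $r_k>r_{k+1}$, which upgrades the corresponding inequality to strict.

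Conversely, for $\SemiStable{}\Rightarrow\SemiStable{m_0}$, given any proper subspace $L\subsetneq\PP^N$ of codimension $c$, I would choose a basis with $L=\Span(e_0,\ldots,e_{N-c})$ and apply the numerical criterion to the canonical $1$-PS $\ell^L$ with $r_i=c$ for $i\le N-c$ and $r_i=-(N+1-c)$ for $i>N-c$. Then the sum over $P_\nu$ telescopes to $(N+1-c)m_\Sigma-(N+1)C_\bfP(L)$, and $\mu^{\Ocal(\bfm)}((f,\bfP),\ell^L)\ge0$ combined with the upper bound $\mu^{\Ocal(1)}(f,\ell^L)\le(d-1)c+(N+1)$ rearranges exactly to $C_\bfP(L)\le D_{m_0}(L)$; the stable version is identical with strict inequalities. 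The main technical point is that the bounds on $\mu^{\Ocal(1)}(f,\ell)$ must mesh exactly with the coefficients appearing in $D_\e(L)$ after the Abel identity, and the gap between $(d-1)r_0$ and $dr_0-r_N$ is precisely what produces the shift from $\e=0$ to $\e=m_0$ in the weaker direction.
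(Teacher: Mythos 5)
Your proof is correct and follows essentially the same route as the paper's: the same Hilbert--Mumford computation with a diagonalized one-parameter subgroup, the same Abel-summation identities for $C_\bfP(L_k)$ and $D_0(L_k)$ (in the equivalent convention $r_i=-k_i$ with decreasing weights), the same use of the nonvanishing of some coefficient $a_\rho(d,0,\ldots,0)$ forced by $f$ being a morphism, and the same block-diagonal test subgroup $\ell^L$ adapted to a given $L$ for the converse implications. The only blemish is notational: for the upper bound on $\mu^{\Ocal(1)}(f,\ell)$ you need $\sum_i e_i r_i\le d r_0$ (not $\ge d r_N$), which is what your stated bound $d r_0-r_N$ in fact uses.
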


\begin{remark}
Theorem~\ref{theorem:stableEnddNtimesPNnz} gives necessary and
sufficient conditions for a pair $(f,\bfP)\in\End_d^N\times(\PP^N)^n$
to be stable relative to~$\Ocal(\bfm)$. These conditions are
stated in terms of the numerical quantities~$N,d,n,\bfm$ and the
$n$-tuple of points~$\bfP$.  It would be nice if there were a single
condition depending on these quantities that was simultaneously
necessary and sufficient, but unfortunately, life is not that simple,
and any such condition must also depend on~$f$.
See Proposition~\ref{proposition:stableEnd1P1nz}(c) for an example.
We also note that Theorem~\ref{theorem:stableEnddNtimesPNnz} 
only gives the (semi)stable locus in~$\End_d^N\times(\PP^N)^n$; it does not
describe the full (semi)stable locus in $\PP^\dimEnd\times(\PP^N)^n$.
\end{remark}

Before proving Theorem~\ref{theorem:stableEnddNtimesPNnz}, we state a
corollary giving a simple numerical criterion for the stability of
$\End_d^N\times(\PP^N)^n$.  See Section~\ref{section:refinedP1} for a refined
statement for~$\PP^1$.

\begin{corollary}
\label{corollary:dgenm1z}
With notation as in Theorem~$\ref{theorem:stableEnddNtimesPNnz}$, we have:
\begin{align*}
  m_0 \ge \frac{m_\Sigma}{d-1}
  &\Longrightarrow
  \text{$\End_d^N\times(\PP^N)^n$ is $\SL_{N+1}$-semistable for $\Ocal(\bfm)$.} \\*
  m_0 > \frac{m_\Sigma}{d-1}
  &\Longrightarrow
  \text{$\End_d^N\times(\PP^N)^n$ is  $\SL_{N+1}$-stable for $\Ocal(\bfm)$.}
\end{align*}
\end{corollary}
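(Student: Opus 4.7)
The plan is to deduce the corollary directly from Theorem~\ref{theorem:stableEnddNtimesPNnz}, via the implications $\SemiStable{0} \Longrightarrow \SemiStable{}$ and $\Stable{0} \Longrightarrow \Stable{}$. Since the conditions $\SemiStable{0}$ and $\Stable{0}$ depend only on $\bfP$ and not on $f$, it suffices to verify, for every geometric point $\bfP \in (\PP^N)^n$ and every proper linear subspace $L \subsetneq \PP^N$, that $C_\bfP(L) \le D_0(L)$ (respectively, $C_\bfP(L) < D_0(L)$), under the given hypothesis on $m_0$.

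The trivial bound $C_\bfP(L) \le m_\Sigma$, obtained by including every marked point into the sum, reduces the problem to verifying $m_\Sigma \le D_0(L)$ (respectively, $<$) for every proper linear subspace $L$. Writing $\ell = \dim L$ so that $\codim L = N - \ell$ with $0 \le \ell \le N-1$, a direct calculation from the definition of $D_0$ gives
\[
  D_0(L) - m_\Sigma
  = \frac{m_\Sigma(\ell+1) + m_0(d-1)(N-\ell) - m_\Sigma(N+1)}{N+1}
  = \frac{(N-\ell)\bigl(m_0(d-1) - m_\Sigma\bigr)}{N+1}.
\]
Since $N-\ell \ge 1$ for proper $L$, this quantity is $\ge 0$ exactly when $m_0(d-1) \ge m_\Sigma$, and is strictly positive exactly when $m_0(d-1) > m_\Sigma$. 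These are precisely the hypotheses of the corollary, so $\SemiStable{0}$ (respectively, $\Stable{0}$) holds at every point of $\End_d^N \times (\PP^N)^n$, and invoking Theorem~\ref{theorem:stableEnddNtimesPNnz} concludes the argument.

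There is no real obstacle here: once Theorem~\ref{theorem:stableEnddNtimesPNnz} is available, the corollary is a short algebraic verification whose entire content is the inequality between $m_0(d-1)$ and $m_\Sigma$ after clearing the denominator $N+1$. The worst case $C_\bfP(L) = m_\Sigma$ is attained when all $n$ marked points happen to lie on $L$, and since $D_0(L)$ depends only on $\dim L$, this is the only configuration that needs to be examined.
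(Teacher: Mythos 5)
Your proof is correct and follows essentially the same route as the paper's: the trivial bound $C_\bfP(L)\le m_\Sigma$, the same algebraic simplification of $D_0(L)-m_\Sigma$ to $(N-\ell)\bigl(m_0(d-1)-m_\Sigma\bigr)/(N+1)$, and the appeal to the implications \SemiStable{0} $\Rightarrow$ \SemiStable{} and \Stable{0} $\Rightarrow$ \Stable{} of Theorem~\ref{theorem:stableEnddNtimesPNnz}. Nothing to add.
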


We note that the $n=0$ case of
Theorem~\ref{theorem:stableEnddNtimesPNnz} is the classical result
that~$\End_d^N$ is~$\SL_{N+1}$-stable relative to~$\Ocal(1)$;
cf.\ Theorem~\ref{theorem:EnddNstableAutfinite}.  It is also
instructive to note that the~$\bfm=(0,1,\ldots,1)$ case of
Theorem~\ref{theorem:stableEnddNtimesPNnz} gives the following
classical result on stability of points in~$(\PP^N)^n$.  Thus our
Theorem~\ref{theorem:stableEnddNtimesPNnz} is, in some sense, an
amalgamation of Theorems~\ref{theorem:EnddNstableAutfinite}
and~\ref{theorem:classicalz}, although it is not a direct consequence
of these two results.

\begin{theorem}[Mumford]
\label{theorem:classicalz}
Let $n\ge1$ and $N\ge1$. A geometric point
$\bfP=(P_1,\ldots,P_n)\in(\PP^N)^n$ is~$\SL_{N+1}$-stable relative
to~$\Ocal(1,1,\ldots,1)$ if and only if\/\footnote{One can
  show that \eqref{eqn:numiPiinLz} implies that $\bfP$ has trivial stabilizer;
  see \cite[Proposition~3.6]{mumford:geometricinvarianttheory}.}
\begin{equation}
  \label{eqn:numiPiinLz}
  \#\{\nu : P_\nu\in L\} < \frac{n(\dim L + 1)}{N+1}
  \quad\text{for all linear $L\subsetneq\PP^N$.}
\end{equation}
\end{theorem}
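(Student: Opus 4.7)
The plan is to apply the Hilbert--Mumford numerical criterion of Theorem~\ref{theorem:numcrit} using the Segre embedding $(\PP^N)^n \hookrightarrow \PP^{(N+1)^n - 1}$ associated to $\Ocal(1,\ldots,1)$. Since every 1-parameter subgroup $\ell : \GG_m \hookrightarrow \SL_{N+1}$ is conjugate to a diagonal one, and since the numerical invariant satisfies the equivariance $\mu^{\Ocal(\bfone)}(\bfP, \f \ell \f^{-1}) = \mu^{\Ocal(\bfone)}(\f \star \bfP, \ell)$, it will suffice to compute $\mu$ against diagonal 1-PSs and let $\bfP$ vary through its $\SL_{N+1}$-orbit; this is what eventually sweeps the standard coordinate flag over every complete flag of $\PP^N$.

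For a diagonal $\ell(\a) = \diag(\a^{r_0}, \ldots, \a^{r_N})$ with $\sum r_i = 0$ and (after reordering) $r_0 \le \cdots \le r_N$, the first key observation is that under the Segre embedding the coordinate $x_{i_1}^{(1)} \cdots x_{i_n}^{(n)}$ transforms with weight $\sum_\nu r_{i_\nu}$ (up to the sign coming from the convention $\f \star P = \f^{-1}(P)$). Because this weight is additive across the $n$ factors, the numerical invariant itself decouples:
\[
  \mu^{\Ocal(\bfone)}(\bfP, \ell) = \sum_{\nu=1}^{n} \max\bigl\{ r_i : x_i^{(\nu)} \neq 0 \bigr\}.
\]
Introducing the standard coordinate flag $L_1 \subset L_2 \subset \cdots \subset L_N \subset \PP^N$ with $\dim L_k = k - 1$, adapted so that $P_\nu \in L_k$ exactly when $\max\{i : x_i^{(\nu)} \neq 0\} < k$, and letting $a_k := \#\{\nu : P_\nu \in L_k\}$, a telescoping Abel summation together with the relation $\sum r_i = 0$ then rewrites the above as
\[
  \mu^{\Ocal(\bfone)}(\bfP, \ell) = \sum_{k=1}^{N} (r_k - r_{k-1})\biggl(\frac{n(\dim L_k + 1)}{N+1} - a_k\biggr),
\]
a nonnegative-coefficient linear form in the consecutive gaps $r_k - r_{k-1} \ge 0$.

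Since the gaps may be chosen to be any nontrivial tuple of nonnegative rationals, strict positivity of $\mu$ against every nontrivial diagonal $\ell$ is equivalent to the strict inequalities $a_k < n(\dim L_k + 1)/(N+1)$ for all $k = 1, \ldots, N$. Finally, replacing $\ell$ by its conjugate $\f\ell\f^{-1}$ and invoking the equivariance transports this inequality from $L_k$ to $\f(L_k)$, and as $\f \in \SL_{N+1}$ and $k$ vary this ranges over every proper linear subspace $L \subsetneq \PP^N$, producing the stated bound. I expect the main technical step to be the Abel summation combined with using $\sum r_i = 0$ to absorb the overall shift in the weights; everything else is either formal (the reduction to diagonal 1-PSs) or a direct appeal to Theorem~\ref{theorem:numcrit}, which automatically packages conditions~(iii) and~(iv) of stability once $\mu > 0$ is verified.
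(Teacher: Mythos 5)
Your argument is correct and is essentially the numerical-criterion proof that the paper points to (Dolgachev--Hu, Example~3.3.21) rather than Mumford's direct argument: the decoupling of $\mu^{\Ocal(\bfone)}$ over the $n$ factors and the Abel summation against the coordinate flag are exactly the computations the paper itself performs (in the more general setting of $\End_d^N\times(\PP^N)^n$) in equations \eqref{eqn:bfkbfjbfPsumz} and \eqref{eqn:sumDLrkrz}, specialized to $\bfm=(0,1,\ldots,1)$. The paper offers no independent proof of Theorem~\ref{theorem:classicalz} beyond these citations, so your write-up is a valid self-contained substitute.
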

\begin{proof}
Mumford proves this directly from the definition of stability in
\cite[Chapter~3]{mumford:geometricinvarianttheory}; see
Definition~3.7/Proposition~3.4 and Theorem~3.8.  A proof via the
numerical criterion may be found in~\cite[Example~3.3.21]{MR1659282}.
\end{proof}

\begin{proof}[Proof of Theorem~$\ref{theorem:stableEnddNtimesPNnz}$]
We note that every~$(f,\bfP)\in\End_d^N\times(\PP^N)^n$ has finite
stabilizer in~$\SL_{N+1}$, since
Theorem~\ref{theorem:EnddNstableAutfinite}(b) says that~$f$ itself has
finite stabilizer.

In order to use the numerical criterion
(Theorem~\ref{theorem:numcrit}) to study stability and semistability,
we choose an arbitrary non-trivial 1-para\-meter subgroup
$\ell:\GG_m\hookrightarrow\SL_{N+1}(k)$, and we choose coordinates
on~$\PP^N$ to diagonalize the action as follows:
\begin{equation}
  \label{eqn:1PSz}
  \ell(\a):=\begin{pmatrix}
  \a^{k_0}\\
  &\a^{k_1}\\
  && \ddots \\
  &&&\a^{k_N}\\
  \end{pmatrix}\in\SL_{N+1}
\end{equation}
with
\begin{equation}
  \label{eqn:k0kNsum0z}
  k_0\le k_1\le\cdots\le k_N,
  \quad
  k_0+k_1+\cdots+k_N=0,
  \quad\text{and}\quad
  k_N > 0.
\end{equation}

We write endomorphisms $f\in\End_d^N$ using the
notation~\eqref{eqn:frhoxdef} described in
Section~\ref{section:notationEnddN}.  An $\ell$-diagonalized basis
for the global sections of $\Ocal(1)$ on~$\End_d^N$ is given by
\[
  \bigl\{ a_\rho(\bfe) : \text{$0\le\rho\le N$ and $|\bfe|=d$} \bigr\}.
\]
The action of the 1-parameter subgroup~\eqref{eqn:1PSz} on this basis is
\[
  \ell(\a)\star a_\rho(\bfe) = \a^{k_0e_0+\cdots+k_Ne_N-k_\rho} a_\rho(\bfe).
\]

For later use, we define
\[
  \bfd := (d,0,0,\ldots,0),
\]
so~$a_\rho(\bfd)$ is the coefficient of~$x_0^d$ in~$f_\rho$.  The
assumption that $f\in\End_d^N$ implies that there exists some~$\rho'$
with
\begin{equation}
  \label{eqn:arhoprimedne0z}
  a_{\rho'}(\bfd) \ne 0,
\end{equation}
since otherwise~$f_0,\ldots,f_N$ would all vanish at~$[1,0,\ldots,0]$,
say.

Next we turn to~$(\PP^N)^n$.  We let~$\bigl[U(0),\ldots,U(N)\bigr]$ be
homogeneous coordinates on~$\PP^N$, so each~$U(i)$ is a section
of~$\Ocal_{\PP^N}(1)$, and the action of~$\ell$ on these sections is
via
\[
  \ell(\a)\star U(i) = \a^{-k_i}U(i).
\]
(The negative exponent is due to the fact that~$\ell$ acts on points
via the inverse matrix.)

More generally, for $1\le \nu\le n$, we let
\[
\bigl[U_\nu(0),\ldots,U_\nu(N)\bigr]
= \left\{ \begin{tabular}{@{}l@{}}
  homogeneous coordinates on\\
  the $\nu$th copy of~$\PP^N$ in~$(\PP^N)^n$\\
\end{tabular} \right\}.
\]
(Formally, we have $U_\nu(i) := U(i)\circ\operatorname{proj}_\nu$,
where $\operatorname{proj}_\nu:(\PP^N)^n\to\PP^N$ is projection on the
$\nu$th factor.)

For our given point~$\bfP=(P_1,\ldots,P_n)\in(\PP^N)^n$, we let
\begin{align}
  \label{eqn:jnudefz}
  j_\nu &:= \text{largest $j$ so that the $j$th coordinate of $P_\nu$ is non-zero} \\*
  &:= \text{the index with $U_\nu(j_\nu)\ne0$ and $U_\nu(j)=0$ for $j>j_\nu$.} \notag
\end{align}
Then with these definitions, we have
\begin{equation}
  \label{eqn:bfUbfjbfPne0z}
  U_\nu(j_\nu)\ne 0\quad\text{for all $1\le\nu\le n$.}
\end{equation}

We now consider a general $\ell$-diagonalized global section
of~$\Ocal(\bfm)$.  Such a section is a product of~$m_0$ copies of
sections of the form~$a_\rho(\bfe)$ multiplied by~$m_1$ copies of
sections of the form~$U_1(i)$, multiplied by~$m_2$ copies of sections
of the form~$U_2(i)$, etc. Thus such a section has the form
\begin{equation}
  \label{eqn:arhoeUibasisLz}
  s := \prod_{\mu=1}^{m_0} a_{\rho_\mu}(\bfe_\mu)
  \cdot \prod_{\nu=1}^n  \prod_{\l=1}^{m_\nu} U_\nu(i_{\nu,\l}) .
\end{equation}
The section~$s$ depends on the choice of the
quantities
\[
  \rho_\mu,\quad \bfe_\mu=(e_{\mu,0},\ldots,e_{\mu,N}),\quad i_{\nu,\l},
\]
but for convenience we omit this dependence from the notation and
always take~$s$ to be defined by~\eqref{eqn:arhoeUibasisLz}.

The action of~$\ell$ on~$s$ is given by
\begin{equation}
  \label{eqn:actiononsz}
  \ell(\a)\star s = \a^{E(s)}s
  \;\text{with}\;
  E(s) := 
  \sum_{\mu=1}^{m_0} \biggl( \sum_{\s=0}^N k_\s e_{\mu,\s} - k_{\rho_\mu} \biggr) 
  - \sum_{\nu=1}^n  \sum_{\l=1}^{m_\nu}   k_{i_{\nu,\l}}    .
\end{equation}

For $-1\le r\le N$, we define an increasing sequence of nested linear
subspaces
\begin{equation}
  \label{eqn:LrUr1uN0z}
  L_r := \{ U(r+1)=\cdots=U(N)=0 \} \subset \PP^N,
\end{equation}
so in particular $L_{-1}=\emptyset$ and $L_N=\PP^N$. We have
\begin{equation}
  \label{eqn:dimLrz}
  \dim L_r = r\quad\text{for all $0\le r\le N$.}
\end{equation}

We note that the definition of~$j_\nu$ says exactly that
\begin{equation}
  \label{eqn:PnuinLjLj1xz}
  P_\nu \in L_{j_\nu} \setminus L_{j_\nu-1}.
\end{equation}
This criterion plays the key role in relating the numerical invariant
to membership of the~$P_\nu$ in linear subspaces.

Using the notation~$C_\bfP(L)$ and~$D_\e(L)$ defined
by~\eqref{eqn:CPLDLdef1z} and~\eqref{eqn:CPLDLdef2z}, our next step is
to prove the following two formulas, which will be needed later:
\begin{align}
  \label{eqn:bfkbfjbfPsumz}
   \sum_{r=0}^N C_\bfP(L_r) (k_r - k_{r+1}) &= \sum_{\nu=1}^n m_\nu k_{j_\nu}, \\
  \label{eqn:sumDLrkrz}
  \sum_{r=0}^N D_0(L_r)(k_r-k_{r+1}) &= m_0(d-1)k_0,
\end{align}
where for convenience we set $k_{N+1} = 0$.

In order to prove~\eqref{eqn:bfkbfjbfPsumz}, we compute
\begin{align*}
  \sum_{\nu=1}^n m_\nu k_{j_\nu}
  &= \sum_{r=0}^N \biggl(\sum_{\substack{1\le\nu\le n\\ j_\nu=r\\}} m_\nu\biggr) k_r
    &&\text{grouping $j_\nu$ by value,}  \\
  &= \sum_{r=0}^N \biggl(\sum_{\substack{1\le\nu\le n\\ P_\nu\in L_r\setminus L_{r-1}\\}} m_\nu\biggr) k_r
    &&\text{from \eqref{eqn:PnuinLjLj1xz},}  \\
  &= \sum_{r=0}^N C_\bfP(L_r)k_r - \sum_{r=0}^N C_\bfP(L_{r-1})k_r
    &&\text{definition of $C_\bfP(L)$,} \\
  &= 
    \sum_{r=0}^N C_\bfP(L_r)k_r - \sum_{r=0}^N C_\bfP(L_r)k_{r+1} 
    &&\text{using $L_{-1}=\emptyset$, $k_{N+1}=0$,}  \\*
  &= \sum_{r=0}^N C_\bfP(L_r) (k_r - k_{r+1}).
\end{align*}
This completes the proof of~\eqref{eqn:bfkbfjbfPsumz}.

We now turn to the proof of~\eqref{eqn:sumDLrkrz}.  To ease notation,
we let
\[
  A = \frac{m_\Sigma-m_0(d-1)}{N+1}\quad\text{and}\quad
  B = \frac{m_\Sigma+m_0(d-1)N}{N+1}.
\]
Using $\dim L_r=r$ and $\codim L_r=N-r$, we see that
from~\eqref{eqn:CPLDLdef2z} that
\begin{equation}
  \label{eqn:DLrArBz}
  D_0(L_r) = \frac{m_\Sigma(r+1)+m_0(d-1)(N-r)}{N+1} = Ar + B.
\end{equation}
Then 
\begin{align*}
  \sum_{r=0}^N D_0(L_r)(k_r-k_{r+1})
  &= \sum_{r=0}^N (Ar+B)(k_r-k_{r+1}) 
  \quad\text{from \eqref{eqn:DLrArBz},} \\
  &=  \sum_{r=0}^N Ark_r - \sum_{r=0}^N Ark_{r+1} + B k_0 \\
  &=  \sum_{r=0}^N Ark_r - \sum_{r=1}^{N+1} A(r-1)k_{r}  + B k_0 \\
  &=  A\sum_{r=1}^N k_r + Bk_0
  \quad\text{since $k_{N+1}=0$,} \\
  &= (-A+B)k_0
  \quad\text{since $\textstyle\sum_{r=0}^Nk_r=0$ from \eqref{eqn:k0kNsum0z},} \\
  &= m_0(d-1)k_0  \quad\text{since $B-A=m_0(d-1)$.}
\end{align*}
This completes the proof of~\eqref{eqn:sumDLrkrz}.

We now resume the proof of Theorem~\ref{theorem:stableEnddNtimesPNnz},
so we consider the product $\End_d^N\times(\PP^N)^n$ and the
$\ell$-diagonalized global sections~$s$ for the invertible sheaf
${\Ocal(\bfm)}=\Ocal(m_0,m_1,\ldots,m_n)$ as defined
by~\eqref{eqn:arhoeUibasisLz}.

A key observation is that we can specify parameters for~$s$ so
that~$s$ does not vanish at our given point~$(f,\bfP)$:
\begin{equation}
  \label{eqn:aUne0fPz}
  \text{$s\ne0$ at $(f,\bfP)$ if}
  \quad\left\{\begin{aligned}
    \bfe_\mu &= \bfd&&\text{for all $\mu$,}\\
    \rho_\mu &= \rho'&&\text{for all $\mu$, and}\\ 
    i_{\nu,\l} & =j_\nu&&\text{for all~$\nu$ and all $\l$,}\\
  \end{aligned}
  \right.
\end{equation}
where $\bfd$ and~$\rho'$ are defined
by~\eqref{eqn:arhoprimedne0z} and~$j_\nu$ is defined
by~\eqref{eqn:jnudefz}.

\par\noindent
\framebox{\textbf{Proof of
  \SemiStable{0} $\boldsymbol\Longrightarrow$ \SemiStable{} and
  \Stable{0} $\boldsymbol\Longrightarrow$ \Stable{}  } }
\par
The numerical criterion (Theorem~\ref{theorem:numcrit}) says
that~$(f,\bfP)$ is semistable relative to ${\Ocal(\bfm)}$ if and only
if $\mu^{\Ocal(\bfm)}\bigl((f,\bfP),\ell\bigr)\ge0$ for every
1-parameter subgroup $\ell:\GG_m\to\SL_{N+1}$, and similarly for
stable with a strict inequality for~$\mu$. Without loss of generality,
we choose coordinates so that~$\ell$ is diagonalized and has the form
given by~\eqref{eqn:1PSz} and~\eqref{eqn:k0kNsum0z}, and then we
write~$f$ and~$\bfP$ using the $\ell$-diagonalized global sections~$s$
of~${\Ocal(\bfm)}$ described in~\eqref{eqn:arhoeUibasisLz}. Having
done this, and recalling from~\eqref{eqn:actiononsz} the notation
$\ell\star s=\a^{E(s)}s$, we estimate the numerical invariant as
follows:
\begin{align*}
  \mu^{\Ocal(\bfm)}&\bigl((f,\bfP),\ell\bigr) \\
  &= \max \bigl\{ -E(s) : \text{$s$ is defined by \eqref{eqn:arhoeUibasisLz} and $s(f,\bfP)\ne0$} \bigr\} \\
  &= \max_{s(f,\bfP)\ne0} - \left(  \sum_{\mu=1}^{m_0} \biggl( \sum_{\s=0}^N k_\s e_{\mu,\s} - k_{\rho_\mu} \biggr) 
  - \sum_{\nu=1}^n  \sum_{\l=1}^{m_\nu}   k_{i_{\nu,\l}} \right)  \quad\text{from~\eqref{eqn:actiononsz},} \\\\
  & \ge \sum_{\mu=1}^{m_0} (  -k_0 d + k_{\rho'} ) 
  + \sum_{\nu=1}^n  \sum_{\l=1}^{m_\nu}   k_{j_\nu}
  \quad\text{using the $s$ from~\eqref{eqn:aUne0fPz},} \\
  &\ge - m_0 k_0 (d-1)  + \sum_{\nu=1}^n m_\nu k_{j_\nu} 
  \quad\text{since $k_0=\min k_\rho$ from \eqref{eqn:k0kNsum0z},} \\
  &= 
  \smash[b]{ - \sum_{r=0}^N D_0(L_r)(k_r-k_{r+1}) + \sum_{r=0}^N C_\bfP(L_r) (k_r - k_{r+1}) } \\
  &\omit\hfill\text{from \eqref{eqn:bfkbfjbfPsumz} and \eqref{eqn:sumDLrkrz},} \\
  &= \sum_{r=0}^N \bigl(D_0(L_r)-C_\bfP(L_r)\bigr) \bigl(k_{r+1}-k_r\bigr).
\end{align*}
We next use the fact that $L_N=\PP^N$ to observe that
\[
  D_0(L_N)=C_\bfP(L_N)=m_\Sigma,
\]
so the $r=N$ term in the sum vanishes. Hence
\begin{equation}
  \label{eqn:mugesumr0N1z}
  \mu^{\Ocal(\bfm)}\bigl((f,\bfP),\ell\bigr) 
  \ge \sum_{r=0}^{N-1} \bigl(D_0(L_r)-C_\bfP(L_r)\bigr)
  \bigl( \underbrace{k_{r+1}-k_r}_{\text{${}\ge0$ from \eqref{eqn:k0kNsum0z}}} \bigr).
\end{equation}
We stress that eliminating the $r=N$ term is crucial, since the
inequality $k_{r+1}-k_r\ge0$ is not true for~$r=N$.

Suppose now that~\SemiStable{0} holds. Applying~\SemiStable{0} with
$L=L_r$ tells us that $C_\bfP(L_r)\le D_0(L_r)$ for all $0\le r<N$,
while as noted earlier, we have $k_{r+1}\ge k_r$ for all
$0\le{r}<N$. (Again, note that this is not true for $r=N$.)
Thus the lower bound
for $\mu^{\Ocal(\bfm)}\bigl((f,\bfP),\ell\bigr)$ given
by~\eqref{eqn:mugesumr0N1z} is a sum of non-negative terms, and hence
$\mu^{\Ocal(\bfm)}\bigl((f,\bfP),\ell\bigr)\ge0$. The numerical
criterion (Theorem~\ref{theorem:numcrit}) then tells us
that~$(f,\bfP)$ is a semistable point, i.e.,~\SemiStable{} is true.

We turn now to stability and assume that~\Stable{0} holds.
Applying~\Stable{0} with $L=L_r$ gives
the strict inequality $C_\bfP(L_r)<D_0(L_r)$, so the lower bound for
$\mu^{\Ocal(\bfm)}\bigl((f,\bfP),\ell\bigr)$ in~\eqref{eqn:mugesumr0N1z} is
strictly positive unless
\[
  k_{r+1} = k_r \quad\text{for all $0\le r<N$.}
\]
But this would imply that $k_0=k_1=\cdots=k_N$, and then the
fact~\eqref{eqn:k0kNsum0z} that $\sum_{r=0}^Nk_r=0$ would force
$k_0=\cdots=k_N=0$, contradicting the assumption that~$\ell$ is a
non-trivial 1-parameter subgroup. Hence the lower bound is strictly
positive, i.e., $\mu^{\Ocal(\bfm)}\bigl((f,\bfP),\ell\bigr)>0$, and the
numerical criterion then tells us that~$(f,\bfP)$ is a stable point.
Hence~\Stable{} is true.
\par\noindent
\framebox{\textbf{Proof of
  \SemiStable{} $\boldsymbol\Longrightarrow$ \SemiStable{m_0} and
  \Stable{} $\boldsymbol\Longrightarrow$ \Stable{m_0}  } }
\par
Let $(f,\bfP)\in\End_d^N\times(\PP^N)^n$ and let $L\subset\PP^N$.  We
claim that there exists a non-trivial 1-parameter subgroup
$\ell:\GG_m\hookrightarrow\SL_{N+1}$, depending on $(N,d,n,\bfm,\bfP)$,
such that
\begin{equation}
  \label{eqn:muLleDmDPN1z}
  \mu^{\Ocal(\bfm)}\bigl((f,\bfP),\ell) \le
  \bigl(D_{m_0}(L) - C_\bfP(L)\bigr)(N+1).
\end{equation}

To prove this claim, we let
\[
  r := \dim L \quad\text{and}\quad  c := \#\{\nu : P_\nu\in L\}.
\]
Changing coordinates, we may assume without loss of generality that
\[
  L = \{U(r+1)=\cdots=U(N)=0\}\subset\PP^N.
\]
By definition, exactly~$c$ of the~$P_\nu$ are in~$L$, so relabeling
the~$P_\nu$ (hence also the $m_\nu$), we may assume that
\[
  P_1,P_2,\ldots,P_c \in L. 
\]
In other words, we have
\begin{equation}
  \label{eqn:Uine0impliesinulerz}
  U_\nu(i) = 0\quad\text{for all $1\le\nu\le c$ and all $i\ge r+1$.}
\end{equation}

We take for~$\ell$ the following 1-parameter subgroup:
\[
  \ell(\a) :=
  \bordermatrix{
    & \overbrace{\smash{\phantom{\a^{NNN}\;\dots\;\a^{NNN}}}}^{r+1}
    & \overbrace{\smash{\phantom{\a^{NN}\;\dots\;\a^{N}}}}^{N-r} \cr
    &\begin{array}{@{}c@{}c@{}c@{}}
      \a^{-(N-r)} \\
      & \ddots \\
      && \a^{-(N-r)} \\
    \end{array}
    \cr
    &&\hspace{-4\jot}
    \begin{array}{@{}c@{}c@{}c@{}}
      \a^{r+1} \\
      & \ddots \\
      && \a^{r+1} \\
     \end{array}
    \cr
  }
  \in\SL_{N+1}.
\]

It follows from~\eqref{eqn:Uine0impliesinulerz} and our choice
of~$\ell$ that if the section~$s$ defined by~\eqref{eqn:arhoeUibasisLz}
is non-zero at~$(f,\bfP)$, then in particular we must have
\begin{align}
  \label{eqn:sumnulambdakiz}
  \sum_{\nu=1}^n  \sum_{\l=1}^{m_\nu}   k_{i_{\nu,\l}}
  &= \sum_{\nu=1}^c  \sum_{\l=1}^{m_\nu}   \underbrace{k_{i_{\nu,\l}}}_{\text{$i_{\nu,\l}\le r$ from \eqref{eqn:Uine0impliesinulerz}}} +
    \sum_{\nu={c+1}}^n  \sum_{\l=1}^{m_\nu}   k_{i_{\nu,\l}} \notag \\
  &\le \sum_{\nu=1}^c  \sum_{\l=1}^{m_\nu}   -(N-r) +
    \sum_{\nu={c+1}}^n  \sum_{\l=1}^{m_\nu}  (r+1) \notag \\
  &= -(N-r) \sum_{\nu=1}^c m_\nu  +  \sum_{\nu=c+1}^n m_\nu (r+1) \notag \\
  &= -(N-r) C_\bfP(L)  + \biggl( \sum_{\nu=1}^n m_\nu - C_\bfP(L) \biggr)(r+1)  \notag \\    
  &= -(N-r) C_\bfP(L)  + \bigl( m_\Sigma - C_\bfP(L) \bigr)(r+1)  \notag \\
  &= - C_\bfP(L) (N+1) + m_\Sigma (r+1) . 
\end{align}

We combine this with the trivial bounds 
\begin{align}
  \label{eqn:sumksigmaemuz}
  -\sum_{\mu=1}^{m_0} \sum_{\s=0}^N k_\s e_{\mu,\s}
  &\le \sum_{\mu=1}^{m_0} \sum_{\s=0}^N (N-r) e_{\mu,\s} 
  = m_0(N-r)d, \\
  \label{eqn:summum0krhomuz}
  \sum_{\mu=1}^{m_0} k_{\rho_\mu}
  &\le m_0(r+1),
\end{align}
to estimate
\begin{align*}
  \mu^{\Ocal(\bfm)}\bigl(&(f,\bfP),\ell\bigr)  \\*
  &= \max \bigl\{ -E(s) : \text{$s$ is defined by \eqref{eqn:arhoeUibasisLz} and $s(f,\bfP)\ne0$} \bigr\} \\
  &= \max_{s(f,\bfP)\ne0}
  -\sum_{\mu=1}^{m_0} \sum_{\s=0}^N k_\s e_{\mu,\s} + \sum_{\mu=1}^{m_0} k_{\rho_\mu}
  + \sum_{\nu=1}^n  \sum_{\l=1}^{m_\nu}   k_{i_{\nu,\l}}
  \quad\text{from \eqref{eqn:actiononsz},} \\  
  &\le m_0(N-r)d + m_0(r+1) - C_\bfP(L) (N+1) + m_\Sigma (r+1) \\
  &\omit\hfill\text{from \eqref{eqn:sumnulambdakiz}, \eqref{eqn:sumksigmaemuz}, and \eqref{eqn:summum0krhomuz},} \\
  &= \bigl( D_{m_0}(L)  - C_\bfP(L) \bigr)(N+1)
  \quad\text{by definition of $D_\e(L)$.}
\end{align*}
This completes the proof of~\eqref{eqn:muLleDmDPN1z}.

We are going to prove contrapositives. So we assume first
that~\SemiStable{m_0} is false, which means that there is a
subspace~$L\subsetneq\PP^N$ such that $C_\bfP(L) > D_{m_0}(L)$. It
follows from~\eqref{eqn:muLleDmDPN1z} that there is an~$\ell$
satisfying $\mu^{\Ocal(\bfm)}\bigl((f,\bfP),\ell\bigr)<0$. The
numerical criterion (Theorem~\ref{theorem:numcrit}) then implies
that~$(f,\bfP)$ is not semistable, i.e.,~\SemiStable{} is false.

Similarly, if we assume that that~\Stable{m_0} is false, then there is a
subspace~$L\subsetneq\PP^N$ such that $C_\bfP(L) \ge D_{m_0}(L)$,
and~\eqref{eqn:muLleDmDPN1z} gives an~$\ell$ with
$\mu^{\Ocal(\bfm)}\bigl((f,\bfP),\ell\bigr)\le0$. So the numerical criterion
implies that~$(f,\bfP)$ is not stable, i.e.,~\Stable{} is false.
\end{proof}

\begin{proof}[Proof of Corollary $\ref{corollary:dgenm1z}$]
We always have the trivial inequality
\[
  C_\bfP(L)\le m_\Sigma,
\]
where the upper bound comes from the case that~$L$ contains every point in~$\bfP$.  So
condition~\SemiStable{0} is automatically satisfied provided
\begin{equation}
  \label{eqn:nlenr1md1Nrz}
  m_\Sigma \le \frac{m_\Sigma(r + 1)+m_0(d-1)(N-r)}{N+1} 
  \quad\text{for all $0\le r<N$,}
\end{equation}
and similarly~\Stable{0} is true if~\eqref{eqn:nlenr1md1Nrz} holds with a
strict inequality. Since
\[
  \frac{m_\Sigma(r+1)+m_0(d-1)(N-r)}{N+1} - m_\Sigma = \frac{(m_0(d-1)-m_\Sigma)(N-r)}{N+1},
\]
so we see that $m_0(d-1)\ge m_\Sigma$ ensures the required non-negativity, and
$m_0(d-1)> m_\Sigma$ ensures positivity.
\end{proof}

\section{Formal Models and Stability of $\End_d^N[\Pcal]$}
\label{section:stabilityend} 

In this section we describe the portrait parameter space
$\End_d^N[\Pcal]$ as a finite cover of the parameter space associated
to its subportrait of endpoints. In particular, if~$\Pcal$ is
preperiodic, i.e., has no endpoints, then we obtain a description of
the natural projection map $\End_d^N[\Pcal]\to\End_d^N$.  If the
endpoint portrait parameter space is $\SL_{N+1}$-stable, we use
standard GIT arguments to show that~$\End_d^N[\Pcal]$ is also
$\SL_{N+1}$-stable.

\begin{theorem}  
\label{theorem:EndENDpt}
Let $N\ge1$, let $d\ge2$, let~$\Pcal=(\Vcal^\circ,\Vcal,\F,\e)$ be a
portrait such that the scheme
\[
  \End_d^N[\Pcal] \subset\PP^\dimEnd\times(\PP^N)^\Vcal
\]
constructed in Theorem~$\ref{theorem:EnddNPweighted}$
is non-empty,\footnote{See Theorems~\ref{theorem:UnweightedNonempty}
  and~\ref{thm:dimMd1} and Proposition~\ref{proposition:whenisMPempty}
  for various necessary and sufficient conditions on~$\Pcal$ so that
  $\End_d^N[\Pcal]\ne\emptyset$, and formulas for
  $\dim\End_d^N[\Pcal]$.} let
\[
  \END_d^N[\Pcal] :=
  \text{Zariski closure of $\End_d^N[\Pcal]$ in $\End_d^N\times(\PP^N)^\Vcal$},
\]
and let
\[
  \Pi : \PP^\dimEnd\times(\PP^N)^\Vcal \longrightarrow \PP^\dimEnd \times (\PP^N)^{\Vcal\setminus\Vcal^\circ}
\]
be the projection map that discards the copies of~$\PP^N$ associated
to the vertices in~$\Vcal^\circ$, i.e., $\Pi$ discards the vertices
of~$\Pcal$ at which~$\F$ is defined.
\begin{parts}
\Part{(a)}
The map
\[
  \Pi_\Pcal : \END_d^N[\Pcal] \longrightarrow \End_d^N \times (\PP^N)^{\Vcal\setminus\Vcal^\circ}
\]
induced by~$\Pi$ is a finite morphism.\footnote{But note that if we
  replace~$\END_d^N[\Pcal]$ with $\End_d^N[\Pcal]$, then the map need
  not be proper; see Example~\ref{example:Endnotproper}.}
\Part{(b)}
Both $\END_d^N[\Pcal]$ and
$\End_d^N[\Pcal]\times(\PP^N)^{\Vcal\setminus\Vcal^\circ}$ are
$\SL_{N+1}$-invariant subschemes of $\PP^\dimEnd\times(\PP^N)^\Vcal$.
\Part{(c)}
Let $m$ be an integer satisfying\footnote{In particular, if~$\Pcal$ is
  a preperiodic portrait, it suffices to take $m=1$. One only needs
  larger values of~$m$ if~$\Pcal$ has endpoints, or in graph-speak, if
  the directed graph~$\Pcal$ has sinks.}
\begin{equation}
  \label{eqn:mgt1d1VVo}
  m > \frac{1}{d-1} \#(\Vcal\setminus\Vcal^\circ). 
\end{equation}
Then the schemes $\END_d^N[\Pcal]$ and $\End_d^N[\Pcal]$ are contained
in the $\SL_{N+1}$-stable locus of~$\PP^\dimEnd\times(\PP^N)^\Vcal$
relative to the invertible sheaf
$\Pi^*\Ocal_{\PP^\dimEnd\times(\PP^N)^{\Vcal\setminus\Vcal^\circ}}(m,1,\ldots,1)$.\footnote{More
  generally, if we let $n=\#(\Vcal\setminus\Vcal^\circ)$ and we
  replace~\eqref{eqn:mgt1d1VVo} with $m_0>(m_1+\cdots+m_n)/(d-1)$,
  then the indicated schemes are in the stable locus relative to
  $\Pi^*\Ocal(m_0,m_1,\ldots,m_n)$, and similarly in
  Theorem~\ref{theorem:MNdwithportrait}.}
\end{parts}
\end{theorem}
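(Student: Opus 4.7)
The proof splits along parts (a), (b), (c), with (a) feeding crucially into (c).

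For part (a), I would observe that $\END_d^N[\Pcal]$ is a closed subscheme of $\End_d^N \times (\PP^N)^\Vcal$ (being a Zariski closure), while the projection to $\End_d^N \times (\PP^N)^{\Vcal\setminus\Vcal^\circ}$ is proper since the fiber $(\PP^N)^{\Vcal^\circ}$ is projective. Restriction to a closed subscheme preserves properness, so $\Pi_\Pcal$ is proper, and finiteness reduces to quasi-finiteness of geometric fibers. Using the pseudoforest structure of $\Pcal$ (Section~\ref{section:portraitisgraph}), I would order the vertices of $\Vcal^\circ$ so that each $v$ is processed once $\Phi(v)$ has been assigned, with special treatment for cycles. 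At each tree vertex $v$, the point $P_v$ has at most $d^N$ valid choices as a preimage under the finite morphism $f:\PP^N \to \PP^N$. For a cycle $v_0 \to \cdots \to v_{k-1} \to v_0$, the composed constraint $f^k(P_{v_0}) = P_{v_0}$ traps $P_{v_0}$ in the fixed-point scheme of $f^k$, which is $0$-dimensional by a polarization argument: a positive-dimensional fixed subvariety $Y$ would satisfy $(f^k|_Y)^*\Ocal_Y(1) = \Ocal_Y(1)$, while also $(f^k)^*\Ocal_{\PP^N}(1)|_Y = \Ocal_Y(d^k)$, forcing $\Ocal_Y(d^k-1) \cong \Ocal_Y$ in contradiction with ampleness since $d^k \ge 2$.

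For part (b), the $\SL_{N+1}$-action is by morphisms, so the Zariski closure of the invariant subscheme $\End_d^N[\Pcal]$ (Theorem~\ref{theorem:EnddNPweighted}(b)) is itself invariant, giving invariance of $\END_d^N[\Pcal]$. The product $\End_d^N[\Pcal] \times (\PP^N)^{\Vcal\setminus\Vcal^\circ}$, interpreted inside $\PP^\dimEnd \times (\PP^N)^\Vcal$ via the natural splitting of vertex coordinates along $\Vcal^\circ \sqcup (\Vcal\setminus\Vcal^\circ)$, is then a product of $\SL_{N+1}$-invariant factors under the diagonal action, hence invariant.

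For part (c), I would apply Corollary~\ref{corollary:dgenm1z} with $n = \#(\Vcal \setminus \Vcal^\circ)$, $m_0 = m$, and $m_1 = \cdots = m_n = 1$: the hypothesis \eqref{eqn:mgt1d1VVo} becomes exactly $m_0(d-1) > m_\Sigma$, so $\End_d^N \times (\PP^N)^n \subset \PP^\dimEnd \times (\PP^N)^n$ is $\SL_{N+1}$-stable relative to $\Ocal(m,1,\ldots,1)$. Stability then transfers to $\END_d^N[\Pcal]$ along the finite equivariant morphism $\Pi_\Pcal$ of part (a): for a geometric point $p \in \END_d^N[\Pcal]$ with $q = \Pi_\Pcal(p)$, take an $\SL_{N+1}$-invariant section $s$ of $\Ocal(km, k,\ldots, k)$ witnessing stability of $q$. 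Then $\Pi^* s$ is $\SL_{N+1}$-invariant and nonzero at $p$, its non-vanishing locus in $\END_d^N[\Pcal]$ is affine since $\Pi_\Pcal$ is finite and hence affine, the action is closed on that locus because finite equivariant morphisms preserve orbit-closedness (each fiber of $\Pi_\Pcal$ is finite and $\Stab(q)$-invariant), and $\Stab_{\SL_{N+1}}(p) \subseteq \Stab_{\SL_{N+1}}(q)$ is $0$-dimensional. These verify conditions (i)--(iv) of stability for $p$.

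The main anticipated obstacle is the $0$-dimensionality of cycle fibers in part (a), which is where the polarization argument is essential; once that is in place, the rest is routine GIT transfer along a finite equivariant cover, with part (c) being essentially a reformatting of Corollary~\ref{corollary:dgenm1z} on the base.
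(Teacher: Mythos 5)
Your proposal is correct and follows essentially the same route as the paper: finiteness in (a) comes from properness (projectivity of the discarded factors restricted to a closed subscheme) plus quasi-finiteness of fibers via a backwards traversal of the portrait, and stability in (c) is pulled back from Corollary~\ref{corollary:dgenm1z} along the affine equivariant map $\Pi_\Pcal$, where you unpack the orbit-closedness and stabilizer argument that the paper handles by citing Mumford's Proposition~1.18. The only substantive variation is your polarization argument for the zero-dimensionality of $\Fix(f^k)$ (forcing $\Ocal_Y(d^k-1)\cong\Ocal_Y$ on a positive-dimensional fixed subvariety), where the paper instead argues via the intersection of the graph of $f^k$ with the diagonal; both work, and yours is arguably the more carefully justified of the two.
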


\begin{definition}
Let $\END_d^N[\Pcal]$ be as in Theorem~\ref{theorem:EndENDpt}. A point
\[
  \bigl(f,(P_v)_{v\in\Vcal}\bigr)\in\END_d^N[\Pcal](k)
\]
is called a \emph{formal model for the portrait~$\Pcal$ over the
  field~$k$}. Note that if we view~$P$ as a map $P:\Vcal\to\PP^N(k)$,
then~$(f,P)$ is a model for~$\Pcal$ as per
Definition~\ref{definition:unwtedmoduliprob} if and only if~$P$ is
injective.  However, there may be points in $\END_d^N[\Pcal](k)$ for
which this injectivity is not true.  On the other hand, for any
given~$f$, there may or may not be a non-injective map
$P:\Vcal\to\PP^N(k)$ for which~$(f,P)$ is in~$\END_d^N[\Pcal]$. This
is a familiar phenomenon in dynamics, even for polynomials
in~$\CC[z]$, where a point may have formal period strictly larger than
its actual period; cf.\ \cite[Section~4.1]{MR2316407}.  The
terminology ``formal period'' for rational maps on~$\PP^1$ is due
originally to Milnor.
\end{definition}

\begin{proof}[Proof of Theorem~$\ref{theorem:EndENDpt}$]
(a)\enspace
We first treat the case that the portrait~$\Pcal$ is unweighted, i.e.,
we assume that $\e(v)=1$ for all $v\in\Vcal^\circ$.  
We  modify the definition of~$\End_d^N[\Pcal]$ as described
by~\eqref{eqn:Psivvprime} and~\eqref{eqn:EnddNPcaldef} 
to create a somewhat larger subscheme by putting the big diagonal back
into~$(\PP^N)_\D^\Vcal$.  Explicitly,  we let
\[
  \overline{\Psi}_{v,v'} : \End_d^N\times(\PP^N)^\Vcal \to \PP^N\times\PP^N,
  \quad
  \overline{\Psi}_{v,v'} = (\pi_v\times\pi_{v'})\circ F,
\]
where note  that the domain now uses $(\PP^N)^\Vcal$ instead of
$(\PP^N)_\D^\Vcal$. We then define a scheme~$W$ by
\begin{equation}
  \label{eqn:EnddNPcaldefx}
  W := \bigcap_{v\in\Vcal^\circ}  \overline{\Psi}_{v,\Phi(v)}^{-1}(\D)
  \subset \End_d^N\times(\PP^N)^\Vcal.
\end{equation}
We start with three observations about~$W$.
\begin{parts}
\Part{(1)}
$W$ is a closed subscheme of
$\End_d^N\times(\PP^N)^\Vcal$. 
\emph{Proof\/}:\enspace
Since~$\D$ is a closed
subscheme of~$\PP^N\times\PP^N$, and since~$\overline{\Psi}_{v,w}$ is a morphism,
each $\overline{\Psi}_{v,w}^{-1}(\D)$ is a closed subscheme of
$\End_d^N\times(\PP^N)^\Vcal$.  Thus~$W$ is a finite intersection of
closed subschemes, hence~$W$ is a closed subscheme.
\Part{(2)}
$\End_d^N[\Pcal]$ is contained in~$W$.
\emph{Proof\/}:\enspace
From the definitions of~$\Psi$ and~$\overline{\Psi}$ we see that
$\Psi_{v,w}^{-1}(\D)\subseteq\overline{\Psi}_{v,w}^{-1}(\D)$, and then 
$\End_d^N[\Pcal]\subset W$ follows by taking an intersection
of these inclusions.
\Part{(3)}
$\END_d^N[\Pcal]$ is equal to the closure of~$\End_d^N[\Pcal]$ in~$W$. 
\emph{Proof\/}:\enspace
By definition,~$\END_d^N[\Pcal]$ is the closure of~$\End_d^N[\Pcal]$
in $\End_d^N\times(\PP^N)^\Vcal$. From~(2), it follows
that~$\END_d^N[\Pcal]$ is the closure of~$\End_d^N[\Pcal]$ in the
closure of~$W$, but from~(1), the closure of~$W$ is
just~$W$.\footnote{In general,~$W$ may be significantly
  larger than~$\END_d^N[\Pcal]$; see
  Example~\ref{example:WbiggerEND}.}
\end{parts}

To ease notation, we let
\[
  \Zcal=\Vcal\setminus\Vcal^\circ.
\]
We define
\[
  \Pi_W : W \longrightarrow \End_d^N \times (\PP^N)^\Zcal
\]
to be the restriction of~$\Pi$ to~$W$, so in particular~$\Pi_\Pcal$ is
the restriction of~$\Pi_W$ to~$\END_d^N[\Pcal]$.  Let~$k$ be an
algebraically closed field, 
and consider a
geometric point
\begin{equation}
  \label{eqn:PiPcalinvf}
  \bigl(f,(P_v)_{v\in\Vcal}\bigr) 
  \in W(k).
\end{equation}
By construction we have
\begin{equation}
  \label{eqn:fPvPFvEND}
  f(P_v) = P_{\F(v)} \quad\text{for all $v\in\Vcal^\circ$,}
\end{equation}
since the only change between the definition~\eqref{eqn:EnddNPcaldef}
of~$\End_d^N[\Pcal]$ and the definition~\eqref{eqn:EnddNPcaldefx}
of~$W$ is that we have dropped the requirement that the
points~$P_v$ be distinct. The use of the inverse image $\overline{\Psi}^{-1}_{v,\F(v)}(\Delta)$
in~\eqref{eqn:EnddNPcaldefx} still forces the geometric points
of~$W$ to satisfy~\eqref{eqn:fPvPFvEND}.

We start by verifying that the map~$\Pi_W$ is quasi-finite, i.e., has
finite fibers at all geometric points. Fix a geometric point
\[
  \bigl(f,(Q_z)_{z\in\Zcal}\bigr)\in\bigl(\End_d^N\times(\PP^N)^\Zcal\bigr)(k),
\]
and consider the points
\begin{equation}
  \label{eqn:fPvinvimagefQz}
  \bigl(f,(P_v)_{v\in\Vcal}\bigr) \in \Pi_W^{-1}\bigl(f,(Q_z)_{z\in\Zcal}\bigr).
\end{equation}
This means that $P_z=Q_z$ for all $z\in\Zcal$, and we need
to show that for each $v\in\Vcal^\circ$, there are only finitely many
possibilities for~$P_v$. We consider two cases.

First, if $v\in\Vcal^\circ$ is preperiodic for~$\F$,
then~\eqref{eqn:fPvPFvEND} implies that~$P_v$ is preperiodic for~$f$,
and the total length of the orbit is clearly bounded by~$\#\Vcal$.  We
now use the fact that a given endomorphism~$f$ has only finitely many
points of any given period,\footnote{The points of period dividing~$n$
  are the points in the intersection $\G_{\!f^n}\cap\D$, where
  $\G_{\!f^n}\subset(\PP^N)^2$ is the graph of~$f^n$
  and~$\D\subset(\PP^N)^2$ is the diagonal. Both~$\G_{\!f^n}$ and~$\D$
  are irreducible, so they intersect properly unless they are
  equal. But $\G_{\!f^n}=\D$ implies that $f^n$ is the identity map,
  contradicting $\deg(f^n)=d^n>1$.}  and the fact that each point
in~$\PP^N(k)$ has at most~$d$ preimages by~$f$.  Hence there are only
finitely many possibilities for~$P_v$ when~$v\in\Vcal^\circ$ is
preperiodic.

Second, if $v\in\Vcal^\circ$ is not preperiodic for~$\F$, then there
is some $n\ge1$ such that $\F^n(v)\in\Zcal$, where we note that $n$ is
bounded, e.g., we certainly have $n\le\#\Vcal$.
Then~\eqref{eqn:fPvPFvEND} and~\eqref{eqn:fPvinvimagefQz} imply 
that
\[
  f^n(P_v)=P_{\F^n(v)}=Q_{\F^n(v)},
\]
and since we have assumed that~$Q:\Zcal\to\PP^n(k)$ is fixed, this
determines the value of~$f^n(P_v)$.  Hence there are only finitely
many possible values for~$P_v$, since~$f$ is at most~$d$-to-$1$.

This completes the verification that the map~$\Pi_W$ is quasi-finite,
and the same is true of~$\Pi_\Pcal$, since~$\Pi_\Pcal$ is the
restriction of~$\Pi_W$ to the subscheme~$\END_d^N[\Pcal]$ of~$W$.

We next observe that the projection map
\[
  \End_d^N \times (\PP^N)^\Vcal \longrightarrow \End_d^N \times (\PP^N)^\Zcal
\]
is proper, since its fibers are products of projective
spaces. Thus~$\Pi_\Pcal$, which is the restriction of this map to the
closed subscheme~$\END_d^N[\Pcal]$ of $\End_d^N \times (\PP^N)^\Vcal$
is also proper.\footnote{This follows from: (1) If $Z$ is a closed
  subscheme of $X$, then the inclusion $Z\hookrightarrow X$ is
  proper. (2) A composition of proper morphisms is proper. Both~(1)
  and~(2) are exercises using the valuative criterion, or
  see~\cite[II.4.8]{hartshorne}.} We have now shown that~$\Pi_\Pcal$
is quasi-finite and proper,
so~\cite[Chapter~I,~Proposition~1.10]{MR559531} implies
that~$\Pi_\Pcal$ is finite. This completes the proof of~(a) for
unweighted portraits.
\par
For a general weighted portrait~$\Pcal$, let~$\Pcal'$ be the
associated unweighted portrait. We know from
Theorem~\ref{theorem:EnddNPweighted} that~$\END_d^N[\Pcal]$ is a
closed subscheme of~$\End_d^N[\Pcal']$, and the restriction of a
finite map to a closed subscheme is finite, which gives the desired
result for~$\End_d^N[\Pcal]$.
\par\noindent(b)\enspace 
We proved earlier in Theorem~\ref{theorem:EnddNPweighted}(b)
that $\End_d^N[\Pcal]$ is $\SL_{N+1}$-in\-var\-iant, from which it follows
that its Zariski closure~$\END_d^N[\Pcal]$ is likewise
$\SL_{N+1}$-invariant.
\par\noindent(c)\enspace
Every finite morphism is affine, so it follows from~(a)
that~$\Pi_\Pcal$ is an affine morphism.  We also remark that since
every geometric point of~$\End_d^N$ has
$0$-dimensional~$\SL_{N+1}$-stabilizer, the same is true for every
point in~$\End_d^N\times(\PP^N)^\Vcal$, and hence also true for every
point in the subscheme~$\END_d^N[\Pcal]$.  We are going to
use~\cite[Proposition~1.18]{mumford:geometricinvarianttheory}, which
says that if $F:X\to Y$ is a quasi-affine morphism commuting with the
action of a reductive group~$G$, and if~$\Lcal$ is a $G$-linearized
invertible sheaf on~$Y$, then
\begin{equation}
  \label{eqn:finvstable}
  F^{-1}\left(\left\{ \begin{tabular}{@{}c@{}}  
   $G$-stable points of $Y$ \\ relative to $\Lcal$\\
  \end{tabular}\right\}\right)
  \subseteq \left\{ \begin{tabular}{@{}c@{}}
   $G$-stable points of $X$\\    relative to $F^*\Lcal$\\
  \end{tabular}\right\}.
\end{equation}

Using notation from Corollary~\ref{corollary:dgenm1z}, we have
\[
  n=m_\Sigma=\#\Zcal=\#(\Vcal\setminus\Vcal^\circ)\quad\text{and}\quad m_0=m,
\]
so our assumption~\eqref{eqn:mgt1d1VVo} says that
$m_0>m_\Sigma/(d-1)$. This is exactly what is required to apply
Corollary~\ref{corollary:dgenm1z}, so we conclude that
\begin{equation}
  \label{eqn:EnddNPNnstabm1}
  \text{$\End_d^N\times(\PP^N)^\Zcal$ is $\SL_{N+1}$-stable
    for~$\Ocal(m,1,\ldots,1)$.}
\end{equation}

Applying this, we find that 
\begin{align*}
  \END_d^N[\Pcal]
  &= \Pi_\Pcal^{-1}(\End_d^N\times(\PP^N)^\Zcal) \\
  &= \Pi_\Pcal^{-1}\bigl(\{\text{$\SL_{N+1}$-stable points of $\End_d^N\times(\PP^N)^\Zcal$}\}\bigr)
  \quad\text{from \eqref{eqn:EnddNPNnstabm1},}\\*
  &\subseteq \bigl\{\text{$\SL_{N+1}$-stable points of $\END_d^N[\Pcal]$}\bigr\} \quad\text{from \eqref{eqn:finvstable}}\\*
  &\omit\hfill with  $G=\SL_{N+1}$,  $\Lcal=\Ocal_{\End_d^N\times \left(\PP^N\right)^\Zcal}(m,1,\ldots,1)$,\\*
  &\omit\hfill   $F=\Pi_\Pcal$, $X=\END_d^N[\Pcal]$, $Y=\End_d^N\times(\PP^N)^\Zcal$.%
\end{align*}
Hence~$\END_d^N[\Pcal]$ is contained in the $\SL_{N+1}$-stable locus
of $\PP^\dimEnd\times(\PP^N)^\Vcal$, which completes the proof of
Theorem~\ref{theorem:MNdwithportrait}.
\end{proof}

Combining Theorem~\ref{theorem:EndENDpt} with
Theorem~\ref{theorem:UnweightedNonempty}, a result that we will prove
later, leads to a useful fact about formal models for unweighted
portraits.

\begin{corollary}
Let~$\Pcal=(\Vcal^\circ,\Vcal,\F)$ be an unweighted portrait such
that~$\End_d^N[\Pcal]$ is non-empty, and let~$k$ be an algebraically
closed field of characteristic~$0$. Then every~$f\in\End_d^N(k)$
admits a formal model for the portrait~$\Pcal$ over the field~$k$.
This is true even if one first arbitrarily assigns values
in~$\PP^N(k)$ to the vertices in~$\Vcal\setminus\Vcal^\circ$.
\end{corollary}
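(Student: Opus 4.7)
The plan is to reduce the claim to surjectivity of the finite morphism $\Pi_\Pcal \colon \END_d^N[\Pcal] \to \End_d^N \times (\PP^N)^{\Zcal}$ from Theorem~\ref{theorem:EndENDpt}(a), where I write $\Zcal := \Vcal \setminus \Vcal^\circ$. Once surjectivity is established, for any $f \in \End_d^N(k)$ and any assignment $Q \colon \Zcal \to \PP^N(k)$ we can lift $(f,Q)$ to a geometric point $(f,P) \in \END_d^N[\Pcal](k)$; this $(f,P)$ is by definition a formal model for $\Pcal$ over $k$ extending $Q$, as required.

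To obtain surjectivity, I will combine a dimension count with the properness of $\Pi_\Pcal$. Since $\Pi_\Pcal$ is finite, every irreducible component of $\END_d^N[\Pcal]$ has dimension at most $\dim\End_d^N+N|\Zcal|$, matching the dimension of the irreducible target $\End_d^N \times (\PP^N)^\Zcal$. Conversely, the hypothesis $\End_d^N[\Pcal] \neq \emptyset$ together with Theorem~\ref{theorem:UnweightedNonempty} (applicable because $\Pcal$ is unweighted and $\characteristic k = 0$) yields $\dim\Moduli_d^N[\Pcal] = \dim\Moduli_d^N + N|\Zcal|$, the expected dimension of the moduli space. Because the $\SL_{N+1}$-action on $\End_d^N$ has finite stabilizers by Theorem~\ref{theorem:EnddNstableAutfinite}(a), no dimension is lost in passing to the GIT quotient, so $\dim\End_d^N[\Pcal] = \dim\End_d^N + N|\Zcal|$, and the same holds for its Zariski closure $\END_d^N[\Pcal]$.

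Therefore $\END_d^N[\Pcal]$ contains an irreducible component $C$ of dimension $\dim\End_d^N + N|\Zcal|$. The restriction $\Pi_\Pcal|_C$ is finite, hence proper, so its image $\Pi_\Pcal(C)$ is a closed irreducible subset of $\End_d^N\times(\PP^N)^\Zcal$, of dimension equal to $\dim C$ by finiteness. Since the target is itself irreducible of this same dimension, we conclude $\Pi_\Pcal(C) = \End_d^N\times(\PP^N)^\Zcal$, which gives the required surjectivity and hence the corollary.

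The main technical delicacy is the dimension translation between the moduli-space and parameter-space levels; this is clean here precisely because Theorem~\ref{theorem:EnddNstableAutfinite}(a) guarantees that $\SL_{N+1}$-stabilizers are finite on all of $\End_d^N$, so the quotient map $\End_d^N[\Pcal] \to \Moduli_d^N[\Pcal]$ has fibers of the expected dimension $(N+1)^2-1$, and the naive dimension addition is correct throughout.
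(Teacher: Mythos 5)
Your argument is correct and is essentially the paper's own proof: both reduce the claim to surjectivity of the finite morphism $\Pi_\Pcal$, establish $\dim\END_d^N[\Pcal]=\dim\End_d^N+N\cdot\#(\Vcal\setminus\Vcal^\circ)$ via Theorem~\ref{theorem:UnweightedNonempty}, and conclude that the proper image, being closed of full dimension in the irreducible target, is everything. The only cosmetic difference is that you detour through the moduli-space dimension (part~(e)) and back via finite stabilizers, whereas the paper cites the parameter-space statement (part~(d)) directly.
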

\begin{proof}
As before, we let $\Zcal:=\Vcal\setminus\Vcal^\circ$.
  We first note that
\begin{align*}
  \dim \End_d^N\times(\PP^N)^{\Zcal}
  &= \dim \End_d^N + N\cdot\#\Zcal \\*
  &= \dim \End_d^N[\Pcal]
  \quad\text{from Theorem~\ref{theorem:UnweightedNonempty}(d),} \\*
  &= \dim \END_d^N[\Pcal]
  \begin{tabular}[t]{l}
    since  $\END_d^N[\Pcal]$ is the Zariski\\
    closure of $\End_d^N[\Pcal]$.\\
  \end{tabular}
\end{align*}
Next we use the fact from Theorem~\ref{theorem:EndENDpt}(a)
that
\[
  \Pi_\Pcal:\END_d^N[\Pcal]\to\End_d^N\times(\PP^N)^{\Zcal}
\]
is finite, so in particular it is proper, so the image
$\Pi_\Pcal\bigl(\END_d^N[\Pcal]\bigr)$ is a closed subscheme
of~$\End_d^N\times(\PP^N)^{\Zcal}$. Thus the image is both closed and
has dimension equal to the dimension
of the irreducible variety~$\End_d^N\times(\PP^N)^{\Zcal}$, hence the image is all
of~$\End_d^N\times(\PP^N)^{\Zcal}$.
\end{proof}

\begin{example}
\label{example:Endnotproper}
Theorem~\ref{theorem:EndENDpt} says in particular that if~$\Pcal$ is a preperiodic
portrait, then the map $\END_d^N[\Pcal]\to\End_d^N$ is finite and
the map $\End_d^N[\Pcal]\to\End_d^N$ is quasi-finite.  We note that
the latter map need not be finite, since it need not be proper. To
illustrate, let~$\Pcal$ be the portrait consisting of two fixed
points, and consider the family of maps
\[
  t \longmapsto f_t := z(z-t+1) \in \End_2^1,
\]
valid for all~$t$. On the set $t\ne0$, we can extend this to a family
\[
  t \longmapsto (f_t,0,t) \in \End_2^1[\Pcal],
\]
but we clearly cannot extend to $t=0$, since~$\End_2^1[\Pcal]$
classifies triples $(f,P,Q)$ with $f(P)=P$, $f(Q)=Q$, and $P\ne Q$,
so $(f_0,0,0)$ is not a point of~$\End_2^1[\Pcal]$.
\end{example}

\begin{example}
\label{example:WbiggerEND}
Let~$W$ be the scheme defined by~\eqref{eqn:EnddNPcaldefx}. We proved
that $\END_d^N[\Pcal]$ is contained in~$W$, but we want to give an
example showing that they need not be equal. Let~$\Pcal$ be the
portrait consisting of two fixed points. Then~$W$ contains the subscheme
\[
  \bigl\{ (f,P,Q) : \text{$P=Q$ and $f(P)=P$} \bigr\},
\]
which is isomorphic
to~$\End_d^N[\text{$1$~fixed~point}]$. But~$(f,P,P)\in\END_d^N[\Pcal]$
if and only if~$P$ is a fixed point of multiplicity strictly larger
than~$1$. For example, on~$\PP^1$ we have
\[
  (f,P,P)\in\END_d^1[\Pcal]
  \quad\Longleftrightarrow\quad
  f'(P)=1,
\]
where we write~$f'(P)$ for the multiplier of~$f$ at~$P$. Thus
for~$N=1$, the scheme~$W$ has two irreducible components, and for
higher values of~$N$ and/or more complicated portraits~$\Pcal$, it may
have many more components.
\end{example}

\section{The Moduli Space $\Moduli_d^N[\Pcal]$ for Portrait Models}
\label{section:modulispace} 

In this section we construct moduli spaces for dynamical systems
on~$\PP^N$ with a specified portrait.

\begin{theorem}
\label{theorem:MNdwithportrait}
Continuing with the notation from Theorem~$\ref{theorem:EndENDpt}$, we
assume as in that theorem that~$m$ is an integer satisfying
\begin{equation}
  \label{eqn:mgt1d1VVoMd}
  m > \frac{1}{d-1} \#(\Vcal\setminus\Vcal^\circ). 
\end{equation}
\begin{parts}
\Part{(a)}
There are well-defined geometric quotient schemes\footnote{If we need
  to indicate the dependence of the structure sheaf on~$m$, for
  example as in Theorem~\ref{theorem:MNdwithportrait}(c), then we write
  $\MODULI_d^N[m,\Pcal]$ and $\Moduli_d^N[m,\Pcal]$.}
\begin{align*}
  \MODULI_d^N[\Pcal] &:= \END_d^N[\Pcal]\GITQuot\SL_{N+1},\\*
  \Moduli_d^N[\Pcal] &:= \End_d^N[\Pcal]\GITQuot\SL_{N+1},
\end{align*}
satisfying\textup:
\begin{parts}
  \Part{(i)} The structure sheaf of $\MODULI_d^N[\Pcal]$ is the maximal subsheaf
  of $\Pi^*_\Pcal\Ocal_{\PP^\dimEnd \times (\PP^N)^{\Vcal \setminus \Vcal^\circ}}(m,1,\ldots,1)$ that is $\SL_{N+1}$-invariant.
  \Part{(ii)} $\MODULI_d^N[\Pcal]$ is proper over~$\Moduli_d^N$.
  \Part{(iii)} $\Moduli_d^N[\Pcal]$ is an open subscheme of $\MODULI_d^N[\Pcal]$.
\end{parts}
\Part{(b)}
The scheme~$\Moduli_d^N[\Pcal]$ is a coarse moduli space for the
$(N,d,\Pcal)$-moduli problem.
\Part{(c)}
Let~$\Pcal_1$ and~$\Pcal_2$ be portraits, and assume
that~$m$ satisfies~\eqref{eqn:mgt1d1VVoMd} for both~$\Pcal_1$
and~$\Pcal_2$.  Then the map described in
Proposition~$\ref{proposition:EnddNP1toP2}$ descends to give a natural
map
\[
  \Hom(\Pcal_1,\Pcal_2) \longrightarrow
  \Hom\bigl(\Moduli_d^N[m,\Pcal_2],\Moduli_d^N[m,\Pcal_1]\bigr).
\]
N.B. In order to obtain a map between the two moduli spaces, it is
necessary to use the same~$m$ for both.
\end{parts}
\end{theorem}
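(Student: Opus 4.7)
The plan is to reduce all three parts to standard GIT machinery, using Theorem~\ref{theorem:EndENDpt} as the essential input. For part~(a), Theorem~\ref{theorem:EndENDpt}(b)--(c) tells us that $\END_d^N[\Pcal]$ and $\End_d^N[\Pcal]$ are $\SL_{N+1}$-invariant subschemes contained in the stable locus of $\PP^\dimEnd \times (\PP^N)^\Vcal$ relative to the invertible sheaf $\Pi^*\Ocal(m,1,\ldots,1)$, so Mumford's fundamental theorem on geometric quotients of stable loci yields the quotients $\MODULI_d^N[\Pcal]$ and $\Moduli_d^N[\Pcal]$. The polarization description~(i) is then read off the construction of the GIT quotient as $\Proj$ of the graded ring of $\SL_{N+1}$-invariant sections. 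For~(ii), I would factor
\begin{equation*}
  \END_d^N[\Pcal] \xrightarrow{\;\Pi_\Pcal\;} \End_d^N \times (\PP^N)^{\Vcal \setminus \Vcal^\circ} \longrightarrow \End_d^N,
\end{equation*}
where the first arrow is finite by Theorem~\ref{theorem:EndENDpt}(a) and the second is proper as a projection from a product of projective spaces; the composition is proper and $\SL_{N+1}$-equivariant, hence descends to a proper morphism $\MODULI_d^N[\Pcal] \to \Moduli_d^N$. For~(iii), $\End_d^N[\Pcal]$ is by definition an $\SL_{N+1}$-invariant open subscheme of its Zariski closure $\END_d^N[\Pcal]$, and taking geometric quotients inside a common stable locus preserves this openness.

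For part~(b), the coarse moduli property comes from combining the universal parameter space property of $\End_d^N[\Pcal]$ from Theorem~\ref{theorem:EnddNPweighted}(a) with the bijection between $\SL_{N+1}$-orbits in the stable locus and geometric points of the GIT quotient. The key observation is that the $\SL_{N+1}$-action on $\End_d^N[\Pcal]$, which sends $(f,\g)$ to $(\f^{-1}\circ f \circ \f,\,\f^{-1}\circ\g)$, is precisely the equivalence of models in Definition~\ref{definition:unwtedmoduliprob}. Given any family of models over a test scheme $T$, the universal property produces a map $T \to \End_d^N[\Pcal]$ which, composed with the GIT quotient map, yields the classifying map to $\Moduli_d^N[\Pcal]$; uniqueness on geometric points is the orbit-to-point bijection.

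For part~(c), Proposition~\ref{proposition:EnddNP1toP2} supplies an $\SL_{N+1}$-equivariant morphism $\a^* \colon \End_d^N[\Pcal_2] \to \End_d^N[\Pcal_1]$ sending $(f,P) \mapsto (f,P\circ\a)$; equivariance is automatic because $\SL_{N+1}$ acts pointwise on $P$ and hence commutes with precomposition by $\a$. To descend to GIT quotients we must check compatibility of linearizations: insisting on the same integer $m$ for both portraits ensures that the $\End_d^N$-component of the polarization matches, and the set map $\a\colon \Vcal_1 \hookrightarrow \Vcal_2$ induces a morphism $\End_d^N \times (\PP^N)^{\Vcal_2 \setminus \Vcal_2^\circ} \to \End_d^N \times (\PP^N)^{\Vcal_1 \setminus \Vcal_1^\circ}$ along which $\Ocal(m,1,\ldots,1)$ pulls back to the corresponding sheaf on the target (absorbing any extra $\PP^N$-factors as trivial tensorands). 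Hence $\SL_{N+1}$-invariant sections pull back to $\SL_{N+1}$-invariant sections, and composing $\a^*$ with the quotient map produces the required morphism of coarse moduli spaces by the universal property.

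The main obstacle is in part~(c), where the bookkeeping of linearizations across two different ambient products must be handled carefully, especially because vertices in $\Vcal_2\setminus\a(\Vcal_1)$ may lie in either $\Vcal_2^\circ$ or its complement; one must verify that the pullback of invariant sections really lands in the desired invariant subring rather than producing extraneous factors. A secondary technical point is confirming that the morphism $\MODULI_d^N[\Pcal]\to\Moduli_d^N$ in~(a)(ii) genuinely factors through the finite/proper factorization above after quotienting, which uses that geometric quotients are preserved under $\SL_{N+1}$-equivariant proper pushforward.
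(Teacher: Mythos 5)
Your proposal is correct and follows essentially the same route as the paper: part~(a) is deduced from Theorem~\ref{theorem:EndENDpt} plus standard GIT facts about geometric quotients of stable loci, part~(b) from the universal parameter space property combined with the orbit--point bijection, and part~(c) by descending the $\SL_{N+1}$-equivariant map $\a^*$ of Proposition~\ref{proposition:EnddNP1toP2} to the quotients. The only difference is one of detail — you spell out the finite-then-proper factorization for~(a)(ii) and the linearization bookkeeping for~(c), which the paper leaves implicit (and the latter is not actually needed for descent, since equivariance alone suffices by the categorical quotient property).
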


\begin{definition}
We set the convenient notation
\[
  \<\;\cdot\;\> : \End_d^N[\Pcal]\longrightarrow\Moduli_d^N[\Pcal]
\]
for the quotient map, i.e., if $(f,P)\in\End_d^N[\Pcal]$ is a model
for~$\Pcal$, then $\<f,P\>\in\Moduli_d^N[\Pcal]$ denotes the
$\SL_{N+1}$-equivalence class of $(f,P)$.
\end{definition}

\begin{remark}
One can define interesting subschemes of~$\Moduli_d^N[\Pcal]$ by
specifying that the automorphism group of the map~$f$ in the
pair~$(f,\bfP)$ has a specified subgroup. Thus for each finite
group~$\Gcal$, we define\footnote{We apologize for this clumsy
  notation, which will not be used further in this paper, but we want
  to keep~$\Gcal$ separate from the subgroup~$\Acal$ of~$\Aut(\Pcal)$
  that appears in the notation defined in
  Section~\ref{subsection:MdNPmodA}.}
\[
  {}_\Gcal\Moduli_d^N[\Pcal]
  := \left\{ (f,\bfP)\in\Moduli_d^N[\Pcal] :
  \begin{tabular}{@{}l@{}}
    $\Aut(f)$ contains a sub-\\ group isomorphic to $\Gcal$\\
  \end{tabular}
  \right\}.
\]
Levy~\cite{MR2741188} proves that ${}_\Gcal\Moduli_d^N = {}_\Gcal\Moduli_d^N[\emptyset]$ is a Zariski
closed subvariety of~$\Moduli_d^N$ for all~$\Gcal$, and that it is
non-empty for at most finitely many~$\Gcal$, hence the same is true
for ${}_\Gcal\Moduli_d^N[\Pcal] \subseteq \Moduli_d^N[\Pcal]$.

A nice example is ${}_{\ZZ/2\ZZ}\Moduli_2^1[\emptyset]$, which is a
cuspidal cubic curve in
$\Moduli_2^1[\emptyset]=\Moduli_2^1\cong\AA^2$. The cusp is the unique
point in~${}_{\Scal_3}\Moduli_2^1[\emptyset]$.  Let~$p$ be prime, and
let $\Ccal_n$ denote the unweighted portrait consisting of a single
$n$-cycle. Then Manes~\cite{MR2524186} shows that the moduli space
${}_{\ZZ/p\ZZ}\Moduli_d^1[\Ccal_{p\ell}]$ is reducible for infinitely
many primes~$\ell$.
See also~\cite{arxiv1607.05772} for a catalog of
the schemes ${}_\Gcal\Moduli_2^2[\emptyset]$ for groups
satisfying~$\#\Gcal\ge3$.  For example, the largest group for
which~${}_\Gcal\Moduli_2^2[\emptyset]$ is non-empty is the group
$\Gcal=(\ZZ/7\ZZ)\rtimes(\ZZ/3\ZZ)$ of order~$21$.
\end{remark}

\begin{proof}[Proof of Theorem~$\ref{theorem:MNdwithportrait}$]
(a)\enspace
This follows directly from Theorem~\ref{theorem:EndENDpt}, combined with 
standard facts from geometric invariant theory regarding geometric quotients
of schemes of stable points.
\par\noindent(b)\enspace
This is immediate from the fact that~$\End_d^N[\Pcal]$ is a universal
parameter space for models of~$\Pcal$, combined with the fact
that~$\Moduli_d^N[\Pcal]$ is a geometric quotient of~$\End_d^N[\Pcal]$
for the action of~$\SL_{N+1}$ that matches the equivalence relation on
models of~$\Pcal$ used to define the~$(N,d,\Pcal)$-moduli problem.
\par\noindent(c)\enspace
Proposition~\ref{proposition:EnddNP1toP2} says that
every portrait morphism  $\a:\Pcal_1\to\Pcal_2$
induces a scheme-theoretic morphism of the associated parameter spaces
\[
  \a^* : \End_d^N[\Pcal_2] \xrightarrow{\;\;\sim\;\;} \End_d^N[\Pcal_1] 
\]
via $\a^*(f,\bfP)=(f,\bfP\circ\a)$.  Further, for~$\f\in\SL_{N+1}$
and~$(f,\bfP)\in\End_d^N[\Pcal_2]$ we have
\begin{align*}
  \a^*\circ\f \bigl(f,(P_v)_{v\in\Vcal}\bigr)
  &= \a^*\bigl(f^\f,(\f^{-1}P_v)_{v\in\Vcal}\bigr)
  = \bigl(f^\f,(\f^{-1}P_{\a(v)})_{v\in\Vcal}\bigr), \\
  \f\circ\a^* \bigl(f,(P_v)_{v\in\Vcal}\bigr)
  &= \f \bigl(f,(P_{\a(v)})_{v\in\Vcal}\bigr)
  = \bigl(f^\f,(\f^{-1}P_{\a(v)})_{v\in\Vcal}\bigr).
\end{align*}
Thus the action of~$\a^*$ commutes with the action of~$\SL_{N+1}$,
so~$\a^*$ descends to a morphism of the quotient schemes. (We should
also point out that~$\a^*$ preserves~$\Ocal(m,1,\ldots,1)$, since it
simply permutes the factors of~$\PP^N$.)
\end{proof}

\subsection{The Moduli Space $\Moduli_d^N{[\Pcal|\Acal]}$ for Portraits Modulo Automorphisms}
\label{subsection:MdNPmodA}

Applying Theorem~\ref{theorem:MNdwithportrait}(c) with
$\Pcal_1=\Pcal_2=\Pcal$, we see that every automorphism of an
unweighted portrait induces a scheme-theoretic automorphism of the
associated moduli space~$\Moduli_d^N[\Pcal]$.  The group~$\Aut(\Pcal)$
is finite, since it is a subgroup of the symmetric group on the
set~$\Vcal$.\footnote{We generally assume that~$\Vcal\ne\emptyset$,
  and in any case we define the symmetry group of the empty set to be
  the trivial group.}  Quotients of schemes by finite groups of
automorphisms always exist as GIT geometric quotients. Hence for any
subgroup~$\Acal\subset\Aut(\Pcal)$, we obtain a GIT quotient space.
We formally state this result as a corollary.

\begin{corollary}
\label{corollary:MdNPA}
Let $\Pcal $ be a portrait, let~$\Acal\subseteq\Aut(\Pcal)$ be a subgroup of
the automorphism group of~$\Pcal$. Then the natural action
of~$\Acal$ on~$\End_d^N[\Pcal]$ commutes with the action of~$\SL_{N+1}$,
and hence there exists a GIT geometric quotient space
\[
  \Moduli_d^N[\Pcal|\Acal] := \End_d^N[\Pcal]\GITQuot(\Acal\cdot\SL_{N+1})
  \cong \Moduli_d^N[\Pcal]\GITQuot\Acal.
\]
\end{corollary}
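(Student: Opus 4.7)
The plan is to verify that the natural $\Acal$-action on $\End_d^N[\Pcal]$ commutes with the $\SL_{N+1}$-action, so that the combined action of the reductive group $\Acal\cdot\SL_{N+1}$ is well-defined, and then to invoke the existing stability result together with standard GIT to obtain the quotient and the alternative description.

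First I would use Proposition~\ref{proposition:EnddNP1toP2} (applied with $\Pcal_1=\Pcal_2=\Pcal$) to get, for each $\a\in\Acal\subseteq\Aut(\Pcal)$, a morphism $\a^*:\End_d^N[\Pcal]\to\End_d^N[\Pcal]$ sending $(f,\bfP)\mapsto(f,\bfP\circ\a)$. Since $\a$ is invertible in $\Acal$, $\a^*$ is an automorphism, and $\a\mapsto (\a^{-1})^*$ defines a left $\Acal$-action. Commutativity with $\SL_{N+1}$ is precisely the computation already performed in the proof of Theorem~\ref{theorem:MNdwithportrait}(c): both $\a^*\circ\f$ and $\f\circ\a^*$ send $(f,(P_v)_{v\in\Vcal})$ to $(f^\f,(\f^{-1}P_{\a(v)})_{v\in\Vcal})$. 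Hence the actions commute, and $\Acal\cdot\SL_{N+1}\cong\Acal\times\SL_{N+1}$ is a reductive group (product of a finite group and a reductive group) acting on $\End_d^N[\Pcal]$.

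The next step is linearization. Any $\a\in\Aut(\Pcal)$ preserves $\Vcal^\circ$ bijectively and therefore permutes $\Vcal\setminus\Vcal^\circ$; since the sheaf $\Pi^*_\Pcal\Ocal_{\PP^\dimEnd\times(\PP^N)^{\Vcal\setminus\Vcal^\circ}}(m,1,\ldots,1)$ assigns the same weight $1$ to every factor of $\PP^N$ indexed by $\Vcal\setminus\Vcal^\circ$, the sheaf is $\Acal$-equivariant, and combined with the existing $\SL_{N+1}$-linearization this yields an $(\Acal\cdot\SL_{N+1})$-linearization. The main obstacle, albeit a mild one, is to check that $\End_d^N[\Pcal]$ lies in the $(\Acal\cdot\SL_{N+1})$-stable locus. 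This follows from the corresponding $\SL_{N+1}$-stability established in Theorem~\ref{theorem:EndENDpt}(c): each $(\Acal\cdot\SL_{N+1})$-orbit is a finite disjoint union $\bigsqcup_{\a\in\Acal}\SL_{N+1}\cdot\a^*(f,\bfP)$ of closed $\SL_{N+1}$-orbits, so is itself closed in a suitable affine open (obtained by taking the product of a given $\SL_{N+1}$-invariant section with its finitely many $\Acal$-translates, yielding a jointly invariant section), and the $(\Acal\cdot\SL_{N+1})$-stabilizer is a finite extension of the finite $\SL_{N+1}$-stabilizer and therefore finite.

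With these verifications in hand, Mumford's GIT existence theorem produces the geometric quotient $\End_d^N[\Pcal]\GITQuot(\Acal\cdot\SL_{N+1})$. Finally, for the identification with $\Moduli_d^N[\Pcal]\GITQuot\Acal$, I would appeal to the general principle of quotients in stages for commuting reductive actions: the $\SL_{N+1}$-quotient of $\End_d^N[\Pcal]$ is $\Moduli_d^N[\Pcal]$ by Theorem~\ref{theorem:MNdwithportrait}, the commutativity of the two actions lets the $\Acal$-action descend to $\Moduli_d^N[\Pcal]$ (its finiteness guaranteeing that the geometric quotient by this residual action exists), and a direct check of the universal property, or equivalently the identification of rings of invariants $\bigl(\Ocal^{\SL_{N+1}}\bigr)^\Acal=\Ocal^{\Acal\cdot\SL_{N+1}}$ on affine opens, shows that the iterated quotient coincides with $\End_d^N[\Pcal]\GITQuot(\Acal\cdot\SL_{N+1})$.
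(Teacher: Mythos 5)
Your proposal is correct and follows essentially the same route as the paper, which obtains the result by combining the commutativity computation and sheaf-preservation remark from the proof of Theorem~\ref{theorem:MNdwithportrait}(c) with the finiteness of $\Aut(\Pcal)$ and the standard fact that geometric quotients by finite groups of automorphisms always exist. The extra details you supply (the $\Acal$-equivariance of $\Ocal(m,1,\ldots,1)$, stability for the combined group via products of $\Acal$-translates of invariant sections, and the identification $(\Ocal^{\SL_{N+1}})^{\Acal}=\Ocal^{\Acal\cdot\SL_{N+1}}$ for the quotient in stages) are exactly the points the paper leaves implicit.
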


\begin{definition}
\label{definition:MdNP10}
With~$\Moduli_d^N[\Pcal|\Acal]$ as defined in
Corollary~\ref{corollary:MdNPA}, we set the following
notation:\footnote{This notation is motivated by the notation for the
  elliptic modular curves~$X_1(n)$ and~$X_0(n)$, where~$X_1(n)$
  classifies pairs~$(E,P)$ consisting of an elliptic curve and a point
  of exact order~$n$, and~$X_0(n)=X_1(n)\GITQuot(\ZZ/n\ZZ)^*$, the
  action of~$k\in(\ZZ/n\ZZ)^*$ being to send~$(E,P)$ to~$(E,kP)$.}
\[
  \Moduli_d^N[\Pcal|1] := \Moduli_d^N[\Pcal]
  \quad\text{and}\quad
  \Moduli_d^N[\Pcal|0] := \Moduli_d^N[\Pcal|\Aut(\Pcal)].
\]
\end{definition}

By analogy with theorems and conjectures on moduli spaces of curves
and abelian varieties, we make the following conjecture.  See
Theorem~\ref{theorem:M21Cn}(c) for an example of
a~$\Moduli_d^1[\Pcal]$ that is a surface of general type.

\begin{conjecture}
  Let $N\ge1$ and $d\ge2$.
\begin{parts}
  \Part{(a)}
  There is a constant $C_1(N,d)$ so that for all unweighted
  preperiodic portraits~$\Pcal=(\Vcal,\F)$ satisfying
  $\#\Vcal\ge{C_1(N,d)}$ and $\Moduli_d^N[\Pcal]\ne\emptyset$, the variety
  $\Moduli_d^N[\Pcal]\otimes_\ZZ\CC$ is an irreducible variety of general type.
  \Part{(b)}
  There is a constant $C_0(N,d)$ so that for all unweighted
  preperiodic portraits~$\Pcal=(\Vcal,\F)$ satisfying
  $\#\Vcal\ge{C_0(N,d)}$ and  $\Moduli_d^N[\Pcal|0]\ne\emptyset$, the variety
  $\Moduli_d^N[\Pcal|0]\otimes_\ZZ\CC$ is an irreducible variety of general type.
\end{parts}
\end{conjecture}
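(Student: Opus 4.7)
The plan is to adapt the strategy used by Harris, Mumford, Eisenbud and others for proving $\overline{\Moduli}_g$ is of general type for $g$ large, splitting the argument into an irreducibility part and a Kodaira-dimension part.

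\textbf{Step 1 (Irreducibility).} I would aim to show that $\Moduli_d^N[\Pcal]\otimes_\ZZ\CC$ becomes irreducible via a monodromy argument. Since $\Pcal$ is preperiodic, every $f\in\End_d^N$ has only finitely many models realizing~$\Pcal$, so the forgetful map $\pi : \Moduli_d^N[\Pcal] \to \Moduli_d^N$ is generically finite; its Galois closure has a monodromy group $\Gcal$ contained in the finite symmetric group on the generic fiber. Irreducibility of~$\Moduli_d^N[\Pcal]$ amounts to transitivity of~$\Gcal$ on the fiber, while irreducibility of~$\Moduli_d^N[\Pcal|0]$ is the weaker statement that~$\Gcal$ acts transitively on the $\Aut(\Pcal)$-orbits of markings. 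The plan is to argue by induction on subportraits, using the morphisms $\End_d^N[\Pcal_2]\to\End_d^N[\Pcal_1]$ of Proposition~\ref{proposition:EnddNP1toP2} to propagate monodromy from smaller portraits to larger ones, adding one new marked preperiodic point at a time and checking that adjoining this point does not decompose the cover. A Thurston-rigidity input (see~\cite{MR1251582,arxiv1702.02582}) would then rule out unwanted splittings: distinct combinatorial realizations of~$\Pcal$ inside~$\PP^N$ can be fused by analytic continuation through the parameter space unless a rigid obstruction separates them. A model for the one-dimensional case is Bousch's theorem on irreducibility of the dynatomic curves~$X_1^\dyn(n)$ for~$z^2+c$.

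\textbf{Step 2 (Kodaira dimension).} The goal is to prove bigness of $K_{\Moduli_d^N[\Pcal|0]}$ for $\#\Vcal$ large. I would exploit the description of~$\Moduli_d^N[\Pcal|0]$ as a GIT quotient relative to the sheaf~$\Ocal(m,1,\ldots,1)$ produced in Theorem~\ref{theorem:MNdwithportrait}(a.i), which supplies a natural positive line bundle whose effective tensor factors grow linearly with~$\#\Vcal$. Combining this with the multiplier-system maps described in the final section of the paper, which push $\Moduli_d^N[\Pcal]$ into affine space subject to Ueda's fixed-point relations, one obtains enough effective divisors on $\Moduli_d^N[\Pcal|0]$ to dominate the canonical class once~$\#\Vcal$ is sufficiently large. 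An adjunction computation on the compactification~$\MODULI_d^N[\Pcal|0]$ would then convert a large-section estimate into bigness of~$K$. Finally, since the moduli problem has non-trivial automorphisms, one must verify a Reid--Tai type inequality for the stabilizer groups $\Aut(f)\cap\Stab_{\SL_{N+1}}(\bfP)$ to ensure that pluricanonical forms extend across the quotient singularities, in the style of Tai's argument for~$\Acal_g$.

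\textbf{The main obstacle.} By far the harder step is bigness of the canonical class. Even for the unpointed base~$\Moduli_d^N$ the Kodaira dimension is not known in general, so before this conjecture one would need to establish the prior ground-case statement that $\Moduli_d^N\otimes_\ZZ\CC$ is of general type once~$d$ (or~$N$) is large; the conjecture above cannot be easier than that. A reasonable intermediate target is the family of portraits~$\Pcal_n$ consisting of~$n$ disjoint fixed points, where the forgetful map $\Moduli_d^N[\Pcal_n]\to\Moduli_d^N$ factors through the universal fixed-point scheme to a symmetric product that one can analyze explicitly, and where the fixed-point multipliers (via the multiplier systems of Section~\ref{section:dynportraits}) provide canonical candidates for the required positive sections. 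Progress on this restricted case should reveal the right numerical inputs needed to mount a Harris--Mumford-style attack on the general conjecture.
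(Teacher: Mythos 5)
The statement you are addressing is stated in the paper as a \emph{conjecture}: the authors offer no proof, only the analogy with moduli of curves and the single data point that $\Moduli_2^1[\Ccal_6]$ is of general type (Theorem~\ref{theorem:M21Cn}(c)). So what you have written is a research program, not a proof, and both of its pillars rest on open problems. For Step~1, transitivity of the monodromy of $\Moduli_d^N[\Pcal]\to\Moduli_d^N$ is essentially the irreducibility problem for dynatomic-type covers, which is unresolved already for $N=1$ outside of special families such as $x^d+c$; the inductive scheme of ``adding one marked point at a time'' is exactly where the difficulty lives, and Thurston rigidity gives transversality (used in the paper's dimension count, Theorem~\ref{thm:dimMd1}) but does not by itself fuse components of a finite cover. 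For Step~2, the GIT polarization $\Ocal(m,1,\ldots,1)$ is ample on the quotient, but ampleness of \emph{some} line bundle says nothing about the canonical class: every quasi-projective variety carries such a bundle. To get bigness of $K$ you would need to express $K_{\MODULI_d^N[\Pcal]}$ in terms of the polarization, the boundary $\MODULI_d^N[\Pcal]\setminus\Moduli_d^N[\Pcal]$, and the ramification of the forgetful map, none of which is set up here; the Reid--Tai step is also nontrivial since $\Aut(f)$ can be large.

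A more specific error: your ``main obstacle'' paragraph asserts that the conjecture cannot be easier than showing $\Moduli_d^N$ itself is of general type for large $d$ or $N$. That inverts the asymptotics. The conjecture fixes $(N,d)$ and lets $\#\Vcal\to\infty$, in direct analogy with Logan's theorem that $\overline{\Moduli}_{g,n}$ is of general type for $n$ large even when $\overline{\Moduli}_g$ is rational. The paper's own evidence makes this point: $\Moduli_2^1\cong\AA^2$ is rational, yet $\Moduli_2^1[\Ccal_6]$ is already of general type. The mechanism you need is that the generically finite forgetful map to $\Moduli_d^N\times(\PP^N)^{\Vcal\setminus\Vcal^\circ}$ acquires enough ramification and degree growth as the portrait grows to force $K$ positive, not that the base is already of general type. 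Reorienting Step~2 around that covering-space/ramification computation (and around the surface case $N=1$, $d=2$, where Blanc--Canci--Elkies provide explicit models) would be the natural next move.
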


\subsection{The Moduli Space for Maps with Points of Exactly Specified Multiplicities}
\label{subsection:exactmult}

In certain situations one may wish to classify maps~$f$ and
points~$P_i$ whose multiplicities (ramification indices) are specified
exactly, rather than by inequalities.  See for example
Section~\ref{section:goodreduction} and~\cite{arxiv1703.00823} for
applications with~$N=1$.  We start with a definition.

\begin{definition}
We put a partial order on the set of weighted portraits
by saying that $\Pcal'\ge\Pcal$ if there is a portrait morphism
$\a:\Pcal\to\Pcal'$ that is a bijection on vertices.
Referring to Definition~\ref{definition:portraitmorphism}
and writing $\Pcal=(\Vcal^\circ,\Vcal,\Phi,\e)$ and
$\Pcal'=({\Vcal'}^\circ,\Vcal',\Phi',\e')$,
this means that~$\a$ is a bijection
$\a:\Vcal\xrightarrow{\;\sim\;}\Vcal'$ satisfying
\[
  \a(\Vcal^\circ)={\Vcal'}^\circ,\quad
  \a\circ\Phi=\Phi'\circ \a,\quad\text{and}\quad
  \e'\circ \a\ge \e.
\]
The intuition is that $\Pcal'\ge\Pcal$ if~$\Pcal$ and~$\Pcal'$ have
the same vertices and the same maps, but the weights assigned to the
vertices in~$\Pcal'$ are allowed to be greater than the weights
assigned in~$\Pcal$.  
\end{definition} 

\begin{proposition}
\label{proposition:Pcalprime}
Let $N\ge1$ and $d\ge2$, and let $\Pcal$ be a portrait.
\begin{parts}  
\Part{(a)}
Let $\Pcal'\ge\Pcal$. Then the map $\a:\Pcal\to\Pcal'$ induces an
isomorphism of $\End_d^N[\Pcal']$ with a closed subscheme
of~$\End_d^N[\Pcal]$, and similarly allows us to
view~$\Moduli_d^N[\Pcal']$ as a closed subscheme
of~$\Moduli_d^N[\Pcal]$.
\Part{(b)}
For a given~$(N,d)$, 
there are only finitely many portraits~$\Pcal'$ satisfying $\Pcal'\ge\Pcal$
and $\End_d^N[\Pcal']\ne\emptyset$.
\end{parts}  
\end{proposition}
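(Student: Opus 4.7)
\medskip

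\textbf{Plan of proof.}

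For part (a), the portrait morphism $\a:\Pcal\to\Pcal'$ exists by definition of $\Pcal'\ge\Pcal$, so Proposition~\ref{proposition:EnddNP1toP2} provides a morphism $\a^{*}:\End_d^N[\Pcal']\to\End_d^N[\Pcal]$ given at the level of geometric points by $(f,P)\mapsto(f,P\circ\a)$. First I would check that this is a closed immersion. Since $\a:\Vcal\to\Vcal'$ is a \emph{bijection}, the map $P\mapsto P\circ\a$ is an isomorphism $(\PP^N)^{\Vcal'}\xrightarrow{\sim}(\PP^N)^{\Vcal}$ (it is just relabeling of factors), so the ambient map $\End_d^N\times(\PP^N)^{\Vcal'}\to\End_d^N\times(\PP^N)^{\Vcal}$ is an isomorphism of schemes. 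It remains to identify the image of $\End_d^N[\Pcal']$ under this isomorphism with a closed subscheme of $\End_d^N[\Pcal]$. Under the substitution $P_v=P'_{\a(v)}$, the defining conditions $f\circ P'=P'\circ\Phi'$ and $e_f\circ P'\ge\e'$ translate (using $\a\circ\Phi=\Phi'\circ\a$ and $\e'\circ\a\ge\e$) to $f\circ P=P\circ\Phi$ and $e_f\circ P\ge\e'\circ\a\ge\e$. Thus the image is contained in $\End_d^N[\Pcal]$, and it is cut out inside $\End_d^N[\Pcal]$ by the further closed conditions $e_f(P_v)\ge\e'(\a(v))$ for $v\in\Vcal^\circ$, which are Zariski closed by Proposition~\ref{proposition:efPgeepsilon}. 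Hence $\a^{*}$ identifies $\End_d^N[\Pcal']$ with a closed subscheme of $\End_d^N[\Pcal]$.

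To pass to moduli, I would invoke Theorem~\ref{theorem:MNdwithportrait}(c): for $m$ satisfying \eqref{eqn:mgt1d1VVoMd} for both $\Pcal$ and $\Pcal'$ (and such an $m$ exists since both portraits have the same set of endpoints $\Vcal\setminus\Vcal^\circ$), the morphism $\a^{*}$ is $\SL_{N+1}$-equivariant and descends to a morphism $\Moduli_d^N[\Pcal']\to\Moduli_d^N[\Pcal]$. Because both sides are geometric quotients of schemes that are stable relative to the \emph{same} pullback sheaf, and $\a^{*}$ is a closed immersion on the stable loci, the induced map on quotients is also a closed immersion by standard GIT (a geometric quotient of a closed $G$-invariant subscheme by a reductive group is a closed subscheme of the quotient).

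For part (b), the point is to bound each weight $\e'(v)$. Writing $\Pcal'=(\Vcal^\circ,\Vcal,\Phi,\e')$ (the underlying set data and the map $\Phi$ are forced to equal those of $\Pcal$ since $\a$ is a bijection intertwining $\Phi$ and $\Phi'$), we see that $\Pcal'$ is determined entirely by the function $\e':\Vcal^\circ\to\ZZ_{\ge1}$. The plan is to show that there is a constant $C(N,d)$ such that
\[
  \End_d^N[\Pcal']\ne\emptyset \quad\Longrightarrow\quad \e'(v)\le C(N,d) \text{ for every } v\in\Vcal^\circ,
\]
from which finiteness of the set of admissible $\e'$ (and therefore of the set of admissible $\Pcal'$) is immediate, since $\Vcal^\circ$ is finite.

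The bound comes from the local-degree interpretation of $e_f(P)$. If $\End_d^N[\Pcal']\ne\emptyset$, then for every $v\in\Vcal^\circ$ there exist $f\in\End_d^N(\kbar)$ and $P\in\PP^N(\kbar)$ with $e_f(P)\ge\e'(v)$. I would use the fact (standard for finite morphisms of smooth projective varieties) that the sum $\sum_{Q\in f^{-1}(f(P))}e_f(Q)$ equals the degree of $f$ as a finite map, which is $d^N$; hence $e_f(P)\le d^N$. So $C(N,d)=d^N$ works. The main (and only real) obstacle in part~(b) is to cite or derive this multiplicity bound cleanly in the scheme-theoretic setup of Definition~\ref{definition:multiplicityPN}; once that is in hand, both parts follow quickly from the machinery already developed in Sections~\ref{section:EnddNP} and~\ref{section:modulispace}.
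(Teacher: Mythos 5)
Your proposal is correct and follows essentially the same route as the paper: part (a) reduces to the observation that increasing the weight function cuts out a closed subscheme of the common unweighted parameter space (via Theorem~\ref{theorem:EnddNPweighted} and Proposition~\ref{proposition:efPgeepsilon}), after which the inclusion descends to the GIT quotients; part (b) uses exactly the bound $e_f(P)\le\deg(f)^N=d^N$ coming from $\sum_{Q\in f^{-1}(f(P))}e_f(Q)=d^N$, which the paper likewise simply cites as the topological-degree formula. The only cosmetic difference is that the paper identifies the vertex sets of $\Pcal$ and $\Pcal'$ outright rather than carrying the relabeling isomorphism $\a^*$ explicitly.
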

\begin{proof}
(a)\enspace  
We identify the vertices and maps of~$\Pcal$ and~$\Pcal'$, say
$\Pcal=(\Vcal^\circ,\Vcal,\F,\e)$ and
$\Pcal'=(\Vcal^\circ,\Vcal,\F,\e')$. We also let
$\Pcal''=(\Vcal^\circ,\Vcal,\F,1)$ be the associated unweighted portrait,
i.e., it has the same vertices and map as~$\Pcal$ and~$\Pcal'$, but every vertex
in~$\Vcal^\circ$ has weight~$1$. Then
Theorem~\ref{theorem:EnddNPweighted} tells us that~$\End_d^N[\Pcal]$
and~$\End_d^N[\Pcal']$ are closed subschemes of~$\End_d^N[\Pcal'']$
whose geometric points are specified by the multiplicity
condition~\eqref{eqn:EnddNPefPgeev} appearing in
Theorem~\ref{theorem:EnddNPweighted}. Since~$\e'\ge\e$ by assumption,
we see from~\eqref{eqn:EnddNPefPgeev} that
$\End_d^N[\Pcal']\subseteq\End_d^N[\Pcal]$,
and then taking~$\SL_{N+1}$-quotients as in Theorem~\ref{theorem:MNdwithportrait} gives the
analogous inclusion $\Moduli_d^N[\Pcal']\subseteq\Moduli_d^N[\Pcal]$.
\par\noindent(b)\enspace
For any~$Q\in\PP^N$, the multiplicities of the points in the inverse
image of~$Q$ satisfy
\[
  \sum_{P\in f^{-1}(Q)} e_f(P) = (\text{topological degree of $f$}) = \deg(f)^N,
\]
so taking $Q=f(P)$ yields $e_f(P)\le\deg(f)^N$. Hence if
$\End_d^N[\Pcal']\ne\emptyset$, then the weight of every vertex~$v'$
in~$\Pcal'$ must satisfy $\e'(v')\le d^N$, and
since~$\Vcal'=\a(\Vcal)$, there are a fixed number of vertices
in~$\Vcal'$ to which weights may be assigned. Hence there are only
finitely many possibilities.
\end{proof}

\begin{definition}
For $N\ge1$ and $d\ge2$, we define
\begin{align*}
  \End_d^N[\Pcal]^\circ &= \End_d^N[\Pcal] \setminus \bigcup_{\Pcal'\ge\Pcal,\,\Pcal'\ne\Pcal} \End_d^N[\Pcal'], \\
  \Moduli_d^N[\Pcal]^\circ &= \Moduli_d^N[\Pcal] \setminus \bigcup_{\Pcal'\ge\Pcal,\,\Pcal'\ne\Pcal} \Moduli_d^N[\Pcal'].
\end{align*}
It follows from Proposition~\ref{proposition:Pcalprime} that
$\End_d^N[\Pcal]^\circ$ is an open subscheme of $\End_d^N[\Pcal]$ and
that $\Moduli_d^N[\Pcal]^\circ$ is an open subscheme of
$\Moduli_d^N[\Pcal]$.
\end{definition}

\section{Refined Stability Criteria for $\PP^1$}
\label{section:refinedP1}

Corollary~~\ref{corollary:dgenm1z} gives a sufficient numerical
criterion for $\End_d^N\times(\PP^N)^n$ to be $\SL_{N+1}$-stable for
the line bundle~$\Ocal(\bfm):=\Ocal(m_0,\ldots,m_n)$. We used this in
Theorem~\ref{theorem:EndENDpt} to prove that~$\END_d^N[\Pcal]$
is~$\SL_{N+1}$-stable for the pull-back of~$\Ocal(\bfm)$, where we
recall that~$\END_d^N[\Pcal]$ is the Zariski closure
of~$\End_d^N[\Pcal]$ in~$\End_d^N\times(\PP^N)^n$. In particular, the
scheme~$\END_d^n[\Pcal]$ may include geometric points~$(f,\bfP)$ in
which~$\bfP$ lies on the big diagonal, i.e., the~$n$ points
in~$\bfP=(P_1,\ldots,P_n)$ need not be distinct.

In this section we consider maps on~$\PP^1$, i.e., we take~$N=1$, and
we restrict to points~$\bfP$ lying on the complement of the big
diagonal. In this case, we give a refined description of
the~$\SL_2$-stable locus of~$\End_d^1[\Pcal]$ relative
to~$\Ocal(m,1,\ldots,1)$ with~$m\in\{1,2\}$, and we construct the
associated moduli spaces.

\begin{proposition}
\label{proposition:stableEnd1P1nz}
Let $d\ge2$, let $n\ge0$, and let~$(\PP^1)^n_\D$ be the complement of
the big diagonal as in Section~$\ref{subsection:unwtEnddNP}$.
\begin{parts}
  \Part{(a)}
  $\End_d^1\times(\PP^1)^n_\D$ is $\SL_2$-semistable relative
  to $\Ocal(1,1,\ldots,1)$.
  \Part{(b)}
  $\End_d^1\times(\PP^1)^n_\D$ is $\SL_2$-stable relative
  to $\Ocal(1,1,\ldots,1)$ except for the case \text{$(d,n)=(2,1)$}.
  \Part{(c)} 
  $\End_2^1\times\PP^1$ is $\SL_2$-stable relative to $\Ocal(2,1)$.
  For the sheaf $\Ocal(1,1)$, we have
  \begin{multline*}
  \bigl\{ (f,P)\in \End_2^1\times\PP^1 : \text{$(f,P)$ is $\SL_2$-stable relative to $\Ocal(1,1)$} \bigr\} \\
  =
  \bigl\{ (f,P)\in \End_2^1\times\PP^1 : f(P)\ne P \bigr\}.
  \end{multline*}
\end{parts}
\end{proposition}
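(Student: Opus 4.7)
The plan is to reduce parts (a), (b), and the first statement of (c) to Theorem~\ref{theorem:stableEnddNtimesPNnz} with appropriate choices of $\bfm$, and to handle the second statement of (c) by direct application of the Hilbert--Mumford numerical criterion (Theorem~\ref{theorem:numcrit}).

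For parts (a) and (b), I specialize Theorem~\ref{theorem:stableEnddNtimesPNnz} to $N=1$ with $\bfm=(1,1,\ldots,1)$, so that $m_0=1$ and $m_\Sigma=n$. The only proper linear subspaces $L\subsetneq\PP^1$ are points, so $D_0(L)=(n+d-1)/2$; and because $\bfP\in(\PP^1)^n_\D$ has pairwise distinct entries, $C_\bfP(L)\le 1$ for every $L$. The semi-stability inequality $C_\bfP(L)\le D_0(L)$ then reduces to $n+d\ge 3$, which holds for every $d\ge 2$ and $n\ge 1$ (the case $n=0$ being trivial), giving (a). The stability inequality $C_\bfP(L)<D_0(L)$ reduces to $n+d\ge 4$, which fails only at $(d,n)=(2,1)$ (among pairs with $d\ge 2$ and $n\ge 1$; for $n=0$ one has $C_\bfP(L)=0$ and the inequality is again immediate), giving (b). The exception is not vacuous: every degree-$2$ endomorphism of $\PP^1$ has a fixed point, and pairs $(f,P)$ with $f(P)=P$ are non-stable by the second statement of (c). The first statement of (c) is another direct application: with $\bfm=(2,1)$ one computes $D_0(L)=3/2>1\ge C_\bfP(L)$, so the stability inequality holds uniformly and Theorem~\ref{theorem:stableEnddNtimesPNnz}(b) applies.

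The substantive step is the second statement of (c). Given $(f,P)\in\End_2^1\times\PP^1$ and an arbitrary nontrivial 1-PS $\ell:\GG_m\hookrightarrow\SL_2$, I change coordinates on $\PP^1$ to diagonalize $\ell$ as $\diag(\alpha^{-1},\alpha)$, whose fixed points on $\PP^1$ are $[1:0]$ and $[0:1]$. The basis sections of $\Ocal(1,1)$ are the twelve products $a_\rho(\bfe)U(i)$ with $\rho,i\in\{0,1\}$ and $\bfe=(e_0,e_1)$, $e_0+e_1=2$; using the formulas from the proof of Theorem~\ref{theorem:stableEnddNtimesPNnz}, their weights are $-E=e_0-e_1+k_\rho+k_i\in\{-4,-2,0,2,4\}$. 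I split on the location of $P$. If $P$ is not a fixed point of $\ell$, then both $U(i)(P)\ne 0$; either $a_1(2,0)\ne 0$, in which case the weight-$4$ section $a_1(2,0)U(1)$ is nonvanishing at $(f,P)$ and $\mu((f,P),\ell)\ge 4$, or $a_1(2,0)=0$ (equivalently $f([1:0])=[1:0]$), in which case the endomorphism condition forces $a_0(2,0)\ne 0$ and the weight-$2$ section $a_0(2,0)U(1)$ yields $\mu\ge 2$. If $P$ is fixed by $\ell$, say $P=[1:0]$ (the case $P=[0:1]$ being symmetric), then only $U(0)$-sections survive, and the unique weight-$2$ section among them is $a_1(2,0)U(0)$, nonvanishing at $(f,P)$ precisely when $f(P)\ne P$; the remaining $U(0)$-sections have weights in $\{-4,-2,0\}$. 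Hence $\mu=2>0$ if $f(P)\ne P$ and $\mu=0$ if $f(P)=P$. Since every point of $\PP^1$ is fixed by some nontrivial 1-PS in $\SL_2$, the pair $(f,P)$ is $\SL_2$-stable for $\Ocal(1,1)$ if and only if $f(P)\ne P$.

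The main obstacle is the last case of the weight analysis: showing that when $f(P)=P$ the $\mu$-invariant collapses exactly to $0$ (and is not negative, which would contradict semi-stability from part (a)) requires a careful enumeration of which coefficients $a_\rho(\bfe)$ are forced to vanish by the fixed-point condition, together with the observation that $f\in\End_2^1$ prevents both $a_0(d,0)$ and $a_1(d,0)$ from vanishing simultaneously. Everything else reduces cleanly to Theorem~\ref{theorem:stableEnddNtimesPNnz}.
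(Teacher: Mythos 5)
Your proposal follows the same route as the paper's proof. Parts (a), (b), and the first assertion of (c) are obtained exactly as in the paper, by specializing Theorem~\ref{theorem:stableEnddNtimesPNnz} (via the implications \SemiStable{0} $\Rightarrow$ \SemiStable{} and \Stable{0} $\Rightarrow$ \Stable{}) with $C_\bfP(L)\le 1$ and $D_0(L)=(n+d-1)/2$, resp.\ $D_0(L)=3/2$ for $\Ocal(2,1)$; and the second assertion of (c) is the same direct Hilbert--Mumford computation with $\ell(\a)=\diag(\a^{-1},\a)$. Your weight bookkeeping for the sections $a_\rho(\bfe)U(i)$, and your analysis of the cases ``$P$ not fixed by $\ell$'' and ``$P=[1:0]$,'' agree with the paper's.

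The one step that needs repair is the parenthetical ``the case $P=[0:1]$ being symmetric.'' It is not: the two fixed points of $\ell$ are distinguished by the sign of the weight, and the computation for $P=[0:1]$ gives a different answer. For $P=[0:1]$ the surviving sections are the $a_\rho(\bfe)U(1)$, among which $a_0(2,0)U(1)$ has weight $2$ and $a_1(2,0)U(1)$ has weight $4$; since $a_0(2,0)$ and $a_1(2,0)$ cannot both vanish (else $f$ would be undefined at $[1:0]$), one gets $\mu\ge 2>0$ \emph{unconditionally} in this case, even when $f(P)=P$. Consequently it is false that every nontrivial $1$-parameter subgroup fixing $P$ destabilizes $(f,P)$ when $f(P)=P$: only those whose diagonalization places $P$ at the coordinate point carrying the negative weight do so. Your closing sentence therefore needs sharpening in both directions. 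For instability you must choose the $1$-parameter subgroup so that $P$ becomes $[1:0]$ in the diagonalizing coordinates (such a choice always exists, so the conclusion stands, and your $P=[1:0]$ computation, including the observation that $a_1(2,0)=0$ forces $a_0(2,0)\ne0$ so that $\mu=0$ exactly, is correct). For stability the case $P=[0:1]$ must be checked directly as above rather than deduced by a symmetry that would yield the wrong value of $\mu$. With these local adjustments the argument is complete; the paper avoids the issue by organizing the same computation around whether $a_1(2)\ne 0$ rather than around the position of $P$.
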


\begin{proof}
For $n=0$, we already know that~$\End_d^1$ is stable, so we assume
henceforth that $n\ge1$.  We are going to apply
Theorem~\ref{theorem:stableEnddNtimesPNnz} with~$N=1$
and~$\bfm=(1,1,\ldots,1)$.  In particular,~$m_0=1$ and~$m_\Sigma=n$.
The only proper linear subspaces $L\subsetneq\PP^1$ consist of single
points, say $L_Q=\{Q\}$. We are only considering pairs~$(f,\bfP)$
where $\bfP=(P_1,\ldots,P_n)$ consists of~$n$ \emph{distinct} points,
so for a given~$L_Q$, there is at most one~$\nu$ with~$P_\nu\in L_Q$.
Thus
\[
  C_\bfP(L_Q) 
  = \begin{cases}
  1&\text{if $Q=P_\nu$ for some $\nu$,} \\
  0&\text{otherwise.} \\
  \end{cases}
\]
(We are using the fact that every~$m_\nu=1$.)
We also note that the definition~\eqref{eqn:CPLDLdef2z} of~$D_\e(L)$
for these parameters gives
\[
  D_0(L_Q) = \frac{n+d-1}{2}\ge 1,
\]
where the lower bound comes from our assumption that $d\ge2$ and $n\ge1$.
Hence the estimate
\begin{equation}
  \label{eqn:PnuinLQleD0z}
  C_\bfP(L_Q) \le D_0(L_Q)
\end{equation}
is always true, so Theorem~\ref{theorem:stableEnddNtimesPNnz}(a) tells
us that~$(f,\bfP)$ is semistable relative to~$\Ocal(1,1,\ldots,1)$,
which proves~(a). Further, if~$(d,n)\ne(2,1)$, then
$D_0(L_Q)\ge3/2$, so~\eqref{eqn:PnuinLQleD0z} is a strict
inequality, and Theorem~\ref{theorem:stableEnddNtimesPNnz}(b) tells us
that~$(f,\bfP)$ is stable relative to~$\Ocal(1,1,\ldots,1)$. This
proves~(b).

It remains to consider stability in the case that~$(d,n)=(2,1)$ for
the invertible sheaf~$\Ocal(1,1)$.  Thus~$(f,P)$ consists of a map and
a single point,
and we need to compute
$\mu^{\Ocal(1,1)}\bigl((f,P),\ell)$ for non-trivial 1-parameter
subgroups~$\ell$. After a change of coordinates, we may assume that
\begin{equation}
  \label{eqn:ellalpha}
  \ell(\a) := \begin{pmatrix} \a^{-1} & 0 \\ 0 & \a \\ \end{pmatrix}.
\end{equation}
We write $P= [U(0),U(1)]$ and
\[
  f = \bigl[a_0(2)X^2+a_0(1)XY+a_0(0)Y^2,a_1(2)X^2+a_1(1)XY+a_1(0)Y^2\bigr],
\]
which is  an abbreviated version of the notation used in the proof of
Theorem~\ref{theorem:stableEnddNtimesPNnz}.

A basis of $\ell$-diagonalized global sections of~$\Ocal(1,1)$
on~$\End_2^1\times\PP^1$ is given by
\[
\bigl\{ a_\rho(e)U(i) : 0\le\rho\le1,\; 0\le e\le2,\; 0\le i\le 1\bigr\}
\]
and the action of~$\ell$ on these sections is via
\[
  \ell\star a_\rho(e)U(i)
  = \a^{2(2-e-\rho-i)} a_\rho(e)U(i).
\]
Hence
\[
  \mu^{\Ocal(1,1)}\bigl((f,P),\ell\bigr)
  = \max_{a_\rho(e)U(i)\ne0} 2(e+\rho+i-2).
\]
Suppose first that $a_1(2)\ne0$. Then using the
sections $a_1(2)U(i)$ gives
\[
\mu^{\Ocal(1,1)}\bigl((f,P),\ell\bigr)
\ge \begin{cases}
  2(2+1+0-2) = 2 &\text{if $U(0)\ne0$,} \\
  2(2+1+1-2) = 4 &\text{if $U(1)\ne0$.} \\
\end{cases}
\]
But~$U(0)$ and~$U(1)$ cannot both be~$0$, so
$\mu^{\Ocal(1,1)}\bigl((f,P),\ell\bigr)>0$, and the numerical
criterion (Theorem~\ref{theorem:numcrit}) tells us that~$(f,P)$ is
stable.

We are reduced to the case that~$a_1(2)=0$. This implies
that $a_0(2)\ne0$, since otherwise~$f$ would not be a morphism
at~$[0,1]$. Looking at the sections~$a_0(2)U(i)$ yields
\[
\mu^{\Ocal(1,1)}\bigl((f,P),\ell\bigr)
\ge \begin{cases}
  2(2+0+0-2) = 0 &\text{if $U(0)\ne0$,} \\
  2(2+0+1-2) = 2 &\text{if $U(1)\ne0$.} \\
\end{cases}
\]
Hence if~$U(1)\ne0$, we again conclude that~$(f,P)$ is stable.  And we
note that since~$a_1(2)=0$, the condition~$U(1)=0$ 
implies that~$f(P)=P$, so this shows that if~$f(P)\ne{P}$, then~$(f,P)$ is
stable.

Finally, suppose that $f(P) = P$. After a change of variables we may
assume $P = [1,0]$, hence we also have~$a_1(2) = 0$.  We now take 
the $1$-parameter subgroup $\ell$ given in \eqref{eqn:ellalpha} and
compute~$\mu^{\Ocal(1,1)}\bigl((f,P),\ell\bigr)$.  There are five
$\ell$-diagonalized sections that are potentially non-zero, as listed
in the following table with their corresponding values of~$e+\rho+i-2$:
{\small
\[
\begin{array}{|c||c|c|c|c|c|} \hline 
  \text{Section}   &U(0)a_0(2) & U(0)a_0(1) & U(0)a_0(0) & U(0)a_1(1) & U(0)a_1(0) \\ \hline
  e + \rho + i - 2 & 0 & -1 & -2 & 0 & -1 \\ \hline
\end{array}
\]}
Hence
\[
  \mu^{\Ocal(1,1)}\bigl((f,P),\ell\bigr) \le \max\{0,-1,-2,0,-1\} = 0,
\]
so~$(f,P)$ is not stable. This completes the proof that if the pair
$(f,P)\in\End_2^1\times\PP^1$ satisfies~$f(P)=P$, then~$(f,P)$ is not
stable relative to~$\Ocal(1,1)$.
\end{proof}

\begin{corollary}
\label{corollary:Md1PGIT}
Let~$\Pcal=(\Vcal^\circ,\Vcal,\F,\e)$ be a portrait, let $d\ge2$, and let
$\Lcal=\Ocal(1,1,\ldots,1)$ unless $d=2$ and
$\#(\Vcal\setminus\Vcal^\circ)=1$, in which case
let~$\Lcal=\Ocal(2,1)$. Then~$\End_d^1[\Pcal]$ is~$\SL_2$-stable
relative to~$\Pi_\Pcal^*\Lcal$, and hence there is a GIT moduli
quotient scheme
\[
  \Moduli_d^1[\Pcal]:=\End_d^1[\Pcal]\GITQuot\SL_2
  \quad\text{with structure sheaf}\quad  (\Pi_\Pcal^*\Lcal)^{\SL_2}.
\]
\end{corollary}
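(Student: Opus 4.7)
The plan is to derive the stability of~$\End_d^1[\Pcal]$ by pulling back stability along the finite morphism
$\Pi_\Pcal\colon\END_d^1[\Pcal]\to\End_d^1\times(\PP^1)^n$ furnished by Theorem~\ref{theorem:EndENDpt}(a), where $n:=\#(\Vcal\setminus\Vcal^\circ)$. The argument is a direct adaptation of the proof of Theorem~\ref{theorem:EndENDpt}(c), with Proposition~\ref{proposition:stableEnd1P1nz} used in place of Corollary~\ref{corollary:dgenm1z}; this allows us to use a smaller line bundle than the bound $m>n/(d-1)$ from Corollary~\ref{corollary:dgenm1z} would require.

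First I would observe that the geometric points of~$\End_d^1[\Pcal]$ have pairwise distinct vertex-coordinates, so under~$\Pi_\Pcal$ they map into the open subscheme $\End_d^1\times(\PP^1)^n_\D$ obtained by removing the big diagonal.  Proposition~\ref{proposition:stableEnd1P1nz}(a),(b) then guarantees $\SL_2$-stability of $\End_d^1\times(\PP^1)^n_\D$ relative to $\Ocal(1,1,\ldots,1)$ whenever $(d,n)\ne(2,1)$, while part~(c) of that proposition provides $\SL_2$-stability relative to $\Ocal(2,1)$ in the exceptional case $(d,n)=(2,1)$.  The degenerate case $n=0$ reduces to the classical stability of~$\End_d^1$ relative to~$\Ocal(1)$; see Theorem~\ref{theorem:EnddNstableAutfinite}.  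In every case, $\End_d^1\times(\PP^1)^n_\D$ is $\SL_2$-stable relative to the chosen~$\Lcal$.

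Next I would invoke~\cite[Proposition~1.18]{mumford:geometricinvarianttheory}: since $\Pi_\Pcal$ is an $\SL_2$-equivariant finite (hence affine) morphism, the preimage $\Pi_\Pcal^{-1}\bigl(\End_d^1\times(\PP^1)^n_\D\bigr)$ in~$\END_d^1[\Pcal]$ lies in the $\SL_2$-stable locus of $\END_d^1[\Pcal]$ relative to $\Pi_\Pcal^*\Lcal$.  This preimage is an $\SL_2$-invariant open subscheme of~$\END_d^1[\Pcal]$ that contains~$\End_d^1[\Pcal]$, so the stability of~$\End_d^1[\Pcal]$ relative to~$\Pi_\Pcal^*\Lcal$ follows.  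The existence of the GIT geometric quotient $\End_d^1[\Pcal]\GITQuot\SL_2$ with structure sheaf $(\Pi_\Pcal^*\Lcal)^{\SL_2}$ is then immediate from standard GIT.

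The main subtlety lies in the exceptional case $(d,n)=(2,1)$: here the naive choice $\Lcal=\Ocal(1,1)$ would force us to exclude the locus $\{f(P_z)=P_z\}$ at the unique endpoint vertex~$z$, carving a nontrivial piece out of~$\End_2^1[\Pcal]$; the switch to~$\Ocal(2,1)$, justified by Proposition~\ref{proposition:stableEnd1P1nz}(c), is precisely what is needed to restore stability uniformly across~$\End_2^1[\Pcal]$.
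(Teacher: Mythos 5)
Your proposal is correct and follows essentially the same route as the paper's own proof: both arguments pull back the stability of $\End_d^1\times(\PP^1)^n_\D$ furnished by Proposition~\ref{proposition:stableEnd1P1nz} (with the case split at $(d,n)=(2,1)$ forcing $\Ocal(2,1)$) along the $\SL_2$-equivariant affine morphism $\Pi_\Pcal$ via Mumford's \cite[Proposition~1.18]{mumford:geometricinvarianttheory}, exactly as in the proof of Theorem~\ref{theorem:EndENDpt}(c). The only cosmetic difference is that you route the argument through the open preimage inside $\END_d^1[\Pcal]$ before restricting to $\End_d^1[\Pcal]$, whereas the paper applies the pullback directly to $\End_d^1[\Pcal]$; this changes nothing of substance.
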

\begin{proof}
We follow the proof of Theorem~~\ref{theorem:EndENDpt}(c),
\emph{mutatis mutandis}, but using $\End_d^1$ instead of $\END_d^1$,
since 
Proposition~\ref{proposition:stableEnd1P1nz}  only gives stability of
$\End_d^1\times(\PP^1)^n_\D$, i.e., on the complement of the big
diagonal. Thus
\begin{align*}
  \End_d^1[\Pcal]
  &= \Pi_\Pcal^{-1}(\End_d^1\times(\PP^1)^\Zcal_\D) \\*
  &\subseteq \Pi_\Pcal^{-1}\bigl(\text{$\SL_{N+1}$-stable points of $\End_d^1\times(\PP^1)^\Zcal$ for $\Lcal$} \bigr)\\*
  &\omit\hfill\text{from Proposition \ref{proposition:stableEnd1P1nz},}\\*
  &\subseteq \bigl\{\text{$\SL_{N+1}$-stable points of $\End_d^1[\Pcal]$ for $\Pi_\Pcal^*\Lcal$}\bigr\} \quad\text{from \eqref{eqn:finvstable}}.
\end{align*}
This completes the proof of Corollary~\ref{corollary:Md1PGIT}.
\end{proof}

\section{The Moduli Space of $\PP^1$-Endomorphisms with Marked Critical Points}
\label{section:P1critmarkedc}

In characteristic~$0$, a degree~$d$ map $f:\PP^1\to\PP^1$ has~$2d-2$
geometric critical points, counted with appropriate multiplicity. It
is often useful to have a moduli space that classifies maps with
marked critical points. Our general construction allows us to describe
these spaces as GIT quotients.

Thus suppose that~$\Crit(f)=\{P_1,\ldots,P_r\}$ consists of~$r$
distinct points, and let $e_i:=e_f(P_i)\ge2$ denote the multiplicity
of~$P_i$. Then
\[
  e_1,\ldots,e_r\ge2\quad\text{and}\quad \sum_{i=1}^r (e_i-1) = 2d-2.
\]

\begin{definition}
Let~$\Pcal=(\Vcal^\circ,\Vcal,\F,\e)$ be a portrait. We say
that~$\Pcal$ is a \emph{complete critical point portrait of
  degree~$d$} if
\[
  \sum_{v\in\Vcal^\circ} \bigl(\e(v)-1\bigr) = 2d-2
  \quad\text{and}\quad
  \Vcal = \bigcup_{v\in\Crit(\Pcal)} \Orbit_\F(v).
\]
\textup(See Definition~\ref{definition:critorbitportrait} for the definition
of~$\Crit(\Pcal)$ and $\Orbit_\F(v)$.\textup)
We say that~$\Pcal$ is \emph{critically primitive} if it has the further property
\[
  \e(v)\ge2~\text{for all $v\in\Vcal^\circ$.}
\]
We write
\begin{align*}
  \mathsf{P}^\crit_d &= \{\Pcal : \text{$\Pcal$ is a complete critical
    point portrait of degree~$d$} \}, \\
  \mathsf{P}^\prim_d &= \{\Pcal\in \mathsf{P}^\crit_d : \text{$\Pcal$ is critically  primitive} \}.
\end{align*}
\end{definition}

\begin{remark}
See~\cite{MR1149891,MR2496235} for a description of the set of
preperiodic portraits in~$\mathsf{P}^\crit_d$ that are represented by
a polynomial map~$f(x)\in\Kbar[x]$, or equivalently, preperiodic
portraits~$\Pcal\in\mathsf{P}^\crit_d$ such that~$\Pcal$ contains a
fixed point of multiplicity~$d$ and $\End_d^1[\Pcal]\ne\emptyset$.
\end{remark}

\begin{lemma}
\label{lemma:primincrit}
Let $d\ge2$.
\begin{parts}
  \Part{(a)}
  Let $\Pcal=(\Vcal^\circ,\Vcal,\F,\e)\in\mathsf{P}^\crit_d$. Then
  every $v\in\Vcal$ has either an in-arrow or an out-arrow, i.e.,
  either $v\in\Vcal^\circ$ or there is some $v'\in\Vcal^\circ$ with
  $\F(v')=v$.
  \Part{(b)}
  $\mathsf{P}^\prim_d$ contains only finitely many isomorphism classes
  of portraits.
  \Part{(c)}
  Every portrait in $\mathsf{P}^\crit_d$ contains a unique subportrait
  in $\mathsf{P}^\prim_d$, which we called the \emph{frame of~$\Pcal$}
  and denote by~$\Frame(\Pcal)$.
\end{parts}
\end{lemma}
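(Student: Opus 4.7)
The plan is to handle the three parts in order, each an essentially formal unpacking of definitions once the right observation is made.

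For part (a), the key is the second clause of the definition of a complete critical portrait, which says $\Vcal=\bigcup_{v\in\Crit(\Pcal)}\Orbit_\F(v)$. Given any $w\in\Vcal$, I would pick $v\in\Crit(\Pcal)\subseteq\Vcal^\circ$ with $w\in\Orbit_\F(v)$, so $w=\F^n(v)$ for some $n\ge0$ with $\F^{n-1}(v)\in\Vcal^\circ$ (when $n\ge1$). If $n=0$ then $w=v\in\Vcal^\circ$, giving an out-arrow; if $n\ge1$, then $\F^{n-1}(v)$ is the required source of an in-arrow to $w$.

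For part (b), I would bound both the vertex count and the weights in terms of $d$. Since each $v\in\Vcal^\circ$ of a critically primitive portrait has $\e(v)\ge2$, the relation $\sum_{v\in\Vcal^\circ}(\e(v)-1)=2d-2$ forces $\#\Vcal^\circ\le 2d-2$ and $\e(v)\le 2d-1$ for every $v\in\Vcal^\circ$. By part~(a), every vertex of $\Vcal\setminus\Vcal^\circ$ lies in $\F(\Vcal^\circ)$, so $\#\Vcal\le 2\#\Vcal^\circ\le 4d-4$. Thus the underlying set, function, and weight function live in a finite set, so there are only finitely many isomorphism classes.

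For part (c), I would define $\Frame(\Pcal)$ by taking
\[
  \Wcal^\circ:=\Crit(\Pcal),\quad \Wcal:=\Crit(\Pcal)\cup\F\bigl(\Crit(\Pcal)\bigr),\quad
  \Phi_\Wcal:=\Phi_\Vcal|_{\Wcal^\circ},\quad \e_\Wcal:=\e_\Vcal|_{\Wcal^\circ}.
\]
One then checks directly that this is a subportrait of $\Pcal$; that it is critically primitive (every $v\in\Wcal^\circ=\Crit(\Pcal)$ has $\e_\Wcal(v)=\e(v)\ge2$, the sum $\sum_{v\in\Wcal^\circ}(\e_\Wcal(v)-1)$ equals $\sum_{v\in\Vcal^\circ}(\e(v)-1)=2d-2$ since vertices of weight~$1$ contribute nothing, and $\Wcal=\bigcup_{v\in\Crit(\Pcal)}\Orbit_{\F_\Wcal}(v)$ by construction). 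For uniqueness, given any critically primitive subportrait $\Pcal'=(\Wcal'^\circ,\Wcal',\F',\e')$ of $\Pcal$, the inequalities $\e(v)\ge\e'(v)\ge2$ force $\Wcal'^\circ\subseteq\Crit(\Pcal)$, and then the chain
\[
  2d-2=\sum_{v\in\Wcal'^\circ}\bigl(\e'(v)-1\bigr)\le\sum_{v\in\Wcal'^\circ}\bigl(\e(v)-1\bigr)\le\sum_{v\in\Crit(\Pcal)}\bigl(\e(v)-1\bigr)=2d-2
\]
forces $\Wcal'^\circ=\Crit(\Pcal)$ and $\e'=\e|_{\Wcal'^\circ}$; the completeness condition on $\Pcal'$ then pins down $\Wcal'=\Crit(\Pcal)\cup\F(\Crit(\Pcal))$. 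The only mild obstacle is to navigate the orbit definition carefully when forming $\Wcal$, since $\Orbit_{\F_\Wcal}$ stops as soon as $\F_\Wcal$ is undefined; but since $\F_\Wcal$ is defined precisely on $\Crit(\Pcal)$, the union of one-step orbits already gives $\Wcal$, so no iteration issues arise.
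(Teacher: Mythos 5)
Your proposal is correct and follows essentially the same route as the paper: the same orbit-based argument for (a), the same bound $\#\Vcal^\circ\le 2d-2$ combined with $\Vcal=\Vcal^\circ\cup\F(\Vcal^\circ)$ for (b), and the same construction $\Wcal^\circ=\Crit(\Pcal)$, $\Wcal=\Crit(\Pcal)\cup\F(\Crit(\Pcal))$ for (c), with uniqueness forced by the total-weight identity $\sum(\e(v)-1)=2d-2$. Your chain of inequalities in (c) is just a more compact packaging of the paper's telescoping sum computation, and your closing remark about the orbit definition correctly disposes of the only subtlety.
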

\begin{proof}
(a)\enspace
If~$v$ is a critical point, i.e, $\e(v)\ge2$, then
necessarily~$v\in\Vcal^\circ$, so~$v$ has an out-arrow.  And if~$v$ is
not a critical point, then it is in the orbit of a critical point, so
it has an in-arrow.
\par\noindent(b)\enspace
Let $\Pcal=(\Vcal^\circ,\Vcal,\Phi,\e)\in\mathsf{P}^\prim_d$ be critically primitive. Then
\[
  2d-2=\sum_{v\in\Vcal^\circ} \bigl(\e(v)-1\bigr) \ge \#\Vcal^\circ,
\]
since each~$\e(v)\ge2$ by assumption. This bounds~$\#\Vcal^\circ$, and hence
also bounds
\[
  \#\Vcal=\#\bigl(\Vcal^\circ\cup\F(\Vcal^\circ)\bigr)\le2\#\Vcal^\circ.
\]
Further,
\[
  \sum_{v\in\Vcal^\circ} \e(v) = \sum_{v\in\Vcal^\circ}
  \bigl(\e(v)-1\bigr) + \#\Vcal^\circ = 2d-2+\#\Vcal^\circ
\]
shows that the multiplicities~$\e(v)$ are also bounded.
\par\noindent(c)\enspace
Let $\Pcal=(\Vcal^\circ,\Vcal,\Phi,\e_\Vcal)\in\mathsf{P}^\crit_d$. We
define a subportrait
\[
\Pcal'=(\Wcal^\circ,\Wcal,\Phi_\Wcal,\e_\Wcal)
\]
of~$\Pcal$ by taking
\begin{align*}
  \Wcal^\circ &:= \bigl\{ v\in\Vcal^\circ : \e(v)\ge2 \bigr\},  \\
  \Wcal &:= \Wcal^\circ \cup \bigl\{\F(w) : w\in\Wcal^\circ\bigr\}, \\
  \Phi_\Wcal &:= \Phi|_{\Wcal^\circ},\qquad
  \e_\Wcal := \e_\Vcal|_{\Wcal^\circ}.
\end{align*}
We have
\[
  \sum_{w\in\Wcal^\circ} \bigl(\e_\Wcal(w)-1\bigr)
  = \sum_{\substack{v\in\Vcal^\circ\\ \e(w)\ge2\\}} \bigl(\e_\Vcal(v)-1\bigr)
  = \sum_{v\in\Vcal^\circ} \bigl(\e_\Vcal(v)-1\bigr)
  = 2d-2,
\]
so~$\Pcal'$ is a complete critical point portrait, and the definitions
of~$\Wcal$ and~$\Wcal^\circ$ show that~$\Pcal'$ is critically primitive.

Next suppose that
$\Pcal''=(\Ucal^\circ,\Ucal,\e_\Ucal)\in\mathsf{P}^\prim_d$ is a
subportrait of~$\Pcal$.  The assumption that~$(\Ucal^\circ,\Ucal,\e_\Ucal)$ is
critically primitive tells us that $\e_\Ucal(u)\ge2$ for all
$u\in\Ucal^\circ$, and hence $\Ucal^\circ\subseteq\Wcal^\circ$ from
the definition of~$\Wcal^\circ$. This allows us to compute
\begin{align*}
  \sum_{u\in\Ucal^\circ} & \overbrace{\bigl(\e_\Vcal(u)-\e_\Ucal(u)\bigr)}^{\hidewidth\text{${}\ge0$ since $\Pcal''\subseteq\Pcal$}\hidewidth}
  + \sum_{w\in\Wcal^\circ\setminus\Ucal^\circ} \overbrace{\bigl(\e_\Wcal(w)-1\bigr)}^{\hidewidth\text{${}\ge1$ since  $\e_\Wcal(w)\ge2$ for $w\in\Wcal^\circ$ }\hidewidth} \\
  &= \sum_{u\in\Ucal^\circ}  \bigl(\e_\Wcal(u)-\e_\Ucal(u)\bigr)
  + \sum_{w\in\Wcal^\circ\setminus\Ucal^\circ} \bigl(\e_\Wcal(w)-1\bigr)
  \quad\text{since $\e_\Wcal=\e_\Vcal$,} \\
  &= \sum_{u\in\Ucal^\circ}  \bigl(\e_\Wcal(u)-1\bigr)
  - \sum_{u\in\Ucal^\circ}  \bigl(\e_\Ucal(u)-1\bigr)
  + \sum_{w\in\Wcal^\circ\setminus\Ucal^\circ} \bigl(\e_\Wcal(w)-1\bigr) \\
  &= \sum_{u\in\Wcal^\circ}  \bigl(\e_\Wcal(u)-1\bigr)
  - \sum_{u\in\Ucal^\circ}  \bigl(\e_\Ucal(u)-1\bigr) \\
  &= (2d-2) - (2d-2)
  \quad\text{since $\Wcal$ and $\Ucal$ are complete,} \\
  &= 0.
\end{align*}
Hence
\[
  \Ucal^\circ=\Wcal^\circ
  \quad\text{and}\quad
  \e_\Ucal(u) = \e_\Vcal(u) = \e_\Wcal(u)
  \quad\text{for all $u\in\Ucal^\circ$.}
\]
Further, we see that
\[
  \Ucal = \Ucal^\circ\cup\F(\Ucal^\circ)
  = \Wcal^\circ\cup\F(\Wcal^\circ) = \Wcal,
\]
where for the first and third equalities we use~(a).  This completes
the proof that~$\Pcal''=\Pcal'$, and hence that~$\Pcal'$ is the unique
primitive complete critical point subportrait of~$\Pcal$.
\end{proof}

\begin{definition}
Let $\Pcal\in\mathsf{P}^\crit_d$, and
let~$\Frame(\Pcal)\in\mathsf{P}^\prim_d$ be the frame of~$\Pcal$ as
described in Lemma~\ref{lemma:primincrit}. A \emph{critical decomposition
  series for~$\Pcal$} is a sequence of portraits
\begin{equation}
  \label{eqn:decompser}
  \Pcal_0 \subsetneq \Pcal_1 \subsetneq \cdots \subsetneq \Pcal_n,
\end{equation}
with the portraits
$\Pcal_i=(\Vcal_i^\circ,\Vcal_i,\F_i,\e_i)\in\mathsf{P}^\crit_d$
satisfying
\[
  \Pcal_0=\Frame(\Pcal),\quad \Pcal_n=\Pcal,\quad\text{and}\quad
  \#(\Vcal_{i+1}\setminus\Vcal_i)=0~\text{or}~1.
\]
\end{definition}

\begin{remark}
The intuition for adjacent portraits in a critical decomposition
series~\eqref{eqn:decompser} is that $\Pcal_{i+1}$ has exactly one
more out-arrow than~$\Pcal_i$.  This can happen in one of three ways:
\begin{itemize}
  \setlength{\itemsep}{0pt}
\item
  Adjoin an existing point $v\in\Vcal_i\setminus\Vcal_i^\circ$ to
  $\Vcal_{i+1}^\circ$ and have $\F_{i+1}$ map~$v$ to a point in $\Vcal_{i+1} = \Vcal_i$.
\item
  Create a new point~$v'$ in $\Vcal_{i+1}^\circ$ and have $\F_{i+1}$ map~$v'$ to
  a point in $\Vcal_i$.
\item
  Create a new point~$v'$ in $\Vcal_{i+1}$, adjoin an existing point
  $v\in\Vcal_i\setminus\Vcal_i^\circ$ to $\Vcal_{i+1}^\circ$, and have
  $\F_{i+1}$ map~$v$ to~$v'$.
\end{itemize}
\end{remark}

Theorem~\ref{theorem:MNdwithportrait}(c) tells us that the chain of
inclusions in a critical decomposition series~\eqref{eqn:decompser}
gives a chain of inclusions of portrait moduli spaces in the opposite
direction,
\[
  \Moduli_d^1[\Pcal_n] \subseteq \cdots\subseteq \Moduli_d^1[\Pcal_1]
  \subseteq \Moduli_d^1[\Pcal_0],
\]
i.e., inclusions that start at $\Moduli_d^1[\Pcal]$ and end at
$\Moduli_d^1\bigl[\Frame(\Pcal)\bigr]$.  We note that
$\Moduli_d^1\bigl[\Frame(\Pcal)\bigr]$ is the largest portrait moduli
space using a portrait from~$\mathsf{P}^\crit_d$ in which
$\Moduli_d^1[\Pcal]$ naturally sits. Each inclusion
$\Moduli_d^1[\Pcal_{i+1}]\subseteq\Moduli_d^1[\Pcal_i]$ imposes at
most one additional critical point relation on the portrait.

\section{Examples of $\Moduli_d^1[\Pcal]$}
\label{section:Md1Pexamples}

In this section we illustrate our general theory by describing various
instances of~$\Moduli_d^1[\Pcal]$ and~$\Moduli_d^1[\Pcal|\Acal]$.

\begin{example}[Dynatomic Curves]
\label{example:dynatomiccurve}
One of the most studied families of dynamical systems is the set of
``unicritical'' polynomial maps $f_{d,c}(x):=x^d+c$, especially
for~$d=2$.  (We have put quote marks around ``unicritical'' because as
a map of~$\PP^1$, the polynomial~$f_{d,c}(x)$  has two
critical points,~$0$ and~$\infty$.) We briefly explain how these
curves fit into our general setup.

For each $d\ge2$ and $n\ge1$, there are associated \emph{dynatomic
  curves} $Y_{1,d}^\dyn(n)$ and $Y_{0,d}^\dyn(n)$. The former is
frequently defined to be the affine curve consisting of the
points~$(c,x)$ in~$\AA^2$ satisfying the equation
\begin{equation}
  \label{eqn:Y1ddynn}
 \prod_{m\mid n} \bigl( f_{d,c}^{n}(x) - x \bigr)^{\mu(n/m)} = 0;
\end{equation}
see for example~\cite{arxiv1711.04233}.  However, for~$d\ge3$,
we note that this is not quite a moduli space, since for every
$(d-1)$st root of unity~$\z$, we can conjugate~$f_{d,c}(x)$ by
$x\to\z^{-1}x$, and thus we should identify~$(c,x)$
and~$(\z{c},\z{x})$.  So we define~$Y_{1,d}^\dyn(n)$ to be the
quotient of the curve~\eqref{eqn:Y1ddynn} by the action of~$(d-1)$st
roots of unity.  Of course, for the much studied case of~$d=2$, there
is no distinction.

The curve~$Y_{0,d}^\dyn(n)$ classifies degree~$d$
unicritical polynomials with a marked cycle of formal period~$n$. It
is defined to be the quotient
\[
  Y_{0,d}^\dyn(n) := Y_{1,d}^\dyn(n)\GITQuot(\ZZ/n\ZZ),
\]
where the action of~$\ZZ/n\ZZ$ is given by
\[
  \ZZ/n\ZZ\longhookrightarrow\Aut\bigl(Y_{1,d}^\dyn(n)\bigr),\quad
  k \longmapsto \Bigl( (c,x) \mapsto \bigl(c,f_{d,c}^k(x)\bigr) \Bigr).
\]

To describe the dynatomic curve~$Y_{1,d}^\dyn(n)$ in our general
setting, we consider the following
portrait~$\Pcal=(\Vcal^\circ,\Vcal,\F,\e)$:
\begin{align*}
  \Vcal^\circ &= \{\a,\b_1,\g_0,\ldots,\g_{n-1}\},\hidewidth
  &&&\Vcal &= \Vcal^\circ\cup\{\b_2\},\\
  \F(\a)&=\a,& \F(\b_1)&=\b_2,& \F(\g_i)&=\g_{i+1\bmod n},\\
  \e(\a)&=d,& \e(\b_1)&=d,& \e(\g_0)&=\cdots=\e(\g_{n-1})=1.      
\end{align*}    
Let $(f,P)\in\End_d^1[\Pcal]$ be a geometric point of
characteristic~$0$.  Then~$f$ has a maximally ramified critical fixed
point, hence is a polynomial, a second 
maximally ramified critical point, hence is unicritical, and a marked
periodic cycle of exact period~$n$. Changing coordinates, i.e.,
conjugating by an element of~$\PGL_2$, we may assume that~$(f,P)$ has
the form~$(f_{d,c},P)$, where~$c$ is well-defined up to a $(d-1)$st
root of unity.  In this way we obtain a well-defined morphism
$\Moduli_d^1[\Pcal]\hookrightarrow Y_{1,d}^\dyn(n)$, but the map is
not surjective. The failure of surjectivity is due to the fact that we have required 
the points in our model to be distinct: First, the
space~$\Moduli_d^1[\Pcal]$ is missing the points of formal
period~$n$. Second, 
it is missing the points
where the second critical point is either in the $n$-cycle or maps into the $n$-cycle
after one step. Finally, $\Moduli_d^1[\Pcal]$ is missing the points where the second
critical point is fixed. (Note that in the second and third cases $f$ is PCF.)
These difficulties are overcome by using
instead the space~$\MODULI_d^1[\Pcal]$, which is finite
over~$\Moduli_d^1$. In this way, we obtain an isomorphism
$\MODULI_d^1[\Pcal]\xrightarrow{\sim}{Y_{1,d}^\dyn(n)}$. Similarly, if
we let~$k\in\ZZ/n\ZZ$ act on~$\Pcal$ via $\g_i\mapsto\g_{i+k\bmod n}$,
then we obtain an isomorphism
$\MODULI_d^1[\Pcal|\ZZ/n\ZZ]\xrightarrow{\sim}{Y_{0,d}^\dyn(n)}$;
cf.\ the notation set in Definition~\ref{definition:MdNP10}.
\end{example}

\begin{example}
\label{example:milnorM21eqA2}
The space~$\Moduli_2^1$ has dimension~$2$, and indeed is isomorphic
to~$\AA^2$ via the Milnor isomorphism
\[
 \bfs=(s_1,s_2) : \Moduli_2^1 \xrightarrow{\;\sim\;} \AA^2,
\]
where $s_1$ and $s_2$ are elementary symmetric functions in the
multipliers of the fixed points for the equivalence class of maps
$\langle f\rangle \in \Moduli_2^1$;
see~\cite[Theorem~4.56]{MR2316407}, for example.
The next result, which is due to
Blanc, Canci, and Elkies, describes the finite coverings
of~$\Moduli_2^1$ associated to portraits consisting of a single cycle
of small size.

\begin{theorem}
[\textup{Blanc, Canci, and Elkies~\cite{MR3431627}}]
\label{theorem:M21Cn}
Let $\Ccal_n$ be the unweighted portrait consisting of a single
$n$-cycle, i.e., $\Vcal^\circ=\Vcal=\ZZ/n\ZZ$ and $\F(v)=v+1\bmod n$.
Note that~$\Aut(\Ccal_n)$ is the group generated by~$\Phi$.\footnote{Note
  that~$\Aut(\Ccal_n)$ is not the dihedral group, because our $n$-gon
  is a directed graph, and automorphisms need to preserve the
  orientation.}
  \begin{parts}
  \Part{(a)}
  For $1\le n\le 5$, the variety $\Moduli_2^1[\Ccal_n|1]$ is a rational surface over~$\QQ$.
  \Part{(b)}
  For $1\le n\le 6$, the variety $\Moduli_2^1[\Ccal_n|0]$ is a rational surface over~$\QQ$.
  \Part{(c)} 
  The variety $\Moduli_2^1[\Ccal_6|1]$ is a surface of general
  type. Thus the Bombi\-eri--Lang conjecture implies that
  $\Moduli_2^1[\Ccal_6|1](\QQ)$ is not Zariski dense.
  \Part{(d)}
  The set $\Moduli_2^1[\Ccal_6|1](\QQ)$ is infinite. More precisely, the
  space $\Moduli_2^1[\Ccal_6|1]$ contains a rational curve and three
  elliptic curves, each of which has infinitely many~$\QQ$-rational
  points.
  \Part{(e)}
  Let $\Ccal_n'$ denote the union of~$\Ccal_n$ and a single fixed point.
  Then the set $\Moduli_2^1[\Ccal_6'|1](\QQ)$ is infinite.
\end{parts}
\end{theorem}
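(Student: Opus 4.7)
The plan is to exploit the finite morphism $\Moduli_2^1[\Ccal_n|1]\to\Moduli_2^1\cong\AA^2$ (from Theorem~\ref{theorem:EndENDpt}(a) combined with Milnor's parametrization) and work with an explicit $\PGL_2$-normal form for a degree-$2$ map together with marked cycle points. A convenient normalization for $\Ccal_n$ is to put two consecutive cycle points at $0$ and $\infty$ (using two of the three $\PGL_2$ degrees of freedom), which reduces $f$ to a $2$-parameter family; the remaining cycle condition $f^{n-2}(\text{next point})=0$ generates a system of polynomial equations whose solution locus is (a finite cover of) $\Moduli_2^1[\Ccal_n|1]$. The action of $\Aut(\Ccal_n)=\ZZ/n\ZZ$ on the indexing of cycle points is then evident and can be quotiented out at the level of these equations.

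For parts (a) and (b), one verifies rationality of the resulting affine surface for each $n\le 5$ (resp.\ $n\le 6$ after quotienting by $\ZZ/n\ZZ$) by exhibiting an explicit birational map to $\AA^2$, or by finding a pencil of rational curves (e.g.\ a conic bundle structure) defined over $\QQ$ and a rational section. The small-$n$ cases are essentially computational: for $n=1,2$ the cycle equations are linear after normalization, and for $3\le n\le 5$ standard elimination yields an equation of low bidegree amenable to direct parametrization. Part (b) for $n=6$ is where quotienting out $\Aut(\Ccal_6)\cong\ZZ/6\ZZ$ is essential, as it trades a degree-$6$ cover for invariants that fit into a rational family.

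For part (c), one passes to a smooth projective model $\tilde X\to\overline{\Moduli_2^1[\Ccal_6|1]}$ by resolving the boundary and any singularities introduced by compactification, then computes the canonical class $K_{\tilde X}$ and shows $\kappa(\tilde X)=2$ either by producing linearly independent pluricanonical sections or by identifying $\tilde X$ birationally with a known general-type surface. This is the main obstacle: the surface sits as a sixfold branched cover of $\Moduli_2^1\cong\AA^2$, and the ramification computation giving $K_{\tilde X}$ is delicate, requiring careful bookkeeping of the degenerate loci (maps whose $6$-cycle collides with a fixed point, or where cycle points coincide) that must be blown up to obtain a smooth model. The cited Bombieri--Lang consequence is then automatic from $\kappa=2$.

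For parts (d) and (e), one exhibits explicit subfamilies. A rational curve on $\Moduli_2^1[\Ccal_6|1]$ can be obtained from a one-parameter symmetric specialization (for instance, imposing an extra involution on the map) which collapses the cycle equations to a linear system; the three elliptic curves arise from other natural one-parameter constraints whose resulting cubic-in-the-parameter relations define elliptic curves, and positive rank over $\QQ$ is checked by exhibiting explicit independent $\QQ$-rational points and appealing to a height/descent argument. For (e), the portrait $\Ccal_6'$ adds a fixed point, so the forgetful map $\Moduli_2^1[\Ccal_6'|1]\to\Moduli_2^1[\Ccal_6|1]$ has generic fibers corresponding to a choice of fixed point of $f$; since a degree-$2$ map has three fixed points with at least one rational whenever the multiplier polynomial has a $\QQ$-root, each $\QQ$-point on any of the curves from (d) lifts to (possibly multiple) $\QQ$-points of $\Moduli_2^1[\Ccal_6'|1]$, giving infinitely many.
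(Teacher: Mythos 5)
First, a point of context: the paper does not prove this theorem at all --- it is quoted from Blanc, Canci, and Elkies \cite{MR3431627}, so there is no in-paper argument to compare against. Your outline does follow the general strategy of that reference (explicit $\PGL_2$-normal forms for a quadratic map with a marked cycle, direct parametrization for the rational cases, a canonical-class computation on a resolution for general type, and explicit low-genus curves for the rational-point statements), but as written it is a plan rather than a proof: every step that carries the actual mathematical content --- the birational maps to $\AA^2$ in (a) and (b), the computation showing $\kappa=2$ in (c), and the identification of the rational curve and the three positive-rank elliptic curves in (d) --- is deferred with phrases like ``one verifies'' or ``is checked by exhibiting explicit points.'' These are precisely the hard parts; for instance, the existence of three elliptic curves of positive rank on a surface of general type is a delicate arithmetic fact that does not follow from any general principle. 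A smaller but concrete error: the cover $\Moduli_2^1[\Ccal_6|1]\to\Moduli_2^1$ has generic degree $\nu_2^1(6)=54$ (there are $9$ six-cycles, each admitting $6$ markings), not $6$; only the intermediate cover $\Moduli_2^1[\Ccal_6|1]\to\Moduli_2^1[\Ccal_6|0]$ is sixfold.

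The argument you give for (e) has a genuine logical gap. You propose to lift $\QQ$-points from the curves in (d) through the forgetful map $\Moduli_2^1[\Ccal_6'|1]\to\Moduli_2^1[\Ccal_6|1]$ by choosing a rational fixed point of $f$. But the fixed points of a quadratic rational map defined over $\QQ$ are the roots of a cubic that can perfectly well be irreducible over $\QQ$, so a $\QQ$-point of $\Moduli_2^1[\Ccal_6|1]$ need not have any $\QQ$-rational fixed point and need not lift. Your hedge ``whenever the multiplier polynomial has a $\QQ$-root'' concedes this, but the conclusion then requires showing that infinitely many of the points produced in (d) satisfy that condition, which is not addressed and is not automatic. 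This is exactly why (e) is stated as a separate result: one must exhibit curves with infinitely many rational points on the degree-$3$ cover itself, not merely downstairs.
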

\end{example}

\begin{example}
Let~$\Pcal$ be the portrait consisting of three fixed points,
\[
  \Pcal:\quad
  \xymatrix{ {\bullet}  \ar@(dr,ur)[]_{}} \quad
  \xymatrix{ {\bullet}  \ar@(dr,ur)[]_{}} \quad
  \xymatrix{ {\bullet}  \ar@(dr,ur)[]_{}}.
\]
(This is the portrait labeled~$\Pcal_{3,6}$
in~\cite[Table~2]{arxiv1703.00823}.)
Every rational map of degree~$2$ on~$\PP^1$ has three fixed points,
counted with multiplicities, so the projection map
\begin{equation}
  \label{eqn:End21PtoEnd213fixed}
  \End_2^1[\Pcal] \to \End_2^1
\end{equation}
is generically 6-to-1, corresponding to the six ways to label the
three fixed points. However, since~$\End_2^1[\Pcal]$ is, by
definition, a subscheme of~$\End_2^1\times(\PP^1)^3_\D$, the three
specified points are required to be distinct. It follows that the
map~\eqref{eqn:End21PtoEnd213fixed} is \'etale, and the image is a
Zariski open subset of~$\End_2^1$, namely
\[
  \Image\bigl(\End_2^1[\Pcal]\bigr)
  = \bigl\{f\in\End_2^1 : \text{$f$ has  3 distinct fixed points}\bigr\}.
\]
Taking the (partial) closure~$\END_2^1[\Pcal]$ fills in the missing
maps. Note that there is no issue with formal versus exact period
here, since we are only dealing with fixed points.

All of this respects the natural action of~$\PGL_2$, and we note
that $\Aut(\Pcal)\cong\Scal_3$  permutes the three fixed points
in~$\Pcal$, so we obtain
\begin{equation}
  \label{eqn:M21P61M21}
  \begin{array}{ccccc}
    \Moduli_2^1[\Pcal] &\xrightarrow[\text{6-to-1}]{\text{\'etale}} &   \Moduli_2^1[\Pcal|\Scal_3] \\
    \bigcap && \bigcap \\
    \MODULI_2^1[\Pcal] &\xrightarrow[\phantom{\text{6-to-1}}]{} &   \Moduli_2^1 &\hspace{-1em}\cong & \AA^2.
  \end{array}
\end{equation}
The complement $\Moduli_2^1\setminus\Moduli_2^1[\Pcal|\Scal_3]$
corresponds to maps having~$1$ or~$2$ fixed points. Up to~$\PGL_2$-conjugation,
the only map with~$1$ fixed point is~$z+z^{-1}$, and the maps with~$2$ fixed
points comprise the family\footnote{Move the fixed points to~$0$ and~$\infty$, with~$\infty$ a double
fixed point.  The maps that fix~$0$ and~$\infty$ are the maps
$f(z)=(az^2+bz)/(cz+d)$, and~$\infty$ is a double fixed point if and
only if~$a=c$. So setting~$a=c$ and dehomogenizing~$a=1$, we have
$f(z)=(z^2+bz)/(z+d)$. Conjugating by~$\f(z)=dz$ yields $(1/d)f(dz) =
(z^2+bz/d)/(z+1)$, so relabeling yields the stated result.}
\[
  \bigl\{(z^2+bz)/(z+1) : b\ne1 \bigr\}.
\]
Using the Milnor isomorphism,
\[
  \bfs\left(z+z^{-1}\right) = (3,3)
  \quad\text{and}\quad
  \bfs\left(\frac{z^2+bz}{z+1}\right) = (b+2,2b+1),
\]
so we see that
\[
  \Moduli_2^1[\Pcal|\Scal_3] \cong
  \AA^2 \setminus \{2s_1=s_2+3\} .
\]

In general, given a map~$f$ with three distinct marked fixed points,
we can use~$\PGL_2$ to move the fixed points
to~$\{0,1,\infty\}$. Then~$f$ has the form $f(z)=(az^2+bz)/(cz+d)$
with $a+b=c+d$. Dehomogenizing~$a=1$, and then for convenience letting
$b=u-1$ and $d=v$, so $c=u-v$, this puts~$f$ into the normal form
\[
  f(z) = \frac{z^2+(u-1)z}{(u-v)z+v}
\]
with resultant $\Resultant(f)=uv(1-u+v)$ and with Milnor coordinates
\begin{align*}
  s_1(f) &= \frac{ (u-1)(v+1)(u-v) }{ uv } + 2, \\
  s_2(f) &= \frac{ (u-v)(u^2+v^2-u+v+1) - 1 }{ uv} + 1 \\
  &= \frac{ (u-1)(u^2+v^2-uv+v+1) - v^3 }{ uv } + 1 \\
  &= \frac{ -(v+1)(u^2+v^2-uv-u+1) + u^3} { uv } + 1.
\end{align*}
We observe that the excluded locus~$2s_1-s_2-3 = 0$ pulls back to
\[
  2s_1-s_2-3
  = \frac{(1-u+v)^3}{uv}.
\]
Hence
\[
  \Moduli_2^1[\Pcal] \cong \AA^2 \setminus \bigl\{ uv(1-u+v) = 0\bigr\},
\]
where~$uv(1-u+v)=0$ corresponds to maps of degree strictly less than~$2$.

To illustrate the $6$-to-$1$ nature of the map~\eqref{eqn:M21P61M21},
we observe that the Milnor map sends each of the lines $\{u=1\}$,
$\{v=-1\}$, and $\{u=v\}$ in $(u,v)$-space as a double cover onto the
line $s_1=2$, which is the locus of polynomial maps in~$\Moduli_2^1$.
Explicitly,
\[
  u=1\Rightarrow s_2=1-v^2,\quad
  v=-1\Rightarrow s_2=1-u^2,\quad
  u=v\Rightarrow s_2=1-u^{-2}.
\]
The lines~$\{u=1\}$ and~$\{v=1\}$ map $2$-to-$1$ onto the line
$s_1=2$. The line $\{u=v\}$ also maps $2$-to-$1$ to the line $s_1 = 2$,
but its image misses the point $(s_1,s_2) = (2,1)$.
\end{example}

\begin{example}
Let~$\Pcal$ be the portrait consisting of two fixed points and one
2-cycle,
\begin{equation}
  \label{eqn:P2fixe1twocycle}
  \Pcal:\quad
  \xymatrix{ {\bullet}  \ar@(dr,ur)[]_{}} \hspace{1.5em}
  \xymatrix{ {\bullet}  \ar@(dr,ur)[]_{}} \hspace{1.5em}
  \xymatrix{
     {\bullet} \ar@(dl,dr)[r]_{}   & {\bullet}   \ar@(ur,ul)[l]_{} \\
  } .
\end{equation}
(This is the portrait labeled~$\Pcal_{4,13}$
in~\cite[Table~3]{arxiv1703.00823}.)
A generic rational map of degree~$2$ on~$\PP^1$ has three fixed points
and one cycle of period~$2$. There are~$6$ ways to choose a labeled
pair of fixed points and~$2$ ways to label the~$2$-cycle, so the
projection map
\begin{equation}
  \label{eqn:End21PtoEnd212fixed1per2}
  \End_2^1[\Pcal] \to \End_2^1
\end{equation}
is generically $12$-to-$1$.

The automorphism group $\Aut(\Pcal)\cong(\ZZ/2\ZZ)^2$ is generated by
the involution~$\f_1$ that swaps the fixed points and the
involution~$\f_2$ that swaps the two points in the $2$-cycle.
This gives the commutative diagram of morphisms, where we have indicated
the degree of each map:
\[
  \begin{array}{cccccccc}
    && \Moduli_2^1[\Pcal|\f_1] \\
    &\nearrow_2 && \searrow^2 \\
    \Moduli_2^1[\Pcal] &&&& \Moduli_2^1[\Pcal|\Aut(\Pcal)] & \xrightarrow{\;3\;} \Moduli_2^1 \cong \AA^2 \\
    &\searrow^2 && \nearrow_ 2 \\
    && \Moduli_2^1[\Pcal|\f_2] \\
  \end{array}
\]

We can use~$\PGL_2$ to move the points in the $2$-cycle
to~$\{0,\infty\}$ and one of the fixed points to~$1$, so~$f$
has the form
\[
  f(z) = \frac{az+b}{cz^2+dz}\quad\text{with $a+b=c+d$.}
\]
The fact that~$\deg(f)=2$ tells us that~$c\ne0$, so we can dehomogenize~$c=1$
to obtain
\[
  f(z) = \frac{az+b}{z^2+(a+b-1)z}\quad\text{with}\quad
  \Resultant(f) = (1-a)b(a+b).
\]

The image of~$f$ via the Milnor map is given by the unenlightening
formulas
{\tiny
\begin{align*}
s_1(f) &= \frac{ \left(b + 1\right) a^3 + \left(3 b^2 + b\right) a^2 +
  \left(3 b^3 - 2 b^2 - b\right) a + \left(b^4 - 3 b^3 - 3 b^2 -
  b\right) }{-b a^2 + \left(-b^2 + b\right) a + b^2}, \\
s_2(f) &= \frac{-a^4 - 4 b a^3 + \left(-7 b^2 + b - 1\right) a^2 +
  \left(-6 b^3 + 5 b^2 + b\right) a + \left(-2 b^4 + 6 b^3 + 6 b^2 + 2
  b\right) }{-b a^2 + \left(-b^2 + b\right) a + b^2}.
\end{align*}
}

The fixed points of~$f$ are the roots of the equation
\[
  (z-1)(z^2+(a+b)z+b) = 0.
\]
In order to specify a second fixed point, we need to choose
a root of $z^2-(a+b)z+b$, which requires taking a square root. Formally,
there is a model for~$\Moduli_2^1[\Pcal]$ as an open subvariety of
\[
  V := \bigl\{ (a,b,t) \in \AA^3 : (a+b)^2-4b = t^2 \bigr\},
\]
and we have a dominant birational map
\[
\begin{array}{ccc}
  V & \longrightarrow & \Moduli_2^1[\Pcal], \\
  (a,b,t) & \longmapsto &
  \left( \dfrac{az+b}{z^2+(a+b-1)z}, 1, \dfrac{-a-b+t}{2}, 0, \infty \right),
  \end{array}
\]
where we have labeled the four points in the
picture~\eqref{eqn:P2fixe1twocycle} for~$\Pcal$ from left to right.
\end{example}

\begin{example}
\label{example:34critptsdeg3}
We consider maps of degree~$3$ on~$\PP^1$.
Let~$\Pcal$ be the portrait consisting of three fixed points,
each taken with multiplicity~$2$:
\[
  \Pcal:\quad
  \xymatrix{ {\bullet}  \ar@(dr,ur)[]_{2}} \quad
  \xymatrix{ {\bullet}  \ar@(dr,ur)[]_{2}} \quad
  \xymatrix{ {\bullet}  \ar@(dr,ur)[]_{2}} .
\]
Without loss of generality, we may move the fixed points to~$\{0,1,\infty\}$,
and then every map~$f\in\Image(\End_3^1[\Pcal]\longrightarrow\End_3^1)$
has the form
\begin{equation}
  \label{eqn:fax3bz23a2b}
  f(z) = \frac{az^3+bz^2}{(3a+2b)z-(2a+b)}
\end{equation}
after an appropriate change of variables.
The resultant of~$f$ is
\[
  \Resultant(f) = -2a^2(a+b)^2(2a+b)^2,
\]
so we find that
\[
  \Moduli_3^1[\Pcal] = \PP^1 \setminus\bigl\{ [0,1],[1,-1], [1,-2] \bigr\}.
\]
\end{example}

\begin{example}
\label{example:M31with4fixedpts}
A generic map of degree~$3$ has~$4$ fixed points and total ramification
index $\sum \bigl(e_f(P)-1\bigr)=2d-2=4$, so in principle we could require
that all four fixed points be critical points, i.e., we
could consider $\Moduli_3^1[\Pcal']$ for the portrait
\[
  \Pcal':\quad
  \xymatrix{ {\bullet}  \ar@(dr,ur)[]_{2}} \quad
  \xymatrix{ {\bullet}  \ar@(dr,ur)[]_{2}} \quad
  \xymatrix{ {\bullet}  \ar@(dr,ur)[]_{2}} \quad
  \xymatrix{ {\bullet}  \ar@(dr,ur)[]_{2}}.
\]
This imposes one additional constraint on the 1-parameter family of
maps~$f(z)$ described by~\eqref{eqn:fax3bz23a2b}, so we might expect
examples to exist. However, it turns out
that~$\Moduli_3^1[\Pcal']=\emptyset$, since one easily computes that
the fourth fixed point is $2+a^{-1}b$, and the multiplier at that
point is
\[
  f'\left(2+\frac{b}{a}\right) = \frac32.
\]
Hence the fourth fixed point is never critical.\footnote{More
  precisely, the fourth fixed point is not critical if
  $\characteristic(K)\ne3$, and it is always critical if
  $\characteristic(K)=3$.}  This example illustrates a general way to
construct obstructions to~$\Moduli_d^1[\Pcal]\ne\emptyset$ using the
fixed point formula described in~\cite[Theorem~1.14]{MR2316407}
or~\cite[Section~12]{MR2193309}; see also Section~\ref{section:multiplierrelatons:}.
\end{example}

\begin{example}
\label{example:M31gentype}
We investigate a more complicated  space $\Moduli_d^1[\Pcal]$ in which the
portrait~$\Pcal$ is the union of~$3$ fixed points and a $3$-cycle
containing~$2$ critical points of multiplicity~$2$.  We note that the
three points in the $3$-cycle of~$\Pcal$ are rigidly specified, since
one is a critical point that is not a critical value, one is a critical
value that is not a critical point, and one is both a critical point and a
critical value. So given any~$f\in\End_3^1[\Pcal]$, there is
exactly one change of variables taking the $3$-cycle to the
points~$0,1,\infty$ such that~$0$ and~$\infty$ are critical points
and~$\infty$ and~$1$ are critical values. Labeling the three marked
fixed points as~$\a,\b,\g$, we have the picture illustrated in
Figure~\ref{eqn:3cycle}.

\begin{figure}[ht]
\[
  \raisebox{-20pt}{$\displaystyle \Pcal:\quad
    \xymatrix{  {\alpha}  \ar@(dr,ur)[]_{} } \quad
    \xymatrix{  {\beta}  \ar@(dr,ur)[]_{} } \quad
    \xymatrix{  {\gamma}  \ar@(dr,ur)[]_{} }
    $ }\quad
  \xymatrix{ {\infty} \ar[r]^{2} & {1} \ar[dl]_{}\\
     {0}  \ar[u]^{2}\\
    }
\]
\caption{Portrait $\Pcal$ with 3 fixed points and a doubly-critical 3-cycle}
\label{eqn:3cycle}
\end{figure}
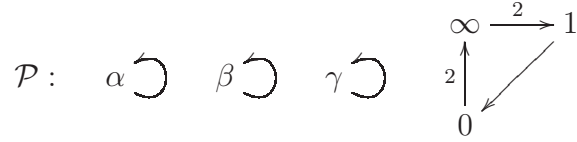

We first observe that every map~$f\in\End_3^1$ having the $3$-cycle
in Figure~\ref{eqn:3cycle} has the form
\[
  f_{u,v}(x) = \frac{x^3+u x^2-(v+1) x-(u-v)}{x^3+ux^2}
\]
with
\begin{equation}
  \label{eqn:resfuv}
  \Resultant(f_{u,v}) = -(u+1)(u-v)^2v \ne 0.
\end{equation}
Imposing the conditions that~$\a,\b,\g$ are fixed points of~$f_{u,v}$
gives three linear equations for the two quantities~$u$
and~$v$. Explicitly
\begin{equation}
  \label{eqn:abcmatrix1}
  \begin{pmatrix}
    \a^3-\a^2+ 1 & \a-1 & \a(\a^3-\a^2+ 1) \\
    \b^3-\b^2+ 1 & \b-1 & \b(\b^3-\b^2+ 1) \\
    \g^3-\g^2+ 1 & \g-1 & \g(\g^3-\g^2+ 1) \\
  \end{pmatrix}
  \begin{pmatrix}
    u\\ v\\ 1\\
  \end{pmatrix} =
  \begin{pmatrix}
    0\\ 0\\ 0\\
  \end{pmatrix}.
\end{equation}
We let~$A$ denote the matrix in~\eqref{eqn:abcmatrix1}.

This proves that $(f_{u,v},\a,\b,\g,0,1,\infty)\in\End_3^1[\Pcal]$ if
and only if the equation~\eqref{eqn:abcmatrix1} has a solution using
distinct values~$\a,\b,\g\notin\{0,1\}$. Clearly a necessary condition
for this to occur is to have~$\det(A)=0$, and further, if~$A$ has rank
exactly~$2$, then~$(\a,\b,\g)$ determines a unique pair~$(u,v)$.

We pause momentarily to discuss what happens if $\rank(A)\le1$. Since
we are not allowing~$\a,\b,\g$ to be~$1$ and are requiring them to be
distinct, a calculation using the $2$-by-$2$ minors of~$A$ shows that
$\rank(A)\le1$ if and only if~$\a,\b,\g$ are the three roots of the
polynomial~$t^3-t^2+1$, in which case~$A$ has rank exactly~$1$ and its
null space is the set of vectors of the form~$(u,0,w)$. But the
resultant~\eqref{eqn:resfuv} of~$f_{u,v}$ vanishes when~$v=0$, so
these~$\a,\b,\g$ values do not yield a point
in~$\End_3^1[\Pcal]$.\footnote{More precisely, we get
  $(1-x^{-2},\a,\b,\g,0,1\,\infty)\in\End_2^1[\Pcal]$.}



We conclude that~$\Moduli_3^1[\Pcal]$ is isomorphic to an open subset
of the set of $(\a,\b,\g)\in\AA^3$ such that $\det(A)=0$. More
precisely, since we want~$\a,\b,\g$ to be distinct, we take the
subvariety of $\AA^3$ defined by~$\frac{\det(A)}{(\a-\b)(\a-\g)(\b-\g)} = 0$.
This gives the following formula for a surface~$S$ in~$\AA^3$ that
contains a Zariski open subset isomorphic to~$\Moduli_3^1[\Pcal]$:
\begin{multline*}
  S : \a \b^2 \g^2 + \a^2 \b \g^2 + \a^2 \b^2 \g  
  - (\a^2 \b^2 + \a^2 \g^2 + \b^2 \g^2) \\  
  - 2 (\a^2 \b \g + \a \b^2 \g + \a \b \g^2)   
  + 3 \a \b \g + \a^2 \b + \a^2 \g + \b^2 \g + \a \b^2 + \a \g^2 + \b \g^2 \\ 
  - (\a^2 + \b^2 + \g^2) - 2 (\a \b + \a \g + \b \g)  
  + 2 (\a + \b + \g) -1 = 0.  
\end{multline*}


Next let $\Acal=\Aut(\Pcal)\cong\Scal_3$ be the set of automorphisms
that permute the three fixed points of~$\Pcal$. The
surface~$\Moduli_3^1[\Pcal|\Acal]$ is obtained by writing the equation
of~$\Moduli_3^1[\Pcal]$ in terms of the elementary symmetric functions
$X:=\a+\b+\g$, $Y:=\a\b+\a\g+\b\g$, and $Z:=\a\b\g$, which can be done
by hand, or using the command \texttt{IsSymmetric()} in
Magma~\cite{MR1484478}.  In any case, we find the equation
\[
  X^2 - X Y + Y^2 - Y Z - 2 X + 1 = 0.
\]
Thus~$\Moduli_3^1[\Pcal|\Acal]$ is a rational surface, since we
can solve for~$Z$ in terms of~$X$ and~$Y$. This exhibits~$\Moduli_3^1[\Pcal]$
as a ramified cover of~$\AA^2$.

\end{example}

\section{The Dimension of $\Moduli_d^N[\Pcal]$ for Unweighted Portraits}
\label{section:dimMdNP}

In this section we make the following assumption:
\begin{center}
  \framebox{\parbox{.78\hsize}{\noindent
      All varieties are defined over a field of
      characteristic $0$.}}
\end{center}

\begin{definition}
\label{definition:nudNn}  
For integers~$d \ge 2$,~$N \ge 1$, and~$n \ge 1$, we set
\begin{equation}
  \label{eqn:nudNn}
  \nu_d^N(n) := \sum_{k\mid n}\left( \mu(n/k) \sum_{j=0}^N d^{jk}\right).
\end{equation}
For~$m \ge 1$ we set
\[
  \nu_d^N(m,n) := d^{N(m-1)}(d^N - 1)\nu_d^N(n),
\]
and we extend this to $m = 0$ by setting~$\nu_d^N(0,n) := \nu_d^N(n)$.
\end{definition}

\begin{theorem}
\label{theorem:UnweightedNonempty} 
Let $\Pcal=(\Vcal^\circ,\Vcal,\Phi)$ be an unweighted portrait, and let
\begin{align*}
  D_\Pcal &:= \max\bigl\{\#\Phi^{-1}(P) : P \in \Vcal\bigr\}, \\
  C_\Pcal(n) &:= \#\{ v\in\Vcal^\circ : \text{$v$ has exact period $n$ for $\Phi$} \}.
\end{align*}
Then the following are equivalent\textup:
\begin{parts}
\Part{(a)}
$\End_d^N[\Pcal]\ne\emptyset$.
\Part{(b)}
$\Mcal_d^N[\Pcal]\ne\emptyset$.
\Part{(c)}
The finite morphism
\begin{equation}
  \label{eqn:PiPcalnonempty}
   \Pi_\Pcal :  \END_d^N[\Pcal] \longrightarrow \End_d^N \times (\PP^N)^{\Vcal\setminus\Vcal^\circ}
\end{equation}
described in Theorem~\textup{\ref{theorem:EndENDpt}(a)} is surjective.
\Part{(d)}
$\dim\End_d^N[\Pcal] = \dim\End_d^N + N\cdot\#(\Vcal\setminus\Vcal^\circ)$.
\Part{(e)}
$\dim\Moduli_d^N[\Pcal] = \dim\Moduli_d^N + N\cdot\#(\Vcal\setminus\Vcal^\circ)$.
\Part{(f)}  
$D_\Pcal \le d^N$ and $C_\Pcal(n) \le \nu_d^N(n)$ for all $n \ge 1$.
\end{parts}
\end{theorem}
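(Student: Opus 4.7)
I plan to organize the six conditions into GIT-quotient pairs and close the remaining equivalences via the cycle $(a)\Rightarrow(f)\Rightarrow(c)\Rightarrow(a)$, using finiteness of $\Pi_\Pcal$ to relate $(c)$ and $(d)$. The equivalences $(a)\Leftrightarrow(b)$ and $(d)\Leftrightarrow(e)$ are immediate from Theorem~\ref{theorem:MNdwithportrait}: since $\End_d^N[\Pcal]$ lies in the $\SL_{N+1}$-stable locus with finite stabilizers, both non-emptiness and dimension transfer to the GIT quotient, and the dimension drops by $\dim\SL_{N+1}$ in both the portrait and non-portrait cases. The implication $(c)\Rightarrow(a)$ is trivial, and $(c)\Rightarrow(d)$ follows because $\Pi_\Pcal$ is finite (Theorem~\ref{theorem:EndENDpt}(a)), hence dimension-preserving; moreover $\End_d^N[\Pcal]$ is Zariski-dense in $\END_d^N[\Pcal]$, so they share the same dimension. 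Conversely, $(d)\Rightarrow(c)$ holds because the image of the proper map $\Pi_\Pcal$ is a closed subscheme of the irreducible target whose dimension equals that of the target under $(d)$, so the image is the whole target.

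For $(a)\Rightarrow(f)$: given any model $(f,\g)$, the fiber $f^{-1}(\g(v))$ has at most $d^N$ geometric points, giving $D_\Pcal\le d^N$; and the injectivity of $\g$ together with the equation $f\circ\g=\g\circ\Phi$ forces the $\g$-images of the exact-period-$n$ vertices to form a set of distinct exact-period-$n$ points of $f$, whose cardinality is at most $\nu_d^N(n)$ by the fixed-point count for $f^n$ (degree $d^n$ on $\PP^N$, giving $\sum_{j=0}^N d^{jn}$ fixed points with multiplicity) followed by M\"obius inversion.

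The main content is $(f)\Rightarrow(c)$. Since $\Pi_\Pcal$ is proper, its image in the irreducible target $\End_d^N\times(\PP^N)^{\Vcal\setminus\Vcal^\circ}$ is closed, so it suffices to exhibit a non-empty Zariski open subset of the target lying in the image. I would pick a generic $f_0\in\End_d^N$ whose exact-period-$n$ locus consists of the full $\nu_d^N(n)$ distinct points for every $n\le\#\Vcal$ and whose generic preimage set has the full $d^N$ distinct points; then, for generic endpoint data $(P_z)_{z\in\Vcal\setminus\Vcal^\circ}$, build a model $\g\colon\Vcal\hookrightarrow\PP^N$ extending $(P_z)$ in two stages. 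First, assign each $\Phi$-cycle of length $n$ in $\Pcal$ to a distinct $f_0$-cycle of exact period $n$, which is possible because $C_\Pcal(n)\le\nu_d^N(n)$. Second, traverse the tree part of $\Pcal$ backward from the cycles and endpoints, choosing at each vertex $v$ a preimage in $f_0^{-1}(\g(\Phi(v)))$, which is possible because $D_\Pcal\le d^N$.

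The hard part is ensuring $\g$ stays injective globally: each backward preimage choice must avoid all points already assigned (cycles, endpoints, previously chosen preimages). The finiteness of $\Vcal$ reduces this to a finite list of open ``avoidance'' conditions on the pair $(f_0,(P_z))$, each defined by the non-vanishing of an appropriate resultant or discriminant. The hypotheses in $(f)$ are exactly what is needed to ensure that each such open condition is non-empty --- namely that at every stage of the backward construction there is a spare preimage, and at the cycle stage there is a spare exact-period-$n$ point --- so that the intersection of these finitely many non-empty open conditions on $(f_0,(P_z))$ remains non-empty and produces the desired dense family of models.
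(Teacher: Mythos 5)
Your proposal is correct, and most of it coincides with the paper's proof: the equivalences $(a)\Leftrightarrow(b)$ and $(d)\Leftrightarrow(e)$ via the quotient maps, the implications $(c)\Rightarrow(a)$ and $(d)\Rightarrow(c)$ (proper image of full dimension in an irreducible target), the degree and M\"obius-inversion counts for $(a)\Rightarrow(f)$, and the backward cycle-matching/preimage-lifting construction are all exactly what the paper does. The genuine difference is in how you close the loop to the dimension statement. The paper proves $(f)\Rightarrow(a)$ by producing a \emph{single} model, and then separately proves $(a)\Rightarrow(d)$ via three lemmas: codimension-one bounds for the loci $R_d^N(m,n)$ of maps with too few points of preperiodic type $(m,n)$ and for $R_d^N(\Pcal)$, plus a computation of the fibers of $\pi_\Pcal:\End_d^N[\Pcal]\to\End_d^N$. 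You instead run the same construction uniformly over a dense open subset of the target $\End_d^N\times(\PP^N)^{\Vcal\setminus\Vcal^\circ}$ to get $(f)\Rightarrow(c)$ directly, and then deduce $(d)$ from $(c)$ because a finite surjection preserves dimension and $\End_d^N[\Pcal]$ is dense in $\END_d^N[\Pcal]$. This is a legitimate and somewhat leaner logical organization; what the paper's route buys is that Lemmas~\ref{lemma:codim}--\ref{lemma:piPcal} are reused elsewhere (e.g., in Theorem~\ref{theorem:MPprimetoMP} and in the weighted-portrait dimension count), so the codimension and fiber-dimension information is needed anyway. One point in your version that deserves to be made explicit: you need the locus of $f$ possessing the full complement of $\nu_d^N(n)$ distinct exact-period-$n$ points (and no ramified preperiodic points to the relevant depth) to contain a dense open set, not merely the generic point; this is precisely the content of Lemma~\ref{lemma:NumPrepPts} together with the closedness argument in Lemma~\ref{lemma:codim}, and similarly your ``avoidance'' conditions on $(f_0,(P_z))$ should be checked to be complements of closed subsets with proper fibers so that their finite intersection is still dense open in the irreducible product. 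With those routine verifications supplied, the argument is complete.
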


The remainder of this section is devoted to proving
Theorem~\ref{theorem:UnweightedNonempty} and a corollary that
describes the dimensions of the fibers and images of maps between
portrait moduli spaces.

\begin{definition}
For $f \in \End_d^N$, we let $\PrePer(f)$ denote the set of
preperiodic points for~$f$. For each preperiodic point $P \in
\PrePer(f)$, we say~$P$ has \emph{preperiodic type}~$(m,n)$ if~$P$
enters an~$n$-cycle after precisely~$m$ steps. We will also refer to
$m$ and $n$ as the \emph{preperiod} and (\emph{eventual})
\emph{period}, respectively, of $P$.
\end{definition}
      
\begin{lemma}
\label{lemma:NumPrepPts}
Fix integers~$d \ge 2$ and~$N \ge 1$, and let
\[
  f : \PP^N_{\End_d^N}\longrightarrow \PP^N_{\End_d^N}
\]
be the generic endomorphism of degree~$d$. Then for all
integers~$m\ge0$ and~$n\ge1$, the map~$f$ has exactly~$\nu_d^N(m,n)$
geometric points of preperiodic type~$(m,n)$.
\end{lemma}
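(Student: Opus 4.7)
The plan is to split the count into two parts: first I will count geometric points of exact period $n$ (the $m=0$ case), and then I will use an inductive preimage-counting argument to handle preperiodic type $(m,n)$ with $m \ge 1$.

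For the periodic count, the Chow ring of $\PP^N \times \PP^N$ gives the standard intersection formula: for any endomorphism $g:\PP^N \to \PP^N$ of degree $e$, the graph $\Gamma_g$ has class $\sum_{i=0}^N e^i\, h_1^i h_2^{N-i}$ and the diagonal $\Delta$ has class $\sum_{i=0}^N h_1^i h_2^{N-i}$, so
\[
  [\Gamma_g] \cdot [\Delta] = \sum_{j=0}^{N} e^{j}.
\]
Specializing to $g = f^n$ with $e = d^n$ yields $\sum_{j=0}^N d^{jn}$ fixed points of $f^n$, counted with multiplicity. I would next argue that for the generic $f$ these fixed points are all simple and distinct, by exhibiting a single $f_0 \in \End_d^N(k)$ for which this is true for every $k \mid n$; upper-semicontinuity of the length of the fibers of the fixed-point scheme $\Fix(f^k) \to \End_d^N$ then forces the count to be exact on a dense open subset, hence at the generic point. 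Möbius inversion over the divisors of $n$ yields
\[
  \sum_{k\mid n} \mu(n/k) \sum_{j=0}^N d^{jk} = \nu_d^N(n),
\]
which is the claim for $m = 0$.

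For $m \ge 1$, I claim that for the generic $f$ the critical locus $\Crit(f)$, a hypersurface of degree $(N+1)(d-1)$ in $\PP^N$, is disjoint from every periodic point of period dividing $n$, so that $f$ is étale over all such periodic points. This is another open nonempty condition that can be verified at the same witness $f_0$. Granted étale-ness, each $Q$ on an $n$-cycle has exactly $d^N$ distinct preimages under $f$: one is the cycle-predecessor of $Q$, and the remaining $d^N - 1$ are preperiodic of type $(1,n)$, giving $(d^N-1)\nu_d^N(n) = \nu_d^N(1,n)$ points of type $(1,n)$. For $m \ge 2$ and $Q$ of type $(m-1,n)$ with $m-1 \ge 1$, any preimage $P$ of $Q$ satisfies $\mathrm{dist}(P,\text{cycle}) = \mathrm{dist}(Q,\text{cycle}) + 1 = m$, since the tail distance can only drop by exactly one under $f$ off the cycle (and dropping to a smaller value would force $Q$ to lie closer to the cycle than $m-1$, a contradiction). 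Thus all $d^N$ preimages of $Q$ have type $(m,n)$, and iterating gives
\[
  \bigl|\{\text{type }(m,n)\}\bigr| = d^{N(m-1)} (d^N-1) \nu_d^N(n) = \nu_d^N(m,n).
\]

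The main obstacle will be producing a single witness $f_0$ that simultaneously realizes simple fixed points for every iterate $f_0^k$ with $k \mid n$ \emph{and} keeps $\Crit(f_0)$ disjoint from all periodic points of period dividing $n$. Since each is an open nonempty condition on the irreducible variety $\End_d^N$, their intersection is automatically nonempty, so such an $f_0$ exists abstractly; the work is in writing one down explicitly. A convenient candidate is $f_0$ in a ``triangular'' form $[P_0(x_0) : P_1(x_0,x_1) : \cdots : P_N(x_0,\ldots,x_N)]$ with the $P_i$ taken as small linear perturbations of $x_i^d$, which reduces the verification of both properties to the classical one-variable case for perturbations of $z \mapsto z^d$, where the simple-periodic and étale-over-cycle conditions are explicit and standard.
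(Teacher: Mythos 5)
Your overall skeleton matches the paper's: establish the exact count $\nu_d^N(n)$ of period-$n$ points for the generic map, then induct on the preperiod by counting preimages, using the fact that each relevant point has $d^N$ distinct preimages. The paper simply cites Hutz for the periodic count and the multiplicity-one statement, and Fakhruddin for unramifiedness at preperiodic points, whereas you attempt to prove these inputs from scratch. That is legitimate in principle, but there is a genuine gap in the genericity condition you impose for the inductive step.

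The condition ``$\Crit(f)$ is disjoint from every periodic point of period dividing $n$'' is too weak to run your induction --- indeed it is already too weak at $m=1$. To conclude that a periodic point $Q$ has $d^N$ \emph{distinct} preimages you need every point of $f^{-1}(Q)$ to be non-critical: since $\sum_{P\in f^{-1}(Q)}e_f(P)=d^N$, a single critical preimage collapses the count. But $f^{-1}(Q)$ consists of one periodic point and several points of type $(1,n)$, and your hypothesis says nothing about the latter. The same problem recurs at every stage: for $m\ge 2$ you need the type-$(m-1,n)$ points to have unramified fibers, i.e., you need the type-$(m,n)$ points themselves to avoid $\Crit(f)$. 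The correct hypothesis is that $\Crit(f)$ avoids the finite set of all preperiodic points of preperiod at most $m$ and eventual period dividing $n$ --- still a Zariski-open condition on $\End_d^N$ --- and its nonemptiness is precisely Fakhruddin's theorem that the generic endomorphism has no ramified preperiodic points, which is what the paper cites. Your proposed witness is also problematic: the power map $[x_0^d:\cdots:x_N^d]$ has critical \emph{fixed} points (the coordinate points lie on the critical hyperplanes $\{x_i=0\}$), so it fails even your weaker condition, and verifying that a specific triangular perturbation simultaneously achieves simple periodic points for all $k\mid n$ and unramifiedness at all the relevant preperiodic points is exactly the nontrivial content you would be trying to import; it does not reduce in any routine way to the one-variable case.
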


\begin{proof}
We first consider the case~$m = 0$. The number of points of period~$n$
for~$f$, counted with the appropriate multiplicity, is equal
to~$\nu_d^N(n)$; see \cite[Theorems~4.3~and~4.17]{arxiv:0801.3643}. By
\cite[page~3513]{arxiv:1011.5155}, for a general degree-$d$
endomorphism of~$\PP^N$, each point of exact period~$n$ occurs to
multiplicity~$1$, so there must be exactly~$\nu_d^N(n) = \nu_d^N(0,n)$
such points.

By \cite[Corollary~3.5]{MR3218803}, the generic map~$f$ has no
ramified preperiodic points, so every preperiodic point has
precisely~$d^N$ preimages under~$f$. Every point of type~$(1,n)$ is
the preimage of a point of period~$n$; conversely, given a point~$P$
of period~$n$, all but one of its preimages is a point of
type~$(1,n)$. (The remaining preimage is the periodic
point~$f^{n-1}(P)$.) Thus~$f$ has $(d^N - 1)\nu_d^N(n)=\nu_d^N(1,n)$
points of type~$(1,n)$.

Now assume that~$m \ge 2$. The points of type~$(m,n)$ are precisely
the preimages of the points of type~$(m-1,n)$. If we assume that there
are $\nu_d^N(m-1,n)$ points of type~$(m-1,n)$, and if we again use the
fact that~$f$ is unramified at its preperiodic points, it follows that
there are exactly
\[
  d^N \cdot \nu_d^N(m-1,n) = \nu_d^N(m,n)
\]
points of type~$(m,n)$. Thus the stated result follows by induction
on~$m$.
\end{proof}

\begin{proof}[Proof of Theorem~$\ref{theorem:UnweightedNonempty}$]
We prove various implications.
\par\noindent\framebox{(a) $\Longleftrightarrow$ (b)}\enspace
This is immediate from the surjectivity of the map
\[
\End_d^N[\Pcal]\to\Moduli_d^N[\Pcal].
\]
\par\noindent\framebox{(d) $\Longrightarrow$ (c)}\enspace
Theorem~\ref{theorem:EndENDpt} says that the
map~\eqref{eqn:PiPcalnonempty} is a finite morphism, and~(d) says
that~$\END_d^N[\Pcal]$ contains an open subset (namely, $\End_d^N[\Pcal]$) whose
dimension is equal to the dimension of the codomain
$\End_d^N\times(\PP^N)^{\Vcal\setminus\Vcal^\circ}$. Further, the
codomain is irreducible, since~$\End_d^N$ is an open subset
of~$\PP^\dimEnd$.  Hence the map~\eqref{eqn:PiPcalnonempty} is
surjective.
\par\noindent\framebox{(d) $\Longleftrightarrow$ (e)}\enspace
These implications follow from the fact that the generic fibers of
$\End_d^N[\Pcal]\to\Moduli_d^N[\Pcal]$ and $\End_d^N\to\Moduli_d^N$
have dimension equal to the dimension of~$\PGL_{N+1}$.
\par\noindent\framebox{(c) $\Longrightarrow$ (a)}\enspace
The assumed surjectivity of the map~\eqref{eqn:PiPcalnonempty}
implies in particular that $\END_d^N[\Pcal]\ne\emptyset$.
But~$\END_d^N[\Pcal]$ is the Zariski closure of~$\End_d^N[\Pcal]$
in $\End_d^N\times(\PP^N)^{\Vcal\setminus\Vcal^\circ}$, so~$\End_d^N[\Pcal]$
cannot be the empty set.
\par\noindent\framebox{(a) $\Longrightarrow$ (f)}\enspace
A degree~$d$ map~$f$ on~$\PP^N$ has topological degree~$d^N$,
and Lemma~\ref{lemma:NumPrepPts} tells us that~$f$ can have at
most~$\nu_d^N(n)$ points of period~$n$. Hence~(a) implies~(f).
\par\noindent\framebox{(f) $\Longrightarrow$ (a)}\enspace
To see the opposite implication, we suppose that the inequalities
in~(f) are true. Let~$\Vcal'\subseteq\Vcal$ be the set of preperiodic
vertices of~$\Pcal$, let $\Pcal'=(\Vcal',\Phi)$ be the preperiodic
subportrait of~$\Pcal$, and let $m_0$ and $n_0$ be the maximal
preperiod and period, respectively, for a vertex $v \in \Vcal'$.  We
relabel the vertices in~$\Vcal$ so that
\[
  \Vcal=\{1,\ldots,s\},\quad
  \Vcal^\circ=\{q,\ldots,s\},\quad
  \Vcal'=\{r,\ldots,s\},\quad
  \text{with $q\le r\le s$,}
\]
i.e., the vertices are labeled from~$1$ to~$s$ so that
\[
  \underbrace{1,2,\ldots,q-1}_{\hidewidth\text{vertices with no out-arrow}\hidewidth},
  \underbrace{q,q+1,\ldots,r-1,\overbrace{r,r+1,\ldots,s}^{\hidewidth\text{preperiodic vertices}\hidewidth}}_{\text{vertices having an out-arrow}}.
\]

The fact that the generic degree~$d$ map on~$\PP^N$ has no ramified
preperiodic points implies that for each~$(m,n)$ with~$0\le m\le m_0$
and~$1\le n\le n_0$, there is a open subset of~$f\in\End_d^N$ such
that the map~$f$ has precisely~$\nu_d^N(m,n)$ points of
type~$(m,n)$. Hence we can find a map~$f$ and
points~$P_r,\ldots,P_s\in\PP^N$ so that~$(f,P_r,...,P_s)$ is a model
for~$P'$.

We next construct points~$P_k$ for 
vertices~$k\in\Vcal\setminus\Vcal^\circ=\{1,\ldots,q-1\}$ that do not
have an out-arrow. For each such~$k$, we let
\[
  m_k :=\max\bigl\{m : \text{$\Phi^m(v)=k$ for some $v\in\Vcal$}\bigr\}.
\]
We then successively choose points~$P_{q-1},\ldots,P_2,P_1\in\PP^N$ so that
\begin{equation}
  \label{eqn:PermfmCrit}
  P_k \notin
  \bigcup_{1\le m\le m_k} \Per_m(f) \cup
  \bigcup_{m=0}^{m_k} \bigcup_{i=k+1}^{s} f^{m}(P_i) \cup
  \bigcup_{m=0}^{m_k}  f^m\bigl(\Crit(f)\bigr),
\end{equation}
where~$\Crit(f)$ is the critical locus (also called the ramification
locus) of~$f$.

It remains to construct points~$P_q,\ldots,P_{r-1}\in\PP^N$ that
complete our model for~$\Pcal$. These correspond to the vertices
$\{q,\ldots,r-1\}$ that have out-arrows, but whose orbits terminate at
points with no out-arrow. We do this recursively as follows:
\begin{parts}
\Part{(1)}
Initialize $\Lcal:=\{q,\ldots,r-1\}$.
\Part{(2)}
Choose a vertex $\ell\in\Lcal$ such that $P_{\Phi(\ell)}$ has already been
assigned.
\Part{(3)}
Let $v_1,\ldots,v_t$ be the vertices in
$\Phi^{-1}\bigl(\Phi(\ell)\bigr)$. Note that~$t\ge1$,
since~$\ell\in\Phi^{-1}\bigl(\Phi(\ell)\bigr)$.
\Part{(4)}
Choose distinct points $P_{v_1},\ldots,P_{v_t}\in\PP^N$ satisfying
\[
f(P_{v_1})=\cdots=f(P_{v_t})=P_\ell.
\]
\Part{(5)}
Remove $v_1,\ldots,v_t$ from~$\Lcal$. If~$\Lcal=\emptyset$ terminate;
otherwise go to Step~(2) and choose another vertex in~$\Lcal$.
\end{parts}

We claim that this process yields a model~$(f,P_1,\ldots,P_s)$
for~$\Pcal$.  First, we verify that it is in fact possible to choose
the~$P_i$ as we have described.  For~$1\le k<q$, i.e., for vertices
with no out-arrows, the set~\eqref{eqn:PermfmCrit} that we need to
avoid when choosing~$P_k$ is a set of codimension~$1$ in~$\PP^N$,
since it consists of a union of a finite set of points together with
some forward images of the critical locus, which has codimension~$1$.
Hence these~$P_k$ may be chosen from a non-empty Zariski open set.

For vertices~$\ell$ with an out-arrow whose orbits terminate at a
point with no out-arrow, i.e., $q\le\ell<r$, Step~(4) requires that we
choose~$t$ distinct points in~$f^{-1}(P_\ell)$, where
$t=\#\F^{-1}\bigl(\F(\ell)\bigr)$.  Our assumption that $\Dcal_P\le
d^N$ means that $t\le d^N$, while~\eqref{eqn:PermfmCrit} tells us that
$P_\ell\notin{f}\bigl(\Crit(f)\bigr)$, i.e.,~$P_\ell$ is not a critical
value of~$f$, so~$\#f^{-1}(P_\ell)=d^N$. Hence there are enough
distinct points in the inverse image~$f^{-1}(P_\ell)$ to accomplish
Step~(4).

We also note that the algorithm~(1)--(5) terminates, since each time
we get to Step~(5), we remove at least one vertex from the set~$\Lcal$.

Finally, it is clear from the construction that for
each~$v\in\Vcal^\circ$, i.e., for each vertex with an out-arrow, we
have $f(P_v)=P_{\Phi(v)}$. And the points~$P_1,\ldots,P_s$ are
distinct due to the restrictions imposed by~\eqref{eqn:PermfmCrit}
and Step~(4). Therefore~$(f,P_1,\ldots,P_s)$ is a model for~$\Pcal$,
so~$\End_d^N[\Pcal]$ is non-empty. 

We have now completed the proof of the following implications:
\[
  \text{(e)}
  \Longleftrightarrow
  \text{(d)}
  \Longrightarrow
  \text{(c)}
  \Longrightarrow
  \text{(a)}
  \Longleftrightarrow
  \text{(b)}
  \Longleftrightarrow
  \text{(f)}
\]
In order to complete the proof of
Theorem~\ref{theorem:UnweightedNonempty}, it suffices to prove
that~(a) implies~(d). We first prove some lemmas describing various
codimensions.

\begin{lemma}
\label{lemma:codim}
Fix integers~$d \ge 2$ and~$N \ge 1$. For each pair of integers
$(m,n)$ with~$m \ge 0$ and~$n \ge 1$, define~$R_d^N(m,n)
\subset \End_d^N$ to be the set of endomorphisms~$f$ of~$\PP^N$ with
fewer than~$\nu_d^N(m,n)$ points of preperiodic portrait~$(m,n)$. Then
$R_d^N(m,n)$ has codimension~$1$ in~$\End_d^N$.
\end{lemma}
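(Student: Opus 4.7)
The plan is to bound the codimension from below and above separately. The lower bound is immediate from Lemma~\ref{lemma:NumPrepPts}: the generic $f \in \End_d^N$ has exactly $\nu_d^N(m,n)$ distinct geometric points of preperiodic type $(m,n)$, and the locus where the fiber cardinality of a quasi-finite morphism drops below its generic value is Zariski-closed and proper, so $R_d^N(m,n)$ is a proper Zariski-closed subset of the regular variety $\End_d^N$ and hence has codimension at least $1$.

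For the upper bound, the strategy is to realize $R_d^N(m,n)$ as the branch locus of a finite flat morphism $Z_{m,n} \to \End_d^N$ of degree $\nu_d^N(m,n)$, whose fiber over $f$ is a $0$-dimensional subscheme of $\PP^N$ whose reduced support equals the set of type-$(m,n)$ points of $f$. Once flatness is in hand, each fiber has scheme-theoretic length $\nu_d^N(m,n)$, so the number of distinct geometric points in the fiber drops below $\nu_d^N(m,n)$ precisely when the fiber is non-reduced, i.e., precisely on the branch locus of $Z_{m,n} \to \End_d^N$. Since $\End_d^N$ is regular and the branch locus is non-empty by the lower bound, Zariski--Nagata purity of the branch locus forces $R_d^N(m,n)$ to be pure of codimension~$1$.

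The main obstacle is constructing $Z_{m,n}$ with the required flatness: under specialization, scheme-theoretic mass can migrate between strata of different preperiodic types, which potentially breaks flatness if one defines $Z_{m,n}$ naively as "the scheme of exact-type-$(m,n)$ points". One navigates this by beginning with the unconditionally-finite scheme $\{(f,P) : f^{m+n}(P) = f^m(P)\} \subset \End_d^N \times \PP^N$, which contains all points of type $(m',n')$ with $m' \le m$ and $n' \mid n$, and applying Möbius inversion at the level of cycle classes to isolate the exact-type-$(m,n)$ contribution, exactly as in the periodic-point counting arguments of~\cite{arxiv:0801.3643,arxiv:1011.5155}; the constancy of the total length $\nu_d^N(m,n)$ then follows from the Bezout-type computations developed there. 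For $N=1$ one can bypass this machinery and take $Z_{m,n}$ directly from the generalized dynatomic polynomial of $f$, whose nonvanishing discriminant gives an explicit defining equation for the complement of $R_d^1(m,n)$.
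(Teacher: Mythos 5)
Your lower bound (codimension at least $1$ from properness, via Lemma~\ref{lemma:NumPrepPts}) matches the paper. The upper bound is where you diverge, and it is where the proposal has a genuine gap. Everything rests on the existence of a closed subscheme $Z_{m,n}\subset\End_d^N\times\PP^N$ that is finite and \emph{flat} over $\End_d^N$ of degree $\nu_d^N(m,n)$, with reduced fibers equal to the set of exact type-$(m,n)$ points. You correctly flag flatness as ``the main obstacle,'' but the references you lean on do not supply it: for $N\ge2$ the results of \cite{arxiv:0801.3643,arxiv:1011.5155} produce effective $0$-\emph{cycles} of the expected degree (via intersection theory and M\"obius inversion on cycle classes), not a flat family of subschemes. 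M\"obius inversion does not yield a subscheme --- a difference of cycles is not a scheme --- and effectivity of a cycle class is strictly weaker than flatness of a family. Moreover the preperiodic case $m\ge1$ on $\PP^N$ is not treated there at all; the paper's own count $\nu_d^N(m,n)$ for $m\ge1$ comes from an elementary preimage count at a generic $f$, not from a dynatomic construction. Two secondary problems: even granting flatness, ``fiber non-reduced $\Leftrightarrow$ fewer exact-type points'' needs an argument, since under specialization the fiber support can acquire points of \emph{smaller} period or preperiod, so the reduced support need not equal the exact-type locus; and Zariski--Nagata purity is the wrong tool (it requires the total space to be normal, which you do not check) --- with flatness in hand the correct statement is that the branch locus is the vanishing of the discriminant section of the trace form, hence a divisor.

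The paper sidesteps all of this with a more elementary device: it writes down a single element $\l$ of the function field of $\End_d^N$ --- for $m=0$ the product over the $\nu_d^N(n)$ generic period-$n$ points $\a$ and all $\z\in\m_n$ of $\det(J_\a-\z I_N)$, and for $m\ge1$ an additional product of Jacobian determinants over the relevant preperiodic points --- which is symmetric in the periodic points, hence descends to the base, and whose vanishing cuts out $R_d^N(m,n)$. Codimension at most $1$ is then immediate, with no flatness, purity, or scheme structure on the dynatomic locus required. Your route, if completed, would prove somewhat more (pure codimension $1$ and a natural divisor structure on $R_d^N(m,n)$), and for $(N,m)=(1,0)$ the dynatomic-polynomial/discriminant version you sketch does work; but as written the general case is not established.
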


\begin{proof}
First let~$m = 0$. If~$f$ has fewer than~$\nu_d^N(n)$ points of
period~$n$, then \cite[Theorems~2.20~and~3.1]{arxiv:0801.3643} tells us
that one of the period-$n$ orbits of~$f$ has a multiplier 
that is an~$n$th root of unity.

For each point~$\alpha$ of period~$n$ for the generic map~$f$,
write~$J_\alpha$ for the Jacobian matrix of~$f^n$ at~$\alpha$, and for
each~$n$th root of unity~$\zeta$, let~$\lambda_{\alpha,\zeta}$ be the
determinant of~$J_\alpha - \zeta I_N$, where~$I_N$ is the~$N\times N$
identity matrix. Let~$\lambda$ denote the product
of~$\lambda_{\alpha,\zeta}$ over all~$\nu_d^N(n)$ points~$\alpha$ of
period~$n$ for~$f$ and all~$\zeta \in \mu_n$. Then~$\lambda$ lies in
the function field of~$\End_d^N$, since it is symmetric in the
periodic points of~$f$, so its vanishing will cut out~$R_d^N(0,n)$ as
a closed subscheme of~$\End_d^N$ of codimension at most~$1$. By
Lemma~\ref{lemma:NumPrepPts}, however,~$R_d^N(0,n)$ is a \emph{proper}
subscheme of~$\End_d^N$, hence the codimension is equal to~$1$.

Now let~$m \ge 1$.  For each~$\beta\in\PP^N_{\End_d^N}$, we impose a
polynomial condition~$\lambda_\beta$ by forcing the Jacobian~$J_\beta$
of $f$ at $\beta$ to have a nontrivial kernel, and we take~$\lambda$
to be the product of~$\lambda_\beta$ over all~$\beta$ of eventual
period~$n$ and preperiod at most~$m$. As in the previous
paragraph,~$\lambda$ cuts out a codimension~$1$ closed
subscheme~$S_d^N(m,n)$ of~$\End_d^N$, and we have
\[
R_d^N(m,n) = R_d^N(0,n) \cup S_d^N(m,n),
\]
which completes the proof of Lemma~\ref{lemma:codim}.
\end{proof}

\begin{lemma}
\label{lemma:RndPcodim1}
Let~$\Pcal$ be an unweighted portrait such that~$\End_d^N[\Pcal]$ is
nonempty, and let
\[
R_d^N(\Pcal) := \End_d^N \setminus \Image\bigl( \End_d^N[\Pcal]\to\End_d^N \bigr),
\]
i.e., let~$R_d^N(\Pcal)$ be the subscheme of~$\End_d^N$ consisting of
those maps~$f$ that cannot be extended to give a model
of~$\Pcal$. Then~$R_d^N(\Pcal)$ has codimension at least~$1$
in~$\End_d^N$.
\end{lemma}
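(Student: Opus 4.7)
The plan is to exhibit a nonempty Zariski open subset $U\subseteq\End_d^N$ that is contained in the image of the projection $\End_d^N[\Pcal]\to\End_d^N$; the lemma then follows, since $R_d^N(\Pcal)\subseteq\End_d^N\setminus U$ will be contained in the proper closed complement of a dense open set.

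Since $\End_d^N[\Pcal]\ne\emptyset$, the implications already established in Theorem~\ref{theorem:UnweightedNonempty} give condition~(f), namely $D_\Pcal\le d^N$ and $C_\Pcal(n)\le\nu_d^N(n)$ for every $n\ge 1$. Let $T(\Pcal)\subset\ZZ_{\ge 0}\times\ZZ_{\ge 1}$ denote the (finite) set of preperiodic types $(m,n)$ that occur at preperiodic vertices of~$\Pcal$. I would define
\[
  U := \bigcap_{(m,n)\in T(\Pcal)} \bigl(\End_d^N\setminus R_d^N(m,n)\bigr) \,\cap\, U_0,
\]
where $U_0\subseteq\End_d^N$ is the open locus on which $f$ has no ramified preperiodic points. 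By Lemma~\ref{lemma:codim} each set $\End_d^N\setminus R_d^N(m,n)$ is open and dense in the irreducible variety $\End_d^N$, and $U_0$ is open and dense by \cite[Corollary~3.5]{MR3218803}; hence $U$ is a nonempty Zariski open subset of $\End_d^N$.

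For each $f\in U$, I would then run the inductive construction from the ``(f)$\Rightarrow$(a)'' step of the proof of Theorem~\ref{theorem:UnweightedNonempty} verbatim, but now with the fixed generic $f$, to produce points $(P_v)_{v\in\Vcal}$ with $(f,(P_v))\in\End_d^N[\Pcal]$. The preperiodic subportrait of $\Pcal$ embeds into $\PrePer(f)$ because the defining conditions of~$U$ guarantee that $f$ has exactly $\nu_d^N(m,n)$ points of each required type $(m,n)\in T(\Pcal)$, and these are all unramified; vertices without out-arrows are assigned points in the nonempty Zariski open complement in~$\PP^N$ of a finite union of proper closed subvarieties (finitely many periodic points, the already-constructed forward orbits, and finitely many forward images of $\Crit(f)$); and the remaining vertices are filled in by choosing preimages, which succeeds because each target is arranged not to lie in $f(\Crit(f))$, so it has exactly $d^N$ distinct preimages, while the in-degree condition $D_\Pcal\le d^N$ from~(f) guarantees that this suffices.

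The main obstacle is really only bookkeeping: verifying that each step of the backward-orbit construction can be performed on a \emph{generic} $f$ rather than on a single chosen one, and that the avoidance conditions remain nonempty open after finitely many inductive choices. Both issues are resolved by the numerical bound $D_\Pcal\le d^N$ and by the avoidance of critical values baked into the construction, exactly as in the proof of ``(f)$\Rightarrow$(a)''. This yields $U\subseteq\Image\bigl(\End_d^N[\Pcal]\to\End_d^N\bigr)$, and therefore $\codim R_d^N(\Pcal)\ge 1$, completing the proof.
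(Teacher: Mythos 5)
Your proposal is correct and is essentially the paper's own argument read in contrapositive form: the paper proves the containment $R_d^N(\Pcal)\subseteq\bigcup_{0\le m\le m_0,\,1\le n\le n_0}R_d^N(m,n)$ and applies Lemma~\ref{lemma:codim}, while you exhibit the complementary dense open set and rerun the construction from the implication (f)~$\Longrightarrow$~(a) of Theorem~\ref{theorem:UnweightedNonempty}; these are the same reduction. The one point to tighten is that your $U_0$, defined by the absence of ramified preperiodic points of \emph{every} type, is a countable intersection of open conditions and is not known to be open, so you should restrict to the finitely many types with $m\le m_0$ and $n\le n_0$ (which is all your construction actually uses; indeed the full-count conditions at consecutive preperiod levels already force the relevant points to be unramified, so this factor of $U$ is essentially redundant).
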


\begin{proof}
If~$f \in R_d^N(\Pcal)$, then there exist some integers $m \ge 0$ and
$n \ge 1$ such that $f$ has fewer points of preperiodic type~$(m,n)$
than the portrait~$\mathcal P$, which itself has at most
$\nu_d^N(m,n)$ points of type~$(m,n)$ from the implication
\text{(a) $\Longrightarrow$ (f)} that we already proved in 
Theorem~\ref{theorem:UnweightedNonempty}. Thus, if~$n_0$
and~$m_0$ denote the maximal period and preperiod of a preperiodic
vertex of~$\Pcal$, then
\[
  R_d^N(\Pcal) \subseteq \bigcup_{\substack{0 \le m \le m_0\\1 \le n \le n_0}} R_d^N(m,n),
\]
and Lemma~\ref{lemma:codim} says that each~$R_d^N(m,n)$ has
codimension~$1$.
\end{proof}

\begin{lemma}
\label{lemma:piPcal}
Let~$\Pcal = (\Vcal^\circ, \Vcal, \Phi)$ be an unweighted
portrait. Let
\[
  \pi_\Pcal : \End_d^N[\Pcal] \to \End_d^N,\quad
  \pi_\Pcal(f,P_1,\ldots,P_k) = f,
\]
be the natural projection map. Then for
every~$f \in \End_d^N$ we have
\[
  \dim\bigl(\pi_\Pcal^{-1}(f)\bigr) =
    \begin{cases}
        N \cdot \#(\Vcal \setminus \Vcal^\circ) &\text{if $f \notin R_d^N(\Pcal)$,}\\
        0 &\text{if $f \in R_d^N(\Pcal)$}.
    \end{cases}
\]
\end{lemma}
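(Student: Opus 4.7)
The plan is to factor $\pi_\Pcal$ through the finite morphism $\Pi_\Pcal$ of Theorem~\ref{theorem:EndENDpt}(a). Writing $\mathrm{pr}_1$ for projection onto the $\End_d^N$ factor, the restriction of $\mathrm{pr}_1\circ\Pi_\Pcal$ to the open subscheme $\End_d^N[\Pcal]\subseteq\END_d^N[\Pcal]$ recovers $\pi_\Pcal$. Consequently, for any $f\in\End_d^N$ we have
\[
\pi_\Pcal^{-1}(f)\subseteq\Pi_\Pcal^{-1}\bigl(\{f\}\times(\PP^N)^{\Vcal\setminus\Vcal^\circ}\bigr),
\]
and finiteness of $\Pi_\Pcal$ yields the uniform upper bound $\dim\pi_\Pcal^{-1}(f)\le N\cdot\#(\Vcal\setminus\Vcal^\circ)$. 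When $f\in R_d^N(\Pcal)$ the fiber is empty by the very definition of $R_d^N(\Pcal)$, so the asserted value $0$ reads as this trivial upper bound.

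For $f\notin R_d^N(\Pcal)$ the matching lower bound is obtained by replaying the recursive construction of a model from the just-completed proof of $(\mathrm{f})\Rightarrow(\mathrm{a})$ in Theorem~\ref{theorem:UnweightedNonempty}, now with $f$ held fixed and the endpoint markings $\{P_z\}_{z\in\Vcal\setminus\Vcal^\circ}$ viewed as free parameters. The preperiodic vertices of $\Pcal$ admit only finitely many compatible assignments to $f$'s preperiodic orbits; each endpoint $P_z$ then ranges over the complement of the codimension-one locus described by~\eqref{eqn:PermfmCrit}; and the remaining $P_v$ for $v\in\Vcal^\circ$ whose $\Phi$-orbit terminates at an endpoint are determined up to finite $f$-preimage ambiguity. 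Packaging these choices yields a subscheme of $\pi_\Pcal^{-1}(f)$ that dominates a non-empty Zariski open subset of $(\PP^N)^{\Vcal\setminus\Vcal^\circ}$ through the finite morphism $\Pi_\Pcal$, forcing $\dim\pi_\Pcal^{-1}(f)\ge N\cdot\#(\Vcal\setminus\Vcal^\circ)$.

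The main obstacle is verifying that, for this specific and possibly non-generic $f\notin R_d^N(\Pcal)$, the conditions imposed by~\eqref{eqn:PermfmCrit} still cut out a proper closed locus of $(\PP^N)^{\Vcal\setminus\Vcal^\circ}$ rather than the entire space. This reduces to the codimension bookkeeping already present in the proof of Theorem~\ref{theorem:UnweightedNonempty}(f)$\Rightarrow$(a): each forbidden set is either a finite set of points (periodic points of bounded period, forward images of previously-assigned markings) or a forward iterate of the codimension-one locus $\Crit(f)$, so its complement in $(\PP^N)^{\Vcal\setminus\Vcal^\circ}$ is dense open. The hypothesis $f\notin R_d^N(\Pcal)$ then guarantees that at least one endpoint profile in this complement extends to an honest model, and the recursive construction applies with $f$ fixed to produce the claimed $N\cdot\#(\Vcal\setminus\Vcal^\circ)$-parameter family of models in $\pi_\Pcal^{-1}(f)$.
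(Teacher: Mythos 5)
Your proposal is correct and follows essentially the same route as the paper: both arguments reduce the fiber dimension to the finiteness of the projection that forgets the coordinates indexed by $\Vcal^\circ$ (the paper phrases this via a map $\phi:\End_d^N[\Pcal]\to\End_d^N[\Pcal']\cong\End_d^N\times(\PP^N)^{\Vcal\setminus\Vcal^\circ}_\Delta$ with finite fibers, proved by the same orbit-chasing that underlies the finiteness of $\Pi_\Pcal$ that you invoke), and both read off the empty-fiber case from the definition of $R_d^N(\Pcal)$. The only real difference is that you make the lower bound explicit by rerunning the recursive construction from the proof of \textup{(f)}$\,\Rightarrow\,$\textup{(a)} of Theorem~\ref{theorem:UnweightedNonempty} with $f$ held fixed --- a step the paper's proof leaves implicit --- and your check that the forbidden loci of~\eqref{eqn:PermfmCrit} remain proper closed subsets for an arbitrary $f\notin R_d^N(\Pcal)$ is precisely what that step requires.
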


\begin{proof}
We first note that by definition of~$R_d^N(\Pcal)$, we have~$f \in
R_d^N(\Pcal)$ if and only if the fiber~$\pi_\Pcal^{-1}(f)$ is
empty. We may therefore assume that
\text{$f\notin{R}_d^N(\Pcal)$}. Let~$r:=\#(\Vcal\setminus\Vcal^\circ)$.

First suppose that~$\Vcal^\circ = \emptyset$, i.e., suppose that~$\Pcal$
consists of~$r$ vertices with no arrows. Then clearly
\[
  \End_d^N[\Pcal] \cong \End_d^N \times (\PP^N)^r_\Delta,
\]
and the map~$\pi_\Pcal$ is just projection onto the~$\End_d^N$
factor. Therefore each fiber is isomorphic to~$(\PP^N)^r_\Delta$,
which is an open subset of~$(\PP^N)^r$, hence has dimension~$Nr$.

Next suppose~$\Vcal^\circ \ne \emptyset$. Write
\[
\Vcal=\{P_1,P_2,\ldots,P_r,Q_1,Q_2,\ldots,Q_s\},
\]
where~$\Vcal^\circ=\{Q_1,\ldots,Q_s\}$. Let~$\Pcal'$ be the
subportrait of~$\Pcal$ consisting only of the vertices
in~$\Vcal\setminus\Vcal^\circ$. Thus~$\Vcal'=\{P_1,\ldots,P_r\}$
and~${\Vcal'}^\circ=\emptyset$. Then~$\pi_\Pcal = \pi_{\Pcal'} \circ
\phi$, where~$\phi$ is the map
\begin{align*}
  \phi : \End_d^N[\Pcal] &\longrightarrow \End_d^N[\Pcal']\\
  \bigl(f,(x_1,\ldots,x_r,y_1,\ldots,y_s)\bigr) &\longmapsto \bigl(f,(x_1,\ldots,x_r)\bigr).
\end{align*}
It remains to show that the fibers of~$\phi$ are finite.

Suppose we have fixed the map~$f$ and the
points~$x_1,\ldots,x_r$. Since~$\#\Vcal$ is finite, the orbit of
each~$y_i$ under~$f$ must eventually reach either one of the~$x_j$ or
a periodic point. Since there are finitely many points with periods
specified by~$\Pcal$, and each point has at most~$d^N$ preimages
under~$f$, we are done with the proof of Lemma~\ref{lemma:piPcal}.
\end{proof}

We now have the tools required to prove the final implication that
completes the proof of Theorem~\ref{theorem:UnweightedNonempty}.

\par\noindent\framebox{(a) $\Longrightarrow$ (d)}\enspace
Let~$U_d^N(\Pcal) = \End_d^N \setminus R_d^N(\Pcal)$. By definition,
the image of the map~$\pi_\Pcal$ in Lemma~\ref{lemma:piPcal} is equal
to~$U_d^N(\Pcal)$.  Further, Lemma~\ref{lemma:RndPcodim1} tells us
that~$U_d^N(\Pcal)$ has full dimension, i.e.,
that~$\dim{U}_d^N(\Pcal)=\dim \End_d^N$. The desired result follows
from this and the fact that each fiber over $U_d^N(\Pcal)$ has
dimension~$N\cdot\#(\Vcal\setminus\Vcal^\circ)$.
\end{proof}

We conclude this section with a general result describing the
dimensions of the fibers and images of maps between portrait moduli
spaces.

\begin{theorem}
\label{theorem:MPprimetoMP}
Let $\Pcal'\subseteq\Pcal$ be an inclusion of unweighted portraits,
let
\begin{equation}
  \label{eqn:MPprimetoMP}
  M_{\Pcal,\Pcal'} : \Moduli_d^N[\Pcal]\longrightarrow \Moduli_d^N[\Pcal']
\end{equation}
be the natural induced map on moduli spaces as described in
Theorem~$\ref{theorem:MNdwithportrait}$, and assume that
$\Moduli_d^N[\Pcal]$ is non-empty.  Viewing~$\Pcal$ as a directed
graph, let~$n$ be the number of connected components of~$\Pcal$ that
intersect~$\Pcal'$ and do not contain a cycle. Then
\begin{align}
  \label{eqn:fiberMPpP}
  \dim(\textup{Fiber of $M_{\Pcal,\Pcal'}$})
     &= N\cdot\bigl(\#(\Vcal\setminus{\Vcal}^\circ) - n\bigr), \\*
  \label{eqn:imageMPpP}
  \codim(\textup{Image of $M_{\Pcal,\Pcal'}$})
     &= N\cdot\bigl(\#(\Vcal'\setminus{\Vcal'}^\circ) - n\bigr).
\end{align}
\end{theorem}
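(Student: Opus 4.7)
The plan is to work at the level of parameter schemes and then descend via the $\SL_{N+1}$-quotient. The natural morphism
\[
  \Pi:\End_d^N[\Pcal]\longrightarrow\End_d^N[\Pcal'],\qquad (f,\g)\longmapsto(f,\g|_{\Vcal'}),
\]
is the one furnished by Proposition~\ref{proposition:EnddNP1toP2} for the inclusion $\Pcal'\hookrightarrow\Pcal$, and by Theorem~\ref{theorem:MNdwithportrait}(c) it descends to $M_{\Pcal,\Pcal'}$ on GIT quotients. Because the generic $\SL_{N+1}$-stabilizer of a point of $\End_d^N[\Pcal]$ or $\End_d^N[\Pcal']$ is finite (it is contained in $\Aut(f)$, which is finite by Theorem~\ref{theorem:EnddNstableAutfinite}), the two quotient maps have generic fibres of constant dimension $\dim\SL_{N+1}$, and it suffices to prove both \eqref{eqn:fiberMPpP} and \eqref{eqn:imageMPpP} for $\Pi$ in place of $M_{\Pcal,\Pcal'}$. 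Theorem~\ref{theorem:UnweightedNonempty}(d) already supplies $\dim\End_d^N[\Pcal]=\dim\End_d^N+N\cdot\#(\Vcal\setminus\Vcal^\circ)$ and the analogous formula for $\Pcal'$, so by the fibre-dimension relation $\dim(\text{source})=\dim(\text{generic fibre})+\dim(\text{image})$ only one of the two formulas needs a direct proof; I would aim at \eqref{eqn:fiberMPpP}.

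First I would fix $f\in\End_d^N$ generic enough to have no ramified preperiodic points and to realize exactly $\nu_d^N(m,n)$ points of each preperiodic type $(m,n)$ that occurs in $\Pcal$; this is a dense open condition by Lemma~\ref{lemma:NumPrepPts} and the proof of Theorem~\ref{theorem:UnweightedNonempty}. Let $V_f$ and $V'_f$ denote the fibres over $f$ of the forgetful maps $\End_d^N[\Pcal]\to\End_d^N$ and $\End_d^N[\Pcal']\to\End_d^N$, and let $\Pi_f:V_f\to V'_f$ be induced by $\Pi$. Away from the big-diagonal locus, which is a generic open condition, $\Pi_f$ factors as a product over the connected components $C$ of $\Pcal$, so I can analyse each component separately. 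If $C$ is disjoint from $\Pcal'$, then extending $\g$ to $C$ is unconstrained by $\g'$: when $C$ is a tree with unique sink $r\in\Vcal\setminus\Vcal^\circ$ one chooses $\g(r)\in\PP^N$ freely and then fills in $\g|_C$ by finitely many preimage branches, contributing $N$ to the fibre dimension; when $C$ contains a cycle one picks one of the finitely many $f$-cycles of the correct length and then fills in the attached tails by finite preimage choices, contributing $0$. If instead $C$ meets $\Pcal'$, choose any $v\in C\cap\Vcal'$; the equations $\g(\F^j(v))=f^j(\g'(v))$ force $\g$ on the entire forward $\F$-orbit of $v$ in $C$, which reaches the sink of $C$ in the tree case or enters the cycle in the cyclic case, after which every remaining vertex of $C$ is a backward $f$-iterate of a vertex whose $\g$-value has already been fixed, contributing $0$. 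Summing over $C$ yields
\[
  \dim\Pi_f^{-1}(\g') = N\bigl(\#(\Vcal\setminus\Vcal^\circ)-n\bigr)
\]
for generic $\g'$ in the image, which proves \eqref{eqn:fiberMPpP}. Then \eqref{eqn:imageMPpP} follows by subtraction: $\dim\Image(\Pi)=\dim\End_d^N+Nn$, so its codimension in $\End_d^N[\Pcal']$ equals $N\bigl(\#(\Vcal'\setminus\Vcal'^\circ)-n\bigr)$.

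The hardest part will be the rigorous justification of the component-by-component dimension count. I must verify that for generic $f$ the relevant backward preimages of specified points are indeed $0$-dimensional, which uses the unramified-preperiodic genericity of $f$ established in \cite[Corollary~3.5]{MR3218803} and invoked also for Lemma~\ref{lemma:codim}. I must also show that the big-diagonal condition on $\g$ remains generic even after the forward-orbit values $\g(\F^j(v))=f^j(\g'(v))$ are forced by the vertices of $\Pcal\cap\Pcal'$; this reduces to the statement that, over a dense open in $V'_f$, the distinct prescribed points of $\g'$ have distinct forward $f$-orbits up to the bounded length determined by $\Pcal$, an open condition whose non-emptiness follows from $\PP^N$ minus any proper subvariety being Zariski-dense. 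Finally, because $\End_d^N[\Pcal]$ and $\End_d^N[\Pcal']$ may be reducible, the formulas should be interpreted componentwise, but the uniformity of the construction over the dense open locus of generic $f\in\End_d^N$ ensures that each irreducible component of the source projects to a component of the target of the expected dimension.
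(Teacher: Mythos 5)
Your proposal is correct and follows essentially the same route as the paper: the fiber dimension is computed by identifying exactly which marked points can be chosen freely from a Zariski open subset of $\PP^N$ (namely the sinks of tree components of $\Pcal$ not meeting $\Pcal'$, all other points being pinned down up to finitely many forward/backward iterate choices), and the codimension of the image then follows by subtraction using the dimension formulas of Theorem~\ref{theorem:UnweightedNonempty}. Your component-by-component organization and your explicit attention to the big-diagonal and unramified-genericity issues are a slightly more detailed rendering of the same counting argument the paper carries out vertex by vertex.
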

\begin{proof} 
Let $\bigl\langle f, (P_{v'})_{v' \in \Vcal'}\bigr\rangle \in
\Mcal_d^N[\Pcal']$ be in the image of $M_{\Pcal,\Pcal'}$, and let
$\bigl\langle f, (P_v)_{v \in \Vcal}\bigr\rangle \in \Mcal_d^N[\Pcal]$
be a preimage.  Much as in the proof of Lemma~\ref{lemma:piPcal}, a
point $P_v \in \PP^N$ corresponding to the vertex $v \in \Vcal$ may be
chosen freely from a Zariski open subset of $\PP^N$ if and only if $v$
satisfies both of the following conditions:
\begin{parts}
  \Part{\textbullet}
  $v$ has no outgoing arrow, i.e., $v\notin{\Vcal}^\circ$.
  \Part{\textbullet}
  $v$ is not in a component of $\Pcal$ containing vertices from $\Pcal'$, since otherwise
  there would be an $f$-orbit relation between~$P_{v}$ and a point~$P_{v'}$ with~$v'\in\Vcal'$.
\end{parts}
Hence~\eqref{eqn:fiberMPpP} is true.
\par
Next, we know from the 
\text{(a) $\Longrightarrow$ (e)}
implication of
Theorem~\ref{theorem:UnweightedNonempty} that
\[
  \dim \Moduli_d^N[\Pcal] = \dim\Moduli_d^N + N \cdot \#\bigl({\Vcal} \setminus {\Vcal}^\circ\bigr).
\]
Using this, we can compute the dimension of the image of~$M_{\Pcal,\Pcal'}$ as follows:
\begin{align*}
  \dim\bigl(\text{Image}&\text{ of $M_{\Pcal,\Pcal'}$}\bigr)\\
  &= \dim\Mcal_d^N[\Pcal] 
  - \dim\bigl(\text{Fiber of $M_{\Pcal,\Pcal'}$}\bigr) \\
  & = \bigl( \dim\Moduli_d^N + N\cdot\#(\Vcal\setminus{\Vcal}^\circ) \bigr)
  - N\cdot \bigl( \#(\Vcal\setminus{\Vcal}^\circ)-n \bigr) \\
  &= \dim\Moduli_d^N+N\cdot n.    
\end{align*}
Subtracting this from the formula for~$\dim\Moduli_d^N[\Pcal']$ in
Theorem~\ref{theorem:UnweightedNonempty}(e) gives the desired value
for the codimension of the image of $M_{\Pcal,\Pcal'}$.
\end{proof}

\section{The Dimension of $\Moduli_d^1[\Pcal]$ for Weighted Portraits}
\label{section:dimMd1P}

In this section we study~$\Moduli_d^1[\Pcal]$, where~$\Pcal$ is now
allowed to be a weighted portrait. We assume throughout that we are
working over a field of characteristic~$0$.  As noted in
Remark~\ref{remark:whenemptyset}, there are geometric constraints that
may force $\Moduli_d^N[\Pcal]=\emptyset$.  For example, if $\Pcal$
contains four fixed points, or if~$\Pcal$ contains two periodic cycles
of length~$2$, then~$\Moduli_2^1[\Pcal]=\emptyset$; see
Theorem~\ref{theorem:UnweightedNonempty}(f).  More generally, we have
the following natural geometric conditions that are necessary, but may
not be sufficient, to ensure that~$\Moduli_d^1[\Pcal]$ is non-empty.

\begin{proposition}
\label{proposition:whenisMPempty}
Let~$d\ge2$, and let~$\Pcal=(\Vcal^\circ,\Vcal,\Phi,\e)$ be a portrait such
that~~$\Moduli_d^1[\Pcal]\ne\emptyset$. Then~$\Pcal$ satisfies the
following conditions:
\[
\textup{(I)}\quad\max_{v\in\Vcal} \sum_{w\in \Phi^{-1}(v)} \e(w) \le d.
\qquad
\textup{(II)}\quad\sum_{w\in\Vcal^\circ} \bigl(\e(w)-1\bigr) \le 2d-2.
\]
For all $n\ge1$, 
\[
  \textup{(III$_n$)}\quad
  \#\left\{w\in\Vcal^\circ :
  \begin{array}{@{}l@{}}
    \Phi^n(w)=w~\text{and}\\\Phi^m(w)\ne w~\text{for all $m<n$}\\
  \end{array}\right\}
  \le \nu_d^1(n).
\]
\textup(See~\eqref{eqn:nudNn} in Definition~$\ref{definition:nudNn}$
for the definition of~$\nu_d^N(n)$.\textup)
\end{proposition}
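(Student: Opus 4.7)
The plan is to fix a geometric model $(f,\g) \in \End_d^1[\Pcal](k)$ witnessing that $\Moduli_d^1[\Pcal]\ne\emptyset$, where $\g:\Vcal\hookrightarrow\PP^1(k)$ is injective, $f\circ\g=\g\circ\Phi$ on $\Vcal^\circ$, and $e_f(\g(w))\ge\e(w)$ for $w\in\Vcal^\circ$. The three inequalities then fall out of three classical facts about degree-$d$ maps on $\PP^1$ in characteristic $0$: (i) the local degree formula $\sum_{Q\in f^{-1}(P)}e_f(Q)=d$, (ii) the Riemann-Hurwitz relation $\sum_{P\in\PP^1}(e_f(P)-1)=2d-2$, and (iii) the bound on the number of geometric points of exact period $n$ by $\nu_d^1(n)$, which in turn follows from Lemma~\ref{lemma:NumPrepPts} (or rather the underlying count of formally periodic points of period $n$ with multiplicity).

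For (I), I fix $v\in\Vcal$ and consider the preimage set $\Phi^{-1}(v)\subseteq\Vcal^\circ$. For each $w\in\Phi^{-1}(v)$ the defining commutative square gives $f(\g(w))=\g(\Phi(w))=\g(v)$, so $\g(w)\in f^{-1}(\g(v))$; since $\g$ is injective the points $\{\g(w)\}_{w\in\Phi^{-1}(v)}$ are distinct elements of $f^{-1}(\g(v))$. Combining $\e(w)\le e_f(\g(w))$ with the local degree formula yields
\[
\sum_{w\in\Phi^{-1}(v)}\e(w)\;\le\;\sum_{w\in\Phi^{-1}(v)}e_f(\g(w))\;\le\;\sum_{Q\in f^{-1}(\g(v))}e_f(Q)\;=\;d.
\]
For (II), the same injectivity of $\g$ makes $\{\g(w)\}_{w\in\Vcal^\circ}$ a set of distinct points in $\PP^1(k)$, so Riemann-Hurwitz immediately gives
\[
\sum_{w\in\Vcal^\circ}(\e(w)-1)\;\le\;\sum_{w\in\Vcal^\circ}(e_f(\g(w))-1)\;\le\;\sum_{P\in\PP^1(k)}(e_f(P)-1)\;=\;2d-2.
\]

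For (III$_n$), if $w\in\Vcal^\circ$ has exact $\Phi$-period $n$ (i.e.\ $\Phi^n(w)=w$ and $\Phi^m(w)\ne w$ for $0<m<n$), then iterating the functional equation gives $f^n(\g(w))=\g(w)$ while $f^m(\g(w))=\g(\Phi^m(w))\ne\g(w)$ for $0<m<n$ by injectivity of $\g$. Thus $\g$ sends the set of exact-$\Phi$-period-$n$ vertices injectively into the set of exact-$f$-period-$n$ points of $\PP^1(k)$, and the latter set has cardinality at most $\nu_d^1(n)$ since the scheme of formally periodic points of period $n$ has length $\nu_d^1(n)$ and each reduced point contributes at least $1$.

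I do not expect any genuine obstacle here: the proposition is essentially a repackaging of the standard ramification/periodic-point bookkeeping on $\PP^1$, and the only subtlety is tracking how the portrait data $(\e,\Phi)$ pulls back along $\g$. The mild care needed in (III$_n$) is to distinguish ``formal'' from ``exact'' period, but since we allow the upper bound $\nu_d^1(n)$ (the formal count), the weaker inequality for exact periods is automatic.
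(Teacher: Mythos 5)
Your proposal is correct and follows the same route as the paper, which justifies (I) by the degree of $f$, (II) by Riemann--Hurwitz, and (III$_n$) by the count of points of exact period $n$; you simply write out the bookkeeping (injectivity of $\g$, the inequality $e_f(\g(w))\ge\e(w)$) that the paper leaves implicit. Your use of the local degree formula $\sum_{Q\in f^{-1}(P)}e_f(Q)=d$ for (I) is in fact the right level of precision, since the paper's one-line justification (``at most $d$ preimages'') only covers the unweighted case.
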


\begin{proof}
Constraint~I comes from the fact that~$f$ is a map of degree~$d$, so a
point has at most~$d$ preimages.  Constraint~II follows from the
Riemann-Hurwitz formula $\sum\bigl(e_f(P)-1\bigr)=2d-2$;
see~\cite[Theorem~1.1]{MR2316407}.  Constraint~III$_n$ is true because
a degree~$d$ map on~$\PP^1$ has at most the indicated number of points
of exact period~$n$; see~\cite[Remark~4.3]{MR2316407}. We remark that
if~$\Pcal$ is unweighted, then constraint~(II) is vacuous.
\end{proof}

\begin{remark}\label{remark:necessarynotsufficient}
For unweighted portraits,
Theorem~\ref{theorem:UnweightedNonempty}(f) gives succinct
necessary and sufficient condition for~$\Mcal_d^1[\Pcal]$ to be
nonempty. The situation is more complicated in the case of weighted
portraits. For example, if~$\Pcal$ consists of four fixed points, each
having multiplicity~$2$, then the explicit computation in
Example~\ref{example:M31with4fixedpts} shows
that~$\Moduli_3^1[\Pcal]=\emptyset$, despite the fact that~$\Pcal$
satisfies properties~(I),~(II), and~(III$_n$) for all $n \ge 1$ with $d=3$ in
Proposition~\ref{proposition:whenisMPempty}.
\end{remark}

\begin{remark}
\label{remark:thurstontopchar}
As illustrated in Remark~\ref{theorem:UnweightedNonempty}, the
elementary conditions given in
Proposition~\ref{proposition:whenisMPempty} for a portrait to be
realized by a degree~$d$ rational map are necessary but not
sufficient.  A deeper topological characterization of rational
functions was formulated by Thurston and proved by Douady and Hubbard
in \cite{MR1251582}. We give a brief description. Let $F:S^2\to S^2$
be a topological branched cover of the 2-sphere, and suppose that the
forward orbits of the branch points are finite.  Thurston defines~$F$
to be equivalent to a PCF rational map~$f:\PP^1(\CC)\to\PP^1(\CC)$ if
there is a homeomorphism of~$S^2\cong\PP^1(\CC)$ taking~$F$ to~$f$ and
correctly identifying the postcritical set~$P_F$ of~$F$ with the
postcritical set~$P_f$ of~$f$.  Thurston uses~$F$ and~$P_F$ to define
a map
$\s_F:\operatorname{Teich}(S^2,P_F)\to\operatorname{Teich}(S^2,P_F)$
on a certain Teichm\"uller space.  Then his topological
characterization says that~$F$ is equivalent to a PCF rational map if
and only if the \emph{Thurston pullback map}~$\s_F$ has a fixed point.
\end{remark}

\begin{remark}
As noted earlier, in this section we work over a field of
characteristic~$0$. To see why, we note that otherwise there may be
problems with Condition~(II) in
Proposition~\ref{proposition:whenisMPempty}.  For example, let~$\Pcal$
consist of~$3$ fixed points, each of multiplicity~$2$.  Then
$\Moduli_2^1[\Pcal](\CC)=\emptyset$, as specified by
Proposition~\ref{proposition:whenisMPempty}. But
$\Moduli_2^1[\Pcal](\FF_2)$ contains a point. Indeed,
\[
  \Moduli_2^1[\Pcal](\FF_2) = \Bigl\{ \bigl( x^2, (0,1,\infty) \bigr) \Bigr\}.
\]
Wild ramification examples of this sort are closely related to the
failure of the classical Riemann--Hurwitz formula in positive
characteristic.
\end{remark}

\begin{remark}
Note that we can use weighted portraits to easily describe moduli
spaces of polynomial maps on~$\PP^1$, since~$\Moduli_d^1[\Pcal]$
parameterizes polynomial maps if and only
if~$\Pcal=(\Vcal^\circ,\Vcal,\Phi,\e)$ has a vertex~$v\in\Vcal^\circ$
satisfying $\Phi(v)=v$ and $\e(v)=d$.
\end{remark}

Before stating the main result of this section, we need a definition.

\begin{definition}
Let $\Pcal$ be a portrait. We say that a point
$(f,\bfP)\in\End_d^N[\Pcal](\Kbar)$ is a \emph{flexible Latt{\`e}s
  model for~$\Pcal$} if~$f\in\End_d^N(\Kbar)$ is a flexible Latt\`es
map.  We refer the reader to~\cite[Section~6.5]{MR2316407} for the
definition of flexible Latt\`es map, but roughly, it means that~$f$
comes from a multiplication-by-$m$ map on an elliptic curve. In
particular, the locus of flexible Latt\`es maps in~$\Moduli_d^1$ is
empty if~$d$ is not a square, and it is a copy of~$\PP^1$ if~$d$ is a
square.  We then define
\[
  \End_d^1[\Pcal]^{\text{nL}}:=\End_d^1[\Pcal]\setminus(\text{flexible Latt\`es})
\]
to be the complement of the points corresponding to flexible Latt\`es
models, and similarly for~$\Moduli_d^1[\Pcal]^{\text{nL}}$.
\end{definition}

\begin{theorem}
\label{thm:dimMd1}
Let~$d \ge 2$, and let~$\Pcal = (\Vcal^\circ,\Vcal,\Phi,\epsilon)$ be a
portrait such that~$\Mcal_d^1[\Pcal] \ne \emptyset$. Then
\[
  \dim \Mcal_d^1[\Pcal]^{\text{nL}}
  = \dim \Mcal_d^1 - \sum_{v \in \Vcal^\circ} (\epsilon(v) - 1) + \#(\Vcal \setminus \Vcal^\circ).
\]
\end{theorem}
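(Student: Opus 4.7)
The plan is to compare $\Moduli_d^1[\Pcal]^{\text{nL}}$ with the unweighted portrait moduli space already understood in the previous section. Let $\Pcal' = (\Vcal^\circ,\Vcal,\Phi)$ denote the unweighted portrait obtained from $\Pcal$ by setting every weight to $1$. By Theorem~\ref{theorem:EnddNPweighted} and Proposition~\ref{proposition:Pcalprime}, $\Moduli_d^1[\Pcal]$ is a closed subscheme of $\Moduli_d^1[\Pcal']$, cut out precisely by the multiplicity conditions $e_f(P_v) \ge \e(v)$ for $v \in \Vcal^\circ$. Since $\Moduli_d^1[\Pcal]$ is non-empty, so is $\Moduli_d^1[\Pcal']$, and Theorem~\ref{theorem:UnweightedNonempty}(e) with $N = 1$ yields
$$\dim\Moduli_d^1[\Pcal'] \;=\; \dim\Moduli_d^1 \,+\, \#(\Vcal \setminus \Vcal^\circ).$$
It therefore suffices to verify that $\Moduli_d^1[\Pcal]^{\text{nL}}$ has codimension exactly $\sum_{v \in \Vcal^\circ}(\e(v) - 1)$ inside $\Moduli_d^1[\Pcal']$.

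The easy half is the upper bound on this codimension. Unwinding the proof of Proposition~\ref{proposition:efPgeepsilon}, the condition $e_f(P_v) \ge \e(v)$ is Zariski-locally defined by $\e(v) - 1$ polynomial equations in the coefficients of $f$ and the coordinates of $P_v$ (essentially, the vanishing of the first $\e(v) - 1$ Taylor coefficients of $f$ at $P_v$, as packaged in the determinantal description of the proposition). Krull's Hauptidealsatz then forces every irreducible component of $\Moduli_d^1[\Pcal]$ meeting a given component of $\Moduli_d^1[\Pcal']$ to have codimension at most $\sum_{v \in \Vcal^\circ}(\e(v) - 1)$ therein, delivering the required lower bound on $\dim \Moduli_d^1[\Pcal]^{\text{nL}}$.

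The matching upper bound on dimension is the substantive content and is where Thurston's rigidity/transversality theorem (\cite{MR1251582}, \cite{arxiv1702.02582}) must enter. The geometric statement to be established is that, away from the flexible Latt\`es locus, the differential at any point of $\Moduli_d^1[\Pcal']^{\text{nL}}$ of the natural map
$$\Moduli_d^1[\Pcal'] \longrightarrow \prod_{v \in \Vcal^\circ} \bigl(\text{$(\e(v)-1)$-jet of $f$ at $P_v$}\bigr)$$
is surjective, so that the zero locus of the multiplicity conditions has codimension equal to $\sum (\e(v) - 1)$ rather than less. The plan is to reduce this to the PCF case by embedding $\Pcal$ in a complete critical portrait $\tilde{\Pcal}$ in the sense of Section~\ref{section:P1critmarkedc} for which Thurston's rigidity directly gives $\dim \Moduli_d^1[\tilde{\Pcal}]^{\text{nL}} = 0$, and then descending the transversality back to $\Pcal$ via the forgetful map $\Moduli_d^1[\tilde{\Pcal}] \to \Moduli_d^1[\Pcal]$, accounting for the unconstrained dimensions contributed by vertices in $\Vcal \setminus \Vcal^\circ$ and by the remaining, un-marked critical weight.

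The main obstacle is precisely this independence: a priori the conditions $e_f(P_v) \ge \e(v)$ can fail to cut transversely, and in fact they do fail for flexible Latt\`es maps, whose underlying elliptic curve structure produces extra relations among the critical orbits. This failure is exactly why the flexible Latt\`es locus must be removed in the statement of the theorem, and correctly identifying and excising it is the delicate point in executing the descent step above.
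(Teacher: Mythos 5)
Your first half is fine and matches the paper: the lower bound on $\dim \Moduli_d^1[\Pcal]^{\text{nL}}$ comes from counting the $\sum_{v\in\Vcal^\circ}(\e(v)-1)$ local equations that cut the weighted space out of the unweighted one and applying Krull (the paper does this at the level of $\End_d^1\times(\PP^1)^{\Vcal}_\D$, counting orbit and multiplicity equations together; the difference is cosmetic). The gap is in your upper bound. Your plan is to embed $\Pcal$ in a complete critical, hence PCF, portrait $\tilde{\Pcal}$, get $\dim\Moduli_d^1[\tilde{\Pcal}]^{\text{nL}}=0$ from Thurston rigidity, and ``descend the transversality'' along the forgetful map $\Moduli_d^1[\tilde{\Pcal}]\to\Moduli_d^1[\Pcal]$. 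This fails for two reasons. First, a realizable PCF completion $\tilde{\Pcal}$ with $\Moduli_d^1[\tilde{\Pcal}]\ne\emptyset$ need not exist: PCF realizability is obstructed (cf.\ Remark~\ref{remark:thurstontopchar} and Example~\ref{example:M31with4fixedpts}), and your argument does not produce one. Second, even when it exists, the image of the forgetful map is the PCF locus of $\Moduli_d^1[\Pcal]$, a countable set of rigid points; transversality of the jet conditions at those isolated points says nothing about a generic point of a positive-dimensional component, and conversely the $0$-dimensionality of $\Moduli_d^1[\tilde{\Pcal}]^{\text{nL}}$ bounds $\dim\Moduli_d^1[\Pcal]^{\text{nL}}$ only if you can impose the extra relations one at a time with non-empty intersection at every stage --- which is essentially the statement you are trying to prove.

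The paper's route avoids this circularity by never passing through a PCF completion. It (i) reduces to critically generated portraits (Lemma~\ref{lem:ReduceToCG}); (ii) imposes the multiplicity conditions \emph{first}, obtaining a variety $\Xcal$ whose dimension is controlled by the stratification of $\End_d^1$ by the number of distinct critical points, $\dim\End^1_{d,T'}\le T'+3$ from \cite[Theorem~2.1]{arxiv1702.02582}; and (iii) only then imposes the critical orbit relations, after constructing a minimal non-redundant generating set $\Scal_\Pcal$ of exactly $T-\#(\Vcal\setminus\Vcal^\circ)$ relations (Definition~\ref{definition:S_P}, Proposition~\ref{prop:P_determined}) so that the transversality --- not merely the rigidity --- form of Thurston's theorem applies at non-Latt\`es points of $\Xcal$. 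The count $\#\Scal_\Pcal=T-\#(\Vcal\setminus\Vcal^\circ)$ is precisely what produces the $+\#(\Vcal\setminus\Vcal^\circ)$ term in the final formula; your proposal never engages with this bookkeeping, and without it the phrase ``accounting for the unconstrained dimensions'' carries the entire burden of the proof. To repair your argument you would need to replace the PCF descent by a direct application of the transversality theorem to an explicitly constructed independent set of critical relations, which is what the paper does.
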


\begin{remark}
Since~$\Mcal_d^1[\Pcal]^{\text{nL}}$ and~$\Mcal_d^1$ have dimension~$3$ less than
$\End_d^1[\Pcal]^{\text{nL}}$ and~$\End_d^1$, respectively, it suffices to prove
Theorem~\ref{thm:dimMd1} with~$\Mcal$ replaced with~$\End$, i.e., to
prove that
\begin{equation}\label{eq:dimEndd1}
  \dim \End_d^1[\Pcal]^{\text{nL}} = \dim \End_d^1
  - \sum_{v\in\Vcal^\circ} (\epsilon(v) - 1) + \#(\Vcal \setminus \Vcal^\circ).
\end{equation}
\end{remark}

Recall that~$\End_d^1[\Pcal]$ is the subvariety of
$\End_d^1\times(\PP^1)^S_\Delta$ determined by the following two
conditions:\footnote{We remark that if we write~$f(x)\in{K}(x)$ as a
  rational function, and if we take a point $\a\in{K}$ satisfying
  $f(\a)\ne\infty$, then an inequality~$e_f(\a)\ge\e$ as
  in~\eqref{eq:End2} simply says that $(d^nf/dx^n)(\a)=0$ for all
  $1\le{n}\le\e-1$.}
\begin{align}
  f(P_{w}) &= P_{\Phi(w)} &&\text{ for all } w \in \Vcal^\circ. \label{eq:End1} \\
  e_f(P_w) &\ge \epsilon(w) &&\text{ for all } w \in \Crit(\Pcal) \label{eq:End2}.
\end{align}
There are $\#\Vcal^\circ + \sum_{w \in \Vcal^\circ} (\epsilon(w) - 1)$
equations given in~\eqref{eq:End1} and~\eqref{eq:End2}; since
$\End_d^1[\Pcal]^{\text{nL}}$ is nonempty, this is an upper bound for the
codimension of $\End_d^1[\Pcal]^{\text{nL}}$ in $\End_d^1 \times
(\PP^1)^S_\Delta$, hence
\begin{multline*}
 \dim \End_d^1[\Pcal]^{\text{nL}} \ge \dim \End_d^1 + S - \left(\#\Vcal^\circ + \sum_{w \in \Vcal^\circ} (\epsilon(w) - 1)\right)\\
         = \dim \End_d^1 - \sum_{w \in \Vcal^\circ} (\epsilon(w) - 1) + \#(\Vcal \setminus \Vcal^\circ).
\end{multline*}
Thus, in order to complete the proof of Theorem~\ref{thm:dimMd1}, it remains to show that
\begin{equation}
  \label{eqn:DimIneq}
  \dim \End_d^1[\Pcal]^{\text{nL}} \le \dim \End_d^1 - \sum_{w \in \Vcal^\circ} (\epsilon(w) - 1) + \#(\Vcal \setminus \Vcal^\circ).
\end{equation}

\subsection{A Reduction Step}
Because our main focus in this section is on critical points and their
corresponding critical relations, we specify, for a weighted portrait
$\Pcal$, a minimal subportrait of $\Pcal$ that captures the same
critical data as $\Pcal$ itself.

\begin{definition}\label{definition:CriticallyGenerated}
Let $\Pcal = (\Vcal,\Vcal^\circ,\Phi,\epsilon)$ be a portrait. The
\emph{critically generated subportrait} of $\Pcal$ is the subportrait
$\Pcal' = (\Vcal', {\Vcal'}^\circ, \Phi', \epsilon')$ defined by
\[
 \Vcal' := \bigcup_{v \in \Crit(\Pcal)} 
 \Ocal_\Phi(v),\quad {\Vcal'}^\circ := \Vcal' \cap
 \Vcal^\circ,\quad \Phi' := \left.\Phi\right|_{{\Vcal'}^\circ},\quad
 \epsilon' := \left.\epsilon\right|_{{\Vcal'}^\circ}.
\]
We say that $\Pcal$ is \emph{critically generated}\footnote{
If $\sum_{v \in \Vcal^\circ} \bigl(\e(v) - 1\bigr) = 2d - 2$,
then $\Pcal$ being critically generated is equivalent to $\Pcal$ being a complete critical
portrait, using the terminology of Section~\ref{section:P1critmarkedc}.} if $\Pcal'=\Pcal$. 
\end{definition}

We now show that for the purposes of proving Theorem~\ref{thm:dimMd1},
it suffices to consider only critically generated portraits.

\begin{lemma}\label{lem:ReduceToCG}
If Condition~\eqref{eqn:DimIneq} holds for all critically generated portraits,
then Condition~\eqref{eqn:DimIneq} holds for all portraits.
\end{lemma}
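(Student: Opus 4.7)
The plan is to compare $\End_d^1[\Pcal]$ with $\End_d^1[\Pcal']$, where $\Pcal' = (\Vcal', {\Vcal'}^\circ, \Phi', \epsilon')$ is the critically generated subportrait of $\Pcal$, via the natural projection that forgets the points indexed by the extra vertices $\Vcal \setminus \Vcal'$. The inclusion $\alpha : \Pcal' \hookrightarrow \Pcal$ is a portrait morphism: ${\Vcal'}^\circ = \Vcal' \cap \Vcal^\circ$ gives ${\Vcal'}^\circ \subseteq \Vcal^\circ$; $\Phi'=\Phi|_{{\Vcal'}^\circ}$ together with the fact that $\Vcal'$ is closed under the action of $\Phi$ on its internal vertices gives commutativity; and $\epsilon' = \epsilon|_{{\Vcal'}^\circ}$ gives the weight inequality. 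Proposition~\ref{proposition:EnddNP1toP2} therefore supplies a morphism
\[
  \pi : \End_d^1[\Pcal]^{\text{nL}} \longrightarrow \End_d^1[\Pcal']^{\text{nL}}
\]
that simply restricts $(f,(P_v)_{v\in\Vcal})$ to $(f,(P_v)_{v\in\Vcal'})$. The goal is then to combine the hypothesis \eqref{eqn:DimIneq} for $\Pcal'$ with an upper bound on the dimensions of the fibers of $\pi$.

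The core step is to show that every non-empty fiber of $\pi$ has dimension at most $\#E$, where $E := (\Vcal \setminus \Vcal') \setminus \Vcal^\circ$ is the set of ``extra leaves.'' Since $\Vcal'$ contains every critical vertex and is closed under forward orbits of its internal vertices, any $\Phi$-cycle meeting $\Vcal \setminus \Vcal'$ lies entirely in $\Vcal \setminus \Vcal'$ and consists of non-critical vertices. I would partition $\Vcal \setminus \Vcal'$ into \emph{cycle vertices} (those on such a cycle) and \emph{tree vertices}, and bound each contribution to the fiber dimension separately. A cycle of length $n$ forces the corresponding tuple of points to constitute an $n$-orbit of $f$, contributing $0$ dimensions because $\deg f = d \ge 2$ implies that $\Per_n(f)$ is finite. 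The tree vertices can be processed in topological order with $\Phi(v)$ placed before $v$: one with an out-arrow contributes at most $d$ preimage choices ($0$ dimensions), while one in $E$ contributes a free point in $\PP^1$ ($1$ dimension). Summing yields the bound $\#E$.

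To finish, I would apply the hypothesis~\eqref{eqn:DimIneq} to $\Pcal'$ and combine with two elementary identities: $\sum_{v\in\Vcal^\circ}(\epsilon(v)-1) = \sum_{v\in{\Vcal'}^\circ}(\epsilon'(v)-1)$, which holds since $\Crit(\Pcal) \subseteq {\Vcal'}^\circ$ and each summand vanishes at non-critical vertices; and $\#E = \#(\Vcal \setminus \Vcal^\circ) - \#(\Vcal' \setminus {\Vcal'}^\circ)$, an immediate count using ${\Vcal'}^\circ = \Vcal' \cap \Vcal^\circ$. Adding the fiber bound to the hypothesis then gives
\[
  \dim \End_d^1[\Pcal]^{\text{nL}} \le \dim \End_d^1 - \sum_{v\in\Vcal^\circ}\bigl(\epsilon(v)-1\bigr) + \#(\Vcal \setminus \Vcal^\circ),
\]
which is~\eqref{eqn:DimIneq} for $\Pcal$.

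The main obstacle is the tight fiber-dimension bound, specifically the treatment of cycles of non-critical vertices lying entirely in $\Vcal \setminus \Vcal'$. Such cycles cannot be anchored to any critical orbit in $\Pcal'$, so one must argue directly from the finiteness of $\Per_n(f)$ for $\deg f \ge 2$. This is conceptually distinct from, and technically easier than, the flexible Latt\`es exclusion that governs the main inequality of Theorem~\ref{thm:dimMd1}; here it is simply an elementary input needed to avoid a spurious dimension contribution from the extra cyclic portion of $\Pcal$.
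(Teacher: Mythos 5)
Your proof is correct and follows essentially the same route as the paper: project $\End_d^1[\Pcal]^{\text{nL}}$ onto $\End_d^1[\Pcal']^{\text{nL}}$ for the critically generated subportrait, bound the fiber dimension by $\#(\Vcal\setminus\Vcal^\circ)-\#(\Vcal'\setminus{\Vcal'}^\circ)$, and combine with the equality of weight sums. The only difference is cosmetic: where the paper cites the fiber-counting argument from Theorem~\ref{theorem:MPprimetoMP}, you spell it out directly (cycles outside $\Vcal'$ contribute nothing by finiteness of $\Per_n(f)$, tree vertices by preimage counting), which is a valid and self-contained way to get the same bound.
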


\begin{proof}
Let $\Pcal$ be any portrait, and let $\Pcal'$ be its critically generated subportrait. By assumption, we have
\[
   \dim \End_d^1[\Pcal']^{\text{nL}} \le \dim \End_d^1 - \sum_{v \in {\Vcal'}^\circ} (\epsilon'(v) - 1) + \#(\Vcal' \setminus {\Vcal'}^\circ).
\]
Consider the natural projection map
(Proposition~\ref{proposition:EnddNP1toP2}) restricted to the
non-Latt\`es locus,
\[
   \pi_{\Pcal,\Pcal'} : \End_d^N[\Pcal]^{\text{nL}} \longrightarrow \End_d^N[\Pcal']^{\text{nL}}.
\]
As in the proof of Theorem~\ref{theorem:MPprimetoMP}, the dimension of
a nonempty fiber of $\pi_{\Pcal,\Pcal'}$ is equal to
$\#(\Vcal\setminus\Vcal^\circ)-\#(\Vcal'\setminus{\Vcal'}^\circ)$. Thus,
noting that
\[
  \sum_{v \in {\Vcal'}^\circ} (\epsilon'(v) - 1) = \sum_{v \in\Vcal^\circ} (\epsilon(v) - 1)
\]
by construction of $\Pcal'$, we have
\begin{align*}
  \dim \End_d^1[\Pcal]^{\text{nL}}
   &= \dim \Image(\pi_{\Pcal,\Pcal'}) + \dim \Fiber(\pi_{\Pcal,\Pcal'})\\
   &\le \dim \End_d^1[\Pcal']^{\text{nL}} + \left(\#(\Vcal \setminus \Vcal^\circ) - \#(\Vcal' \setminus {\Vcal'}^\circ)\right)\\
  &\le \dim \End_d^1 -
  \smash[b]{ \sum_{v \in \Vcal^\circ} (\epsilon(v) - 1) + \#(\Vcal \setminus \Vcal^\circ), }
\end{align*}
so~\eqref{eqn:DimIneq} holds for $\Pcal$.
\end{proof}

\begin{remark}
In the proof of Lemma~\ref{lem:ReduceToCG}, we did not assume that
$\Pcal$ has a vertex with weight greater than $1$.  If $\Pcal$ is
unweighted, i.e., if $\epsilon(v) = 1$ for all $v \in \Vcal^\circ$,
then the critically generated subportrait $\Pcal'$ is the empty
portrait, and in which case we have
$\End_d^1[\Pcal']=\End_d^1[\emptyset]=\End_d^1$. The argument is no
different in this situation.
\end{remark}

\subsection{Critical Relations Realized by Critically Generated Portraits}
For the remainder of this section, we assume that $\Pcal$ is a
critically generated portrait.

In order to prove Theorem~\ref{thm:dimMd1}, we relabel vertices of $\Pcal$ using
\emph{pairs} of positive integers as follows:
First, we decompose~$\Vcal$ as a disjoint union
\[
  \Vcal = \Ccal_1 \cup \cdots \cup \Ccal_r,
\]
consisting of the vertices in the distinct connected components of the
graph attached to~$\Pcal$. Note that every component contains at least one critical point by our assumption that $\Pcal$ is critically generated.

For each~$1\le k\le r$, we label the elements of~$\Ccal_k$ as
\[
  \Ccal_k = \{(1,k),\ldots,(s_k,k)\},
\]
and we choose the labeling so that there is a~$t_k\le s_k$ so that
\[
  \text{$(i,k)\in\Ccal_k$ is a critical point}
  \quad\Longleftrightarrow\quad
  1\le i\le t_k.
\]
In other words, we have
\begin{align*}
	\Vcal &= \{(i,k) : 1 \le k \le r,\; 1 \le i \le s_k\}~\text{and}\\
	\Crit(\Pcal) &= \{(i,k) : 1 \le k \le r,\; 1 \le i \le t_k\}.
\end{align*}
Let
\[
  S := \sum_{k=1}^r s_k
  \quad\text{and}\quad
  T := \sum_{k=1}^r t_k
\]
be, respectively, the total number of vertices in~$\Vcal$ and the
total number of critical points for~$\Vcal$. 
Finally, we let
\begin{equation}
  \label{eqn:zetaPcal}
  \zeta(\Pcal) := \#(\Vcal \setminus \Vcal^\circ)
\end{equation}
be the number of components of~$\Pcal$ that do not contain cycle.

\begin{definition}
A \emph{critical relation realized by~$\Pcal$} is an identity of the form
\begin{equation}
\label{eqn:ikjellmnrel}
\Phi^m(i,k) = \Phi^n(j,\ell)
\end{equation}
with $(i,k), (j,\ell) \in \Crit(\Pcal)$. We also encode such a relation as a tuple
\[
\bigl((i,k), (j,\ell) ; m, n\bigr).
\]
The relation is defined to be \emph{trivial} if $(i,k) = (j,\ell)$
and~$m=n$, and \emph{nontrivial} otherwise.
\end{definition}

Note that for any critical relation~\eqref{eqn:ikjellmnrel}, we must
have~$k=\ell$, since otherwise the vertices~$(i,k)$ and~$(j,\ell)$
would be in different components.

We seek to construct a minimal set of critical relations realized by
$\Pcal$ that completely determines \emph{all} relations realized by
$\Pcal$. First, we make more explicit what we mean for a set
of critical relations to determine another relation. The following
definition is motivated by \cite[\textsection~3.1]{arxiv1702.02582}.

\begin{definition}
Let~$\Scal$ be a collection of critical relations realized by
$\Pcal$. We let~$\sim_\Scal$ denote the smallest equivalence relation
$\sim$ on the set
\[
  \Bigl\{\bigl((i,k), m\bigr) \mid 1 \le k \le r,\, 1 \le i \le t_k,\, m \in \ZZ_{\ge 0}\Bigr\}
\]
with the property that
\begin{multline}
\label{eqn:ikmikmc}
\bigl((i,k),m\bigr) \sim \bigl((j,\ell), n\bigr) \\
{}\Longrightarrow\quad
\bigl((i,k),m+c\bigr)\sim\bigl((j,\ell),n+c\bigr)\text{ for all }
c \in \ZZ_{\ge 0}.
\end{multline}
We refer to the condition~\eqref{eqn:ikmikmc} by saying that
$\sim_\Scal$ is \emph{closed under iteration}. We say that a
relation~$\bigl((i,k), (j,\ell) ; m, n\bigr)$ as
in~\eqref{eqn:ikjellmnrel} is \emph{determined by~$\Scal$} if
$\bigl((i,k), m) \sim_\Scal ((j,\ell), n\bigr)$.
\end{definition}

We now define a particular set of critical relations for~$\Pcal$,
which we will then show determines all critical relations realized by
$\Pcal$.

\begin{definition}\label{definition:S_P}  
We construct a set ~$\Scal_\Pcal$ using the following algorithm:  
\begin{enumerate}
\item[(A)]
  Initialize $\Scal_\Pcal := \emptyset$, $k := 1$, and $i := 1$.
\item[(B)]
  If~$i = 1$ and the component~$\Ccal_k$ has no cycle, skip  to Step (G).
\item[(C)]
  Let~$m_{i,k}$ be the minimal~$m \ge 0$ such that~$\Phi^m(i,k) =
  \Phi^n(j,k)$ for some~$1 \le j \le i$ and~$n \ge 0$, with~$n < m$
  if~$i = j$. (Step (B) ensures that there is such an~$m$.)
\item[(D)]
  Let~$j_{i,k}$ be the minimal~$1 \le j \le i$ such
  that~$\Phi^{m_{i,k}}(i,k) = \Phi^n(j,k)$ for some~$n \ge 0$.
\item[(E)]
  Let~$n_{i,k}$ be the minimal~$n \ge 0$ such that~$\Phi^{m_{i,k}}(i,k) = \Phi^n(j_{i,k},k)$.
\item[(F)]
  Adjoin the tuple~$\bigl((i,k),(j_{i,k},k);m_{i,k},n_{i,k}\bigr)$ to the
  set~$\Scal_\Pcal$.
\item[(G)]
  Increment~$i$. If~$i\le t_k$, go to Step (C).
\item[(H)]
  Increment~$k$. If~$k\le r$, go to Step (B).
\end{enumerate}
\end{definition}

We recall the quantity~$\zeta(\Pcal)$ defined by~\eqref{eqn:zetaPcal},
and we note that the set~$\Scal_\Pcal$ contains
precisely~$T-\zeta(\Pcal)$ tuples, since Step~(B)
eliminates~$\zeta(\Pcal)$ of the critical points. We now show
that~$\Scal_\Pcal$ determines \emph{all} critical relations
for~$\Pcal$.

\begin{proposition}\label{prop:P_determined}
Let~$\Scal_\Pcal$ be as in Definition~$\ref{definition:S_P}$. Then
every critical relation $\Phi^m(i,k) = \Phi^n(j,\ell)$ realized
by~$\Pcal$ is determined by $\Scal_\Pcal$.
\end{proposition}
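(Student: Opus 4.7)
My plan is to reduce the problem to a two-level induction: an outer step that restricts attention to a single connected component, and an inner strong induction on the larger of the two critical-point indices within that component. All of the reductions are driven by the minimality built into Definition~\ref{definition:S_P} together with the closure under iteration property~\eqref{eqn:ikmikmc} of $\sim_{\Scal_\Pcal}$.

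First, I would observe that if $\Phi^m(i,k)=\Phi^n(j,\ell)$ is a realized critical relation, then necessarily $k=\ell$: both $\Phi^m(i,k)$ and $\Phi^n(j,\ell)$ lie in the connected components of $(i,k)$ and $(j,\ell)$ respectively, and forward orbits stay inside a component. So we may as well fix $k$ and work inside $\Ccal_k$. After swapping the two sides if necessary, I will assume $i\ge j$, and proceed by strong induction on $i$.

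For the base case $i=1$ (so $j=1$), the relation has the form $\Phi^m(1,k)=\Phi^n(1,k)$. If $\Ccal_k$ contains no cycle, then $\Phi$ is injective on $\Ocal_\Phi(1,k)$, forcing $m=n$, which is determined by reflexivity. Otherwise Step~(B) of Definition~\ref{definition:S_P} was \emph{not} skipped, so $\Scal_\Pcal$ contains a tuple $\bigl((1,k),(1,k);m_{1,k},n_{1,k}\bigr)$ with $n_{1,k}<m_{1,k}$, recording the preperiod and period of $(1,k)$. Any other self-relation $\Phi^m(1,k)=\Phi^n(1,k)$ with, say, $m>n$ must satisfy $n\ge n_{1,k}$ and $(m-n)\equiv 0\pmod{m_{1,k}-n_{1,k}}$; one derives it from the recorded tuple by finitely many applications of closure under iteration.

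For the inductive step, assume the proposition holds for all relations involving only critical points $(j',k)$ with $j'<i$, and consider a relation $R\colon \Phi^m(i,k)=\Phi^n(j,k)$ with $j\le i$. After possibly swapping sides, assume $m\ge n$. If $m<m_{i,k}$, then the minimality in Step~(C) forces $R$ to be the trivial relation (the only pair $(j,n)$ permitted is $(i,m)$), which is determined. If $m\ge m_{i,k}$, write $m=m_{i,k}+c$ with $c\ge0$. Applying the recorded relation $\bigl((i,k),(j_{i,k},k);m_{i,k},n_{i,k}\bigr)\in\Scal_\Pcal$ and closing under iteration $c$ times gives
\[
  \bigl((i,k),m\bigr)\;\sim_{\Scal_\Pcal}\;\bigl((j_{i,k},k),n_{i,k}+c\bigr),
\]
so it suffices to show that the residual relation $\Phi^{n_{i,k}+c}(j_{i,k},k)=\Phi^n(j,k)$ is determined by $\Scal_\Pcal$. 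When $j_{i,k}<i$, both sides involve critical points of index strictly less than $i$, and the inductive hypothesis applies directly.

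The one delicate case, which I anticipate being the main obstacle, is $j_{i,k}=i$: the recorded tuple is then a \emph{self}-relation for the orbit of $(i,k)$, so the reduction above does not immediately drop the maximum index. Here I would argue as in the base case: iteration of the self-relation shows that the orbit of $(i,k)$ is eventually periodic with preperiod $n_{i,k}$ and period $L:=m_{i,k}-n_{i,k}$, so any self-relation of $(i,k)$ reduces modulo $L$ to one already handled. For a cross-relation with some $j<i$ in this cyclic subcase, the equivalence class of $\bigl((i,k),m\bigr)$ under $\sim_{\Scal_\Pcal}$ can be shifted into the cycle segment $[n_{i,k},n_{i,k}+L-1]$, and by inductive hypothesis applied to the orbit of $(j,k)$ (which, if it meets this cycle, must already contribute its own tuple to $\Scal_\Pcal$ recording the merger), the equivalence is forced. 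Combining the two levels of induction then gives the result.
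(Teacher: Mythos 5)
Your overall strategy coincides with the paper's: fix a connected component, induct on the larger critical index, and combine the minimality built into Definition~\ref{definition:S_P} with closure under iteration. However, the inductive step has a genuine gap in the case of a nontrivial \emph{self}-relation $\Phi^m(i,k)=\Phi^n(i,k)$ with $i\ge 2$. After you apply the recorded tuple $\bigl((i,k),(j_{i,k},k);m_{i,k},n_{i,k}\bigr)$ to the left-hand side, the residual relation is $\Phi^{n_{i,k}+c}(j_{i,k},k)=\Phi^n(i,k)$, which still involves the index $i$ on the right; your assertion that ``both sides involve critical points of index strictly less than $i$'' is false precisely when $j=i$, so the induction hypothesis does not apply as claimed. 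The repair is to transfer \emph{both} occurrences of $(i,k)$ to $(j_{i,k},k)$: writing $(M,N)$ for the preperiodic type of $(i,k)$, reduce (as in your base case) to showing $\bigl((i,k),M\bigr)\sim\bigl((i,k),M+N\bigr)$, observe that $m_{i,k}\le M$ because $\Phi^M(i,k)$ already lies in the cycle and hence in the orbit of $(1,k)$, apply the recorded tuple with closure under iteration at exponents $M$ and $M+N$, and you are left with a self-relation of $(j_{i,k},k)$ with $j_{i,k}<i$, which the induction hypothesis does cover. This is exactly how the paper's Case~1 proceeds.

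Conversely, the case you single out as the main obstacle, namely $j_{i,k}=i$ with $i\ge 2$, is vacuous. A connected component of a pseudoforest either contains a unique cycle, in which case every vertex of the component has an out-arrow and every forward orbit enters that cycle, or it is a tree with a unique sink, in which case every forward orbit terminates there. Either way the forward orbits of $(i,k)$ and $(1,k)$ intersect, and they do so no later than the first self-collision of the orbit of $(i,k)$ (which occurs at time $M+N$, strictly after the orbit has entered the cycle at time $M$). Hence the minimal $m$ in Step~(C) is witnessed by some $j<i$, and the minimality in Step~(D) then forces $j_{i,k}<i$ for all $i\ge 2$; the self-referential tuple only ever occurs for $i=1$, which is your base case. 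A smaller point: the normalization ``after possibly swapping sides, assume $m\ge n$'' is incompatible with having already arranged $i\ge j$, so the argument should impose only one of these two normalizations; with the fix above, the assumption $m\ge n$ is not needed in the cross-relation case.
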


\begin{proof}
We drop the subscript and write $\sim$ for the equivalence
relation~$\sim_{\Scal_\Pcal}$. Since~$\sim$ is reflexive, any trivial
relation is immediately determined by~$\Scal_\Pcal$. Henceforth, we
consider only nontrivial critical relations.

As noted previously, for any critical relation $\Phi^m(i,k) =
\Phi^n(j,\ell)$ we must have~$k=\ell$. Now, fix~$k\in\{1,\ldots,r\}$,
and assume that~$i \ge j$. We proceed by induction on~$i$.

First, we take~$i = 1$, hence also~$j = 1$. There is a nontrivial
critical relation of the form~$\Phi^m(1,k) = \Phi^n(1,k)$ if and only
if the component~$\Ccal_k$ contains a cycle. We assume that~$m \ge
n$. In this case, if we let~$(M,N)$ denote the preperiodic type of
$(1,k)$, then we must have~$n \ge M$ and~$m = n + cN$ for some~$c \ge
1$. (If~$c = 0$, then the relation is trivial.) Therefore, it suffices
to show that~$\bigl((1,k), n\bigr) \sim \bigl((1,k), n + cN\bigr)$ for
all integers~$n \ge M$ and~$c \ge 1$.

Using the notation from Definition~\ref{definition:S_P} we have 
\[
  j_{1,k} = 1,\qquad
  m_{1,k} = M + N,\qquad
  n_{1,k} = M.
\]
Thus~$\bigl((1,k), M\bigr) \sim \bigl((1,k), M + N\bigr)$. Since~$n
\ge M$, and since~$\sim$ is closed under iteration, it follows also
that~$\bigl((1,k), n\bigr) \sim \bigl((1,k), n + N\bigr)$. In fact, we
have~$\bigl((1,k), n + (c-1)N\bigr) \sim \bigl((1,k), n + cN\bigr)$
for all~$c \ge 1$, hence by transitivity we have~$\bigl((1,k), n\bigr)
\sim \bigl((1,k), n + cN\bigr)$ for all~$c \ge 1$ as claimed.

Now take~$2 \le i \le t_k$, and assume that for all~$1 \le j' \le i' <
i$, if~$\Phi^{m'}(i',k) = \Phi^{n'}(j',k)$, then that relation is
determined by~$\Scal_\Pcal$. Since~$i \ge 2$, we must have~$j_{i,k} <
i$. We consider two cases.

\textbf{Case 1.} Assume~$i = j$, in which case~$\Ccal_k$ contains a
cycle. Let~$(M, N)$ be the preperiodic type of~$(i,k)$. By following
the same argument as in the~$i = 1$ case, it will suffice to show
that~$\bigl((i,k), M\bigr) \sim \bigl((i,k), M + N\bigr)$.

The vertex~$\Phi^M(i,k)$ lies in the~$N$-cycle in~$\Ccal_k$, which is
contained in the orbit of~$(1,k)$; thus~$m_{i,k} \le M$. By definition
of~$\Scal_\Pcal$, we have
\[
  \bigl((i,k), m_{i,k}\bigr) \sim \bigl((j_{i,k}, k), n_{i,k}\bigr),
\]
and by closure under iteration we have
\[
  \bigl((i,k), M\bigr) \sim \bigl((j_{i,k}, k), n_{i,k} + M - m_{i,k}\bigr)
\]
and
\[
  \bigl((i,k), M + N\bigr) \sim \bigl((j_{i,k}, k), n_{i,k} + M - m_{i,k} + N\bigr).
\]
It therefore suffices to show that
\begin{equation}
  \label{eq:rel1}
  \bigl((j_{i,k}, k), n_{i,k} + M - m_{i,k}\bigr) \sim \bigl((j_{i,k},
  k), n_{i,k} + M - m_{i,k} + N\bigr).
\end{equation}
Now, by construction we have
that~$\Phi^{n_{i,k}}(j_{i,k},k)=\Phi^{m_{i,k}}(i,k)$; thus, since~$M
\ge m_{i,k}$, we have
\[
\Phi^{M - m_{i,k} + n_{i,k}}(j_{i,k},k) = \Phi^M(i,k),
\]
which is periodic of period~$N$. It follows that
\[
  \Phi^{M - m_{i,k} + n_{i,k} + N}(j_{i,k},k)
  = \Phi^{M - m_{i,k} + n_{i,k}}(j_{i,k},k).
\]
Since~$j_{i,k} < i$, our induction assumption guarantees that~\eqref{eq:rel1} holds.

\textbf{Case 2.} Assume~$i > j$. Since~$\Phi^m(i,k) = \Phi^n(j,k)$, we
have~$m_{i,k} \le m$ by definition, and furthermore
\[
  \bigl((i,k), m_{i,k}\bigr) \sim \bigl((j_{i,k},k), n_{i,k}\bigr).
\]
By closure under iteration, since~$m \ge m_{i,k}$ we get
\[
  \bigl((i,k), m\bigr) \sim \bigl((j_{i,k},k), n_{i,k} + m - m_{i,k}\bigr).
\]
On the other hand, we have
\[
  \Phi^{n_{i,k} + m - m_{i,k}}(j_{i,k}, k)
  = \Phi^m(i,k) = \Phi^n(j,k),
\]
and since~$j_{i,k}, j< i$, our induction hypothesis implies that
\[
  \bigl((j_{i,k},k), n_{i,k} + m - m_{i,k}\bigr) \sim \bigl((j,k),n\bigr).
\]
By transitivity, we get~$\bigl((i,k),m\bigr) \sim \bigl((j,k),n\bigr)$, as claimed.
\end{proof}

\subsection{Proof of Theorem~\ref{thm:dimMd1}}
\label{section:proofthm:dimMd1}
The key to proving inequality~\eqref{eqn:DimIneq}, and therefore
Theorem~\ref{thm:dimMd1}, is to note that the relations coming from
\eqref{eq:End1} and~\eqref{eq:End2} intersect transversally at points
in $\End_d^1[\Pcal]$. This, in turn, relies on Thurston rigidity; see
\cite{MR1251582} and \cite{arxiv1702.02582}.

\begin{proof}[Proof of Theorem~$\ref{thm:dimMd1}$]
Lemma~\ref{lem:ReduceToCG} says that it suffices to assume that
$\Pcal$ is critically generated, and thus we need to show
that~\eqref{eqn:DimIneq} is satisfied by such~$\Pcal$.

Let
\[
  \left(f, (P_{i,k})_{\substack{1 \le k \le r\\1 \le i \le s_k}}\right) \in \End_d^1[\Pcal]^{\text{nL}}.
\]
The portrait $\Pcal$ determines $T$ \emph{distinct} critical points
for $f$, accounting for at least
\[
  \sum_{k=1}^r\sum_{i=1}^{t_k}(\epsilon(i,k) - 1)
\]
critical points when counted \emph{with multiplicity}. This means that
the map $f$ has at most
\[
  T' := T + 2d - 2 - \sum_{k=1}^r\sum_{i=1}^{t_k} (\epsilon(i,k) - 1)
\]
distinct critical points. By \cite[Theorem 2.1]{arxiv1702.02582}, the
(generally reducible) subvariety $\End_{d, T'}^1 \subset \End_d^1$ of
maps with at most $T'$ distinct critical points has dimension at most
$T' + 3$. It follows that the subvariety
$\Xcal\subset\End_d^1\times(\PP^1)^T_\Delta$ defined by
\[
  e_f(P_{i,k}) \ge \epsilon(i,k) \quad\text{for all $1 \le k \le r$, $1 \le i \le t_k$,}
\]
has dimension at most $T' + 3$, since the projection map
$\Xcal\to\End_d^1$ has finite fibers, and its image lies in
$\End_{d,T'}^1$.

Let $\Zcal$ be the subvariety of $\Xcal$ cut out by
\begin{equation}
  \label{eqn:PcalRels}
   f^m(P_{i,k}) = f^n(P_{j,\ell}) \quad\text{for all $((i,k),(j,\ell) ; m,n) \in \Scal_\Pcal$}.
\end{equation}
Then the image of the map $\pi_\Xcal : \End_d^1[\Pcal] \to \Xcal$
obtained by projecting onto the appropriate coordinates is contained
in $\Zcal$. Furthermore, by construction of $\Scal_\Pcal$, there can
be no redundancy among the relations; that is, there is no subset of
distinct elements of $\Scal_\Pcal$ of the form
\begin{align*}
   ((i_1,k),(i_2,k)&;m_1,n_1), ((i_2,k), (i_3,k); m_2,n_2),\\
   &\ldots, ((i_{\ell - 1},k),(i_\ell,k);m_{\ell - 1},n_{\ell - 1}), ((i_\ell,k), (i_1,k); m_\ell,n_\ell).
\end{align*}
Thus, Thurston rigidity implies\footnote{More explicitly, one can
  extend $\Scal_\Pcal$ to a larger set of relations
  $\Scal_f\supseteq\Scal_\Pcal$ which is \emph{minimally full} in the
  sense of \cite[Definition 3.6]{arxiv1702.02582}, and then
  transversality follows from \cite[Lemma 4.1]{arxiv1702.02582} and
  the proof of \cite[Theorem 3.2]{arxiv1702.02582}.} that the
relations defining $\Zcal$ intersect transversally in $\Xcal$ at each non-Latt\'es
point in $\Image(\pi_\Xcal)$. Therefore, we have
\begin{align}
  \label{eqn:FinalIneq}
  \dim \Image(\pi_\Xcal|_{\End_d^1[\Pcal]^{\text{nL}}})\notag
         &= \dim \Xcal - \#\Scal_\Pcal\notag\\
         &\le (T' + 3) - (T - \zeta(\Pcal))\notag\\
         &= \dim \End_d^1 - \sum_{k=1}^r\sum_{i=1}^{t_k} (\epsilon(i,k) - 1) + \zeta(\Pcal)\notag\\
         &= \dim \End_d^1 - \sum_{v \in \Vcal^\circ} (\epsilon(v) - 1) + \#(\Vcal \setminus \Vcal^\circ),
\end{align}
where the last equality comes from the fact that we defined
$\zeta(\Pcal)$ to be equal to $\#(\Vcal \setminus \Vcal^\circ)$. The fact that $\Pcal$ is critically generated
implies that for every vertex $(j,\ell) \in \Vcal$, there exist
$(i,k) \in \Crit(\Pcal)$ and $m \ge 0$ such that
$\Phi^m(i,k)=(j,\ell)$. This implies that $P_{j,\ell} = f^m(P_{i,k})$,
and thus every coordinate $P_{j,\ell}$ is determined by just the
critical point coordinates. It follows that
$\End_d^1[\Pcal]^{\text{nL}}$ is isomorphic to its image under
$\pi_\Xcal$, and therefore the inequality~\eqref{eqn:FinalIneq}
implies~\eqref{eqn:DimIneq}.
\end{proof}

\section{An FOD/FOM Degree Bound for Dynamical Systems with Portraits}
\label{section:fomfoddynportdegbd}

In this section, we turn to a discussion of the field-of-moduli versus
field-of-definition problem associated to the
space~$\Moduli_d^N[\Pcal]$ with an unweighted portrait~$\Pcal$. In the
next section we apply this material to explain how the uniform
boundedness conjecture is related to rational points
on~$\Moduli_d^N[\Pcal]$.

The main theorem of~\cite{fomfodPN2018} says that if~$K$ is a number
field and~$\xi\in\Moduli_d^N(K)$, then there is a field
extension~$L/K$ whose degree is bounded in terms of~$N$ and~$d$ so
that~$\xi$ comes from a point in~$\End_d^N(L)$. We generalize to
portrait moduli spaces~$\Moduli_d^N[\Pcal]$, obtaining a result that will be
needed in the next section for comparing two uniform boundedness
conjectures.

\begin{theorem}
\label{theorem:FODFOMbound} 
Fix integers~$N\ge1$ and $d\ge2$. There is a constant~$C_2(N,d)$ such that
for every unweighted portrait~$\Pcal$ and every number field~$K$ we
have
\[
  \Moduli_d^N[\Pcal](K) \subset \Image \left(
  \bigcup_{[L:K]\le C_2(N,d)} \End_d^N[\Pcal](L)
  \longrightarrow \Moduli_d^N[\Pcal](\Kbar) \right).
\]
In other words, every $\xi\in\Moduli_d^N[\Pcal](\Kbar)$ whose field of
moduli is contained in~$K$ has a field of definition whose degree
over~$K$ is bounded in terms of~$N$ and~$d$.
\end{theorem}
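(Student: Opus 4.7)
The plan is to reduce to the portrait-free analogue proved in~\cite{fomfodPN2018} and to handle the extra portrait data by exploiting Levy's uniform bound $\#\Aut(f)\le C_3(N,d)$ from Theorem~\ref{theorem:EnddNstableAutfinite}(a). In outline, I will first produce a single endomorphism $f_0$ over a controlled extension of $K$ whose class in $\Moduli_d^N$ equals $\pi(\xi)$, next choose portrait points $\bfQ$ for $f_0$ so that $(f_0,\bfQ)$ represents $\xi$, and finally show that $\bfQ$ is defined over a further bounded extension via a Galois-orbit argument governed by $\Aut(f_0)$.

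Consider the forgetful projection $\pi:\Moduli_d^N[\Pcal]\to\Moduli_d^N$ induced by the inclusion of the empty portrait into $\Pcal$, under which the $K$-rational point $\xi$ maps to a $K$-rational point $\pi(\xi)$. Applying the main theorem of~\cite{fomfodPN2018} to $\pi(\xi)$ yields an extension $L_0/K$ with $[L_0:K]\le C_1(N,d)$ and an endomorphism $f_0\in\End_d^N(L_0)$ representing $\pi(\xi)$. Starting from any lift $(f,\bfP)\in\End_d^N[\Pcal](\Kbar)$ of $\xi$, the identity $\<f\>=\<f_0\>$ in $\Moduli_d^N$ supplies some $\f\in\SL_{N+1}(\Kbar)$ with $\f^{-1}\circ f\circ\f=f_0$, and then $\bfQ:=\f^{-1}(\bfP)$ produces a lift $(f_0,\bfQ)\in\End_d^N[\Pcal](\Kbar)$ of $\xi$ whose first coordinate is our chosen $f_0$.

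The crux of the argument is to bound $[L_0(\bfQ):L_0]$. Given any $\s\in\Gal(\Kbar/L_0)$, the coordinatewise conjugate $(f_0,\s(\bfQ))$ is a second lift of $\xi$, since $\s$ fixes $f_0$ by $L_0$-rationality and fixes $\xi$ because $L_0\supseteq K$. By the geometric-quotient description of $\Moduli_d^N[\Pcal]$ from Theorem~\ref{theorem:MNdwithportrait}, these two lifts differ by some $\psi_\s\in\SL_{N+1}(\Kbar)$, and agreement of the first coordinates forces $\psi_\s\in\Aut(f_0)$. Therefore the $\Gal(\Kbar/L_0)$-orbit of $\bfQ$ is contained in the set $\{\psi^{-1}(\bfQ):\psi\in\Aut(f_0)\}$, whose cardinality is at most $\#\Aut(f_0)\le C_3(N,d)$. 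Setting $L:=L_0(\bfQ)$ and $C_2(N,d):=C_1(N,d)\cdot C_3(N,d)$ yields $(f_0,\bfQ)\in\End_d^N[\Pcal](L)$ with $[L:K]\le C_2(N,d)$, as required.

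The only conceptual point requiring genuine care is the identification of $\s(\bfQ)$ with an element of $\Aut(f_0)\cdot\bfQ$, which depends on $\Moduli_d^N[\Pcal]$ being a genuine geometric quotient rather than merely a categorical one; every other ingredient is an invocation of a result already cited in the excerpt, so no new geometric input is needed beyond the portrait-free FOD/FOM theorem of~\cite{fomfodPN2018}.
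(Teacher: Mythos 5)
Your proposal is correct and follows essentially the same route as the paper's proof: first invoke the portrait-free FOD/FOM theorem of~\cite{fomfodPN2018} to replace the map by a model defined over a bounded extension, then observe that for each Galois element the conjugated marked points differ from the original ones by an element of $\Aut(f)$, so the Galois orbit of the point tuple has size at most $\#\Aut(f)$, which is bounded by Levy's theorem. The only differences are cosmetic (you keep the auxiliary field $L_0$ explicit rather than relabeling, and you phrase the existence of $\psi_\s$ via the geometric-quotient property, which is exactly what the paper uses implicitly).
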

\begin{proof}
Recall that we use angle brackets to denote the maps
\[
  \<\;\cdot\;\> : \End_d^N[\Pcal]\longrightarrow\Moduli_d^N[\Pcal]
  \quad\text{and}\quad
  \<\;\cdot\;\> : \End_d^N\longrightarrow\Moduli_d^N.
\]
We start with an element $\<f,\bfP\>\in\Moduli_d^N[\Pcal](K)$, i.e.,
the field of moduli of~$(f,\bfP)$ is contained
in~$K$. Then~$\<f\>\in\Moduli_d^N(K)$,
so~\cite[Theorem~1]{fomfodPN2018} says that there is
a~$\psi\in\PGL_{N+1}(\Kbar)$ so that~$f^\psi\in\End_d^N(L)$ for a
field~$L$ whose degree over~$K$ is bounded in terms of~$N$
and~$d$. Replacing~$K$ with~$L$ and~$(f,\bfP)$
with~$\bigl(f^\psi,\psi^{-1}(\bfP)\bigr)$, we may assume without loss
of generality that~$\<f,\bfP\>\in\Moduli_d^N[\Pcal](K)$
and~$f\in\End_d^N(K)$.

It follows that for all~$\s\in\Gal(\Kbar/K)$, we have
\[
  \<f,\bfP^\s\> = \<f^\s,\bfP^\s\> = \<f,\bfP\>^\s = \<f,\bfP\>.
\]
Hence there is a exists $\f_\s\in\PGL_{N+1}(\Kbar)$ such that
\[
  (f,\bfP^\s) = \f_\s\star(f,\bfP) = \bigl(f^{\f_\s},\f_\s^{-1}(\bfP)\bigr).
\]
In particular, we see that~$\f_\s\in\Aut(f)$, and hence
\[
  \#\bigl\{\bfP^\sigma : \sigma \in \Gal(\Kbar/K)\bigr\}
  \le \#\bigl\{\f^{-1}\bfP : \f\in\Aut(f)\bigr\}
  \le \#\Aut(f).
\]
It follows that
\[
  \bigl[ K(\bfP) : K \bigr] \le \#\Aut(f),
\]
and now we need merely invoke Levy's theorem~ \cite{MR2741188},
mentioned already in the proof of
Theorem~\ref{theorem:EnddNstableAutfinite}(a), that~$\#\Aut(f)$ is
bounded by a constant that depends on only~$N$ and~$d$.
\end{proof}

\section{Uniform Boundedness of Preperiodic Points and Its Relation to Algebraic Points on $\Moduli_d^N[\Pcal]$}
\label{section:uniformbdedness}

The strong uniform boundedness conjecture gives a strong uniform bound
for the number of preperiodic points of morphisms of~$\PP^N$ over
number fields of bounded degree. In this section we relate this
conjecture to the existence of algebraic points on moduli
spaces~$\Moduli_d^N[\Pcal]$.

\begin{conjecture}[Strong Uniform Boundedness Conjecture; Silverman--Morton \cite{mortonsilverman:rationalperiodicpoints}]
  \label{conjecture:strongUBC}
For every $D\ge1$,
  $N\ge1$, and $d\ge2$ there is a constant~$C_3(D,N,d)$ such that for
  all number fields~$K/\QQ$ satisfying $[K:\QQ]\le D$ and all
  endomorphisms $f\in\End_d^N(K)$, we have
  \[
  \# \Bigl( \PrePer(f) \cap \PP^N(K) \Bigr) \le C_3(D,N,d).
  \]
\end{conjecture}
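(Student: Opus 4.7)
The plan is to give a conditional proof of the Strong Uniform Boundedness Conjecture, using the portrait moduli space machinery of this paper together with the Bombieri--Lang conjecture on rational points of varieties of general type. Since the statement is a universal bound $C_3(D,N,d)$ on $K$-rational preperiodic points as $K$ and $f$ vary, it suffices to rule out a hypothetical sequence $(K_i, f_i, S_i)$ with $[K_i:\QQ] \le D$, $f_i \in \End_d^N(K_i)$, $S_i \subseteq \PrePer(f_i) \cap \PP^N(K_i)$, and $\#S_i \to \infty$.

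First I would repackage such a sequence as a sequence of portrait models. Replacing each $S_i$ by its forward orbit under $f_i$ (which remains $K_i$-rational and still has cardinality tending to $\infty$), we may assume $S_i$ is $f_i$-invariant. Then $(f_i, S_i)$ is a $K_i$-model of the unweighted preperiodic portrait $\Pcal_i := (S_i, S_i, f_i|_{S_i})$, hence determines a point $\<f_i,S_i\> \in \Moduli_d^N[\Pcal_i](K_i)$. By Theorem~\ref{theorem:UnweightedNonempty}(f), for each fixed $n$ only finitely many isomorphism classes of unweighted preperiodic portraits with $\#\Vcal = n$ can be realized by degree $d$ maps on $\PP^N$, so by pigeonholing a subsequence we may assume all $\Pcal_i$ are isomorphic to a fixed $\Pcal$, with $\#\Vcal_\Pcal$ as large as we please.

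The core geometric input would be the conjectural statement, stated in the paper, that $\Moduli_d^N[\Pcal] \otimes_\ZZ \CC$ is of general type once $\#\Vcal$ exceeds a constant $C_1(N,d)$. Combined with Theorem~\ref{theorem:FODFOMbound}, which bounds the degree of a field of definition of $\<f_i,S_i\>$ by $D \cdot C_2(N,d)$, and with Bombieri--Lang, this forces the points $\<f_i,S_i\>$ into a proper Zariski-closed subset $Z \subsetneq \Moduli_d^N[\Pcal]$. One would then argue that each irreducible component of $Z$ arises, via the map of Theorem~\ref{theorem:MPprimetoMP}, as the image of $\Moduli_d^N[\Pcal']$ for some portrait $\Pcal' \supsetneq \Pcal$, and apply the general-type hypothesis again to $\Pcal'$. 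The resulting descent strictly lowers dimension at each step, so it must terminate, yielding the desired uniform bound $C_3(D,N,d)$.

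The principal obstacles are threefold: (i) establishing general type for $\Moduli_d^N[\Pcal]$ once $\#\Vcal$ is large --- a conjecture open outside of special cases such as Theorem~\ref{theorem:M21Cn}(c); (ii) the Bombieri--Lang conjecture itself, which is essential to convert general type into non-Zariski-density of bounded-degree rational points; and (iii) controlling the descent uniformly, so that the number of steps --- and thus the final constant $C_3(D,N,d)$ --- depends only on $(D,N,d)$ and not on the individual portraits encountered. Without serious progress on (i) and (ii), the most one can reasonably hope for is the conditional equivalence formulated in Theorem~\ref{theorem:UBCifffMBC}, rather than the unconditional conjecture stated above.
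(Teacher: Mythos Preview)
The statement you are addressing is a \emph{conjecture}, not a theorem; the paper does not prove it, nor does it claim to. The paper's actual contribution in this direction is Theorem~\ref{theorem:UBCifffMBC}, which shows that Conjecture~\ref{conjecture:strongUBC} is equivalent to Conjecture~\ref{conjecture:strongMBC} (the Strong Moduli Boundedness Conjecture). You acknowledge this in your final paragraph, so in effect your proposal is not a proof of the stated conjecture but rather a heuristic sketch of how one might hope to attack it via two further open conjectures.

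Even as a conditional argument, your sketch has a genuine gap beyond the inputs (i) and (ii) you flag. The descent in (iii) asserts that each irreducible component of the exceptional locus $Z \subsetneq \Moduli_d^N[\Pcal]$ supplied by Bombieri--Lang ``arises, via the map of Theorem~\ref{theorem:MPprimetoMP}, as the image of $\Moduli_d^N[\Pcal']$ for some portrait $\Pcal' \supsetneq \Pcal$.'' There is no reason for this to be true: Bombieri--Lang gives you a proper closed subvariety containing the bounded-degree points, but nothing forces that subvariety to have any portrait-theoretic description. Without that, the descent does not start, and the argument does not yield a bound depending only on $(D,N,d)$. This is not a technicality --- it is precisely the obstruction that makes the conjecture hard, and it is why the paper stops at the equivalence of Theorem~\ref{theorem:UBCifffMBC} rather than attempting anything like what you outline.
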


\begin{remark}
Fakhruddin~\cite{MR1995861} has shown that
Conjecture~\ref{conjecture:strongUBC} implies an analogous uniform
boundedness theorem for torsion points on abelian varieties.  He has
also shown that if Conjecture~\ref{conjecture:strongUBC} is true for
$D=1$, i.e., for $K=\QQ$, then it is true for all number fields.
Levy, Manes, and Thompson~\cite[Theorem~2.9]{MR3223364} have shown
that Conjecture~\ref{conjecture:strongUBC} is true if we fix a map~$f$
and consider only the family of~$\Kbar/K$-twists of~$f$, i.e., the set
of maps~$f^\f$ such that~$\f\in\PGL_{N+1}(\Kbar)$
and~$f^\f\in\End_d^N(K)$.
\end{remark}

We next consider a uniformity conjecture for points on
$\Moduli_d^N[\Pcal]$ over fields of bounded degree.

\begin{conjecture}[Strong Moduli Boundedness Conjecture]
  \label{conjecture:strongMBC}
  For every $D\ge1$, $N\ge1$, and $d\ge2$ there is a constant~$C_4(D,N,d)$
  such that for all number fields~$K/\QQ$ satisfying $[K:\QQ]\le D$ and
  all unweighted preperiodic portraits $\Pcal=(\Vcal,\F)$ satisfying
  $\#\Vcal\ge C_4(D,N,d)$, we have
  \[
  \Moduli_d^N[\Pcal](K) = \emptyset.
  \]
\end{conjecture}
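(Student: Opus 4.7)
The plan is to derive Conjecture~\ref{conjecture:strongMBC} as a consequence of the (still-open) Strong Uniform Boundedness Conjecture~\ref{conjecture:strongUBC}, using the field-of-moduli versus field-of-definition bound proved in Theorem~\ref{theorem:FODFOMbound} as the main technical input. Concretely, given $D\ge 1$, $N\ge 1$, and $d\ge 2$, I would propose the constant
\[
  C_4(D,N,d) \;:=\; C_3\bigl(D\cdot C_2(N,d),\,N,\,d\bigr) + 1,
\]
where $C_2(N,d)$ is the constant of Theorem~\ref{theorem:FODFOMbound} and $C_3(\,\cdot\,,N,d)$ is the constant of Conjecture~\ref{conjecture:strongUBC}.

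To verify that this $C_4$ works, I would argue by contradiction: suppose $K/\QQ$ satisfies $[K:\QQ]\le D$, suppose $\Pcal=(\Vcal,\F)$ is an unweighted preperiodic portrait with $\#\Vcal\ge C_4(D,N,d)$, and suppose there exists a point $\xi\in\Moduli_d^N[\Pcal](K)$. Applying Theorem~\ref{theorem:FODFOMbound} produces a field $L/K$ with $[L:K]\le C_2(N,d)$ and a pair
\[
  (f,\bfP)\in\End_d^N[\Pcal](L)
\]
whose image in $\Moduli_d^N[\Pcal]$ equals $\xi$. Because $\Pcal$ is preperiodic, every vertex $v\in\Vcal$ has a well-defined closed forward orbit under $\F$, so each coordinate $P_v$ of $\bfP$ is a preperiodic point of $f$ lying in $\PP^N(L)$. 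Moreover, by construction $\End_d^N[\Pcal]$ is a subscheme of $\End_d^N\times(\PP^N)^\Vcal_\D$, so the points $\{P_v:v\in\Vcal\}$ are pairwise distinct and therefore
\[
  \#\bigl(\PrePer(f)\cap\PP^N(L)\bigr) \;\ge\; \#\Vcal.
\]
Applying Conjecture~\ref{conjecture:strongUBC} to $f\in\End_d^N(L)$, with $[L:\QQ]\le D\cdot C_2(N,d)$, yields the upper bound $\#\bigl(\PrePer(f)\cap\PP^N(L)\bigr)\le C_3\bigl(D\cdot C_2(N,d),N,d\bigr)=C_4(D,N,d)-1$, which contradicts $\#\Vcal\ge C_4(D,N,d)$. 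Hence $\Moduli_d^N[\Pcal](K)=\emptyset$, completing the conditional proof.

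The principal obstacle, and the reason this proposal does not resolve Conjecture~\ref{conjecture:strongMBC} unconditionally, is that Conjecture~\ref{conjecture:strongUBC} itself is a central open problem in arithmetic dynamics, known only in very restricted situations (e.g.\ Levy--Manes--Thompson for twist families, Fakhruddin's reduction $D=1\Rightarrow$ all $D$, and partial results for $x^2+c$). A purely geometric route avoiding Conjecture~\ref{conjecture:strongUBC} would presumably proceed by combining the dimension formula of Theorem~\ref{thm:dimMd1} (and its unweighted analogue, Theorem~\ref{theorem:UnweightedNonempty}) with a proof that $\Moduli_d^N[\Pcal]\otimes\CC$ is of general type once $\#\Vcal$ is sufficiently large, and then invoking Bombieri--Lang to control $K$-points. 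However, such an argument would at best produce non-Zariski-density of $K$-rational points, rather than the uniform emptiness demanded by Conjecture~\ref{conjecture:strongMBC}, so any direct attack appears to require input of essentially the same depth as Conjecture~\ref{conjecture:strongUBC} itself.
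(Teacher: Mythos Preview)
Your proposal is correct and matches the paper's approach exactly: the statement is a conjecture, and the paper does not prove it unconditionally but instead proves (Theorem~\ref{theorem:UBCifffMBC}) that it is equivalent to Conjecture~\ref{conjecture:strongUBC}, with the implication you derive corresponding to Part~I of that proof, using Theorem~\ref{theorem:FODFOMbound} in the same way. Your explicit choice of $C_4(D,N,d)=C_3\bigl(D\cdot C_2(N,d),N,d\bigr)+1$ makes the argument slightly more concrete than the paper's contradiction-by-unbounded-$B$ presentation, but the content is identical.
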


\begin{example}
Let~$\Ccal_n$ denote an unweighted portrait containing a single
$n$-cycle, and let $\Pcal_1$ and~$\Pcal_2$ be the following portraits:
\[
  \Pcal_1 :\xymatrix{
    {\bullet}  \ar@(dr,ur)[]_{2}
  }
  \hspace{4em}
  \Pcal_2 : \xymatrix{
     {\bullet} \ar@(dl,dr)[r]^{2}   & {\bullet}   \ar@(ur,ul)[l]^{1} \\ }
\]
\par
Flynn, Poonen, and Schaefer~\cite{flynnetal:cyclesofquads} conjecture
that $\Moduli_2^1[\Pcal_1\sqcup\Ccal_n](\QQ)=\emptyset$ for all
$n\ge4$, or equivalently, that a polynomial of the form~$x^2+c$
with~$c\in\QQ$ has no~$\QQ$-rational periodic points of period~$4$ or
greater. Assuming this conjecture,
Poonen~\cite{poonen:uniformboundrefined} proves that there are
exactly~$12$ preperiodic portraits~$\Pcal'_1\supseteq\Pcal_1$ (up to
isomorphism) with $\Moduli_2^1[\Pcal'_1](\QQ)\ne\emptyset$, and that
among these portraits, the maximum number of vertices is~$9$.
\par
Similarly, Canci and Vishkautsan~\cite{MR3656200} conjecture that
for all $n\ge2$ we have
$\Moduli_2^1[\Pcal_2\sqcup\Ccal_n](\QQ)=\emptyset$, i.e.,
if~$f\in\Moduli_2^1(\QQ)$ has a critical $2$-cycle, then~$f$ contains no
further $\QQ$-rational non-fixed periodic points.  Assuming this
conjecture, they prove that there are exactly~$13$ possible preperiodic
portraits~$\Pcal'_2\supseteq\Pcal_2$ (up to isomorphism) with
$\Moduli_2^1[\Pcal'_2](\QQ)\ne\emptyset$, and that among these portraits,
the maximum number of vertices is~$9$.
\end{example}

As one might expect, Conjectures~\ref{conjecture:strongUBC}
and~\ref{conjecture:strongMBC} are equivalent, but the proof involves
a subtlety due to field of moduli versus field of definition issues.

\begin{theorem}
\label{theorem:UBCifffMBC}
Conjecture~$\ref{conjecture:strongUBC}$ is true if and only if
Conjecture~$\ref{conjecture:strongMBC}$ is true.
\end{theorem}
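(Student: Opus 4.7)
The plan is to prove the two implications separately, with the $\text{UBC}\Rightarrow\text{MBC}$ direction relying essentially on Theorem~\ref{theorem:FODFOMbound}, and the reverse direction being a direct tautological construction.

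For $\text{UBC}\Rightarrow\text{MBC}$, I would start with an unweighted preperiodic portrait~$\Pcal=(\Vcal,\F)$ and a~$K$-rational point $\<f,\bfP\>\in\Moduli_d^N[\Pcal](K)$ with~$[K:\QQ]\le D$. Applying Theorem~\ref{theorem:FODFOMbound} yields an extension~$L/K$ with $[L:K]\le C_2(N,d)$ and a representative $(f',\bfP')\in\End_d^N[\Pcal](L)$ of the class~$\<f,\bfP\>$. By construction,~$\bfP'=(P_v')_{v\in\Vcal}$ consists of $\#\Vcal$ \emph{distinct} points (since $\End_d^N[\Pcal]\subset\End_d^N\times(\PP^N)^\Vcal_\D$), and since~$\Pcal$ is preperiodic, every~$P_v'$ is preperiodic for~$f'$. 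Hence
\[
  \#\Vcal \le \#\bigl(\PrePer(f')\cap\PP^N(L)\bigr),
\]
and applying Conjecture~\ref{conjecture:strongUBC} with degree bound~$DC_2(N,d)$ gives $\#\Vcal\le C_3(DC_2(N,d),N,d)$. Setting $C_4(D,N,d):=C_3(DC_2(N,d),N,d)+1$ yields MBC.

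For $\text{MBC}\Rightarrow\text{UBC}$, I would go in the opposite direction. Given~$f\in\End_d^N(K)$ with $[K:\QQ]\le D$, define the abstract vertex set $\Vcal_f:=\PrePer(f)\cap\PP^N(K)$, which is closed under~$f$ since~$f$ is defined over~$K$ and preperiodic points map to preperiodic points. The restriction $\F_f:=f|_{\Vcal_f}$ then defines an unweighted preperiodic portrait $\Pcal_f=(\Vcal_f,\F_f)$. The identity inclusion $\g:\Vcal_f\hookrightarrow\PP^N(K)$, together with~$f$, gives a~$K$-rational model, i.e., $(f,\g)\in\End_d^N[\Pcal_f](K)$, whose image in the quotient lies in~$\Moduli_d^N[\Pcal_f](K)$. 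If $\#\Vcal_f\ge C_4(D,N,d)$, MBC forces $\Moduli_d^N[\Pcal_f](K)=\emptyset$, a contradiction; hence $\#\Vcal_f<C_4(D,N,d)$, giving UBC with $C_3(D,N,d):=C_4(D,N,d)$.

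Neither direction requires deep machinery beyond results already in the paper: the forward direction uses Theorem~\ref{theorem:FODFOMbound} as a black box to pass from a moduli point to a parameter point of controlled degree, and the reverse direction uses only the moduli interpretation of $\Moduli_d^N[\Pcal]$ from Theorem~\ref{theorem:MNdwithportrait}. The only mild subtlety worth calling out explicitly in the write-up is that, in the reverse direction, the class $\<f,\g\>$ really is~$K$-rational: both~$f$ and the coordinates of~$\g$ lie in~$K$, so $(f,\g)^\s=(f,\g)$ for all $\s\in\Gal(\Kbar/K)$, and the quotient map $\End_d^N[\Pcal_f]\to\Moduli_d^N[\Pcal_f]$ is defined over~$\ZZ$. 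The main (though still modest) obstacle is bookkeeping in the forward direction to make sure that the degree bound from Theorem~\ref{theorem:FODFOMbound} depends only on~$N$ and~$d$ (not on~$\Pcal$), which is exactly what Theorem~\ref{theorem:FODFOMbound} asserts.
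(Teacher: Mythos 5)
Your proof is correct and follows essentially the same route as the paper's: the forward direction invokes Theorem~\ref{theorem:FODFOMbound} to replace a moduli point by a parameter-space point over an extension of degree bounded by $C_2(N,d)$ and then applies the uniform boundedness conjecture, while the reverse direction builds the portrait $\Pcal_f$ from the $K$-rational preperiodic points and applies the moduli boundedness conjecture. The only difference is presentational—you argue by direct implication where the paper phrases both halves as contradictions—so no further changes are needed.
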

\begin{proof}
The logic of the proof of Theorem~\ref{theorem:UBCifffMBC} is as
follows:
\begin{align*}
  \text{(I):\enspace}&
  (\text{Conjecture~\ref{conjecture:strongUBC}})
  \;\textbf{AND}\;
  \textbf{NOT}(\text{Conjecture~\ref{conjecture:strongMBC}})
  \;\Longrightarrow\; \rightarrow\leftarrow.\\
  \text{(II):\enspace}&
  (\text{Conjecture~\ref{conjecture:strongMBC}})
  \;\textbf{AND}\;
  \textbf{NOT}(\text{Conjecture~\ref{conjecture:strongUBC}})
  \;\Longrightarrow\; \rightarrow\leftarrow.
\end{align*}

Since the statements of the two conjectures involve a lot of
quantifiers, we take a moment to succinctly state them and their
converses, where to ease notation we assume that all portraits are
preperiodic and unweighted; see Figure~\ref{figure:ubcversions}.

\begin{figure}[ht]
\begin{center}
\begin{tabular}{|l@{\hspace{3em}}l|}\hline
  \multicolumn{2}{|@{}l|}{\textbf{Conjecture~\ref{conjecture:strongUBC} (Strong Uniform Boundedness Conjecture)}}\\*
  &$\forall D\ge1\,\forall N\ge1\,\forall d\ge2 \; \exists C_3\; \forall K/\QQ,\,[K:\QQ]\le D$ \\*
  &\hspace{5em}$\forall f\in\End_d^N(K) \;:\; \# \Bigl( \PrePer(f) \cap \PP^N(K) \Bigr) \le C_3$.\\* \hline
  \multicolumn{2}{|@{}l|}{\textbf{Converse of Conjecture~\ref{conjecture:strongUBC}}}\\*
  &$\exists D\ge1\,\exists N\ge1\,\exists d\ge2 \; \forall B\; \exists K/\QQ,\,[K:\QQ]\le D$ \\*
  &\hspace{5em}$\exists f\in\End_d^N(K) \;:\; \# \Bigl( \PrePer(f) \cap \PP^N(K) \Bigr) \ge B$.\\[2\jot] \hline\hline
  \multicolumn{2}{|@{}l|}{\textbf{Conjecture~\ref{conjecture:strongMBC} (Strong Moduli Boundedness Conjecture)}}\\*
  &$\forall D\ge1\,\forall N\ge1\,\forall d\ge2 \; \exists C_4\; \forall K/\QQ,\,[K:\QQ]\le D$ \\*
  &\hspace{5em}$\forall\Pcal=(\Vcal,\F),\,\#\Vcal\ge C_4 \;:\; \Moduli_d^N[\Pcal](K)=\emptyset$.\\* \hline
  \multicolumn{2}{|@{}l|}{\textbf{Converse of Conjecture~\ref{conjecture:strongMBC}}}\\*
  &$\exists D\ge1\,\exists N\ge1\,\exists d\ge2 \; \forall B\; \exists K/\QQ,\,[K:\QQ]\le D$ \\*
  &\hspace{5em}$\exists\Pcal=(\Vcal,\F),\,\#\Vcal\ge B \;:\; \Moduli_d^N[\Pcal](K)\ne\emptyset$.\\  \hline
\end{tabular}
\caption{Conjectures~\ref{conjecture:strongUBC} and~\ref{conjecture:strongMBC} and their converses}
\label{figure:ubcversions}
\end{center}
\end{figure}

\par\vspace{2\jot}\noindent \textbf{Part I}:\enspace
In the converse to Conjecture~\ref{conjecture:strongMBC}, we fix
the~$(D,N,d)$ whose existence is asserted.  Then for each~$B$, the
field~$K$ and portrait~$\Pcal$ depend on the choice of~$B$, so we
label them~$K_B$ and~$\Pcal_B=(\Vcal_B,\F_B)$ to indicate this
dependence. The converse of Conjecture~\ref{conjecture:strongMBC}
tells us that
\[
  [K_B:\QQ]\le D,\quad \#\Vcal_B \ge B,
  \quad\text{and}\quad \Moduli^N_d[\Pcal_B](K_B)\ne\emptyset.
\]

Let $\xi_B\in\Moduli^N_d[\Pcal_B](K_B)$.
Theorem~\ref{theorem:FODFOMbound} says that there is a field of
definition~$L_{\xi_B}/K$ for~$\xi_B$ whose degree satisfies
\[
  [L_{\xi_B}:K] \le C_2(N,d).
\]
We fix a representative~$\bar\xi_B$ for~$\xi_B$ that is defined
over~$L_{\xi_B}$, say
\[
  \bar\xi_B = (f,P_1,\ldots,P_n) \in \End_d^N[\Pcal_B](L_{\xi_B}),
\]
where $n=\#\Vcal_B$.  The fact that~$\bar\xi_B$ is defined over~$L_{\xi_B}$
means that $f\in\End_d^N(L_{\xi_B})$ and $P_1,\ldots,P_n\in\PP^N(L_{\xi_B})$,
and our assumption that~$\Pcal_B$ is a preperiodic portrait means
that~$P_1,\ldots,P_n$ are in~$\PrePer(f)$. Hence
\begin{equation}
  \label{eqn:PPPNxBgeB}
  \#\Bigl( \PrePer(f) \cap \PP^N(L_{\xi_B}) \Bigr) \ge \#\Vcal_B \ge B.
\end{equation}
On the other hand, we have
\[
  [L_{\xi_B}:\QQ] = [L_{\xi_B}:K]\cdot[K:\QQ] \le C_2(N,d)\cdot D.
\]

We now assume that Conjecture~\ref{conjecture:strongUBC} is true and
derive a contradiction.  Applying
Conjecture~\ref{conjecture:strongUBC} with the field~$L_{\xi_B}$ in
place of~$K$ and the degree constant $C_2(N,d)\cdot D$ in place
of~$D$, we find that
\begin{equation}
  \label{eqn:PPPleC5C1}
  \#\Bigl( \PrePer(f) \cap \PP^N(L_{\xi_B}) \Bigr)
  \le C_3\bigl( C_2(N,d)\cdot D,N,d\bigr).
\end{equation}

Combining~\eqref{eqn:PPPNxBgeB} and~\eqref{eqn:PPPleC5C1} yields
\[
  C_3\bigl( C_2(N,d)\cdot D,N,d\bigr) \ge B.
\]
We have proven that this inequality is true for some $(D,N,d)$ and
all~$B$, which is impossible. This completes the proof that
Conjecture~\ref{conjecture:strongUBC} and the converse of
Conjecture~\ref{conjecture:strongMBC} give a contradiction.

\par\vspace{2\jot}\noindent \textbf{Part II}:\enspace
We assume that Conjecture~\ref{conjecture:strongUBC} is false, and we
fix $(D,N,d)$ whose existence is asserted in the converse to
Conjecture~\ref{conjecture:strongUBC}.  Then for every choice of~$B$, the
field~$K$ and endomorphism~$f$ in the converse to
Conjecture~\ref{conjecture:strongUBC} depend on the choice of~$B$, so we
label them~$K_B$ and~$f_B$ to indicate this dependence. They satisfy
\[
  [K_B:\QQ]\le D,\quad f_B\in\End_d^N(K_B),\quad
  \# \Bigl( \PrePer(f_B) \cap \PP^N(K_B) \Bigr) \ge B.
\]
We label the $K_B$-rational  preperiodic points of~$f_B$ as
\[
  \PrePer(f_B) \cap \PP^N(K_B) = \{P_1,\ldots,P_n\},
\]
and we let~$\Pcal_B=(\Vcal_B,\F_B)$ be the abstract portrait
determined by the action of~$f_B$ on~$(P_1,\ldots,P_n)$. Then
\[
  \bar\xi_B := (f,P_1,\ldots,P_n) \in \End_d^N[\Pcal_B](K_B),
\]
and in particular $\Moduli_d^N[\Pcal_B](K_B)\ne\emptyset$.

We are also assuming that Conjecture~\ref{conjecture:strongMBC} is
true, and Conjecture~\ref{conjecture:strongMBC} tells us that if
$\#\Vcal\ge C_4(D,N,d)$, then $\Moduli_d^N[\Pcal_B](K_B)$ would be
empty. We conclude that $\#\Vcal< C_4(D,N,d)$.  To recapitulate, we
have proven that there exists a triple $(D,N,d)$ such that for
all~$B$,
\[
  C_4(D,N,d) \ge \#\Vcal = n
  = \# \Bigl( \PrePer(f_B) \cap \PP^N(K_B) \Bigr) \ge B.
\]
This is clearly false, which completes the proof that
Conjecture~\ref{conjecture:strongMBC} and the converse of
Conjecture~\ref{conjecture:strongUBC} lead to a contradiction.
\end{proof}

\section{Good Reduction of Dynamical Systems}
\label{section:goodreduction}

Good reduction for dynamical systems with portraits, which is the
topic of the paper~\cite{arxiv1703.00823}, is closely related to the
existence of integral points on the moduli spaces constructed in the
present paper. We discuss this connection, and refer the reader
to~\cite{arxiv1703.00823} for further details, especially for the
dimension~$1$ case.

\begin{definition}
Let~$K$ be a number field with ring of integers~$R$, let~$S$ be a
finite set of places of~$K$, including all archimedean places, and
let~$R_S$ be the ring of~$S$-integers of~$K$.   

A map $f\in\End_d^N(K)$ has \emph{good reduction outside of~$S$}
if~$\langle{f}\rangle$ is in the image of the map
\[
  \End_d^N(R_S) \longrightarrow \Moduli_d^N(K).
\]
This is equivalent to there being a change of variables so that the
new~$f$ induces a morphism~$\PP^N_{R_S}\to\PP^N_{R_S}$ of
$R_S$-schemes.

Let~$\Pcal$ be an unweighted portrait.  According to the definition
in~\cite{arxiv1703.00823}, a map-and-portrait pair
$(f,\bfP)\in\End_d^N[\Pcal|0](K)$ has \emph{good reduction outside
  of~$S$} if~$\langle{f,\bfP}\rangle$ is in the image of the natural
map
\[
  \End_d^N[\Pcal|0](R_S) \longrightarrow \Moduli_d^N[\Pcal|0](K).
\]
What this means, essentially, is that after a change of coordinates,
the map~$f$ is defined over~$K$, has good reduction outside~$S$, and
the points~$\bfP=(P_v)_{v\in\Vcal}$ form a~$\Gal(\Kbar/K)$-invariant set
whose elements remain distinct modulo~$\gP$ for all primes~$\gP$ lying
over primes~$\gp\notin S$.

A stricter version of good reduction asks that we start with a pair
$(f,\bfP)\in\End_d^N[\Pcal|1](K)$, i.e., we insist that the individual
points~$(P_v)_{v\in\Vcal}$ satisfy~$P_v\in\PP^N(K)$;
cf.~\cite{arxiv1703.00823}.
\end{definition}

In general, there are infinitely many $\PGL_{N+1}(R_S)$-conjugacy
classes of maps~$f\in\End_d^N(K)$ having good reduction outside~$S$.
For example, every monic polynomial in~$R_S[x]$ gives an endomorphism
of~$\PP^1$ having good reduction outside~$S$. On the other hand, if we
add enough level structure, i.e., add a portrait with enough vertices,
then at least for~$\PP^1$ there are only finitely many elements
of~$\End_d^1[\Pcal|1](K)/\PGL_2(R_S)$ having good reduction;
see \cite[Theorem~2(a)]{arxiv1703.00823}.  In any case, one might
expect the maps having good reduction outside~$S$ to be reasonably
rare. In~\cite{arxiv1703.00823} we defined the Shafarevich dimension
as a way to measure the abundance or scarcity of good reduction maps
with portraits.

\begin{definition}
Let $N\ge1$, let $d\ge2$, let~$\Pcal$ be an unweighted portrait, and 
let~$\Acal\subset\Aut(\Pcal)$. The \emph{$(N,d)$-Shafarevich dimension
  of~$(\Pcal,\Acal)$} is
\begin{multline*}
  \shafdim^N_d[\Pcal|\Acal] \\ :=
  \sup_{\substack{\text{$K$ a number field}\\\hidewidth \text{$S$ a finite set of places}\hidewidth\\}}
  \dim \overline{ \Image \Bigl( \End_d^N[\Pcal|\Acal](R_S) \longrightarrow \Moduli_d^N(K) \Bigr) },
\end{multline*}
where the overscore indicates Zariski closure. If~$\Acal=\{1\}$, we
may omit it from the notation, and if~$\Pcal=\emptyset$, we write
simply~$\shafdim^N_d$.
\end{definition}

For example, it is proven in~\cite[Proposition~12]{arxiv1703.00823} that
\[
  \shafdim_2^1=2,\quad\text{and that}\quad
  \shafdim_d^1\ge d+1 \text{ for all $d\ge3$.}
\]
Since $\dim\Moduli_d^1=2d-2$, these estimates are optimal for $d=2$
and $d=3$.  It would be interesting to know if they are optimal for
larger values of~$d$.

For $N=1$ and weighted portraits, there are three versions of good
reduction described in~\cite{arxiv1703.00823}. Roughly speaking, in
addition to requiring that~$f$ have good reduction and that the points
in~$(P_v)_{v\in\Vcal}$ remain distinct for primes not in~$S$, we want
to impose a condition relating the ramification indices of~$f$ at the
marked points to the weight function of the portrait.  Thus we can
require that the ramification indices of~$f$ exceed the weights
specified by~$\e$, or we can require that the ramification indices
of~$f$ exactly equal the weights specified by~$\e$, or we can require
that the ramification indices remain exactly equal to the weights even
after reduction modulo every prime not in~$S$. These cases are
assigned symbols~\cite{arxiv1703.00823}, which we summarize by:
\begin{align*}
  \text{$\bullet$-good reduction}
  :\quad
  &\text{$e_f(P_v)\ge\e(P_v)$ for all $P_v\in \Vcal^\circ$.} \\
  \text{$\circ$-good reduction}
  :\quad
  &\text{$e_f(P_v)=\e(P_v)$ for all $P_v\in \Vcal^\circ$.} \\
  \text{$\star$-good reduction}
  :\quad
  &\text{$e_{\tilde f}(\tilde P_v\bmod\gp)=\e(P_v)$ for all $P_v\in \Vcal^\circ$.}
\end{align*}

We now translate these definitions into moduli-theoretic terms.
We fix a weighted portrait~$\Pcal$, and we 
let~$(f,P)\in\End_d^1[\Pcal|\Acal](K)$. Then, after an appropriate
change of coordinates for~$(f,P)$:
\begin{align*}
  \text{$(f,P)$ has $\bullet$-good reduction}
  &\quad\Longleftrightarrow\quad
  (f,P)\in\End_d^1[\Pcal|\Acal](R_S).\quad \\
  \text{$(f,P)$ has $\circ$-good reduction}
  &\quad\Longleftrightarrow\quad
  (f,P)\in\End_d^1[\Pcal|\Acal]^\circ(R_S). \\
  \text{$(f,P)$ has $\star$-good reduction}
  &\quad\Longleftrightarrow\quad
  \widetilde{(f,P)}\in\End_d^1[\Pcal|\Acal]^\circ(\bar\FF_\gp) \;\forall\gp\notin S. 
\end{align*}
For each type of good reduction, we define an associated Shafarevich dimension.
Thus following the notation set in~\cite{arxiv1703.00823}, 
for each symbol $x\in\{\bullet,\circ,\star\}$, we let
\begin{multline*}
  \GR_d^1[\Pcal|\Acal]^x(K,S) \\ :=
  \left\{(f,P)\in\End_d^1[\Pcal|\Acal](K) : \begin{tabular}{@{}l@{}}
    $(f,P)$ has $x$-good\\ reduction outside $S$\\
  \end{tabular}
  \right\},
\end{multline*}
and then we set
\begin{multline*}
  \shafdim^1_d[\Pcal|\Acal]^x \\ :=
  \sup_{\substack{\text{$K$ a number field}\\\hidewidth \text{$S$ a finite set of places}\hidewidth\\}}
  \smash[t]{
  \dim \overline{ \Image \Bigl(
    \GR_d^1[\Pcal|\Acal]^x(K,S)
    \longrightarrow \Moduli_d^N(K) \Bigr) }.
  }
\end{multline*}
We note that these dimensions
satisfy~\cite[Proposition~17]{arxiv1703.00823}
\[
  \shafdim_d^1[\Pcal|\Acal]^\star \le \shafdim_d^1[\Pcal|\Acal]^\circ \le
  \shafdim_d^1[\Pcal|\Acal]^\bullet \le 2d-2.
\]

It is proven in~\cite[Theorem~1]{arxiv1703.00823} that if
$\Pcal=(\Vcal^\circ,\Vcal,\F,\e)$ is a weighted portrait with total
weight
\[
\wt(\Pcal):=\sum_{v\in\Vcal^\circ} \e(v)\ge2d+1,
\]
then $\shafdim_d^1[\Pcal]^\bullet=0$, but that there are always portraits
with $\wt(\Pcal)=2d$ satisfying $\shafdim_d^1[\Pcal]^\bullet\ge1$.
The paper~\cite{arxiv1703.00823} contains an exhaustive analysis of
the~$35$ portraits~$\Pcal$ satisfying~$\wt(\Pcal)\le4$
and~$\Moduli_2^1[\Pcal]\ne\emptyset$, including a calculation of all
three Shafarevich dimensions.  For example, it is shown that there are:
\begin{center}
  \begin{tabular}{l}
    $10$ portraits $\Pcal$ with $\wt(\Pcal)=4$ and $\shafdim^1_2[\Pcal]^\bullet=1$; \\
    $12$ portraits $\Pcal$ with $\wt(\Pcal)=4$ and $\shafdim^1_2[\Pcal]^\bullet=0$; \\
    $\phantom{0}6$ portraits $\Pcal$ with $\wt(\Pcal)=4$ and $\shafdim^1_2[\Pcal]^\star=1$. \\
  \end{tabular}
\end{center}

\section{Multiplier Systems for Periodic Portraits}
\label{section:dynportraits}
A fundamental tool in studying the dynamics of rational maps
on~$\PP^1$ is the use of multipliers at periodic points, since
multipliers are~$\PGL_2$-invariants.  Over a valued field such
as~$\CC$ or~$\CC_p$, the magnitude of the multiplier helps to determine the
short-term local behavior of the map. Algebraically, appropriate
symmetric functions of the multipliers give $\PGL_2$-invariant regular
functions on~$\End_d^1$, and hence descend to functions
on~$\Moduli_d^1$. Further, one can decompose~$\End_d^1$
or~$\Moduli_d^1$ into a (nicely) varying union of slices by specifying
one or more multiplier values.
Thus one might look at
\begin{equation}
  \label{eqn:Perdmlambda}
  \Per_{d,\pd}(\l) := \left\{ f\in\Moduli_d^1 :
  \begin{tabular}{@{}l@{}}
    $f$ has a point $P$ of (formal)\\period $\pd$ and multiplier $\l$\\
  \end{tabular}
  \right\},
\end{equation}
or more generally one can specify the multipliers for a list of periodic points.
See for example~\cite[Appendix~D]{milnor:quadraticmaps} for a
description of the geometry of the curves~$\Per_{2,\pd}(\l)$
in~$\Moduli_2^1\cong\AA^2$, and~\cite{MR718838,MR732343} for the use
of these spaces in studying the stability loci of families of maps on~$\PP^1$.

The classical definition of the multiplier of a rational
function~$f(z)$ at a~$\pd$-periodic point~$\a$ is as the
derivative~$\l_f(\a):=(f^{ \pd})'(\a)$ of the $\pd$th iterate of~$f$
at~$\a$, but a more intrinsic definition, which also obviates the need
to treat~$\infty$ separately, is to note that~$f^{{\pd}}$ induces a
linear transformation
\[
\Df^{{\pd}}_\a : \Tcal_\a\longrightarrow \Tcal_\a
\]
of the tangent space~$\Tcal_\a$ of~$\PP^1$ at~$\a$. The
space~$\Tcal_\a$ has dimension~$1$, and~$\Df^{{\pd}}_\a$ is
multiplication by the scalar~$\l_f(\a)$. This characterization also
makes it clear that the multiplier is~$\PGL_2$-invar\-iant, i.e., we
have $\l_{f^\f}\bigl(\f^{-1}(\a)\bigr)=\l_f(\a)$.  Our goal in this
section is to generalize this construction to endomorphisms
of~$\PP^N$ for all $N\ge1$.

\subsection{The Multiplier Map for a Single $\pd$-Cycle}

We start with the case of a single cycle, and we let~$\Ccal_\pd$ denote
the portrait consisting of a $\pd$-cycle, say
\[
  \Ccal_\pd:=(\ZZ/\pd\ZZ,\F_\pd)
  \quad\text{with}\quad \F_\pd(i)=i+1\bmod \pd.
\]
A geometric point of~$\End_d^N[\Ccal_\pd]$ is a map~$f$ and a~$\pd$-tuple
of distinct points that are cyclically permuted by~$f$. Our general
construction describes~$\End_d^N[\Ccal_\pd]$ and~$\END_d^N[\Ccal_\pd]$ as
subschemes of~$\End_d^N\times(\PP^N)^\pd$, but for the purposes of this
section, it is convenient to observe that they may be naturally
described as subschemes of~$\End_d^N\times\PP^N$ by simply projecting
onto the first copy of~$\PP^N$. Alternatively, we can start with a
scheme
\begin{equation}
  \label{eqn:hatenddN1}
  \widehat{\End}_d^N[\Ccal_\pd]
  := \bigl\{ (f,P)\in\End_d^N\times\PP^N : f^{\pd}(P)=P \bigr\}
\end{equation}
that is ``too large'' and remove the parts that we don't want. Thus
\[
  \End_d^N[\Ccal_\pd] \cong
  \widehat{\End}_d^N[\Ccal_\pd] \setminus
  \bigcup_{\substack{q\mid \pd\\q< \pd}}  \widehat{\End}_d^N[\Ccal_q].
\]
We note that~$\END_d^N[\Ccal_\pd]$ is the closure of $\End_d^N[\Ccal_\pd]$ in
$\widehat{\End}_d^N[\Ccal_\pd]$, and that there is a natural action of~$\ZZ/\pd\ZZ$ on each of
these spaces via
\begin{equation}
  \label{eqn:ZpZaction}
  \ZZ/\pd\ZZ\longrightarrow \Aut\left(\widehat{\End}_d^N[\Ccal_\pd]\right),\quad
  j\longmapsto \Bigl( (f,P)\mapsto \bigl(f,f^j(P)\bigr) \Bigr).
\end{equation}

With this construction, a geometric point of~$\End_d^N[\Ccal_\pd]$
consists of a map~$f$ and a point~$P$ whose orbit is a cycle of exact
period~$\pd$.  In particular, the point~$P$ is fixed by the $\pd$th
iterate~$f^{ \pd}$, so we get an induced map on the tangent space
of~$\PP^N$ at~$P$,
\[
  \Df^{ \pd}_P : \Tcal_{P} \longrightarrow \Tcal_{P}.
\]
We write the characteristic polynomial of $\Df^{ \pd}_P$ as
\[
  \det\bigl(X-\Df^{ \pd}_P\bigr) = \sum_{i=0}^{N} (-1)^i\Lambda_i(f^{\pd},P)X^{N-i},
\]
so~$\Lambda_i(f^{\pd},P)$ is the $i$th symmetric polynomial of
the eigenvalues of the linear transformation~$\Df_P^{\pd}$. These
eigenvalues are typically called the \emph{multipliers of~$f$ at~$P$}.

Each coefficient of the characteristic polynomial defines a regular
function
\[
  \End_d^N[\Ccal_\pd] \longrightarrow \AA^1,\quad
  (f,P) \longmapsto \Lambda_i(f^{\pd},P).
\]
We call these~$\Lambda_i(f^{\pd},P)$ the \emph{multiplier
  coefficients of~$f$ at~$P$}, and we fit them together to define the
\emph{multiplier map}
\[
  \mu_\pd : \End_d^N[\Ccal_\pd] \longrightarrow \AA^N,\quad
  (f,P) \longmapsto \bigl( \Lambda_1(f^{\pd},P),\ldots,\Lambda_N(f^{\pd},P)\bigr).
\]
The fact that the characteristic polynomial is invariant for the
action of $\PGL_{N+1}$, i.e.,
\begin{equation}
  \label{eqn:AfPPGLinv}
  \Lambda_i\bigl(\f^{-1}\circ f^{\pd}\circ\f,\f^{-1}(P)\bigr)=\Lambda_i(f^{\pd},P),
\end{equation}
shows that the multiplier map descends to give a multiplier map
\[
  \mu_\pd:\Moduli_d^N[\Ccal_\pd]\to\AA^N
\]
on the portrait moduli space. Further, since the multipliers are invariant
for the action~\eqref{eqn:ZpZaction}, i.e., we
have~$\Lambda_i(f^p,P)=\Lambda_i\bigl(f^p,f^j(P)\bigr)$ for all
$j\in\ZZ/p\ZZ$,  the multiplier map descends to a map on the
quotient space
\[
  \mu_\pd:\Moduli_d^N[\Ccal_\pd | \ZZ/\pd\ZZ]\to\AA^N.
\]

We claim that the multiplier map extends to~$\END_d^N[\Ccal_\pd]$
and~$\MODULI_d^N[\Ccal_\pd]$, and more generally, it extends to spaces
attached to disjoint unions of cyclic portraits. The easiest way to
see this is to define the multiplier map on a generalization of the
larger scheme~\eqref{eqn:hatenddN1} that we used earlier.  

\subsection{The Multiplier Map for Cycles of Length $\boldsymbol{\pd_1,\ldots,\pd_n}$}
\leavevmode{}\\
Let~$f=[f_0,\ldots,f_N]$ be a generic element of~$\End_d^N$, i.e., the
coefficients of~$f_0,\ldots,f_N$ are
indeterminates~$a_0(\bfe),\ldots,a_N(\bfe)$ as
in~\eqref{eqn:frhoxdef}. We write the $\pd$th iterate of~$f$ as a
polynomial of degree~$d^\pd$ with coefficients that are polynomials in
the~$a_\rho(\bfe)$, say
\[
  f^{ \pd}(\bfx) = \bigl[ F^{(\pd)}_0(\bfx),\ldots,F^{(\pd)}_N(\bfx) \bigr].
\]

As in the proof of Theorem~\ref{theorem:stableEnddNtimesPNnz}, we want to
define various subschemes of $\End_d^N\times(\PP^N)^n$, so for each
$1\le\nu\le{n}$, we let
\[
  \bfU_\nu := \bigl[U_\nu(0),\ldots,U_\nu(N)\bigr]
  = \left\{ \begin{tabular}{@{}l@{}}
    homogeneous coordinates on\\
    the $\nu$th copy of~$\PP^N$ in~$(\PP^N)^n$\\
  \end{tabular} \right\},
\]
and we let
\[
  \bfU := (\bfU_1,\ldots,\bfU_n).
\]
We also fix an $n$-tuple of periods~$\bfpd:=(\pd_1,\ldots,\pd_n)$, and we write
\[
  \Ccal_\bfpd := \Ccal_{\pd_1}\cup\Ccal_{\pd_2}\cup\cdots\cup\Ccal_{\pd_n}
\]
for the portrait consisting of a disjoint union of periodic cycles of
the indicated periods.  We then define a large subscheme of
$\End_d^N\times(\PP^N)^n$, generalizing~\eqref{eqn:hatenddN1}, by 
\[
  \widehat{\End}_d^N[\Ccal_\bfpd] :=
  \bigcap_{\nu=1}^n \bigcap_{i,j=1}^N  \Bigl\{ (f,\bfU) :
     U_\nu(i)  F^{(\pd_\nu)}_j(\bfU_\nu) - U_\nu(j)F^{(\pd_\nu)}_i(\bfU_\nu)  \Bigr\}.
\]
This  formula is how we define the structure of 
\smash[t]{$\widehat{\End}_d^N[\Ccal_\bfpd]$} explicitly as a subscheme
of the product $\End_d^N\times(\PP^N)^n$, but intuitively at the level of
geometric points, we see that 
\begin{multline*}
  \widehat{\End}_d^N[\Ccal_\bfpd] := \\
  \bigl\{ (f,\bfP)\in\End_d^N\times(\PP^N)^n :
  \text{$f^{\pd_\nu}(P_\nu)=P_\nu$ for all $1\le\nu\le{n}$} \bigr\}.
\end{multline*}
We say that the scheme $\widehat{\End}_d^N[\Ccal_\bfpd]$ is ``large''
because it tends to include extra components beyond the closure
of~$\End_d^N[\Ccal_\bfpd]$. This is due to the fact that we do not
require the orbits of the~$P_\nu$ to be disjoint, nor do we
require individual~$P_\nu$ to have exact or formal period~$\pd_\nu$.

For each 
$0 < i \le N$
and each $1\le\nu\le n$, the multiplier
coefficient~$\Lambda_i(f^{\pd_\nu},P_\nu)$ gives a regular function
\[
  \widehat{\End}_d^N[\Ccal_\bfpd] \longrightarrow \AA^1,
  \quad
  (f,\bfP) \longmapsto \Lambda_i(f^{\pd_\nu},P_\nu).
\]
Fitting these together, we obtain a \emph{multiplier map} 
\[
  \mu_\bfpd : \widehat{\End}_d^N[\Ccal_\bfpd] \longrightarrow \AA^{Nn},
  \quad
  (f,\bfP) \longmapsto \bigl( \Lambda_i(f^{\pd_\nu},P_\nu) \bigr)
  _{\substack{0 < i \le N\\1\le\nu\le n\\}}.
\]
N.B. The map~$\mu_\bfpd$ is a morphism on this large closed subscheme
of $\End_d^N\times(\PP^N)^n$. By restriction, we obtain a
morphism on any subscheme of this large scheme, so in particular
we get multiplier maps
\begin{equation}
  \label{eqn:multmapEndEND}
  \mu_\bfpd : \End_d^N[\Ccal_\bfpd] \longrightarrow \AA^{Nn}
  \quad\text{and}\quad
  \mu_\bfpd : \END_d^N[\Ccal_\bfpd] \longrightarrow \AA^{Nn}.
\end{equation}
But~\eqref{eqn:AfPPGLinv} says that these maps are invariant for the
action of~$\PGL_{N+1}$ on~$\End_d^N[\Ccal_\bfpd]$, hence they descend
and give well-defined maps as in the next definition.

\begin{definition}
Let $\bfpd=(\pd_1,\ldots,\pd_n)$. The \emph{multiplier maps} (\emph{morphisms})
on the associated portrait moduli spaces are denoted
\begin{equation}
  \label{eqn:multmapMdNdef}
  \mu_\bfpd : \Moduli_d^N[\Ccal_\bfpd] \longrightarrow \AA^{Nn}
  \quad\text{and}\quad
  \mu_\bfpd : \MODULI_d^N[\Ccal_\bfpd] \longrightarrow \AA^{Nn}.
\end{equation}
\end{definition}

Using these multiplier maps allows us to generalize the classical
decomposition~\eqref{eqn:Perdmlambda} of~$\Moduli_d^1$ in a variety of
ways, for example, by taking inverse images of points
in~$\AA^{Nn}$. More generally, we can fiber~$\AA^{Nn}$ using
$k$-dimensional linear subspaces, leading to a decomposition
of~$\MODULI_d^N$ as a union
\[
  \MODULI_d^N = \pi \biggl(
  \bigcup_{L \in \operatorname{Gr}_k^{Nn}}  \mu_\bfpd^{-1}(L) \biggr),
\]
where $\pi:\MODULI_d^N[\Ccal_\bfpd]\to\MODULI_d^N$ denotes the natural
projection map and $\operatorname{Gr}_k^{Nn}$ is the Grassmanian
parameterizing~$k$-dimensional linear subspaces of~$\AA^{Nn}$.

\subsection{The Multiplier Map for Full $\pd$-Cycle Structure}

For this subsection we again assume:
\begin{center}
  \framebox{\parbox{.78\hsize}{\noindent
      All varieties are defined over a field of
      characteristic $0$.}}
\end{center}
In order to extract the maximum amount of information from periodic
points of a map~$f$, we should use all of the periodic points of a given
period.  For integers $N\ge1$, $d\ge2$, and $\pd\ge1$, we recall
that a map~$f\in\End_d^N$ has exactly~$\nu_d^N(\pd)$ geometric periodic
points of formal period~$\pd$, counted with appropriate multiplicities,
where~$\nu_d^N(\pd)$ is given by~\eqref{eqn:nudNn} in
Definition~$\ref{definition:nudNn}$;
see~\cite[Theorems~4.3~and~4.17]{arxiv:0801.3643}.  Further,
Lemma~\ref{lemma:NumPrepPts} says that a generic map~$f\in\End_d^N$
has exactly this many geometric points of exact period~$\pd$.
To ease notation, for the remainder of this section we let
\[
  \nubar(\pd) := \frac{1}{\pd}\nu_d^N(\pd) =
  \frac{1}{\pd}   \sum_{k\mid \pd}\left( \mu(\pd/k) \sum_{j=0}^N d^{jk}\right)
\]
denote the number of geometric $\pd$-cycles of a generic $f\in\End_d^N$.

By a slight abuse of notation, we let
\[
  \bfpd(N,d) := (\underbrace{\pd,\pd,\pd,\ldots,\pd}_{\hidewidth\text{$\nubar(\pd)$ copies of $\pd$}\hidewidth}),
\]
so the portrait $\Ccal_{\bfpd(N,d)}$ is given by
\[
\Ccal_{\bfpd(N,d)} = \text{a disjoint union
  of~$\nubar(\pd)$ periodic cycles of period~$\pd$.}
\]
The (f) $\Longrightarrow$ (c) implication of
Theorem~\ref{theorem:UnweightedNonempty} tells us that
\[
  \END_d^N[\Ccal_{\bfpd(N,d)}]\to\End_d^N
\]
is a finite surjective morphism. (This is where we use our
characteristic~$0$ assumption, since it is required for
Theorem~\ref{theorem:UnweightedNonempty}.)

We let
\[
  \Acal_{N,d,\pd} = \Aut(\Ccal_{\bfpd(N,d)}) \cong \Scal_{\nubar(\pd)} \wr (\ZZ/\pd\ZZ)
  \cong \Scal_{\nubar(\pd)} \rtimes (\ZZ/\pd\ZZ)^{\nubar(\pd)}
\]
be the full automorphism group of our disjoint union
of~$\pd$-cycles. It is a wreath product consisting of permutations and
rotations of the cycles.

As above, there are multiplier maps~\eqref{eqn:multmapMdNdef}
associated to the full $\pd$-period portraits~$\Ccal_{\bfpd(N,d)}$.
The pure cycle rotations~\eqref{eqn:ZpZaction} in~$\Acal_{N,d,\pd}$
have no effect on the multipliers, so leave the multiplier map
invariant, while the permutation part of~$\Acal_{N,d,\pd}$ corresponds
to permuting the various copies of~$\AA^N$ in the images of the
multiplier maps~\eqref{eqn:multmapEndEND}.  Hence we obtain multiplier
maps (which by abuse of notation we continue to
call~$\mu_{\bfpd(N,d)}$)
\begin{align}
  \label{eqn:multmapMdNANddef1inter}
  \mu_{\bfpd(N,d)} : \Moduli_d^N[\Ccal_\bfpd|\Acal_{N,d,\pd}] &\longrightarrow \left(\AA^{\nubar(\pd)}\right)^N / \Scal_{\nubar(\pd)},\\
  \label{eqn:multmapMdNANddef2inter}
  \mu_{\bfpd(N,d)} : \MODULI_d^N[\Ccal_\bfpd|\Acal_{N,d,\pd}] &\longrightarrow  \left(\AA^{\nubar(\pd)}\right)^N / \Scal_{\nubar(\pd)},
\end{align}
where $\Scal_{\nubar(\pd)}$ acts diagonally on $(\AA^{\nubar(\pd)})^N$.

We next observe that the map $\Moduli_d^N[\Ccal_\bfpd]\to\Moduli_d^N$ factors
through $\Moduli_d^N[\Ccal_\bfpd|\Acal_{N,d,\pd}]$, since the action of~$\Acal_{N,d,\pd}$
on $\bigl(f,P_1,\ldots,P_{\nubar(p)}\bigr)$ leaves~$f$ invariant. On the other hand, both
of the maps
\[
  \Moduli_d^N[\Ccal_\bfpd]\to\Moduli_d^N[\Ccal_\bfpd|\Acal_{N,d,\pd}]
  \quad\text{and}\quad
  \Moduli_d^N[\Ccal_\bfpd]\to\Moduli_d^N
\]
have generic degree equal to $\#\Acal_{N,d,p}=\nubar(p)!\cdot p^{\nubar(p)}$.
Hence the map $\Moduli_d^N[\Ccal_\bfpd|\Acal_{N,d,\pd}]\to\Moduli_d^N$
is an isomorphism, so~\eqref{eqn:multmapMdNANddef1inter} may be viewed
as a multiplier map
\[
  \mu_{\bfpd(N,d)} : \Moduli_d^N\longrightarrow \left(\AA^{\nubar(\pd)}\right)^N / \Scal_{\nubar(\pd)}.
\]

\begin{remark}
When~$N\ge2$, there are some advantages to taking a further quotient
in which we allow~$N$ copies of~$\Scal_{\nubar(\pd)}$ to act
independently on the~$N$ copies of $\AA^{\nubar(\pd)}$, and then use
the general fact that $\AA^\nu/\Scal_\nu\cong\AA^\nu$ via the
elementary symmetric functions. This gives a ``collapsed'' multiplier map
\begin{equation}
  \label{eqn:collapsed}
  \mu'_{\bfpd(N,d)} : \Moduli_d^N 
  \xrightarrow{\mu_{\bfd(N,d)}}
  \left(\AA^{\nubar(\pd)}\right)^N / \Scal_{\nubar(\pd)},
  \longrightarrow
  \left(\AA^{\nubar(\pd)} / \Scal_{\nubar(\pd)}\right)^N
  \xrightarrow{\;\sim\;} \AA^{\nubar(\pd)N}.\\
\end{equation}
We note that these maps coincide for~$N=1$, i.e.,
$\mu'_{\bfpd(1,d)}=\mu_{\bfpd(1,d)}$.
\end{remark}

A famous theorem of McMullen says that for $N=1$ and in
characteristic~$0$, sufficiently many multipliers (almost) determine
the map $\PGL_2$-conjugacy class of the map~$f$. In our notation, McMullen's theorem
says the following.

\begin{theorem}[McMullen \cite{mcmullen:rootfinding}]
For every~$d\ge2$ there is a~$\kappa(d)$ with the property that the map
\[
  \prod_{\pd\le \kappa(d)} \mu_{\bfpd(N,d)} : \Moduli_d^1(\CC)
  \longrightarrow \prod_{\pd\le \kappa(d)} \CC^{\nubar(\pd)}
\]
is quasi-finite away from the locus of flexible Latt\`es maps.  
\end{theorem}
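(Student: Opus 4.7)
The plan is to prove generic finiteness of the multiplier map by reducing to Thurston-style rigidity. Suppose for contradiction that for every integer $\kappa$ the combined map $\prod_{\pd\le \kappa} \mu_{\bfpd(1,d)}$ has positive-dimensional fibers in $\Moduli_d^1(\CC)$ outside the flexible Latt\`es locus. Since $\Moduli_d^1$ is a variety of finite dimension, a straightforward Noetherian/compactness argument over increasing $\kappa$ produces a positive-dimensional irreducible family $\{\langle f_t\rangle\}_{t\in T}$ in $\Moduli_d^1(\CC)$, disjoint from the Latt\`es locus, along which the multiplier at every periodic cycle of every period is constant. After a finite base change I would lift to a holomorphic family of representatives $f_t\in\End_d^1(\CC)$.

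Constancy of every periodic multiplier is exactly the hypothesis needed to apply the Ma\~n\'e--Sad--Sullivan $J$-stability theorem: no bifurcation of periodic cycles can occur, and the Julia set $J_{f_t}$ admits a holomorphic motion, giving quasi-conformal conjugacies $h_t\colon J_{f_0}\to J_{f_t}$ with $h_t\circ f_0=f_t\circ h_t$ on $J_{f_0}$. Next, using linearization at repelling periodic points (Koenigs coordinates) together with the equality (not just modulus-equality) of the complex multipliers at every periodic orbit, I would show that the Beltrami coefficient of $h_t$ vanishes on a neighborhood of every repelling periodic point; by density of repelling cycles in $J_{f_0}$ this forces $\mu_{h_t}\equiv 0$ almost everywhere on $J_{f_0}$. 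Hence the only non-trivial deformation of $f_0$ inside the family can be supported on the Fatou set.

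The heart of the argument is then McMullen's theorem on invariant line fields: a rational map that carries a measurable invariant line field on a positive-measure subset of its Julia set is (up to finite-to-one correspondence) a flexible Latt\`es map. Combined with Sullivan's no-wandering-domains theorem and the classification of periodic Fatou components, this bounds the space of conformal Fatou-set deformations of $f_0$ by a finite-dimensional ``quasi-conformal deformation space'' that depends only on the combinatorics of post-critical dynamics in the Fatou set. Choosing $\kappa(d)$ large enough, I would use the multiplier data at periods $\le\kappa(d)$ (which include all multipliers at attracting/parabolic cycles inside Fatou components, and a generous supply at repelling cycles) to cut this deformation space down to a point. Since $f_0$ is not Latt\`es, no non-trivial deformations remain, contradicting positive-dimensionality of $T$.

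The main obstacle is the invariant-line-field step, i.e.\ McMullen's rigidity result for Julia sets; this is genuine complex-analytic work (measurable dynamics, area estimates, thermodynamic formalism near the Julia set), and it is the only place where the non-Latt\`es hypothesis is essentially used. Once that is in hand, the Ma\~n\'e--Sad--Sullivan stability theorem and Sullivan's classification of Fatou components assemble the remaining pieces; the choice of the explicit constant $\kappa(d)$ requires quantifying how many independent multiplier constraints are needed to kill the (explicitly bounded) dimension of the Fatou-side deformation space, which can in principle be done uniformly in $d$.
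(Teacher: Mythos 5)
Your overall strategy---extract a positive-dimensional algebraic family with all multipliers constant, invoke Ma\~n\'e--Sad--Sullivan to get a holomorphic motion and quasiconformal conjugacies, and then reduce to a rigidity statement about invariant line fields---is indeed the shape of McMullen's argument (the paper itself does not prove this theorem; it only cites \cite{mcmullen:rootfinding}). But the step you call ``the heart of the argument'' is not a theorem. The assertion that a rational map carrying a measurable invariant line field on a positive-measure subset of its Julia set must be flexible Latt\`es is the \emph{no-invariant-line-fields conjecture}, which McMullen states explicitly as a conjecture (it implies the density of hyperbolicity and is open). McMullen's actual 1987 proof that a stable \emph{algebraic} family is trivial or Latt\`es does not use this general rigidity; it exploits the algebraicity of the fiber in an essential way (completing the parameter curve and analyzing the degeneration at the boundary). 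As written, your proof rests on an open conjecture, so it does not go through.

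There is also an internal inconsistency earlier in the argument. You claim that equality of the complex multipliers at every periodic orbit forces the Beltrami coefficient of the conjugacy $h_t$ to vanish on a neighborhood of each repelling cycle, hence almost everywhere on $J_{f_0}$. This is false: a quasiconformal map can conjugate $z\mapsto\l z$ to itself without being conformal anywhere, so matching multipliers give no pointwise control on the dilatation; and even granting conformality on a neighborhood of each point of a dense subset of $J$, that open set need not contain $J$ up to measure zero when $J$ has positive area. Moreover, if this step were correct, the deformation would be supported on the Fatou set and the invariant-line-field rigidity (which concerns line fields on the \emph{Julia} set) would be irrelevant---the whole difficulty in this theorem is precisely that one cannot rule out an invariant line field on $J$ by soft arguments, and the correct proof must instead show that such a line field cannot arise from a nontrivial \emph{algebraic} isospectral family.
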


This naturally leads to two questions, as described
in~\cite[Question~2.43]{MR2884382}.

\begin{question}
For $N\ge2$ and $d\ge2$, is there a $\kappa(N,d)$ such that the map
\[
  \prod_{\pd\le \kappa(N,d)} \mu'_{\bfpd(N,d)} : \Moduli_d^N(\CC)
  \longrightarrow \prod_{\pd\le \kappa(N,d)} \CC^{\nubar(\pd)N}
\]
is quasi-finite on a non-empty Zariski open subset?  If so, what does
the complement look like?
\end{question}

\begin{question}[Poonen]
For $N\ge1$ and $d\ge2$, is there a $\kappa(N,d)$ such that the
map
\[
  \prod_{\pd\le \kappa(N,d)} \mu_{\bfpd(N,d)} : \Moduli_d^N(\CC)
  \longrightarrow \prod_{\pd\le \kappa(N,d)} (\CC^{\nubar(\pd)})^N/\Scal_{\nubar(\pd)}
\]
is one-to-one on a non-empty Zariski open subset
of~$\Moduli_d^N(\CC)$? Note that this question is open even for~$N=1$.
\end{question}

\subsection{Fixed Point Multiplier Relations}
\label{section:multiplierrelatons:}
In this section we consider an endomorphism $f:\PP^N\to\PP^N$ of
degree $d\ge2$ and fixed points $P\in\Fix(f)$. We factor the characteristic
polynomial of the linear map $\Df_P:\Tcal_P\to\Tcal_P$ as
\[
  \det\bigl(X-\Df_P\bigr) = \sum_{i=0}^{N} (-1)^i\Lambda_i(f,P)X^{N-i}
  = \prod_{j=1}^N \bigl( X - \l_j(f,P) \bigr),
\]
so $\l_1(f,P),\ldots,\l_N(f,P)$ are the multipliers of~$f$ at~$P$,
with appropriate multiplicities. We say that~$P$ is a \emph{simple fixed point}
if none of its multipliers is equal to~$1$. 

When $N=1$, each fixed point~$P$ of~$f$ has a single
multiplier~$\l(P)$, and assuming that the fixed points are simple,
their multipliers satisfy the following famous classical
relation:\footnote{In his 1919 M\'emoire~\cite[Section~9]{julia:1918},
  Julia says that this relation is ``bien connue.''}
\begin{equation}
  \label{eqn:classicalsumfix}
  \sum_{P\in\Fix(f)} \frac{1}{1-\l(f,P)} = 1.
\end{equation}
Writing~\eqref{eqn:classicalsumfix} as a symmetric function of
the~$\l(f,P)$, we see that it defines a hypersurface containing the
image of
\[
  \mu_{\boldsymbol1(1,d)} : \Moduli_d^1 \longrightarrow \AA^{d+1}.
\]
(Note that an~$f\in\End_d^1$ has $d+1$ fixed points.)  For example,
when~$d=2$ the relation~\eqref{eqn:classicalsumfix} says that the
image lies in a hyperplane, and indeed, in this case Milnor proved
that $\mu_{\boldsymbol1(1,2)}$ gives an
isomorphism~$\Moduli_2^1\to\AA^2$;
cf.\ Example~\ref{example:milnorM21eqA2}.

The classical fixed point multiplier
relation~\eqref{eqn:classicalsumfix} for~$\PP^1$ has been generalized
by Ueda to endomorphisms of~$\PP^N$.

\begin{theorem}
\label{theorem:uedamult}
\textup{(Ueda \cite{MR1479941})}
Let $f\in\End_d^N$, and assume that all of the fixed points of~$f$ are
simple. Then
\begin{equation}
  \label{eqn:sumdetXDfIDf}
  \sum_{P\in\Fix(f)} \frac{\det(X-\Df_P)}{\det(I-\Df_P)}
  = \frac{X^{N+1}-d^{N+1}}{X-d}.
\end{equation}
Equivalently, for each $0\le k\le N$, we have
\begin{equation}
  \label{eqn:sumLambdaifPdi}
  \sum_{P\in\Fix(f)} \frac{\Lambda_k(f,P)}{\det\bigl(I-\Df_P\bigr)} = (-d)^k.
\end{equation}
\end{theorem}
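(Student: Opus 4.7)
The plan is to prove (\ref{eqn:sumLambdaifPdi}) for each $0\le k\le N$ by applying the holomorphic Lefschetz fixed point theorem of Atiyah--Bott with coefficients in the sheaf $\Omega^k_{\PP^N}$ of $k$-forms, and then obtain (\ref{eqn:sumdetXDfIDf}) as a formal algebraic consequence. Indeed, expanding $\det(X-\Df_P)=\sum_{k=0}^N(-1)^k\Lambda_k(f,P)X^{N-k}$, dividing by $\det(I-\Df_P)$, summing over fixed points, and substituting~(\ref{eqn:sumLambdaifPdi}) yields
$$\sum_{P\in\Fix(f)}\frac{\det(X-\Df_P)}{\det(I-\Df_P)}=\sum_{k=0}^N(-1)^k(-d)^k X^{N-k}=\sum_{k=0}^N d^k X^{N-k}=\frac{X^{N+1}-d^{N+1}}{X-d}.$$
Conversely, reading off the coefficient of $X^{N-k}$ in this polynomial identity recovers (\ref{eqn:sumLambdaifPdi}), so the two formulations are equivalent.

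The essential input is the Atiyah--Bott fixed point theorem applied to the endomorphism of $\Omega^k_{\PP^N}$ given by the canonical pullback map $f^*\Omega^k_{\PP^N}\to\Omega^k_{\PP^N}$. The simple-fixed-point assumption forces each $P\in\Fix(f)$ to be isolated with $\det(I-\Df_P)\ne0$, and the theorem then reads
$$\sum_{P\in\Fix(f)}\frac{\operatorname{tr}\bigl(f^*\mid (\Omega^k_{\PP^N})_P\bigr)}{\det(I-\Df_P)}=\sum_{q=0}^N(-1)^q\operatorname{tr}\bigl(f^*\mid H^q(\PP^N,\Omega^k_{\PP^N})\bigr).$$
On the left, the induced action on the stalk $(\Omega^k_{\PP^N})_P\cong\wedge^k\Tcal_P^*$ is $\wedge^k(\Df_P)^{\vee}$, whose trace is the $k$-th elementary symmetric function of the eigenvalues of~$\Df_P$, namely $\Lambda_k(f,P)$. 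On the right, the Hodge decomposition of $\PP^N$ gives $H^q(\PP^N,\Omega^k_{\PP^N})=0$ for $q\ne k$ and $H^k(\PP^N,\Omega^k_{\PP^N})\cong\CC$, generated by the $k$-th power of the hyperplane class; since $\deg f=d$ we have $f^*\Ocal_{\PP^N}(1)=\Ocal_{\PP^N}(d)$, so $f^*$ acts on this line by multiplication by~$d^k$. The right-hand side therefore equals $(-1)^k d^k=(-d)^k$, which is exactly~(\ref{eqn:sumLambdaifPdi}).

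The main point requiring care is that the theorem is stated as a purely algebraic identity that one expects to hold over any field of characteristic zero, whereas Atiyah--Bott is a priori an analytic statement over~$\CC$. This is handled by a Lefschetz principle argument: after clearing denominators, (\ref{eqn:sumLambdaifPdi}) becomes a universal polynomial identity in the coefficients of~$f$ valid on the dense open subset of $\End_d^N$ where all fixed points are simple, so its truth over~$\CC$ propagates to any characteristic zero base. One can alternatively appeal to the algebraic Lefschetz trace formula in coherent cohomology (Illusie--Grothendieck), which is valid over any noetherian base and gives the identity intrinsically without leaving the algebraic category.
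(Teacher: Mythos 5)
Your proof is correct. Note that the paper does not prove this theorem at all: it is quoted from Ueda's paper \cite{MR1479941} (whose title is precisely ``index formula for fixed points''), and your argument via the Atiyah--Bott holomorphic Lefschetz formula applied to $\Omega^k_{\PP^N}$ is essentially the argument of that cited source. All the ingredients check out: the trace of the pullback on the stalk $\bigl(\Omega^k\bigr)_P\cong\wedge^k\Tcal_P^*$ is indeed $\Lambda_k(f,P)$, the Hodge cohomology of $\PP^N$ is concentrated in bidegree $(k,k)$ where $f^*$ acts by $d^k$ because $f^*\Ocal(1)=\Ocal(d)$, and the passage between \eqref{eqn:sumLambdaifPdi} and \eqref{eqn:sumdetXDfIDf} is the elementary polynomial identity $\sum_{k=0}^N d^kX^{N-k}=(X^{N+1}-d^{N+1})/(X-d)$. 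Your closing remark about descending from $\CC$ to an arbitrary characteristic-zero base is also apt, since the standing characteristic-zero hypothesis of this section is genuinely needed here.
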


\begin{remark}
Ueda's formula~\eqref{eqn:sumdetXDfIDf} yields many interesting
relations. For example, differentiating~\eqref{eqn:sumdetXDfIDf} with
respect to~$X$ and then setting $X=1$ yields
\begin{equation}
  \label{eqn:sum11liP}
  \sum_{P\in\Fix(f)} \sum_{j=1}^N \frac{1}{1-\l_j(f,P)}
  = d^{N-1}+2d^{N-2}+\cdots+(N-1)d+N,
\end{equation}
which is a natural generalization of the classical $1$-dimensional
result~\eqref{eqn:classicalsumfix}.
\end{remark}

Substituting
\[
  \det(I-\Df_P)=\sum (-1)^i\Lambda_i(f,P)
\]
into Ueda's relation~\eqref{eqn:sumLambdaifPdi} and clearing the
denominators, for each $0\le k\le N$ we obtain a polynomial relation
among the collection of multipliers
\[
  \Bigl\{ \Lambda_i\bigl(f,P_j\bigr) \Bigr\}_{\substack{0\le i\le N\\0\le j\le\nubar(1)\\}},
\]
where $P_1,\ldots,P_{\nubar(1)}$ are the fixed points of $f$.
This relation is invariant under permutation of the points
in~$\Fix(f)$, so it descends to define a hypersurface~$U_k$ having the
property that the image of the fixed-point multiplier map
\[
  \mu_{\boldsymbol1(N,d)} : \Moduli_d^N\longrightarrow \left(\AA^{\nubar(1)}\right)^N / \Scal_{\nubar(1)}
\]
lies in~$U_k$.\footnote{We only know, a priori, that the maps
  $f\in\Moduli_d^N$ with simple fixed points map to points in the
  hypersurface~$U_k$, but the maps with simple fixed points form a
  non-empty Zariski open subset, so the entire image of~$\Moduli_d^N$
  must lie in the closed hypersurface~$U_k$.}  We call~$U_k$ a
\emph{Ueda fixed-point hypersurface}, and we observe that these
hypersurfaces are defined over~$\QQ$.

As noted in~\eqref{eqn:collapsed}, we can collapse the image of the
multiplier map by taking a further quotient. This corresponds to
permuting the elements of the sets
$\bigl\{\Lambda_i(P)\bigr\}_{P\in\Fix(f)}$ independently for each
$0\le{i}\le{N}$, which is somewhat unnatural, since it does not come
from a natural action on~$\Moduli_d^N[\Ccal_{\boldsymbol1}]$. On the
other hand, it leads to a simpler target space, so could be useful.
In any case, we push forward~$U_k$ to obtain the \emph{collapsed Ueda
  fixed-point hypersurfaces}
\[
  U_k' = \Image\Bigl( U_k \subset \left(\AA^{\nubar(1)}\right)^N / \Scal_{\nubar(1)}
  \longrightarrow \left(\AA^{\nubar(1)}/ \Scal_{\nubar(1)} \right)^N
  \cong \AA^{\nubar(1)N}\Bigr).
\]
  
At most~$N$ of the~$N+1$ hypersurfaces~$U_0,\ldots,U_N$ can be
independent, since
\begin{align*}
  \sum_{k=0}^N 
  &\overbrace{\left(\sum_{P\in\Fix(f)} (-1)^k\frac{\Lambda_k(f,P)}{\det\bigl(I-\Df_P\bigr)}-d^k\right)}^{\text{equation that determines $U_k$}} \\
  &=  \sum_{P\in\Fix(f)} \frac{\displaystyle\sum_{k=0}^N (-1)^k\Lambda_k(f,P)}{\det\bigl(I-\Df_P\bigr)}
  - \sum_{k=0}^N d^k \\
  &=  \sum_{P\in\Fix(f)} \frac{\det\bigl(I-\Df_P\bigr)}{\det\bigl(I-\Df_P\bigr)}
  - \sum_{k=0}^N d^k = 0,
\end{align*}
where for the last equality we have used the fact that $\#\Fix(f)=(d^{N+1}-1)/(d-1)$.
It seems likely that there are no other relations.

\begin{conjecture}
\begin{parts}
\Part{(a)}
The  collapsed Ueda hypersurfaces $U_0',\ldots,U_N'$ are all irreducible.
\Part{(b)}
The intersection of the collapsed Ueda hypersurfaces~$U_0',\ldots,U_N'$ is
a subvariety of $\AA^{\nubar(1)N}$ of codimension~$N$.
\Part{(c)}
\textup{\textbf{Question}:} Is the intersection $\bigcap U_k'$ irreducible?
\end{parts}
\end{conjecture}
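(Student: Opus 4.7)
Set $\nu := \nubar(1) = \sum_{k=0}^N d^k$, and for each $0\le k\le N$ define
\[
Q_k := \sum_{j=1}^{\nu} \Lambda_k(f,P_j)\prod_{i\ne j}\det(I-\Df_{P_i}) \;-\;(-d)^k\prod_{i=1}^{\nu}\det(I-\Df_{P_i})
\]
in $\ZZ[\Lambda_i(f,P_j)]$ (with $0<i\le N$, $1\le j\le\nu$, and the convention $\Lambda_0\equiv 1$), where $\det(I-\Df_{P}) = \sum_{i=0}^N(-1)^i\Lambda_i(f,P)$ is a linear form in the $\Lambda_i(f,P)$. The $\Scal_\nu$-symmetric hypersurface $\widetilde U_k := \{Q_k=0\}\subset (\AA^N)^\nu$ descends to the Ueda hypersurface $U_k\subset \Sym^\nu(\AA^N)$, and $U_k' := \pi(U_k)$ under the natural morphism $\pi:\Sym^\nu(\AA^N)\to (\Sym^\nu\AA^1)^N \cong \AA^{\nu N}$. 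As noted in the text, $\sum_{k=0}^N(-1)^k Q_k \equiv 0$.

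For part (a), since images of irreducible varieties under morphisms are irreducible, it suffices to prove $Q_k$ is irreducible as a polynomial; this gives irreducibility of $\widetilde U_k$, then of $U_k$ (by $\Scal_\nu$-invariance), then of $U_k' = \pi(U_k)$. Viewing $Q_k$ as a polynomial in the $N$ variables $\Lambda_1(f,P_\nu),\ldots,\Lambda_N(f,P_\nu)$ with the other $\Lambda_i(f,P_j)$ treated as parameters, a direct calculation yields the shape
\[
Q_k \;=\; A\cdot\Lambda_k(f,P_\nu) + B\cdot\det(I-\Df_{P_\nu}),
\]
where $A := \prod_{i<\nu}\det(I-\Df_{P_i})$ and $B$ depends only on $P_1,\ldots,P_{\nu-1}$. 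Since $\det(I-\Df_{P_\nu})$ is linear in the $\Lambda_i(f,P_\nu)$ and $A,B$ are generically nonzero, this is a nonzero linear polynomial in $N$ variables, and so is irreducible over the function field of the parameter variables. Any hypothetical factorization $Q_k = R_1R_2$ in the full ring would then force one factor to be independent of all $\Lambda_i(f,P_\nu)$; iterating through each $P_j$ via the $\Scal_\nu$-symmetry of $Q_k$, combined with the presence of honest cross-terms between $\Lambda_i(f,P_j)$ and $\Lambda_i(f,P_{j'})$ for $j\ne j'$ in $Q_k$, forces that superfluous factor to be a scalar, and hence $Q_k$ is irreducible.

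For part (b), the linear dependence gives $\codim(\bigcap U_k')\le N$ immediately; for the matching lower bound it suffices to show $\codim(\bigcap \widetilde U_k) = N$ in $(\AA^N)^\nu$, because $\pi$ is generically finite (with generic fiber of size $(\nu!)^{N-1}$) and so $\dim\bigcap U_k' = \dim\bigcap U_k = \dim\bigcap\widetilde U_k$. The plan is to exhibit a point $\tilde\xi\in\bigcap \widetilde U_k$ at which the Jacobian of $(Q_1,\ldots,Q_N)$ has rank $N$. A natural candidate is the configuration of $\nu$ fixed points of the power map $f(\bfx) := [x_0^d,\ldots,x_N^d]$: these are the points $[\zeta_0:\cdots:\zeta_N]$ with each $\zeta_\rho$ either $0$ or a $(d-1)$th root of unity (one coordinate normalized to $1$); at each such $P$ the derivative $\Df_P$ is diagonal with entries in $\{d,0\}$, so $\det(I-\Df_P) = (1-d)^{s-1}$ where $s$ is the number of nonzero coordinates of $P$, which is nonzero in characteristic $0$. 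Because $Q_k$ depends linearly on the $k$th ``column'' $\Lambda_k(f,P_1),\ldots,\Lambda_k(f,P_\nu)$ with leading coefficient $\prod_{i\ne j}\det(I-\Df_{P_i})\ne 0$ at $\tilde\xi$, the Jacobian acquires a staircase block structure indexed by $k$, and nondegeneracy at $\tilde\xi$ should reduce to a single explicit $N\times N$ determinantal check built from the known power-map multiplier data.

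The principal obstacle is part (c). A natural attempt is to push irreducibility of $\Moduli_d^N$ through the collapsed multiplier morphism $\mu'_{\boldsymbol{1}(N,d)}:\Moduli_d^N \to \bigcap U_k'$, whose image closure is automatically irreducible; one would then need to show this closure exhausts $\bigcap U_k'$. For $N=1$ this is automatic, since $\bigcap U_k'$ is already the single hyperplane of Julia's classical relation. For $N\ge 2$, however, a direct dimension count shows $\dim \Moduli_d^N < N(\nu-1) = \dim\bigcap U_k'$ (using part (b)) for every $d\ge 2$, so the multiplier morphism can never be dominant, and the ``extraneous'' directions in $\bigcap U_k'$ do not arise from any actual endomorphism of $\PP^N$. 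Settling (c) for $N\ge 2$ therefore appears to require genuinely new input---perhaps a direct ideal-theoretic analysis of $(Q_0,\ldots,Q_N)$ via Gr\"obner or Hilbert-series techniques, or an independent geometric realization of $\bigcap U_k'$ as a determinantal or toric variety---which is beyond the scope of this sketch.
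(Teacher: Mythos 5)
The statement you are trying to prove is labelled a \emph{conjecture} in the paper, and its part (c) is explicitly posed as an open question; the paper offers no proof of any part, so there is nothing on the paper's side to compare your argument against. Judged on its own terms, your sketch is a reasonable plan of attack but has genuine gaps in both (a) and (b). For (a), reducing to irreducibility of the polynomial $Q_k$ is the right move, and writing $Q_k = A\cdot\Lambda_k(f,P_\nu)+B\cdot\det(I-\Df_{P_\nu})$ correctly exhibits $Q_k$ as linear in the block of variables attached to $P_\nu$. But linearity over the function field of the remaining variables only gives irreducibility there; to conclude irreducibility in the full polynomial ring you need Gauss's lemma together with a verification that the content of this linear form --- essentially $\gcd(A,B)$ --- is a unit. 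Since $A=\prod_{i<\nu}\det(I-\Df_{P_i})$ is a product of irreducible linear forms $L_i$, this amounts to checking that no $L_{i_0}$ divides $B$, which follows from $B\equiv\Lambda_k(f,P_{i_0})\prod_{i\ne i_0,\,i<\nu}L_i\pmod{L_{i_0}}$; your appeal to ``honest cross-terms'' forcing the superfluous factor to be a scalar gestures at this but does not carry it out.

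The more serious gap is in (b). Exhibiting one point of $\bigcap\widetilde U_k$ at which the Jacobian of $(Q_1,\dots,Q_N)$ has rank $N$ certifies only that the irreducible component through that point has codimension $N$. Krull's height theorem (applied to the ideal generated by $N$ of the $Q_k$, after discarding one via the linear relation) already bounds the codimension of \emph{every} component above by $N$, so $\codim\bigl(\bigcap U_k'\bigr)\le N$ is automatic; but the codimension of the intersection is the \emph{minimum} over its components, and a single smooth point does not rule out a component of codimension strictly less than $N$. Unless you can show the $Q_k$ form a regular sequence, or settle the irreducibility question (c) first, the single-point Jacobian computation --- which in any case you have not actually performed for the power map, only asserted ``should reduce to a single determinantal check'' --- does not establish (b). Your discussion of (c) is appropriately honest about being out of reach, and the observation that $\dim\Moduli_d^N<N(\nubar(1)-1)$ for $N\ge2$, so that the multiplier map cannot dominate $\bigcap U_k'$, is a worthwhile remark; note only that for $N=1$ the intersection is a single hypersurface of degree $\nubar(1)$ in the multiplier coordinates, not a hyperplane (Julia's relation is linear only in the indices $1/(1-\lambda)$), so (c) for $N=1$ reduces to (a) rather than being trivially true.
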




\begin{acknowledgement}
The authors would like to thank Dan Abramovich, Eric Bedford, David
Eppstein, Noah Giansiracusa, Jeremy Kahn, and Michael Rosen for their
helpful advice.
\end{acknowledgement}




\end{document}